\documentclass[12pt]{article}
\usepackage{amsfonts,amssymb}
\usepackage{hyperref}
\usepackage{titlesec}
\usepackage{titletoc}
\usepackage{amsmath}
\usepackage{enumitem}
\usepackage{listings}
\usepackage{verbatim}
\usepackage{graphicx}
\usepackage{geometry}
\usepackage{mathrsfs}
\usepackage{bbm}
\usepackage{mathtools}
\usepackage[splitrule]{footmisc} 
\interfootnotelinepenalty=10000
\numberwithin{equation}{section}
\usepackage{amsthm}
\usepackage{xcolor, tikz}
\newtheorem{theorem}{Theorem}
\newtheorem{pro}{Proposition}

\newtheorem{lem}{Lemma}

\newtheorem{rem}{Remark}

\usepackage[autostyle=false, style=english]{csquotes}
\MakeOuterQuote{"}
\usepackage{bm}

\DeclareMathOperator{\sign}{sgn}

\DeclareMathOperator{\rank}{rank}

\usepackage{cite}
\title{Quantizing Heavy-tailed Data in Statistical Estimation: (Near) Minimax Rates, Covariate Quantization, and Uniform Recovery}
\usepackage{bigfoot}

\DeclareNewFootnote{AAffil}[arabic]
\DeclareNewFootnote{ANote}[fnsymbol]

\usepackage{etoolbox}
\makeatletter
\patchcmd\maketitle{\def\@makefnmark{\rlap{\@textsuperscript{\normalfont\@thefnmark}}}}{}{}{}
\makeatother
\makeatletter
\def\thanksAAffil#1{
	\footnotemarkAAffil\protected@xdef\@thanks{\@thanks%
		\protect\footnotetextAAffil[\the \c@footnoteAAffil]{#1}}%
}
\def\thanksANote#1{%
	\footnotemarkANote%
	\protected@xdef\@thanks{\@thanks%
		\protect\footnotetextANote[\the \c@footnoteANote]{#1}}%
}
\makeatother
\author{%
	Junren Chen%
\thanksAAffil{Department of 
	Mathematics, The University of Hong Kong.}
$^{,}$\thanksANote{Corresponding author. Email: chenjr58@connect.hku.hk}
	\and %
Michael K. Ng\footnotemarkAAffil[1]
$^{,}$\thanksANote{Emails: mng@maths.hku.hk, di.wang@kaust.edu.sa}\and Di Wang\thanksAAffil{Division of CEMSE, King Abdullah University of Science and Technology (KAUST).}
$^{,}$\footnotemarkANote[2]
}
\date{}
\geometry{a4paper,scale=0.8}
\begin{document}
	\maketitle
\begin{abstract}
    Modern datasets often exhibit heavy-tailed behaviour, while  quantization is   inevitable in digital signal processing and many machine learning problems. This paper studies the quantization of heavy-tailed data in several fundamental statistical estimation problems where the underlying distributions have bounded moments of some order (no greater than $4$). We propose to truncate and properly dither the data prior to a uniform quantization. Our major standpoint is that (near)   minimax rates of estimation error  could be achieved by computationally tractable estimators based on the quantized data produced by the proposed scheme. In particular, concrete results are worked out for covariance estimation, compressed sensing (also interpreted as  sparse linear regression), and matrix completion, all agreeing that the quantization only slightly worsens the multiplicative factor. Additionally, while prior results focused on quantization of responses (i.e., measurements), we study compressed sensing where the covariates (i.e., sensing vectors) are also quantized; in this case,   our recovery program is non-convex because the covariance matrix estimator lacks   positive semi-definiteness, but all local minimizers are proved to enjoy near optimal error bound. Moreover, by  the concentration inequality of product process and a  covering argument, we establish near minimax uniform recovery guarantee for quantized compressed sensing with heavy-tailed noise. Finally, numerical simulations are provided to corroborate our theoretical results.
\end{abstract}
\section{Introduction}
Heavy-tailed distributions are ubiquitous in modern datasets, especially those arising in economy, finance, imaging, biology, see \cite{woolson2011statistical,sahu2019noising,ibragimov2015heavy,biswas2007statistical,swami2002some,kruczek2020detect} for instance. \textcolor{black}{In the recent  literature,  heavy-tailed distribution is often captured by bounded $l$-th moment, where $l$ is some fixed small scalar;}  this is essentially weaker than sub-Gaussian assumption, and thus outliers and extreme values appear much more frequently in data from heavy-tailed distributions (referred to as heavy-tailed data), which poses challenges for statistical analysis. In fact, many standard statistical procedures developed for sub-Gaussian data suffer from performance degradation in the heavy-tailed regime. Fortunately, the past decade has witnessed considerable progresses on  statistical estimation methods that are robust to heavy-tailedness, see \cite{catoni2012challenging,fan2017estimation,nemirovskij1983problem,minsker2015geometric,brownlees2015empirical,hsu2016loss,fan2021shrinkage,zhu2021taming,lugosi2019mean} for instance.

Departing momentarily from heavy-tailed data,
 quantization is an inevitable process in the era of digital signal processing, which maps signals to bitstreams so that they can be stored, processed  and transmitted.  In particular, the resolution of quantization should be selected to achieve a trade-off between accuracy and various data processing costs, and in some applications relatively low resolution would be preferable. For instance, in a distributed learning setting or a MIMO system, the frequent information transmission among multiple parties often result in prohibitive communication cost   \cite{konevcny2016federated,mo2018limited}, and  quantizing signals or data to fairly low resolution (while preserving satisfying utility) is an effective approach to reduce the cost \cite{zhang2017zipml,hanna2021quantization}. Under such a big picture, in recent years there has been rapidly growing literature on high-dimensional signal recovery from quantized data (see, e.g., \cite{boufounos20081,davenport20141,chen2022high,thrampoulidis2020generalized,jung2021quantized,dirksen2021non,dirksen2021covariance} for 1-bit quantization, \cite{thrampoulidis2020generalized,jung2021quantized,xu2020quantized,jacques2017time} for multi-bit uniform quantization), trying to understand the interplay between quantization and signal reconstruction (or parameter learning) in some fundamental estimation  problems. 


Independently, a set of robustifying techniques has been developed to overcome the challenge posed by  heavy-tailed data, and unidorm data quantization under uniform dither was shown to cost very little in some recovery problems. Considering ubiquitousness of heavy-tailed behaviour and data quantization, a natural question is to design quantization scheme for heavy-tailed data that only incurs minor information loss. For instance, when applied to statistical estimation problems with heavy-tailed data, an appropriate quantization scheme should enable   at least one faithful estimator from the quantized data, and  ideally an estimator nearly achieving optimal error rate. Despite the vast literature in this field, prior results that simultaneously take heavy-tailed data and quantization into account are surprisingly rare  --- only the ones presented in \cite{dirksen2021non} and our earlier work   \cite{chen2022high} regarding the dithered 1-bit quantizer, to the best of our knowledge. These results remain incomplete and exhibit some downsides. Specifically,  \cite{dirksen2021non} considered a computationally intractable program for quantized compressed sensing and used techniques hard to generalize to other problems, while the error rates in \cite{chen2022high} are inferior to the corresponding minimax ones (under unquantized sub-Gaussian data), as will be discussed in Section \ref{sec:quanht}. In a nutshell, a quantization scheme for heavy-tailed data arising in statistical estimation problems that allows for computationally tractable near-minimax estimators  is still lacking.

This paper aims to provide a solution to the above question and narrow the gap between heavy-tailed data and data quantization in the literature. In particular, we propose a unified quantization scheme for heavy-tailed data which,  when applied to the canonical estimation problems of (sparse) covariance matrix estimation, compressed sensing (or sparse linear regression) and matrix completion, allows for (near) minimax estimators that are either in closed-form or solved from convex programs.   Additionally, we present novel developments concerning   covariate (or sensing vector) quantization and uniform signal recovery in quantized compressed sensing with heavy-tailed data. 

 \subsection{Related Works and Our Contributions}
 This section is devoted to a review of the most relevant works. \textcolor{black}{Before that we note that a heavy-tailed random variable in this work is formulated by the moment constraint $\mathbbm{E}|X|^l\leq M$ where $M$ is oftentimes regarded as absolute constant and $l$ is some fixed small scalar (specifically, $l\leq 4$ in the present paper).} 
 \subsubsection{Statistical Estimation under Heavy-Tailed Data}\label{sec:HTesti}

 Compared to sub-Gaussian data, heavy-tailed data may contain many outliers and extreme values that are overly influential to traditional estimation methods. Hence, developing estimation methods that are robust to heavy-tailedness has become a recent focus in statistics literature, where heavy-tailed distributions are often only assumed to have bounded moments of some small order. In particular, significant efforts have been devoted to the fundamental problem of mean estimation for heavy-tailed distribution. For instance, effective techniques available in the literature include Catoni's mean estimator \cite{catoni2012challenging,fan2017estimation}, median of means \cite{nemirovskij1983problem,minsker2015geometric}, and trimmed mean \cite{lugosi2021robust,devroye2016sub}. While the strategies to achieve robustness are different, these methods indeed share the same core spirit of making the outliers less influential. To this end, the trimmed method (also referred to as truncation or shrinkage) may be the most intuitive --- it truncates overlarge data to some threshold so that they are more benign for the estimation procedure. For more in-depth discussions we refer to the recent survey \cite{lugosi2019mean}. Furthermore, these robust methods for estimating the mean have been applied to empirical risk minimization \cite{brownlees2015empirical,hsu2016loss} and various high-dimensional estimation problems \cite{fan2021shrinkage,zhu2021taming}, achieving near optimal guarantees. For instance, by invoking M-estimators with truncated data, (near) minimax rates can be achieved in high-diemnsional sparse linear regression, matrix completion, covariance estimation \cite{fan2021shrinkage}. In fact, techniques similarly to truncation has proven effective in some related problems, e.g., in non-convex algorithms for phase retrieval, truncated Wirtinger flow using a more selective spectral initialization and carefully trimmed gradient \cite{chen2015solving} improves on Wirtinger flow \cite{candes2015phase} in terms of sample size.

\textcolor{black}{
Recall that we capture heavy-tailedness by bounded moment of some small order, while regarding statistical estimation beyond sub-Gaussianity, there has been a line of works considering sub-exponential   or more generally sub-Weibull distributions \cite{qiu2020robust,sivakumar2015beyond,kuchibhotla2022moving,genzel2022generic}, which have heavier tail than sub-Gaussian ones but still possess finite moment up to arbitrary order.   Specifically, without truncation and quantization, sparse linear regression was studied under sub-exponential data in \cite{sivakumar2015beyond} and under sub-Weibull data in \cite{kuchibhotla2022moving}, and the obtained error rates match the   ones  in sub-Gaussian case  up to logarithmic factors. Additionally, under sub-exponential measurement matrix and noise, \cite{qiu2020robust} established uniform guarantee for 1-bit generative compressed sensing, while \cite{genzel2022generic} analysed generalized Lasso for a general nonlinear model. 
Because the tail assumptions in these works are substantially stronger than ours,\footnote{One exception is Theorem 4.6 in \cite{kuchibhotla2022moving} for sparse linear regression with sub-Weibull covariate and heavy-tailed noise with bounded second moment, but still this result does not involve quantization procedure.} we will not make special comparison with their results later; instead, we simply note here two key distinctions: 1) these works do   not utilize special treatments for heavy-tailed data like truncation in the present paper; 2) most of them (with the single exception of \cite{qiu2020robust} studying 1-bit quantization) do not focus on quantization, while     this work concentrates on quantization of heavy-tailed data.}

\subsubsection{Statistical Estimation from Quantized Data}\label{sec:quantiest}

\noindent{\textbf{Quantized Compressed Sensing.}} Due to the prominent role of quantization in signal processing and machine learning, quantized compressed sensing that studies the interplay between sparse signal reconstruction and  data quantization has become an active research branch in the field. In this work, we focus on memoryless quantization scheme\footnote{This means that the quantization for different measurements are independent. For other quantization schemes we refer to the recent survey \cite{dirksen2019quantized}.} that embraces simple hardware design. An important   model is 1-bit compressed sensing where only the sign of the measurement is retained \cite{boufounos20081,jacques2013robust,plan2012robust,plan2013one}, and more precisely this model concerns  the recovery of sparse $\bm{\theta^\star}\in \mathbb{R}^d$ from $\sign(\bm{X\theta^\star})$ with the sensing matrix $\bm{X}\in \mathbb{R}^{n\times d}$. However, 1-bit compressed sensing associated with the direct 
$\sign(\cdot)$ quantization suffers from some frustrating limitations, e.g., the loss of signal norm information, and the identifiability issue under some regular sensing matrix (e.g., under Bernoulli sensing matrix, see \cite{dirksen2021non}).\footnote{In fact, almost all existing guarantees using the 1-bit observations $\sign(\bm{X\theta^\star})$ are restricted to standard Gaussian sensing matrix consisting of i.i.d. $\mathcal{N}(0,1)$ entries, with the exceptions of \cite{ai2014one} for sub-Gaussian sensing matrix and \cite{dirksen2020one,stollenwerk2019one} for partial Gaussian circulant matrix.} Fortunately, these limitations can be overcome by random dithering prior to the quantization, under which the 1-bit measurements read as $\sign(\bm{X\theta^\star}+\bm{\tau})$ for some  suitably chosen random dither $\bm{\tau}\in \mathbb{R}^n$. Specifically, under Gaussian dither $\bm{\tau}\sim  \mathcal{N}(0, \bm{I}_n)$ and standard Gaussian sensing matrix $\bm{X}$, full reconstruction with norm information could be achieved, for which the key idea is $\sign(\bm{X\theta^\star}+\bm{\tau})=\sign([\bm{X}~\bm{\tau}][(\bm{\theta^\star})^\top,1]^\top)$, thus reducing the dithered model to the undithered model for the sparse signal $[(\bm{\theta^\star})^\top,1]^\top$ whose last entry is known beforehand \cite{knudson2016one}.\footnote{We note that this idea has been recently extended in \cite{chen2022uniform} to the related problem of phase-only compressed sensing, also see \cite{jacques2021importance,feuillen2020ell,boufounos2013sparse} for prior developments.} More surprisingly, under a  uniform random dither, recovery with norm can be achieved under rather general sub-Gaussian sensing matrix  \cite{dirksen2021non,jung2021quantized,thrampoulidis2020generalized,chen2022high} even with near optimal error rate.

Besides the 1-bit quantizer that retains the sign, the uniform quantizer maps $a\in \mathbb{R}$ to $\mathcal{Q}_\Delta(a)= \Delta\big(\lfloor \frac{a}{\Delta}\rfloor +\frac{1}{2} \big)$ for some pre-specified $\Delta>0$; here and hereafter, we refer to $\Delta$  as the quantization level, and note that smaller $\Delta$ represents higher resolution. While recovering $\bm{\theta^\star}$ from $\mathcal{Q}_\Delta(\bm{X\theta^\star})$ encounters identifiability issue,\footnote{For instance, if $\bm{X}\in \{-1,1\}^{n\times d}$ (typical example is the Bernoulli design where entries of $\bm{X}$ are i.i.d. zero-mean) and $\Delta=1$, then $\bm{\theta}_1:=1.1\bm{e}_1$ and $\bm{\theta}_2:=1.2\bm{e}_1+0.1\bm{e}_2$ can never be distinguished because   $\mathcal{Q}_1(\bm{X\theta}_1)=\mathcal{Q}_1(\bm{X\theta}_2)$ always holds.} it is again beneficial to use  random dithering to obtain the measurements  $\mathcal{Q}_\Delta(\bm{X\theta^\star}+\bm{\tau})$. More specifically, by using uniform dither the Lasso estimator \cite{thrampoulidis2020generalized,sun2022quantized} and projected back projection (PBP) method \cite{xu2020quantized} achieve   minimax rate in certain cases, and the derived error bounds for these estimators demonstrate that the dithered uniform quantization does not affect the scaling law but only slightly worsens the multiplicative factor. Although the aforementioned progresses regarding compressed sensing under dithered quantization were recently made,  the technique of dithering in quantization indeed has a long history and (at least) dates back to some early engineering work (e.g., \cite{roberts1962picture}), see \cite{gray1993dithered} for a brief introduction.

\noindent{\textbf{Other Estimation Problems with Quantized Data.}} Beyond compressed sensing or its corresponding more statistical setting of sparse linear regression,
some other statistical estimation problems were also   investigated under dithered 1-bit quantization. Specifically, \cite{dabeer2006signal} studied a general signal estimation problem under dithered 1-bit  quantization in a traditional setting where sample size tends to infinity, showing that only logarithmic rate loss is incurred by the quantization. Inspired by potential application in reduction of power consumption in a large scale massive MIMO system,
 \cite{dirksen2021covariance} proposed to collect 2 bits per entry from each sub-Gaussian sample and developed an estimator that is near minimax optimal in certain cases. Their estimator from coarsely quantized samples was   extended to high-dimensional sparse case in \cite{chen2022high}. Then, considering the ubiquitousness of binary observations in many recommendation systems, the authors of \cite{davenport20141} first approached the 1-bit matrix completion problem by maximum likelihood estimation with a nuclear norm constraint. Their method was developed by a series of follow-up works by using different regularizers/constraints to encourage low-rankness, or considering multi-bit quantization on a finite alphabet \cite{cai2013max,lafond2014probabilistic,klopp2015adaptive,cao2015categorical,bhaskar2016probabilistic}. Quantizing the observed entries by a dithered 1-bit quantizer, the 1-bit matrix completion result in \cite{chen2022high}   essentially departs from the standard likelihood approach and can tolerate pre-quantization noise with unknown distribution.

\subsubsection{Quantization of Heavy-Tailed Data in Statistical Estimation}
\label{sec:quanht}
From now on, we turn to existing results more closely related to this work and explain our contributions. Note that the results we just reviewed are    for estimation problems from either unquantized heavy-tailed data (Section \ref{sec:HTesti})  or quantized sub-Gaussian      data (Section \ref{sec:quantiest}). While quantization of heavy-tailed data (from distribution assumed to have bounded moment of some small order) is a natural question of significant practical value,
prior investigations turn out to be surprisingly rare, and the only results we are aware of were presented in \cite{chen2022high,dirksen2021non} concerning  dithered 1-bit quantization. Specifically, \cite[Thm. 1.11]{dirksen2021non} considered  heavy-tailed noise and  possibly heavy-tailed covariate, implying that a sharp uniform error rate is achievable (see their Example 1.10). However, their result is for a computationally intractable program (Hamming distance minimization) and hence of limited practical value. Another limitation is that their techniques are based on random hyperplane tessellations that is specialized to 1-bit compressed sensing but does not generalize to other estimation problems.
In contrast, \cite{chen2022high} proposed a unified quantization scheme that first truncates the data and then invokes a dithered 1-bit quantizer. Although this quantization scheme could (at least) be applied to sparse covariance matrix estimation, compressed sensing, and matrix completion while still enabling practical estimators, the main drawback is that the convergence rates of estimation errors are essentially slower than the corresponding minimax optimal ones (e.g., $\tilde{O}\big(\frac{\sqrt{s}}{n^{1/3}}\big)$ for 1-bit compressed sensing under heavy-tailed noise \cite[Thm. 10]{chen2022high}), and in certain cases the rates cannot be improved without changing the quantization process (e.g., \cite[Thm. 11]{chen2022high} complements \cite[Thm. 10]{chen2022high} with a nearly matching lower bound). Beyond that, the 1-bit compressed sensing results in \cite{chen2022high} are non-uniform. In a nutshell, \cite{dirksen2021non} proved sharp rate for  1-bit compressed sensing but used highly intractable program and techniques not extendable to other estimation regimes, while the more widely applicable scheme and practical estimators in \cite{chen2022high} suffer from slow error rates (when compared to the ones achieved from unquantized sub-Gaussian data).  

\vspace{1mm}

\noindent{\textbf{Our Main Contributions: (Near) Minimax Rates.}} We propose a unified quantization scheme for heavy-tailed data consisting of  three steps: 1) {\it truncation} that shrinks data to some threshold, 2) {\it dithering} that adds suitable random noise to the truncated data, and 3) {\it uniform quantization}. For sub-Gaussian or sub-exponential data the truncation step is inessential, and we simply set the threshold as $\infty$ in this case. Careful readers may notice that, we just replace the 1-bit quantizer in our prior work \cite{chen2022high} with the less extreme (multi-bit) uniform quantizer $\mathcal{Q}_\Delta(\cdot)$, but the gain turns out to be significant ---  we are now able to derive (near) optimal rates that are essentially faster than the ones in \cite{chen2022high}, see Theorems \ref{thm1}-\ref{thm9}. Compared to \cite{dirksen2021non}, besides the different quantizers, other major distinctions are that: 1) we utilize an additional truncation step, 2) our estimators are computationally feasible, and 3) we investigate multiple estimation problems with possibility of extensions to more estimation problems. Concerning the effect of    quantization, our error rates suggests a unified conclusion that {\it dithered uniform quantization does not affect   the scaling law but only slightly worsens  the multiplicative factor}, which generalizes similar findings for quantized compressed sensing in \cite{thrampoulidis2020generalized,sun2022quantized,xu2020quantized} towards two directions, i.e., to  the case where heavy-tailed data present and to some other estimation problems (i.e., covariance matrix estimation, matrix completion).  As an example, for quantized compressed sensing with sub-Gaussian sensing vector $\bm{x}_k$ but heavy-tailed measurement $y_k$ satisfying $\mathbbm{E}|y_k|^{2+\nu}\leq M$  for some $\nu>0$, we derive the $\ell_2$-norm error rate $O\big(\mathscr{L}\sqrt{\frac{s \log d}{n}}\big)$ where $\mathscr{L}=M^{1/(2l)} +\Delta$ (Theorem \ref{thm4}, $s,d,n$ are respectively the sparsity, signal dimension, measurement number), which is reminiscent of the rates in \cite{xu2020quantized,thrampoulidis2020generalized} in terms of the position of $\Delta$.    From a technical side, many of our analyses on the dithered quantizer are much cleaner than prior works  because we   make full use of the nice statistical properties of the quantization error and quantization noise   (Theorem \ref{lem1}),\footnote{Prior work that did not fully leveraged these properties  may incur extra technical complication, e.g., the symmetrization and contraction in \cite[Lem. A.2]{thrampoulidis2020generalized}.} see Section \ref{prequan}. Also, the property of quantization noise   motivates us to use triangular dither when covariance estimation is necessary, which departs from the uniform dither commonly adopted in prior works (e.g., \cite{thrampoulidis2020generalized,xu2020quantized,dirksen2021covariance,chen2022high}) and is novel to the literature of quantized compressed sensing. In our subsequent work \cite{chen2023quantized}, a clean analysis on quantized low-rank multivariate regression with possibly quantized covariates is provided, and we believe the innovations of this work will prove useful in other problems.

\subsubsection{Covariate Quantization in Quantized Compressed Sensing}
From now on we   concentrate on quantized compressed sensing, i.e., the recovery of a sparse signal  $\bm{\theta^\star}\in \mathbb{R}^d$ from the quantized version of $(\bm{x}_k,y_k:=\bm{x}_k^\top\bm{\theta^\star}+\epsilon_k)_{k=1}^n$ where $\bm{x}_k,y_k,\epsilon_k$ are the sensing vector, measurement and noise, respectively. Let us first clarify some terminology issue before proceeding.
Note that this formulation also models the  sparse linear regression problem (e.g., \cite{negahban2012unified,raskutti2011minimax}) where one wants to learn a sparse parameter $\bm{\theta^\star}\in \mathbb{R}^d$ from the given data $(\bm{x}_k,y_k)_{k=1}^n$ that are believed to follow the linear model $y_k=\bm{x}_k^\top\bm{\theta^\star}+\epsilon_k$, and in this regression problem $\bm{x}_k,y_k$ are commonly referred to as covariate and response, respectively. We are interested in both settings in this work (as further explained in Section \ref{sec4}), but for clearer presentation, we simply refer to the problem as  quantized compressed sensing, while calling $\bm{x}_k,y_k$ covariate and response, respectively.

Despite a large volume of results in   quantized compressed sensing, almost all of them are restricted to response quantization, thus the question of covariate quantization that allows for accurate subsequent recovery remains unsolved. Note that this question is meaningful especially when the problem is interpreted as sparse linear regression --- working with low-precision data in some (distributed) learning systems could significantly reduce communication cost and power consumption \cite{zhang2017zipml,hanna2021quantization}, which we will further demonstrate in Section \ref{sec:covarquan}. Therefore, it is of interest to understand how covariate quantization affects the learning of $\bm{\theta^\star}$. To the best of our knowledge, the only existing rigorous guarantees for quantized compressed sensing involving covariate quantization were obtained in \cite[Thms. 7-8]{chen2022high}. However, these results require $\mathbbm{E}(\bm{x}_k\bm{x}_k^\top)$ to be sparse \cite[Assumption 3]{chen2022high} (in order to employ their sparse covariance matrix estimator), and this assumption is non-standard in sparse linear regression and compressed sensing.\footnote{In fact, although   isotropic sensing vector    (i.e., $\mathbbm{E}(\bm{x}_k\bm{x}_k^\top)=\bm{I}_d$) has been conventional in compressed sensing, many results in the literature can be extended to sensing vector with general unknown covariance matrix and hence do not really rely on the sparsity of $\mathbbm{E}(\bm{x}_k\bm{x}_k^\top)$.}

\noindent
\textbf{Our Contribution.} Besides the above main contributions, we establish the estimation guarantees for quantized compressed sensing under covariate quantization that are free of  the non-standard assumption on the sparsity of $\mathbbm{E}(\bm{x}_k\bm{x}_k^\top)$. Like \cite{chen2022high}, our estimation methods are built upon the quantized covariance matrix estimator developed in Section \ref{sec:covariance}; but unlike \cite{chen2022high} that relies on the sparsity of $\mathbbm{E}(\bm{x}_k\bm{x}_k^\top)$ to ensure convexity, we instead deal with the non-convex program with an additional $\ell_1$-norm constraint, under which we prove that   all local minimizers deliver near minimax estimation errors (Theorems \ref{thm6}-\ref{thm7}). Our analysis bears resemblance to a line of works on non-convex M-estimator \cite{loh2011high,loh2017statistical,loh2013regularized} but also exhibits some essential differences (Remark \ref{rem4}). Further, we extract our techniques as a deterministic framework (Proposition \ref{framework}) and then use it to establish guarantees under dithered 1-bit quantization and covariate quantization as byproducts (Theorems \ref{sg1bitqccs}-\ref{ht1bitqccs}), which are comparable to \cite[Thms. 7-8]{chen2022high} but free of sparsity on $\mathbbm{E}(\bm{x}_k\bm{x}_k^\top)$.

\subsubsection{Uniform Signal Recovery in Quantized Compressed Sensing}

It is standard in compressed sensing to leverage a random sensing matrix, so a recovery guarantee can be  uniform or non-uniform. More precisely, a uniform guarantee ensures the recovery of all structured signals of interest with a single draw of the sensing ensemble, while a non-uniform guarantee is only valid for a  structured signal fixed before drawing the random ensemble, with the implication that a new realization of the sensing matrix is required for   sensing   a new signal. Uniformity is a highly desired property in compressed sensing, since in applications the measurement ensemble is typically fixed and is expected to work for all signals \cite{genzel2022unified}. Besides, the derivation of a uniform guarantee is often significantly harder than a non-uniform one, making uniformity an interesting theoretical problem in its own right.

A classical fact in linear compressed sensing is that the restricted  isometry property (RIP) of the sensing matrix implies uniform recovery of all sparse signals (e.g., \cite{Foucart2013}), but this is not the case when it comes to  nonlinear compressed sensing models, for which uniform recovery guarantees are still eagerly pursued so far. For instance, in the specific    quantization model involving 1-bit/uniform quantization with/without dithering, or the more general single index model $y_k = f\big(\bm{x}_k^\top\bm{\theta^\star}\big)$ with possibly unknown $f(\cdot)$, most representative results are non-uniform (e.g., \cite{plan2012robust,plan2017high,plan2016generalized,thrampoulidis2020generalized,xu2020quantized,sun2022quantized}).  We refer to \cite{plan2012robust,dirksen2021non,xu2020quantized,jung2021quantized} for   concrete uniform guarantees, some of which remain (near) optimal (e.g., \cite[Sect. 7.2{A}]{xu2020quantized}), while others suffer from essential degradation   compared to the non-uniform ones (e.g., \cite[Thm. 1.3]{plan2012robust}). It is worth noting the interesting recent work \cite{genzel2022unified} who provided a unified approach to   uniform guarantee in a series of non-linear models, but without the aid of some non-trivial embedding result, their uniform guarantees typically exhibit a decaying rate of $O(n^{-1/4})$ that is slower than the non-uniform one of $O(n^{-1/2})$ (Section 4 therein).  Turning back to our focus of compressed sensing from quantized heavy-tailed data, results in \cite[Sect. III]{chen2022high} are  non-uniform, while \cite[Thm. 1.11]{dirksen2021non} presents   sharp  uniform guarantee for the intractable program of hamming distance minimization under dithered 1-bit quantization.

\noindent{\textbf{Our Contribution.}} We additionally contribute to the literature a uniform guarantee for  constrained Lasso under dithered uniform quantization of heavy-tailed response. Specifically, we upgrade our non-uniform Theorem \ref{thm4} to its  uniform version Theorem \ref{uniformtheorem}, which states that using a single realization of the sub-Gaussian sensing matrix, heavy-tailed noise and uniform dither, all $s$-sparse signals within an $\ell_2$-ball can be uniformly recovered up to an $\ell_2$-norm error of $\tilde{O}\big(\sqrt{\frac{s}{n}}\big)$, thus matching the near minimax non-uniform rate in Theorem \ref{thm4} up to logarithmic factors. The proof relies on a concentration inequality for product process \cite{mendelson2016upper} and a careful covering argument inspired by \cite{xu2020quantized}. Due to the heavy-tailed noise, new treatment is needed before invoking the concentration result from \cite{mendelson2016upper}.

\subsection{Outline}
The remainder of this paper is structured as follows. We provide the notation and preliminaries in Section \ref{sec2}. We present the first set of main results (concerning the (near) optimal guarantees for three   estimation problems under quantized heavy-tailed data) in Section \ref{sec3}. Our second set of results (concerning covariate quantization and uniform recovery in quantized compressed sensing) is then presented in Section \ref{sec:qc-qcs}. To corroborate our theory, numerical results on   synthetic data  are reported in Section \ref{sec6}. We give some remarks to conclude the paper in Section \ref{sec7}. All the proofs are postponed to the Appendices.

\section{Preliminaries}\label{sec2}
We adopt the following conventions throughout the paper:

 1) We use  boldface symbols  (e.g., $\bm{A}$, $\bm{x}$) to denote matrices and vectors, and regular letters (e.g, $a$, $x$) for scalars. We write $[m] = \{1,...,m\}$ for positive integer $m$. We denote the complex unit by $\mathsf{i}$. The $i$-th entry for a vector $\bm{x}$ (likewise, $\bm{y}$, $\bm{\tau}$) is denoted by $x_i$ (likewise, $y_i$, $\tau_i$).

 2) Notation with "$\star$" as superscript denotes the desired underlying parameter or signal, e.g., $\bm{\Sigma}^\star$, $\bm{\theta}^\star$.   Moreover, notation marked by a tilde (e.g., $\bm{\widetilde{x}}$) and a dot (e.g., $\bm{\dot{\bm{x}}}$)    stands for the truncated data and quantized data, respectively.

3) We reserve $d$, $n$ for the problem dimension and sample size, respectively. In many cases $\bm{\widehat{\Upsilon}}$ denotes the estimation error, e.g., $\bm{\widehat{\Upsilon}}=\bm{\widehat{\theta}}-\bm{\theta^\star}$ if $\bm{\widehat{\theta}}$ is the estimator for the desired signal $\bm{\theta^\star}$.  We use $\Sigma_s$ to denote the set of $d$-dimensional $s$-sparse signals.


 4) For vector $\bm{x} \in \mathbb{R}^d$, we work with its transpose $\bm{x}^\top$,  $\ell_p$-norm $\|\bm{x}\|_p = (\sum_{i\in [d]} |x_i|^p)^{1/p}$ ($p\geq 1$), max norm $\|\bm{x}\|_{\infty} = \max_{i\in [d]}|x_i|$. 
We define the standard Euclidean sphere as $\mathbb{S}^{d-1}= \{\bm{x}\in\mathbb{R}^d:\|\bm{x}\|_2=1\}$.

5) For   matrix $\bm{A}= [a_{ij}]\in \mathbb{R}^{m\times n}$ with singular values $\sigma_1\geq \sigma_2\geq ...\geq \sigma_{\min\{m,n\}}$, recall the operator norm $\|\bm{A}\|_{op}= \sup_{\bm{v}\in \mathbb{S}^{n-1}}\|\bm{Av}\|_2=\sigma_1$, Frobenius norm $\|\bm{A}\|_F = (\sum_{i,j}a_{ij}^2)^{1/2}$, nuclear norm $\|\bm{A}\|_{nu} = \sum_{k=1}^{\min\{m,n\}}\sigma_k$, and max norm $\|\bm{A}\|_{\infty}= \max_{i,j}|a_{ij}|$. $\lambda_{\min}(\bm{A})$ (resp. $\lambda_{\max}(\bm{A})$) stands for the minimum eigenvalue (resp. maximum eigenvalue) of a symmetric $\bm{A}$.

6) We  denote universal constants by $C$, $c$, $C_i$ and $c_i$, whose value may vary from line to line.   We write $T_1 \lesssim T_2$ or $T_1 = O(T_2)$ if $T_1\leq CT_2$. Conversely, if $T_1\geq CT_2$ we write $T_1 \gtrsim T_2$ or $T_1 = \Omega(T_2)$. Also, we write $T_1 \asymp T_2$ if   $T_1=O(T_2)$ and $T_2 = \Omega(T_1)$ simultaneously hold.

7) We use $\mathscr{U}(\Omega)$ to denote the uniform distribution over $\Omega\subset \mathbb{R}^N$, $\mathcal{N}(\bm{\mu},\bm{\Sigma})$ to denote Gaussian distribution with mean $\bm{\mu}$ and covariance $\bm{\Sigma}$, $\mathsf{t}(\nu)$ to denote student's t distribution with degrees of freedom $\nu$.

\textcolor{black}{8) Our technique to handle heavy-tailedness is a data truncation step, for which we introduce the operator $\mathscr{T}_\zeta(\cdot)$ for some threshold $\zeta>0$. It is defined as $\mathscr{T}_\zeta(a)=\sign(a)\min\{|a|,\zeta\}$ for some $a\in \mathbb{R}$. To truncate vector we apply $\mathscr{T}_\zeta(\cdot)$ entry-wisely in most cases, with the exception of covariance matrix estimation under operator norm error (Theorem \ref{thm2}).}

9) $\mathcal{Q}_\Delta(.)$ is the uniform quantizer with quantization level  $\Delta>0$. It applies to scalar $a$ by $\mathcal{Q}_\Delta(a)=\Delta\big(\big\lfloor\frac{a}{\Delta}\big\rfloor+\frac{1}{2}\big)$, and we set $\mathcal{Q}_0(a) =a$. Given a threshold $\mu$, the hard thresholding of scalar $a$ is $\mathcal{T}_\mu(a)= a\cdot \mathbbm{1}(|a|\geq \mu)$. Both functions element-wisely apply to vectors or matrices.


\subsection{High-Dimensional Statistics}
 
Let $X$ be a real random variable, we present some basic knowledge on the sub-Gaussian and sub-exponential random variable. Then we also precisely formulate  the heavy-tailed distribution.

1) The sub-Gaussian norm is defined as $\|X\|_{\psi_2} = \inf\{t>0:\mathbbm{E}\exp(\frac{X^2}{t^2})\leq 2\}$. A random variable $X$ with  finite $\|X\|_{\psi_2}$ is said to be sub-Gaussian. Analogously to Gaussian variable, a sub-Gaussian random variable exhibits an exponentially-decaying probability tail and satisfies a  moment constraint: 
\begin{gather}
    \mathbbm{P}(|X|\geq t)\leq 2\exp\left(-\frac{ct^2}{\|X\|_{\psi_2}^2}\right);\label{2.2a}\\
    (\mathbbm{E}|X|^p)^{1/p}\leq C\|X\|_{\psi_2}\sqrt{p},~\forall~ p\geq 1.\label{2.2b}
\end{gather}   
Note that these two properties can also define $\|\cdot\|_{\psi_2}$ up to multiplicative constant, e.g., $\|X\|_{\psi_2}\asymp \sup_{p\geq 1}\frac{(\mathbbm{E}|X|^p)^{1/p}}{\sqrt{p}}$ (see \cite[Prop. 2.5.2]{vershynin2018high}). For a $d$-dimensional random vector $\bm{x}$ we define its   sub-Gaussian norm as  $\|\bm{x}\|_{\psi_2}=\sup_{\bm{v}\in\mathbb{S}^{d-1}}\|\bm{v}^\top\bm{x}\|_{\psi_2}$.

2) The sub-exponential norm is defined as $\|X\|_{\psi_1} = \inf\{t>0:\mathbbm{E}\exp(\frac{|X|}{t})\leq 2\}$, 
and $X$  is sub-exponential if $\|X\|_{\psi_1}<\infty$. The sub-exponential $X$ satisfies the following properties:
\begin{gather}
    \mathbbm{P}(|X|\geq t)\leq 2\exp\left(-\frac{ct}{\|X\|_{\psi_1}}\right);\nonumber\\
    (\mathbbm{E}|X|^p)^{1/p}\leq C\|X\|_{\psi_1}p,~\forall~p\geq 1.\label{semoment}
\end{gather}
 To relate $\|.\|_{\psi_1}$ and $\|.\|_{\psi_2}$ one has $\|XY\|_{\psi_2}\leq \|X\|_{\psi_1}\|Y\|_{\psi_1}$ \cite[Lem. 2.7.7]{vershynin2018high}.


3) In contrast to the moment constraints in (\ref{2.2b}) and (\ref{semoment}),    heavy-tailed distributions in this work are only assumed to satisfy bounded moments of some small order no greater than $4$, formulated for a random variable $X$ as $\mathbbm{E}|X|^l\leq M$ for some $M>0$ and $l\in (0,4]$. Following \cite[Def. 2.4, 2.5]{kuchibhotla2022moving}, we consider the following two moment assumptions for a heavy-tailed random vector $\bm{x}\in \mathbb{R}^d$ (again, $M>0$, $l\in (0,4]$): 
\begin{itemize}
[leftmargin=5ex,topsep=0.15ex]
 \setlength\itemsep{-0.1em}
    \item \textbf{Marginal Moment Constraint.} The weaker assumption that constraints the moment of each coordinate, formulated by $\sup_{i\in [d]}\mathbbm{E}|x_i|^l\leq M$.   
    \item \textbf{Joint Moment Constraint.} The stronger assumption that constraints the moments "toward all directions $\bm{v}\in \mathbb{S}^{d-1}$," formulated by $\sup_{\bm{v}\in \mathbb{S}^{d-1}}\mathbbm{E}|\bm{v}^\top\bm{x}|^l\leq M$.  
\end{itemize}

\subsection{Dithered Uniform Quantization}\label{prequan}
In this part, we describe the dithered uniform quantizer and its properties in detail. We also specify the choices of random dither in this work.

1) We first provide the detailed procedure of dithered quantization and     its general property.   Let $\bm{x}\in \mathbb{R}^N$ be the input signal with dimension $N\geq 1$ \textcolor{black}{whose entries may be random and dependent}. Independent of $\bm{x}$, we generate the random dither $\bm{\tau}\in \mathbb{R}^N$ with i.i.d. entries from some distribution,\footnote{Throughout this work, we suppose that a random dither is drawn independent of anything else (particularly, the signal to be quantized and other dithers), and the dither has i.i.d. entries if it is a vector.} and then quantize $\bm{x}$ to $\bm{\dot{x}}= \mathcal{Q}_\Delta(\bm{x}+\bm{\tau})$. Following \cite{gray1993dithered}, we refer to $\bm{w}:= \bm{\dot{x}}- (\bm{x}+\bm{\tau})$ as the quantization error, and $\bm{\xi}:= \bm{\dot{x}} - \bm{x}$ as the quantization noise. The principal properties of dithered quantization are provided in  Theorem \ref{lem1}. 

\begin{theorem}
\label{lem1}  {\rm 	(Adapted from   \cite[Thms. 1-2]{gray1993dithered})\textbf{.}}
Consider the dithered uniform quantization described above for the input signal $\bm{x}  $, with random dither $\bm{\tau} =[\tau_i]$, quantization error $\bm{w}  $ and quantization noise $\bm{\xi} =[\xi_i]$. Use $\mathsf{i}$ to denote the imaginary unit, and let $Y$ be the random variable having the same distribution as the random dither $\tau_i$.  

\noindent{\rm (a)} {\rm (Quantization Error){\bf \sffamily.}} If $f(u):=\mathbbm{E}(\exp(\mathsf{i} uY))$ satisfies $f\big(\frac{2\pi  l}{\Delta}\big)=0$ for all non-zero integer $l$, then $\bm{w}\sim \mathscr{U}([-\frac{\Delta}{2},\frac{\Delta}{2}]^N)$ is independent of $\bm{x}$.\footnote{Although the statement is a bit different, it can be implied by \cite[Thm. 1]{gray1993dithered} and the proof therein.}

\noindent{\rm(b)} {\rm (Quantization Noise){\bf \sffamily.}} Assume that $Z\sim \mathscr{U}([-\frac{\Delta}{2},\frac{\Delta}{2}])$ is independent of $Y$. Let $g(u):=\mathbbm{E}(\exp(\mathsf{i} uY))\mathbbm{E}(\exp(\mathsf{i} u Z))$. Given positive integer $p$, if the $p$-th order derivative $g^{(p)}(u)$ satisfies  $g^{(p)}\big( \frac{2\pi  l}{\Delta}\big)=0$ for all non-zero integer $l$, then the $p$-th conditional moment of $\xi_i$ does not depend on $\bm{x}$: $\mathbbm{E}[\xi_i^p|\bm{x}] = \mathbbm{E}(Y+Z)^p$. 
\end{theorem} 

\textcolor{black}{
We note that Theorem \ref{lem1} serves as the cornerstone for our analysis on the dithered uniform quantizer; for instance, (a) allows for applications of concentration inequalities in our analyses, and (b) inspires us to develop a covariance matrix estimator from quantized samples. The take-home message is that, adding appropriate dither before quantization can make the quantization error and quantization noise behave in a statistically nice manner. For example, the elementary form of Theorem \ref{lem1}(a) is that under a dither $\tau_i$ satisfying the condition there, the quantization noise $\mathcal{Q}_\Delta(x_i+\tau_i)-(x_i+\tau_i)$ follows $\mathscr{U}([-\frac{\Delta}{2},\frac{\Delta}{2}])$ under any given scalar $x_i$ \cite[Lem. 1]{gray1993dithered}.} 

2) We use {\it uniform dither} for quantization of the response in compressed sensing and matrix completion. More specifically, under  $\Delta>0$, we adopt the uniform dither $\tau_k\sim \mathscr{U}([-\frac{\Delta}{2},\frac{\Delta}{2}])$ for the response $y_k\in \mathbb{R}$, which is also a common choice in previous works (e.g., \cite{thrampoulidis2020generalized,xu2020quantized,dirksen2021non,jung2021quantized}).  For $Y\sim  \mathscr{U}([-\frac{\Delta}{2},\frac{\Delta}{2}])$, it can be calculated that 
\begin{equation}\label{calcu1}
     \mathbbm{E}(\exp(\mathsf{i}uY))= \int_{-\Delta/2}^{\Delta/2}~\frac{1}{\Delta}\big(\cos (ux) + \mathsf{i}\sin (ux)\big)~\mathrm{d}x = \frac{2}{\Delta u}\sin\Big(\frac{\Delta u}{2}\Big),
\end{equation}
and hence $\mathbbm{E}(\exp(\mathsf{i} \frac{2\pi l}{\Delta}  Y))=0$ holds for all non-zero integer $l$.  Therefore, the benefit of using $\tau_k \sim \mathscr{U}([-\frac{\Delta}{2},\frac{\Delta}{2}])$
is that the quantization errors $w_k=\mathcal{Q}_\Delta(y_k+\tau_k)-(y_k+\tau_k)$ i.i.d. follow $\mathscr{U}([-\frac{\Delta}{2},\frac{\Delta}{2}])$, and are independent of $\{y_k\}$.

3) We use {\it triangular dither} for quantization of the covariate, i.e., the sample in covariance estimation or the covariate in comprssed sensing. Particularly, when considering the uniform quantizer $\mathcal{Q}_\Delta(.)$  for   the covariate $\bm{x}_k\in \mathbb{R}^d$, we adopt the dither $\bm{\tau}_k\sim \mathscr{U}([-\frac{\Delta}{2},\frac{\Delta}{2}]^d)+\mathscr{U}([-\frac{\Delta}{2},\frac{\Delta}{2}]^d)$,\footnote{\textcolor{black}{An equivalent statement is that entries of $\bm{\tau}_k$ are i.i.d. distributed as $\mathscr{U}\big(\big[-\frac{\Delta}{2},\frac{\Delta}{2}\big]\big)+\mathscr{U}\big(\big[-\frac{\Delta}{2},\frac{\Delta}{2}\big]\big)$. The equivalence can be clearly seen by comparing the joint probability density functions.}} which is  the sum of two independent  $\mathscr{U}([-\frac{\Delta}{2},\frac{\Delta}{2}]^d)$ and referred to as a triangular dither \cite{gray1993dithered}. Simple calculations verify that the triangular dither respects not only the condition in Theorem \ref{lem1}(a), but also the one in Theorem \ref{lem1}(b) with $p=2$; specifically, let $Y=Y_1+Y_2$ where $Y_1$ and $Y_2$ are independent and follow $\mathscr{U}\big(\big[-\frac{\Delta}{2},\frac{\Delta}{2}\big]\big)$, and let $Z\sim \mathscr{U}\big(\big[-\frac{\Delta}{2},\frac{\Delta}{2}\big]\big)$ be independent of $Y$, then based on (\ref{calcu1}), we know $f(u)=\mathbbm{E}(\exp(\mathsf{i}uY))=\big[\frac{2}{\Delta u}\sin \frac{\Delta u}{2}\big]^2$ satisfies  $f(\frac{2\pi l}{\Delta})=0$, and $g(u)=\mathbbm{E}(\exp(\mathsf{i}uY))\mathbbm{E}(\exp(\mathsf{i}uZ))=\big[\frac{2}{\Delta u}\sin\frac{\Delta u}{2}\big]^3$ satisfies $g''(\frac{2\pi l}{\Delta})=0$, where $l$ is any non-zero integer. Thus, at the cost of a   dithering variance larger than uniform dither, the     triangular dither brings the additional nice property of signal-independent variance for the quantization noise   --- $\mathbbm{E}(\xi_{ki}^2)=\frac{1}{4}\Delta^2$, where $\xi_{ki}$ is the $i$-th entry of $\bm{\xi}_k = \mathcal{Q}_\Delta(\bm{x}_k+\bm{\tau}_k)-(\bm{x}_k+\bm{\tau}_k)$.

To the best of our knowledge, the triangular dither is new to the literature of quantized compressed sensing. We will explain its necessity if covariance estimation is involved. This is also complemented by numerical simulation (see Figure \ref{fig5}(a)).


\section{(Near) Minimax Error Rates} \label{sec3}
In this section we derive (near) optimal error rates for several canonical statistical estimation problems. Our novelty is that by using the proposed quantization scheme for heavy-tailed data, (near) optimal error rates could be achieved by computationally feasible estimators.
\subsection{Quantized Covariance Matrix Estimation}\label{sec:covariance}
Given   $\mathscr{X}:=\{\bm{x}_1,...,\bm{x}_n\}$ as i.i.d. copies of a zero-mean  random vector $\bm{x}\in \mathbb{R}^d$, one often encounters the covariance  matrix  estimation problem, i.e., to estimate $\bm{\Sigma^\star} = \mathbbm{E}(\bm{x} \bm{x}^\top)$. This estimation problem is of fundamental importance in multivariate analysis and  has   attracted much research interest (e.g., \cite{cai2012optimal,cai2010optimal,cai2011adaptive,bickel2008covariance}). However, the   practically useful setting (e.g., in a massive MIMO system \cite{yang2023plug}) where the samples undergo certain quantization process remains under-developed, for which we are only aware of the 1-bit quantization results in \cite{dirksen2021covariance,chen2022high}. This setting poses the problem of quantized covariance matrix estimation (QCME), in which    {\it one aims to design quantization scheme for $\bm{x}_k$ that allows for accurate estimation of $\bm{\Sigma^\star}$ only based on the quantized samples}. 
We consider heavy-tailed $\bm{x}_k$ that possesses bounded fourth moments either marginally or jointly, but note that our estimation methods and theoretical results appear to be new even for sub-Gaussian $\bm{x}_k$ (Remark \ref{rem2}). 

As introduced before, we overcome the heavy-tailedness of $\bm{x}_k$  by a data truncation step, i.e., we first truncate $\bm{x}_k$ to $\bm{\widetilde{x}}_k$ in order to make the outliers less influential. Here, we defer the precise definition of $\bm{\widetilde{x}}_k$ to   concrete results because it should be well suited to the error metric. After the truncation, we dither and quantize $\bm{\widetilde{x}}_k$ to $\bm{\dot{x}}_k = \mathcal{Q}_\Delta(\bm{\widetilde{x}}_k+\bm{\tau}_k)$ with the triangular dither $\bm{\tau}_k\sim \mathscr{U}([-\frac{\Delta}{2},\frac{\Delta}{2}]^d)+\mathscr{U}([-\frac{\Delta}{2},\frac{\Delta}{2}]^d)$. Different from the uniform dither adopted in the literature (e.g., \cite{chen2022high,thrampoulidis2020generalized,xu2020quantized,dirksen2021non,jung2021quantized}),  first let us explain our choice of triangular dither. Recall that the  quantization noise and quantization error are respectively defined as $\bm{\xi}_k:=\bm{\dot{x}}_k - \bm{\widetilde{x}}_k$ and $\bm{w}_k:=\bm{\dot{x}}_k - \bm{\widetilde{x}}_k - \bm{\tau}_k$, 
  thus giving $\bm{\xi}_k = \bm{\tau}_k+\bm{w}_k$. Under   uniform dither or triangular dither,  $\bm{w}_k$ is independent of $\bm{\widetilde{x}}_k$ and follows $\mathscr{U}([-\frac{\Delta}{2},\frac{\Delta}{2}]^d)$ (see Section 2.2), thus allowing us to calculate that
\begin{equation}
    \begin{aligned}
    \label{3.1}
    \mathbbm{E}(\bm{\dot{x}}_k\bm{\dot{x}}_k^\top)&= \mathbbm{E}\big((\bm{\widetilde{x}}_k + \bm{\xi}_k)(\bm{\widetilde{x}}_k + \bm{\xi}_k)^\top\big)\\& = \mathbbm{E}(\bm{\widetilde{x}}_k\bm{\widetilde{x}}_k^\top)+ \mathbbm{E}(\bm{\widetilde{x}}_k\bm{\xi}_k^\top) + \mathbbm{E}(\bm{\xi}_k\bm{\widetilde{x}}_k^\top) + \mathbbm{E}(\bm{\xi}_k\bm{\xi}_k^\top)\\&\stackrel{(i)}{=}\mathbbm{E}(\bm{\widetilde{x}}_k\bm{\widetilde{x}}_k^\top)+ \mathbbm{E}(\bm{\xi}_k\bm{\xi}_k^\top).
    \end{aligned}
\end{equation}
Note that $(i)$ is because $\mathbbm{E}(\bm{\xi}_k\bm{\widetilde{x}}_k^\top)=\mathbbm{E}(\bm{\tau}_k\bm{\widetilde{x}}_k^\top)+ \mathbbm{E}(\bm{w}_k\bm{\widetilde{x}}_k^\top)=\mathbbm{E}(\bm{\tau}_k)\mathbbm{E}(\bm{\widetilde{x}}_k^\top)+\mathbbm{E}(\bm{w}_k)\mathbbm{E}(\bm{\widetilde{x}}_k^\top)=0$, due to the previously noted fact that $\bm{\tau}_k$ and $\bm{w}_k$ are independent of $\bm{\widetilde{x}}_k$ and zero-mean. While with suitable choice of the truncation threshold $\mathbbm{E}(\bm{\widetilde{x}}_k\bm{\widetilde{x}}_k^\top)$ is expected to well approximate $\bm{\Sigma^\star}$, the remaining $\mathbb{E}(\bm{\xi}_k\bm{\xi}_k^\top)$  gives rise to constant bias. To address the issue, a straightforward idea is to remove the bias, which requires the full knowledge on $\mathbbm{E}(\bm{\xi}_k\bm{\xi}_k^\top)$, i.e., the covariance matrix of the quantization noise.  For $i\neq j$, because  $\bm{\tau}_k$, $\bm{w}_k\sim \mathscr{U}([-\frac{\Delta}{2},\frac{\Delta}{2}]^d)$ and $\mathbbm{E}(w_{ki}\tau_{kj})=\mathbbm{E}_{\widetilde{x}_{ki}}(\mathbbm{E}[w_{ki}\tau_{kj}|\widetilde{x}_{ki}])=0$ (note that conditionally on $\widetilde{x}_{ki}$, $w_{ki}=\mathcal{Q}_\Delta(\widetilde{x}_{ki}+\tau_{ki})-(\widetilde{x}_{ki}+\tau_{ki})$ and $\tau_{kj}$ are independent),  we have
\begin{equation}
    \begin{aligned}\nonumber
        &\mathbbm{E}(\xi_{ki}\xi_{kj})=\mathbbm{E}\big((w_{ki}+\tau_{ki})(w_{kj}+\tau_{kj})\big)\\
        &=\mathbbm{E}(w_{ki}w_{kj})+\mathbbm{E}(w_{ki}\tau_{kj})+\mathbbm{E}(\tau_{ki}w_{kj})+\mathbbm{E}(\tau_{ki}\tau_{kj})=0,
    \end{aligned}
\end{equation}
showing that $\mathbbm{E}(\bm{\xi}_k\bm{\xi}_k^\top)$ is diagonal. 
Moreover,   under triangular dither the $i$-th diagonal entry is also known as $\mathbbm{E}|\xi_{ki}|^2=\frac{\Delta^2}{4}$, see Section \ref{prequan}. Taken collectively, we arrive at \begin{equation}
    \mathbbm{E}(\bm{\xi}_k\bm{\xi}_k^\top)=\frac{\Delta^2}{4}\bm{I}_d;\label{Delta24}
\end{equation} Based on  (\ref{3.1}) we thus propose the  following estimator
\begin{equation}
\label{3.2}
    \bm{\widehat{\Sigma}} = \frac{1}{n} \sum_{k=1}^n \bm{\dot{x}}_k \bm{\dot{x}}_k^\top - \frac{\Delta^2}{4}\bm{I}_d,
\end{equation}
which is the sample covariance of the quantized sample $\dot{\mathscr{X}}:=\{\bm{\dot{x}}_1,...,\bm{\dot{x}}_n\}$ followed by a correction step. On the other hand, the reason why the standard uniform dither is not suitable for QCME  becomes self-evident --- the diagonal of $\mathbbm{E}(\bm{\xi}_k\bm{\xi}_k^\top)$ remains unknown\footnote{It depends on the input signal, see \cite[Page 3]{gray1993dithered}.} and hence there is no hope to precisely remove the bias.

We are now ready to present error bounds for $\bm{\widehat{\Sigma}}$ under max-norm, operator norm. We will also investigate the high-dimensional setting by assuming sparse structure of $\bm{\Sigma^\star}$, for which we propose a thresholding estimator. More concretely, our first result provides the error rate under $\|\cdot\|_\infty$, in which we assume $\bm{x}_k$ satisfies the marginal fourth moment constraint and utilize an element-wise truncation $\bm{\widetilde{x}}_k=\mathscr{T}_{\zeta}(\bm{x}_k)$.



\begin{theorem}\label{thm1}
\noindent{\rm(Element-Wise Error){\bf \sffamily.}} Given $\Delta>0$ and $\delta > 4$, we consider the problem of QCME described above. We suppose that $\bm{x}_k$s are i.i.d. zero-mean and satisfy the marginal moment constraint $\mathbbm{E}|x_{ki}|^4\leq M$ for any $i\in [d]$, where $x_{ki}$ is the $i$-th entry of $\bm{x}_k$. We truncate $\bm{x}_k$ to $\bm{\widetilde{x}}_k=[\widetilde{x}_{ki}]=\mathscr{T}_{\zeta}(\bm{x}_k)$ with threshold $\zeta \asymp \big(\frac{nM}{\delta \log d}\big)^{1/4}$, then quantize $\bm{\widetilde{x}}_k$ to $\bm{\dot{x}}_k  = \mathcal{Q}_\Delta(\bm{\widetilde{x}}_k+ \bm{\tau}_k)$ with triangular dither $\bm{\tau}_k\sim \mathscr{U}([-\frac{\Delta}{2},\frac{\Delta}{2}]^d)+\mathscr{U}([-\frac{\Delta}{2},\frac{\Delta}{2}]^d)$. If $n\gtrsim \delta \log d$, then the estimator in (\ref{3.2}) satisfies
\begin{equation}\nonumber
    \mathbbm{P}\left(\|\bm{\widehat{\Sigma}}-\bm{\Sigma^\star}\|_{\infty}\geq C\mathscr{L}\sqrt{\frac{\delta \log d}{n}}\right)\leq  2d^{2-\delta},
\end{equation}
where $\mathscr{L}:=\sqrt{M}+\Delta^2$. 
\end{theorem}

\vspace{2mm}

Notably, despite the heavy-tailedness and quantization, the estimator achieves an element-wise rate $O(\sqrt{\frac{\log d}{n}})$ coincident with the one for sub-Gaussian case. One can clearly position quantization level $\Delta$ in the   multiplicative factor $\mathscr{L}=\sqrt{M}+\Delta^2$. Thus, the information loss incurred by quantization is inessential in that it does not affect the key scaling law but only slightly worsens the leading factor. These remarks on the (near) optimality and the information loss incurred by quantization remain  valid in our subsequent theorems. 

Our next result concerns the operator norm estimation error, under which we impose a stronger joint moment constraint on $\bm{x}_k$ and truncate $\bm{x}_k$ regarding $\ell_4$-norm, i.e., $\bm{\check{x}_k}=\frac{\bm{x}_k}{\|\bm{x}_k\|_4}\min\{\|\bm{x}_k\|_4,\zeta\}$ for some threshold $\zeta$. After the dithered uniform quantization, we still define the estimator as (\ref{3.2}).  
\begin{theorem}\label{thm2}
\noindent{\rm(Operator Norm Error){\bf \sffamily.}} Given $\Delta>0$ and $\delta>0$, we consider the problem of QCME described above.   Suppose that the i.i.d. zero-mean $\bm{x}_k$s satisfy $ \mathbbm{E}|\bm{v}^\top \bm{x}_k|^4 \leq M$  for any $\bm{v}\in \mathbb{S}^{d-1}$. We truncate $\bm{x}_k$ to  $\bm{\check{x}}_k=\frac{\bm{x}_k}{\|\bm{x}_k\|_4}\min \{\|\bm{x}_k\|_4,\zeta\}$ with threshold $\zeta \asymp (M^{1/4}+\Delta)\big(\frac{n}{\delta \log d}\big)^{1/4}$, then quantize $\bm{\check{x}}_k$ to $\bm{\dot{x}}_k=\mathcal{Q}_\Delta (\bm{\check{x}}_k+\bm{\tau}_k)$ with triangular dither $\bm{\tau}_k\sim \mathscr{U}([-\frac{\Delta}{2},\frac{\Delta}{2}]^d)+\mathscr{U}([-\frac{\Delta}{2},\frac{\Delta}{2}]^d)$.     If $n\gtrsim \delta d \log d$, then the estimator in (\ref{3.2}) satisfies \begin{equation}\nonumber
    \mathbbm{P}\left(\|\bm{\widehat{\Sigma}}-\bm{\Sigma^\star}\|_{op}\geq C\mathscr{L}\sqrt{\frac{\delta d\log d}{n}}\right)\leq 2d^{-\delta},
\end{equation} 
with $\mathscr{L}:=\sqrt{M}+\Delta^2$.
\end{theorem}

The operator norm error rate in Theorem \ref{thm2} is near minimax optimal, e.g., compared to the lower bound in \cite[Thm. 7]{fan2021shrinkage}, which states that for any estimator $\bm{\widehat{\Sigma}}$ of the positive semi-definite matrix $\bm{\Sigma^\star}$ based on   i.i.d. zero-mean $\{\bm{x}_k\}_{k=1}^n$ with covariance matrix $\bm{\Sigma^\star}$, there exists some $\bm{v}_0\in \mathbb{S}^{d-1}$ such that $\mathbbm{P}\big(\|\bm{\widehat{\Sigma}}-\bm{\Sigma^\star}\|_{op}\geq \frac{1}{48}\sqrt{\frac{6d}{n}}\big)\geq \frac{1}{3}$, where $\bm{\Sigma^\star}=\bm{I}_d+\bm{v}_0\bm{v}_0^\top$. Again, the quantization only affects the multiplicative factor $\mathscr{L}$. Nevertheless, one still needs (at least) $n \gtrsim d$ to achieve small operator norm error. In fact, in a high-dimensional setting where $d$ may exceed $n$, even the sample covariance $\frac{1}{n}\sum_{k=1}^n\bm{x}_k\bm{x}_k^\top$ for sub-Gaussian zero-mean $\bm{x}_k$ may have extremely bad performance. To achieve small operator norm error in a high-dimensional regime, we resort to additional structure on $\bm{\Sigma^\star}$, and specifically we use column-wise sparsity as an example, which corresponds to the situations where dependencies among different coordinates are  weak. Based on the estimator in Theorem \ref{thm1}, we further invoke a  thresholding regularization \cite{bickel2008covariance,cai2012optimal} to promote sparsity.  


\begin{theorem}\label{thm3}
\noindent{\rm(Sparse QCME)\textbf{.}} Under conditions and estimator $\bm{\widehat{\Sigma}}$ in Theorem \ref{thm1}, we additionally assume that all columns of $\bm{\Sigma}^\star = [\sigma^\star_{ij}]$ are $s$-sparse and consider the thresholding estimator $\bm{\widehat{\Sigma}}_s := \mathcal{T}_{\mu}(\bm{\widehat{\Sigma}})$ for some $\mu$ (recall that $\mathcal{T}_\mu(a)=a\cdot \mathbbm{1}(|a|\geq \mu)$ for $a\in \mathbb{R}$). If  $\mu=C_1 (\sqrt{M}+\Delta^2)\sqrt{\frac{\delta \log d}{n}}$ with   sufficiently large $C_1$, then  $\bm{\widehat{\Sigma}}_s$ satisfies \begin{equation}\nonumber
    \mathbbm{P}\left(\|\bm{\widehat{\Sigma}}_s - \bm{\Sigma^\star}\|_{op} \leq C\mathscr{L}s\sqrt{\frac{ \delta \log d}{n}}\right) \geq 1-\exp(-0.25\delta),
\end{equation}
where $\mathscr{L}:=\sqrt{M}+\Delta^2$.
\end{theorem}
Notably,  our estimator $\bm{\widehat{\Sigma}}_s$ achieves minimax rates $O\big(s\sqrt{\frac{\log d}{n}}\big)$ under operator norm, e.g., compared to the minimax lower bound derived in \cite[Thm. 2]{cai2012optimal}, which states that (under some regular scaling) for any covariance estimator $\bm{\Sigma}_{es}$  based on $n$ i.i.d.  samples of $\mathcal{N}(\bm{\mu},\bm{\Sigma}^\star)$ where $\bm{\Sigma}^\star$ is the  true covariance matrix, there exists some covariance matrix $\bm{\Sigma^\star}$ with  $s$-sparse columns such that $\mathbbm{E}\|\bm{\Sigma}_{es}-\bm{\Sigma}^\star\|_{op}^2\gtrsim s^2\frac{\log d}{n}$.

 To analyse the thresholding estimator, our proof resembles the ones developed in prior works
 (e.g., \cite{cai2012optimal}) but requires more efforts like bounding the additional   bias terms arising from the data truncation and quantization. We also point out that the results for the full-data unquantized regime immediately follow by setting $\Delta=0$, thus Theorems \ref{thm1}-\ref{thm2}   represent the strict extension of   \cite[Sect. 4]{fan2021shrinkage}, and Theorem \ref{thm3} complements \cite{fan2021shrinkage} with a high-dimensional sparse setting.  


\begin{rem}\label{rem2}
\noindent{\rm(Sub-Gaussian Case)\textbf{.}} 
While we concentrate on quantization of heavy-tailed data in this work, our results can be readily adjusted to sub-Gaussian $\bm{x}_k$, for which the truncation step is inessential and can be removed (i.e., $\zeta=\infty$). These results are also new to the literature but will not be presented here. 
\end{rem}

\subsection{Quantized Compressed Sensing}\label{sec4}
We consider the linear   model   \begin{equation}\label{csmodel}
    y_k = \bm{x}_k^\top \bm{\theta^\star}+\epsilon_k,~k=1,...,n,
\end{equation} 
where  $\bm{x}_k$s are the covariates, $y_k$s are responses, $\bm{\theta^\star}$ is the sparse signal in compressed sensing or sparse parameter vector in high-dimensional linear regression that we want to estimate.  
 In the quantized compressed sensing (QCS) problem, we are interested in {\it developing quantization scheme for $(\bm{x}_k,y_k)$s $($mainly for $y_k$ in prior works$)$  that enables accurate recovery of $\bm{\theta^\star}$ based on the quantized data}.

\textcolor{black}{In spite of the same mathematical formulation, there are some important differences between compressed sensing and sparse linear regression that we should clarify first. Specifically, different from   sensing vectors in compressed sensing that are generated by some analog measuring device and can oftentimes be designed,   $\bm{x}_k$s in sparse linear regression represent the sample data from certain datasets that are believed to affect the responses $y_k$s through (\ref{csmodel}). While the sparsity of $\bm{\theta^\star}$ is   arguably the most classical signal structure for compressed sensing, due to good interpretability it is also commonly adopted to achieve dimension reduction in high-dimensional statistics. In this work, we are interested in both problem settings. Thus, we do not adopt the  isotropic convention (i.e., $\mathbbm{E}(\bm{x}_k\bm{x}_k^\top)=\bm{I}_d$) from compressed sensing but instead deal with $\bm{x}_k$ having general unknown covariance matrix. While the study of   quantization and heavy-tailed noise is meaningful in both settings, we   note that some of our subsequent results are  mainly of interest to the specific sensing or regression problem. For instance, the  heavy-tailed covariate considered in Theorem  \ref{thm5} is primarily motivated by the regression setting, in which $\bm{x}_k$ may come from a dataset that exhibits much heavier tail than sub-Gaussian data. Moreover, as will be elaborated in Section  \ref{sec:qc-qcs} when appropriate, our subsequent results on covariate quantization (resp., uniform signal recovery guarantee) may prove more useful to the regression problem (resp., compressed sensing problem).}

To fix idea, we assume that $\bm{x}_k$s are i.i.d. drawn from some multi-variate distribution,  $\epsilon_k$s are i.i.d. statistical noise independent of the $\bm{x}_k$s, and we truncate $y_k$ to $\widetilde{y}_k=\mathscr{T}_{\zeta_y}(y_k)$ and then quantize it to $\dot{y}_k=\mathcal{Q}_\Delta(\widetilde{y}_k+\tau_k)$ with uniform dither $\tau_k\sim \mathscr{U}([-\frac{\Delta}{2},\frac{\Delta}{2}])$.  
Under these statistical assumptions and dithered quantization,     near optimal recovery guarantees have been established in \cite{thrampoulidis2020generalized,xu2020quantized} for the  regime where both $\bm{x}_k$ and $\epsilon_k$ are drawn from sub-Gaussian distributions (hence the truncation is not needed). In contrast, our focus is on quantization of heavy-tailed data. Particularly, we always assume that the noise $\epsilon_k$s are i.i.d. drawn from some heavy-tailed distribution, resulting in heavy-tailed responses. We will separately deal with the case of sub-Gaussian covariate and a more challenging situation where $\bm{x}_k$s are also heavy-tailed.


To estimate the sparse $\bm{\theta^\star}$,  a classical approach is via the regularized M-estimator known as Lasso \cite{tibshirani1996regression,negahban2011estimation,negahban2012unified}
\begin{equation}
    \begin{aligned}\nonumber
    \mathop{\arg\min}\limits_{\bm{\theta}}~\frac{1}{2n}\sum_{k=1}^n(y_k-\bm{x}_k^\top\bm{\theta})^2 + \lambda \|\bm{\theta}\|_1,
    \end{aligned}
\end{equation}
whose objective combines the $\ell_2$-loss for data fidelity and $\ell_1$-norm that encourages sparsity. Because  we can only access the quantized data $(\bm{x}_k,\dot{y}_k)$ (or even $(\bm{\dot{x}}_k,\dot{y}_k)$ if covariate quantization is involved, see \textcolor{black}{Section \ref{sec:qc-qcs}}), the main issue lies in the $\ell_2$-loss $\frac{1}{2n}\sum_{k=1}^n(y_k-\bm{x}_k^\top\bm{\theta})^2$ that requires the   unquantized data $(\bm{x}_k,y_k)$. To resolve the issue, we calculate the expected $\ell_2$-loss: 
\begin{equation}
    \begin{aligned}
        \label{expectloss}
        \mathbbm{E}(y_k-\bm{x}_k^\top\bm{\theta})^2&\stackrel{(i)}{=}\bm{\theta}^\top \mathbbm{E}(\bm{x}_k\bm{x}_k^\top)\bm{\theta}-2 \mathbbm{E}(y_k\bm{x}_k)^\top\bm{\theta}:\\&\stackrel{(ii)}{=}\bm{\theta}^\top\bm{\Sigma^\star}\bm{\theta}-2\bm{\Sigma}_{y\bm{x}}^\top\bm{\theta},
    \end{aligned}
\end{equation}
where $(i)$ holds up to an inessential constant $\mathbbm{E}|y_k|^2$, and in $(ii)$ we let $\bm{\Sigma^\star}:= \mathbbm{E}(\bm{x}_k\bm{x}_k^\top)$, $\bm{\Sigma}_{y\bm{x}}=\mathbbm{E}(y_k\bm{x}_k)$. This inspires us to generalize the $\ell_2$ loss to $\frac{1}{2}\bm{\theta}^\top\bm{Q}\bm{\theta}-\bm{b}^\top\bm{\theta}$ and consider the following program 
\begin{equation}\label{4.3}
   \bm{\widehat{\theta}} =  \mathop{\arg\min}\limits_{\bm{\theta}\in \mathcal{S}}~\frac{1}{2}\bm{\theta}^\top \bm{Q}\bm{\theta} - \bm{b}^\top \bm{\theta} +\lambda\|\bm{\theta}\|_1.
\end{equation}
  Compared to (\ref{expectloss}) we will  use $(\bm{Q},\bm{b})$ that well approximates $(\bm{\Sigma^\star},\bm{\Sigma}_{y\bm{x}})$, and we also introduce the constraint $\bm{\theta}\in \mathcal{S}$ to allow more flexibility. It is important to note that this is the general strategy in this work to design estimators in different QCS settings, see more discussions in Remark \ref{rem:modify}.

The next theorem is concerned with QCS under  sub-Gaussian covariate but heavy-tailed response. Note that the heavy-tailedness of $y_k$ stems from the noise distribution assumed to have bounded $2+\nu$ moment ($\nu=2(l-1)>0$ in the theorem statement), but following \cite{fan2021shrinkage,chen2022high,zhu2021taming} we directly  impose the moment constraint on the response. 


 


\begin{theorem}\label{thm4}
\noindent{\rm(Sub-Gaussian Covariate, Heavy-Tailed Response)\textbf{.}} Given some $\delta>0,\Delta>0$, in  (\ref{csmodel}) we suppose that   $\bm{x}_k$s are i.i.d., zero-mean   sub-Gaussian with $\|\bm{x}_k\|_{\psi_2}\leq \sigma$, $\kappa_0\leq \lambda_{\min}(\bm{\Sigma^\star})\leq \lambda_{\max}(\bm{\Sigma^\star})\leq \kappa_1$ for some $\kappa_1>\kappa_0>0$ where $\bm{\Sigma^\star}=\mathbbm{E}(\bm{x}_k\bm{x}_k^\top)$, $\bm{\theta^\star}\in\mathbb{R}^d$ is   $s$-sparse, the noise $\epsilon_k$s are i.i.d. heavy-tailed and independent of $\bm{x}_k$s, and we assume \textcolor{black}{$\mathbbm{E}|y_k|^{2l}\leq M$ for some fixed $l>1$}. In the quantization, we truncate $y_k$ to $\widetilde{y}_k= \mathscr{T}_{\zeta_y}(y_k)$ with threshold $\zeta_y\asymp \big(\frac{nM^{1/l}}{\delta \log d}\big)^{1/2}$, then quantize $\widetilde{y}_k$ to $\dot{y}_k = \mathcal{Q}_\Delta(\widetilde{y}_k+\tau_k)$ with uniform dither $\tau_k\sim \mathscr{U}([-\frac{\Delta}{2},\frac{\Delta}{2}])$. For recovery, we define the estimator $\bm{\widehat{\theta}}$  as (\ref{4.3}) with $\bm{Q} = \frac{1}{n}\sum_{k=1}^n\bm{x}_k\bm{x}_k^\top$, $\bm{b} = \frac{1}{n}\sum_{k=1}^n\dot{y}_k\bm{x}_k$, $\mathcal{S} = \mathbb{R}^d$.  We   set  $\lambda = C_1 \frac{\sigma^2}{\sqrt{\kappa_0}}(\Delta +M^{1/(2l)})\sqrt{\frac{\delta\log d}{n}}$ with sufficiently large $C_1$.  If $n \gtrsim \delta s\log d$ for some hidden constant only depending on $(\kappa_0,\sigma)$, then with probability at least $1-9d^{1-\delta}$, the estimation error $\bm{\widehat{\Upsilon}} = \bm{\widehat{\theta}} - \bm{\theta^\star}$ satisfies\begin{equation}\nonumber
    \|\bm{\widehat{\Upsilon}}\|_2\leq C_3\mathscr{L} \sqrt{\frac{\delta s\log d}{n}}~~~\mathrm{and}~~~\|\bm{\widehat{\Upsilon}}\|_1 \leq C_4\mathscr{L}s\sqrt{\frac{\delta\log d}{n}}~
\end{equation}
where $\mathscr{L} := \frac{\sigma^2(\Delta+M^{1/(2l)})}{\kappa_0^{3/2}}$.
\end{theorem}

 The rate $O\big(\sqrt{\frac{s\log d}{n}}\big)$ for $\ell_2$-norm error is   minimax optimal up to logarithmic factor (e.g., compared to \cite{raskutti2011minimax}). Note that a  random noise 
 bounded by $\Delta$   roughly contributes $\Delta$ to $(\mathbbm{E}|y_k|^{2l})^{1/(2l)}$, and the latter is bounded by $M^{1/(2l)}$; because in the error bound $\Delta$ and $M^{1/(2l)}$ almost play the same role, the effect of uniform quantization  can be  readily interpreted as an additional bounded noise, analogously to the error rate in \cite{sun2022quantized}. 
 
Next, we switch to the more challenging situation where both 
 $\bm{x}_k$ and $y_k$ are heavy-tailed,  assuming that they both possess bounded fourth moments (a marginal moment constraint for $\bm{x}_k$). \textcolor{black}{The consideration of this setting is motivated by the setting of sparse linear regression, where the covariates $\bm{x}_k$s may oftentimes exhibit heavy-tailed behaviour.} Specifically, we element-wisely truncate $\bm{x}_k$ to $\bm{\widetilde{x}}_k$ and   set $\bm{Q}:= \frac{1}{n}\sum_{k=1}^n\bm{\widetilde{x}}_k\bm{\widetilde{x}}_k^\top$ as a robust covariance matrix estimator, whose estimation performance under $\|\cdot\|_{\infty}$ follows immediately from  Theorem \ref{thm1} by setting $\Delta=0$. 
\begin{theorem}
\label{thm5}\noindent{\rm(Heavy-Tailed Covariate, Heavy-Tailed Response)\textbf{.}} Given some $\delta>0$, $\Delta>0$, in (\ref{csmodel}) we suppose that $\bm{x}_k$s are i.i.d. zero-mean satisfying a marginal fourth moment constraint $\sup_{i\in [d]}\mathbbm{E}|x_{ki}|^4\leq M$,  $\kappa_0\leq \lambda_{\min}(\bm{\Sigma^\star})\leq \lambda_{\max}(\bm{\Sigma^\star})\leq \kappa_1$ for some $\kappa_1>\kappa_0>0$  where $\bm{\Sigma^\star}=\mathbbm{E}(\bm{x}_k\bm{x}_k^\top)$, $\bm{\theta^\star}\in\Sigma_s$   satisfies $\|\bm{\theta^\star}\|_1\leq R$, the noise $\epsilon_k$s are i.i.d. heavy-tailed and independent of $\bm{x}_k$s, and we assume $\mathbbm{E}|y_k|^4\leq M$.   In the quantization,   we truncate $\bm{x}_k,y_k$ respectively to $\bm{\widetilde{x}}_k = [\widetilde{x}_{ki}]=\mathscr{T}_{\zeta_x}(\bm{x}_k),~\widetilde{y}_k:= \mathscr{T}_{\zeta_y}(y_k)$ with $\zeta_x, \zeta_y\asymp\big(\frac{nM}{\delta \log d}\big)^{1/4}$, then we quantize $\widetilde{y}_k$ to $\dot{y}_k = \mathcal{Q}_\Delta(\widetilde{y}_k+ \tau_k)$ with uniform dither $\tau_k\sim\mathscr{U}([-\frac{\Delta}{2},\frac{\Delta}{2}])$. For recovery, we define the estimator $\bm{\widehat{\theta}}$ as (\ref{4.3}) with $\bm{Q}=\frac{1}{n}\sum_{k=1}^n\bm{\widetilde{x}}_k\bm{\widetilde{x}}_k^\top$, $\bm{b}=\frac{1}{n}\sum_{k=1}^n\dot{y}_k\bm{\widetilde{x}}_k$, $\mathcal{S}=\mathbb{R}^d$. We set   $\lambda = C_1(R\sqrt{M}+\Delta^2)\sqrt{\frac{\delta \log d}{n}}$ with sufficiently large $C_1$. If $n\gtrsim \delta s^2\log d$ for some hidden constant only depending on $(\kappa_0,M)$, then with probability at least $1-4d^{2-\delta}$, the estimation error $\bm{\widehat{\Upsilon}}:= \bm{\widehat{\theta}} - \bm{\theta^\star}$  satisfies \begin{equation}\nonumber
      \|\bm{\widehat{\Upsilon}}\|_2\leq C_2\mathscr{L}  \sqrt{\frac{\delta s\log d}{n}}~~~\mathrm{and}~~~\|\bm{\widehat{\Upsilon}}\|_1 \leq C_3\mathscr{L}s\sqrt{\frac{\delta \log d}{n}}~
\end{equation} 
where $\mathscr{L}:=\frac{R\sqrt{M}+\Delta^2}{\kappa_0}$.
\end{theorem}

\vspace{1mm}

Theorem \ref{thm5} generalizes   \cite[Thm. 2(b)]{fan2021shrinkage} to the uniform  quantization setting. Clearly,  the obtained rate   remains  near minimax optimal if $R$ is of minor scaling (e.g., bounded or logarithmic factors). Nevertheless,   such near optimality in Theorem \ref{thm5} comes at the cost of more restricted conditions and stronger scaling, as remarked in the following.

\begin{rem}\label{htcova} {\rm  (Comparing Theorems \ref{thm4}-\ref{thm5})\textbf{.}} 
Compared with $n\gtrsim s\log d$ in Theorem \ref{thm4}, the first downside of Theorem \ref{thm5} is the sub-optimal sample complexity $n\gtrsim s^2\log d$, and note that $n\gtrsim s^2\log d$ is also required in \cite[Thm. 2(b)]{fan2021shrinkage}. But indeed, it can be improved to  $n\gtrsim s\log d$   by explicitly adding the constraint $\|\bm{\theta}\|_1\leq R$ to the recovery program, as will be noted as an interesting side finding in Remark \ref{impro}. Secondly, following \cite{fan2021shrinkage} we impose an $\ell_1$-norm constraint $\|\bm{\theta^\star}\|_1\leq R$  that is stronger than $\|\bm{\theta^\star}\|_2\lesssim\frac{M^{1/(2l)}}{\sigma}$ used in the proof of Theorem \ref{thm4}. In fact, when replacing the $\ell_1$ constraint in Theorem \ref{thm5} with an $\ell_2$-norm bound $\|\bm{\theta^\star}\|_2\leq R$, then  our proof technique leads to an error rate   $\|\bm{\widehat{\Upsilon}}\|_2=O\big(\sqrt{\frac{s^2\log d}{n}}\big)$ that exhibits worse dependence on $s$. 
\end{rem}
\begin{rem}\label{rem:modify}
    {\rm  (Modification of $\ell_2$-loss)\textbf{.}} Recall that we generalize the regular $\ell_2$-loss $\frac{1}{2n}\sum_{k=1}^n(y_k-\bm{x}_k^\top\bm{\theta})^2$  to $\frac{1}{2}\bm{\theta}^\top \bm{Q\theta}-\bm{b}^\top\bm{\theta}$ as loss function in (\ref{4.3}). Note that the choice of $(\bm{Q},\bm{b})$ in Theorem \ref{thm4} is tantamount to using the loss function $\frac{1}{2n}\sum_{k=1}^n(\dot{y}_k-\bm{x}_k^\top\bm{\theta})^2$ that replaces $y_k$ with the quantized response $\dot{y}_k$; this idea is analogous to the generalized Lasso investigated for  single index model \cite{plan2016generalized} and dithered quantized model \cite{thrampoulidis2020generalized}, and will be used again in  quantized matrix completion, see (\ref{5.2}) below. However, our generalized $\ell_2$-loss provides more flexibility to deal with heavy-tailedness or quantization of $\bm{x}_k$, e.g., $(\bm{Q},\bm{b})$ in Theorem \ref{thm5} amounts to adopting $\frac{1}{2n}\sum_{k=1}^n({\dot{y}_k}-\bm{\widetilde{x}}_k^\top\bm{\theta})^2$ as loss function, and under quantized covariate  more delicate modifications are required in Theorems \ref{thm6}-\ref{ht1bitqccs}, which is beyond the range of prior works on generalized Lasso. 
\end{rem}

\subsection{Quantized Matrix Completion}\label{sec5}
Completing a low-rank matrix from only a partial observation of its entries is known as the matrix completion problem, which has found many applications including recommendation system, image inpainting, quantum state tomography   \cite{chen2022color,davenport2016overview,bennett2007netflix,nguyen2019low,gross2010quantum}, to name just a few. Mathematically,  let $\bm{\Theta^\star}\in \mathbb{R}^{d\times d}$ be the underlying matrix satisfying $\rank(\bm{\Theta^\star})\leq r$, the matrix completion problem can be formulated as
\begin{equation}\label{mcmodel}
    y_k = \big<\bm{X}_k, \bm{\Theta^\star}\big> + \epsilon_k, ~k=1,2,...,n,
\end{equation}
where  $\bm{X}_k$s are distributed on $\mathcal{X}:=\{\bm{e}_i\bm{e}_j^\top: i,j\in [d]\}$ ($\bm{e}_i$ is the $i$-th column of $\bm{I}_d$), $\epsilon_k$ is observation noise. Note that for $\bm{X}_k=\bm{e}_{i(k)}\bm{e}_{j(k)}^\top$ one has $\big<\bm{X}_k,\bm{\Theta^\star}\big> = \theta^\star_{i(k),j(k)}$, so each observation is a noisy entry. Our main interest is on quantized matrix completion (QMC), where our goal is to {\it design quantizer for the observation $y_k$ that allows for accurate  estimation of $\bm{\Theta^\star}$ from the quantized observations}.

Unlike in compressed sensing, additional condition (besides the low-rankness) on $\bm{\Theta^\star}$ is needed to ensure the well-posedness of the matrix completion problem. More specifically, certain incoherence conditions are required if we pursue exact recovery (e.g., \cite{candes2012exact,recht2011simpler}), whereas a faithful estimation   can be achieved as long as the underlying matrix is not overly spiky and sufficiently diffuse (e.g., \cite{klopp2014noisy,negahban2012restricted}). The latter condition is also known as "low spikiness" and is formulated by $\frac{d\|\bm{\Theta^\star}\|_\infty}{\|\bm{\Theta^\star}\|_F}\leq \alpha$  \cite{fan2021shrinkage,negahban2012restricted}, which has been noted to be necessary for the well-posedness of matrix completion problem \cite{davenport2016overview,negahban2012restricted}. In subsequent works    the low-spikiness condition is often formulated   as the simpler max-norm constraint $\|\bm{\Theta^\star}\|_\infty\leq \alpha$ \cite{klopp2014noisy,chen2022color,klopp2017robust,davenport20141,foucart2020weighted}.

 In this work, we consider the uniform sampling scheme $\bm{X}_k\sim \mathscr{U}(\mathcal{X})$, but with a little bit more work it generalizes to more general sampling scheme  \cite{klopp2014noisy}.     We apply the proposed quantization scheme to possibly heavy-tailed $y_k$ --- we truncate $y_k$ to $\widetilde{y}_k=\mathscr{T}_{\zeta_y}(y_k)$ with some threshold $\zeta_y$, and then quantize $\widetilde{y}_k$ to $\dot{y}_k=\mathcal{Q}_\Delta(\widetilde{y}_k+\tau_k)$ with uniform dither $\tau_k\sim \mathscr{U}([-\frac{\Delta}{2},\frac{\Delta}{2}])$. Because we do not pursue exact recovery (which is impossible under quantization), we do not assume any incoherence condition like \cite{recht2011simpler}. Instead, we only hope to accurately estimate $\bm{\Theta^\star}$, and following \cite{klopp2014noisy,chen2022color,klopp2017robust,davenport20141,foucart2020weighted} we impose a max-norm constraint  $$\|\bm{\Theta^\star}\|_{\infty}\leq \alpha.$$  Overall,  we estimate $\bm{\Theta^\star}$ from $(\bm{X}_k,\dot{y}_k)$ by the regularized M-estimator \cite{negahban2011estimation,negahban2012unified}
\begin{equation}\label{5.2}
    \bm{\widehat{\Theta}} = \mathop{\arg\min}\limits_{\|\bm{\Theta}\|_\infty\leq \alpha}~\frac{1}{2n}\sum_{k=1}^n \big(\dot{y}_k- \big<\bm{X}_k,\bm{\Theta}\big>\big)^2+\lambda\|\bm{\Theta}\|_{nu}
\end{equation} 
that combines an $\ell_2$-loss and nuclear norm regularizer.

In the literature, there has been a line of works on 1-bit or multi-bit matrix completion  related to our results to be presented   \cite{cai2013max,lafond2014probabilistic,klopp2015adaptive,cao2015categorical,bhaskar2016probabilistic}. While the referenced works commonly adopted a likelihood approach, our method  is an essential departure and embraces some advantage, see a precise comparison in Remark \ref{rem8}. Considering such novelty, we include   the result for sub-exponential $\epsilon_k$ in Theorem \ref{thm8}, for which the truncation of $y_k$ becomes unnecessary and  we simply set $\zeta_y=\infty$.

\begin{theorem}
\label{thm8}{\rm (QMC under Sub-Exponential Noise)\textbf{.}} Given some $\Delta>0,\delta>0$, in (\ref{mcmodel}) we suppose that $\bm{X}_k$s are i.i.d. uniformly distributed over $\mathcal{X}=\{\bm{e}_i\bm{e}_j^\top:i,j\in [d]\}$, $\bm{\Theta^\star}\in \mathbb{R}^{d\times d}$   satisfies $\rank(\bm{\Theta^\star})\leq r$ and $\|\bm{\Theta^\star}\|_\infty\leq \alpha$, the     noise $\epsilon_k$s are i.i.d. zero-mean sub-exponential satisfying $\|\epsilon_k\|_{\psi_1}\leq \sigma$, and are independent of $\bm{X}_k$s. In the quantization, we do not truncation $y_k$ but directly quantize it to $\dot{y}_k=\mathcal{Q}_\Delta(y_k+\tau_k)$ with uniform dither $\tau_k\sim \mathscr{U}([-\frac{\Delta}{2},\frac{\Delta}{2}])$. We choose $\lambda =C_1 (\sigma+\Delta) \sqrt{\frac{\delta\log d}{nd}}$ with sufficiently large $C_1$, and define $\bm{\widehat{\Theta}}$ as (\ref{5.2}). If $\delta d\log^3 d\lesssim n\lesssim \delta r^2d^2 \log d$, then with probability at least $1-4d^{-\delta}$, the estimation error $\bm{\widehat{\Upsilon}}:=\bm{\widehat{\Theta}} - \bm{\Theta^\star}$ satisfies\begin{equation}\nonumber
    \frac{\|\bm{\widehat{\Upsilon}}\|_F}{d}\leq C_2 \mathscr{L}\sqrt{\frac{\delta r d \log d}{n}} ~~\mathrm{and}~~ \frac{\|\bm{\widehat{\Upsilon}}\|_{nu}}{d} \leq  C_3\mathscr{L}r\sqrt{\frac{\delta   d \log d}{n}}
\end{equation}
where $\mathscr{L}:= \alpha+\sigma+\Delta $. 
\end{theorem}

By contrast, under heavy-tailed noise only assumed to have bounded variance,  we truncate $y_k$ with a suitable threshold before the dithered quantization to achieve an optimal trade-off between bias and variance.

\begin{theorem}
\label{thm9}{\rm (QMC under Heavy-tailed Noise){\bf \sffamily.}} Given some $\Delta>0,\delta>0$, we consider (\ref{mcmodel})  in the setting of Theorem \ref{thm8} but with the assumption $\|\epsilon_k\|_{\psi_1}\leq \sigma$ replaced by $\mathbbm{E}|\epsilon_k|^2\leq M$. In the quantization, we  truncate $y_k$ to $\widetilde{y}_k=\mathscr{T}_{\zeta_y}(y_k)$ with $\zeta_y\asymp (\sqrt{M}+\alpha)\sqrt{\frac{n}{\delta d \log d}}$, and then quantize $\widetilde{y}_k$ to $\dot{y}_k=\mathcal{Q}_\Delta(\widetilde{y}_k+\tau_k)$ with uniform dither $\tau_k\sim \mathscr{U}([-\frac{\Delta}{2},\frac{\Delta}{2}])$. We choose  $\lambda =C_1 (\alpha+\sqrt{M}+\Delta) \sqrt{\frac{\delta\log d}{nd}}$ with sufficiently large $C_1$, and define $\bm{\widehat{\Theta}}$ as (\ref{5.2}). If $\delta d\log d\lesssim n\lesssim \delta r^2d^2 \log d$, then with probability at least $1-6d^{-\delta}$, the estimation error  $\bm{\widehat{\Upsilon}}:=\bm{\widehat{\Theta}} - \bm{\Theta^\star}$ satisfies  \begin{equation}\nonumber
    \frac{\|\bm{\widehat{\Upsilon}}\|_F}{d}\leq C_2 \mathscr{L}\sqrt{\frac{\delta r d \log d}{n}} ~~\mathrm{and}~~ \frac{\|\bm{\widehat{\Upsilon}}\|_{nu}}{d} \leq  C_3\mathscr{L}r\sqrt{\frac{\delta   d \log d}{n}}
\end{equation}
where $\mathscr{L}:=\alpha+\sqrt{M}+\Delta$. 
\end{theorem}

\vspace{1mm}

Compared to the information-theoretic lower bounds in  \cite{negahban2012restricted,koltchinskii2011nuclear}, the error rates obtained in Theorems \ref{thm8}-\ref{thm9} are   minimax optimal up to logarithmic factors. Specifically, Theorem \ref{thm9} derives near optimal guarantee for QMC with heavy-tailed observations, as the key standpoint of this paper. Note that, the 1-bit quantization counterpart of these two Theorems was derived in our previous work \cite{chen2022high}; in sharp contrast to  Theorem \ref{thm9}, for 1-bit QMC under heavy-tailed noise, the error rate under $\frac{\|\bm{\widehat{\Upsilon}}\|_F}{d}$  in \cite[Thm. 13]{chen2022high} reads as $O\big(\big(\frac{r^2d\log d}{n}\big)^{1/4}\big)$ and 
 is essentially slower; using the 1-bit observations therein, this slow error rate is indeed nearly tight due to the lower bound in \cite[Thm. 14]{chen2022high}. 

To close this section, we  give a remark to illustrate the novelty and advantage of our QMC method by a careful comparison with prior works.  

\begin{rem}
\label{rem8} 
  QMC with  1-bit or multi-bit  quantized observations has received considerable research interest  \cite{davenport20141,cai2013max,lafond2014probabilistic,klopp2015adaptive,cao2015categorical,bhaskar2016probabilistic}. Adapted to our notation, these works studied the model $\dot{y}_k = \mathcal{Q}(\big<\bm{X}_k,\bm{\Theta^\star}\big>+\tau_k)$ under general random dither $\tau_k$ and quantizer $\mathcal{Q}(.)$, and they commonly adopted regularized (or constrained) maximum likelihood estimation  for estimating $\bm{\Theta^\star}$. By contrast,   with   the random dither and quantizer specialized to 
$\tau_k\sim \mathscr{U}([-\frac{\Delta}{2},\frac{\Delta}{2}])$ and $\mathcal{Q}_\Delta(.)$, our model is formulated as $\dot{y}_k=\mathcal{Q}_\Delta(\mathscr{T}_{\zeta_y}(\big<\bm{X}_k,\bm{\Theta^\star}\big>+\epsilon_k)+\tau_k)$. Thus, while suffering from less generality in $(\tau_k,\mathcal{Q})$, our method embraces the   advantage of robustness to pre-quantization noise $\epsilon_k$, whose distribution is unknown and can even be heavy-tailed. Note that such unknown $\epsilon_k$ evidently forbids the likelihood approach. 
\end{rem}
 
\section{Covariate Quantization and Uniform Signal Recovery in Quantized Compressed Sensing}\label{sec:qc-qcs}
By now we have presented near optimal results in the contexts of QCME, QCS and QMC  under heavy-tailed data that further undergo the proposed quantization scheme, which we position as the primary contribution of this work. In this section, we further   provide two additional developments  to enhance our results on heavy-tailed QCS. 
\subsection{Covariate Quantization}\label{sec:covarquan}
In the area of QCS, almost all prior works  merely focused on the quantization of response $y_k$, see the recent survey \cite{dirksen2019quantized}; here, we consider a setting of "complete quantization" --- meaning that the covariate $\bm{x}_k$ is also quantized. To motivate our study of "complete quantization", we interpret compressed sensing as sparse linear regression. Indeed, to reduce the power consumption and computational cost, it is sometimes preferable to work with low-precision data in a machine learning system, e.g.,   the sample quantization scheme      developed in \cite{zhang2017zipml} led to   experimental success in training linear model. Also, it was shown  that direct gradient quantization may not be efficient in certain distributed learning systems where the terminal nodes are connected to the server only through very weak communication fabric and the number of parameters are extremely huge; rather, quantizing and transmitting some important samples could provably reduce communication cost \cite{hanna2021quantization}. 
In fact, the process of data collection may already appeal to quantization  due to certain limit of the data acquisition device (e.g., a low-resolution analog-to-digital module used in distributed signal processing \cite{danaee2022distributed}). Our main goal is to understand how quantization of $(\bm{x}_k,y_k)$s affects the subsequent recovery/learning process, particularly showing that the simple dithered uniform quantization scheme still allows for accurate estimator that may even provide near minimax error rate. To our best knowledge, the only prior rigorous estimation guarantees for QCS with covariate quantization are \cite[Thms. 7-8]{chen2022high}; these two results require a restricted and unnatural assumption, which we will also relax later.




\subsubsection{Multi-bit QCS with Quantized Covariate}
 Since we will also consider the 1-bit quantization, we more precisely refer to the QCS under uniform quantizer  as multi-bit QCS. We will generalize Theorems \ref{thm4}-\ref{thm5} to covariate quantization in the next two theorems.

 Let $(\bm{\dot{x}}_k,\dot{y}_k)$ be the quantized covariate-response pair, we first quickly sketch the idea of our approach. Specifically, we stick to the framework of M-estimator in (\ref{4.3}), which appeals to accurate surrogates for $\bm{\Sigma^\star} =\mathbbm{E}(\bm{x}_k\bm{x}_k^\top)$ and $\bm{\Sigma}_{y\bm{x}}=\mathbbm{E}(y_k\bm{x}_k)$ based on $(\bm{\dot{x}}_k,\dot{y}_k)$, where $\bm{\dot{x}_k}$ represents the quantized covariate. Fortunately, the surrogates can be constructed analogously to our  QCME estimator when triangular dither is used for quantizing $\bm{x}_k$. Let us first state our quantization scheme as follows:  
 \begin{itemize}
 [leftmargin=5ex,topsep=0.15ex]
 \setlength\itemsep{-0.1em}
     \item \textbf{Response Quantization.} This is the same as Theorems \ref{thm4}-\ref{thm5}. We truncate $y_k$  to $\widetilde{y}_k = \mathscr{T}_{\zeta_y}(y_k)$ with threshold $\zeta_y$, and  then quantize $\widetilde{y}_k$ to $\dot{y}_k = \mathcal{Q}_\Delta(\widetilde{y}_k+ \phi_k)$ with uniform dither $\phi_k\sim \mathscr{U}([-\frac{\Delta}{2},\frac{\Delta}{2}])$ and quantization level $\Delta\geq 0$. 

     \item \textbf{Covariate Quantization.} This is the same as Theorem \ref{thm1}. We truncate $\bm{x}_k$   to $\bm{\widetilde{x}}_k= \mathscr{T}_{\zeta_x}(\bm{x}_k)$ with threshold $\zeta_x$, and then quantize $\bm{\widetilde{x}}_k$ to $\bm{\dot{x}}_k = \mathcal{Q}_{\bar{\Delta}}(\bm{\widetilde{x}}_k+\bm{\tau}_k)$ with triangular dither $\bm{\tau}_k\sim \mathscr{U}([-\frac{\bar{\Delta}}{2},\frac{\bar{\Delta}}{2}]^d)+\mathscr{U}([-\frac{\bar{\Delta}}{2},\frac{\bar{\Delta}}{2}]^d)$ and quantization level $\bar{\Delta}\geq 0$.

     \item \textbf{Notation.} We write the quantization noise as $\varphi_k=\dot{y}_k-\widetilde{y}_k$ and $\bm{\xi}_k = \bm{\dot{x}}_k- \bm{\widetilde{x}}_k$, the quantization error as $\vartheta_k= \dot{y}_k-(\widetilde{y}_k+\phi_k)$ and $\bm{w}_k=\bm{\dot{x}}_k-(\bm{\widetilde{x}}_k+ \bm{\tau}_k)$. 
 \end{itemize}
We will adopt the above notation in subsequent developments. Based on the quantized covariate-response pairs $(\bm{\dot{x}}_k,\dot{y}_k)$s, we specify  our estimator by setting $(\bm{Q},\bm{b})$ in (\ref{4.3}) as\begin{equation}\label{quanlasso}
    \bm{Q}=\frac{1}{n}\sum_{k=1}^n \bm{\dot{x}}_k\bm{\dot{x}}_k^\top- \frac{\bar{\Delta}^2}{4}\bm{I}_d~~\mathrm{and}~~\bm{b}=\frac{1}{n}\sum_{k=1}^n \dot{y}_k\bm{\dot{x}}_k.
\end{equation}
Note that  the choice of $\bm{Q}$ is due to the estimator in Theorem \ref{thm1}, while $\bm{b}$ is inspired by the calculation 
\begin{equation}
    \begin{aligned}\nonumber
        &\mathbbm{E}(\dot{y}_k\bm{\dot{x}}_k)=\mathbbm{E}\big((\widetilde{y}_k+\varphi_k)(\bm{\widetilde{x}}_k+\bm{\xi}_k)\big)\\
        &=\mathbbm{E}(\widetilde{y}_k\bm{\widetilde{x}}_k)+\mathbbm{E}(\widetilde{y}_k\bm{\xi}_k)+\mathbbm{E}(\varphi_k\bm{\widetilde{x}}_k)+\mathbbm{E}(\varphi_k\bm{\xi}_k)=\mathbbm{E}(\widetilde{y}_k\bm{\widetilde{x}}_k),
    \end{aligned}
\end{equation}
where the last equality can be seen by conditioning on $\bm{\widetilde{x}}_k$ or $\widetilde{y}_k$.
  However, the issue is  that $\bm{Q}$ is not positive semi-definite, hence the resulting program is non-convex. 
  To explain this,   note that the rank of $\frac{1}{n}\sum_{k=1}^n\bm{\dot{x}}_k\bm{\dot{x}}_k^\top$ does not exceed $n$, so when $d>n$ at least $d-n$ eigenvalues of $\bm{Q}$ are $-\frac{\bar{\Delta}^2}{4}$. Alternatively, the non-convexity can also be seen from the observation that setting $(\bm{Q},\bm{b})$ as in (\ref{quanlasso}) is tantamount to replacing the regular $\ell_2$-loss $\frac{1}{2n}\sum_{k=1}^n (y_k-\bm{x}_k^\top\bm{\theta})^2$  with \begin{equation}
      \nonumber
      \frac{1}{2n}\sum_{k=1}^n (\dot{y}_k-\bm{\dot{x}}_k^\top\bm{\theta})^2-\frac{\bar{\Delta}}{8}\|\bm{\theta}\|_2^2.
  \end{equation} We mention that the lack of positive semi-definiteness of $\bm{Q}$ is problematic in both statistics and optimization aspects: 1) Statistically,   Lemma  \ref{csframework} used to derive the error rates in Theorems \ref{thm4}-\ref{thm5} requires $\bm{Q}$ to be positive semi-definite, and is hence no longer applicable here; 2) From the optimization side, it is in general unknown how to globally optimize a non-convex program. 

Motivated by a   line of previous works on non-convex M-estimator \cite{loh2011high,loh2013regularized,loh2017statistical}, we add an $\ell_1$-norm constraint to (\ref{4.3}) by setting $\mathcal{S}=\{ \bm{\theta}\in \mathbb{R}^d:\|\bm{\theta}\|_1\leq R\}$, where $R$ represents the prior estimation on $\|\bm{\theta^\star}\|_1$. Let $\partial\|\bm{\theta}_1\|_1$ be a subdifferential of $\|\bm{\theta}\|_1$ at $\bm{\theta}=\bm{\theta}_1$,\footnote{Thus, $\partial\|\bm{\widetilde{\theta}}\|_1$ in (\ref{4.17}) below should be understood as "there exists one element in $\partial\|\bm{\widetilde{\theta}}\|_1$ such that (\ref{4.17}) holds."} we consider the local minimizer of the proposed recovery program,\footnote{The existence of local minimizer is guaranteed because of the additional $\ell_1$-constraint.} or more  generally put, $\bm{\widetilde{\theta}}\in \mathcal{S}$ that satisfies\footnote{To distinguish the global minimizer in (\ref{4.3}), we denote  by $\bm{\widetilde{\theta}}$ the estimator in QCS with quantized covariate.} \begin{equation}
    \big<\bm{Q}\bm{\widetilde{\theta}}-\bm{b}+ \lambda \cdot \partial \|\bm{\widetilde{\theta}}\|_1, \bm{\theta} - \bm{\widetilde{\theta}}\big> \geq 0,~~\forall~\bm{\theta}\in \mathcal{S}.\label{4.17}
\end{equation}  
We will prove a fairly strong guarantee stating that all $\bm{\widetilde{\theta}}\in\mathcal{S}$ satisfying (\ref{4.17}) (of course including all local minimizers) enjoy  near minimax error rate. 
While   this guarantee bears resemblance to the ones in \cite{loh2013regularized}, we point out that, \cite{loh2013regularized} only derived concrete results for sub-Gaussian regime; because of the heavy-tailed data and quantization in our setting,   some essentially different ingredients are required  for the technical analysis (see {Remark} \ref{rem4}). 
As before, our results for  sub-Gaussian $\bm{x}_k$ and heavy-tailed $\bm{x}_k$ are presented separately.


\begin{theorem}\label{thm6}
{\rm  (Quantized Sub-Gaussian Covariate)\textbf{.}} Given $\Delta\geq 0$, $\bar{\Delta}\geq 0$, $\delta>0$, we consider (\ref{csmodel}) with the same assumptions on $(\bm{x}_k,y_k,\bm{\theta^\star})$ as  Theorem \ref{thm4},  
and additionally assume that  $\|\bm{\theta^\star}\|_2\leq R$. The quantization of $(\bm{x}_k,y_k)$ is  described above, and we set $\zeta_x=\infty$, $\zeta_y \asymp  \sqrt{\frac{nM^{1/l}}{\delta \log d}}$.
For recovery, we let $\bm{Q} = \frac{1}{n}\sum_{k=1}^n\bm{\dot{x}}_k\bm{\dot{x}}_k^\top -\frac{\bar{\Delta}^2}{4}\bm{I}_d$, $\bm{b}= \frac{1}{n}\sum_{k=1}^n\dot{y}_k\bm{\dot{x}}_k$, $\mathcal{S} = \{\bm{\theta}:\|\bm{\theta}\|_1\leq R\sqrt{s}\}$ and set $\lambda = C_1\frac{(\sigma+\bar{\Delta})^2}{\sqrt{\kappa_0}}(\Delta+M^{1/(2l)})\sqrt{\frac{\delta\log d}{n}}$ with sufficiently large $C_1$. If $n\gtrsim \delta s\log d$ for some hidden constant only depending on $(\kappa_0,\sigma,\Delta,\bar{\Delta},M,R)                $, 
with probability at least $1- 8d^{1-\delta}-C_2\exp(-C_3n)$,  all $\bm{\widetilde{\theta}}\in \mathcal{S}$ satisfying (\ref{4.17}) have  estimation error $\bm{\widetilde{\Upsilon}}:=\bm{\widetilde{\theta}}-\bm{\theta^\star}$ bounded by 
\begin{equation}\nonumber
    \|\bm{\widetilde{\Upsilon}}\|_2\leq C \mathscr{L}\sqrt{\frac{\delta s\log d}{n}} ~~\mathrm{and}~~\|\bm{\widetilde{\Upsilon}}\|_1\leq C' \mathscr{L}s\sqrt{\frac{\delta \log d}{n}}
\end{equation}
where $ \mathscr{L} :=\frac{(\sigma+\bar{\Delta})^2(\bm{\Delta}+M^{1/(2l)})}{\kappa_0^{3/2}}$.
\end{theorem}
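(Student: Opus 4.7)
The plan is to adapt the non-convex regularized $M$-estimator machinery of Loh--Wainwright to our quantization setting. Since $\bm{\theta^\star}$ is $s$-sparse and $\|\bm{\theta^\star}\|_2\le R$, Cauchy--Schwarz gives $\|\bm{\theta^\star}\|_1\le R\sqrt{s}$, so $\bm{\theta^\star}\in\mathcal{S}$ and we may substitute it into (\ref{4.17}). Writing $\bm{Q}\bm{\widetilde{\theta}}-\bm{b}=\bm{Q}\bm{\widetilde{\Delta}}+(\bm{Q}\bm{\theta^\star}-\bm{b})$ and using the subgradient inequality $\langle\nabla\|\bm{\widetilde{\theta}}\|_1,\bm{\theta^\star}-\bm{\widetilde{\theta}}\rangle\le\|\bm{\theta^\star}\|_1-\|\bm{\widetilde{\theta}}\|_1$, together with H\"{o}lder, yields the basic inequality
\begin{equation*}
    \bm{\widetilde{\Delta}}^\top \bm{Q}\bm{\widetilde{\Delta}} \le \|\bm{b}-\bm{Q}\bm{\theta^\star}\|_\infty\,\|\bm{\widetilde{\Delta}}\|_1 + \lambda\bigl(\|\bm{\theta^\star}\|_1-\|\bm{\widetilde{\theta}}\|_1\bigr).
\end{equation*}
Two independent high-probability events then drive everything: (i) the deviation bound $\|\bm{b}-\bm{Q}\bm{\theta^\star}\|_\infty\le\lambda/2$; and (ii) a lower restricted eigenvalue (lower RE) condition
\begin{equation*}
    \bm{v}^\top \bm{Q}\bm{v} \ge \alpha_1\|\bm{v}\|_2^2 - \alpha_2\frac{\log d}{n}\|\bm{v}\|_1^2, \qquad \forall\,\bm{v}\in\mathbb{R}^d,
\end{equation*}
with $\alpha_1\asymp\kappa_0$ and $\alpha_2$ polynomial in $\sigma+\bar\Delta$.

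For (i), I would decompose $\bm{b}-\bm{Q}\bm{\theta^\star}=\frac{1}{n}\sum_k\bm{\dot{x}}_k(\dot{y}_k-\bm{\dot{x}}_k^\top\bm{\theta^\star})+\frac{\bar\Delta^2}{4}\bm{\theta^\star}$ and exploit the dither identities from Lemma \ref{lem1}: $\mathbb{E}[\bm{\xi}_k\mid\bm{x}_k]=0$ and $\mathbb{E}[\bm{\xi}_k\bm{\xi}_k^\top\mid\bm{x}_k]=\tfrac{\bar\Delta^2}{4}\bm{I}_d$ under triangular dither, $\mathbb{E}[\varphi_k\mid\widetilde{y}_k]=0$ under uniform dither, and the mutual independence of the two dithers. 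These reduce $\mathbb{E}[\bm{b}-\bm{Q}\bm{\theta^\star}]$ to the truncation bias $\mathbb{E}[\bm{x}_k(\widetilde{y}_k-y_k)]$, which is controlled coordinate-wise by Markov-type estimates using $\mathbb{E}|y_k|^{2l}\le M$ and the prescribed $\zeta_y$. Stochastic fluctuations break into sums of sub-Gaussian factors ($\bm{\dot{x}}_k$ is sub-Gaussian with norm $\lesssim\sigma+\bar\Delta$ after the $\ell_\infty$-bounded triangular dither) against bounded factors ($|\dot{y}_k|\le\zeta_y+\Delta/2$, $|\varphi_k|\le\Delta/2$, etc.), each handled by Bernstein plus a union bound over the $d$ coordinates. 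The main obstacle is (ii), because $\bm{Q}$ has on the order of $d-n$ eigenvalues equal to $-\bar\Delta^2/4$ so neither PSD-based RE nor global convexity is available. My strategy is to split
\begin{equation*}
    \bm{v}^\top\bm{Q}\bm{v}=\tfrac{1}{n}\sum_k(\bm{x}_k^\top\bm{v})^2+\tfrac{2}{n}\sum_k(\bm{x}_k^\top\bm{v})(\bm{\xi}_k^\top\bm{v})+\tfrac{1}{n}\sum_k(\bm{\xi}_k^\top\bm{v})^2-\tfrac{\bar\Delta^2}{4}\|\bm{v}\|_2^2
\end{equation*}
and bound the three pieces separately: the first via the sub-Gaussian lower RE of Raskutti--Wainwright--Yu / Rudelson--Zhou (giving $\tfrac{3\kappa_0}{4}\|\bm{v}\|_2^2$ minus an $\ell_1$ tolerance); the second by Bernstein, using the mean-zero property $\mathbb{E}[(\bm{x}_k^\top\bm{v})(\bm{\xi}_k^\top\bm{v})]=0$; and the third (together with its exact centering $\tfrac{\bar\Delta^2}{4}\|\bm{v}\|_2^2$) by scalar Bernstein for bounded sub-Gaussian summands. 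A sparse-vector covering argument then upgrades these pointwise bounds to the uniform lower RE for $\bm{Q}$.

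Finally, combining (i) with the basic inequality and splitting on $S=\mathrm{supp}(\bm{\theta^\star})$ gives the cone inclusion $\|\bm{\widetilde{\Delta}}_{S^c}\|_1\le3\|\bm{\widetilde{\Delta}}_S\|_1$, hence $\|\bm{\widetilde{\Delta}}\|_1\le 4\sqrt{s}\,\|\bm{\widetilde{\Delta}}\|_2$ and $\bm{\widetilde{\Delta}}^\top\bm{Q}\bm{\widetilde{\Delta}}\le\tfrac{3}{2}\lambda\sqrt{s}\,\|\bm{\widetilde{\Delta}}\|_2$. Feeding this into (ii) and choosing the implicit constant in $n\gtrsim s\log d$ large enough to absorb $\alpha_2(\log d/n)\|\bm{\widetilde{\Delta}}\|_1^2\le 16\alpha_2(s\log d/n)\|\bm{\widetilde{\Delta}}\|_2^2$ into $(\alpha_1/2)\|\bm{\widetilde{\Delta}}\|_2^2$ yields $\tfrac{\alpha_1}{2}\|\bm{\widetilde{\Delta}}\|_2^2\le\tfrac{3\lambda\sqrt{s}}{2}\|\bm{\widetilde{\Delta}}\|_2$; the explicit constraint $\|\bm{\widetilde{\Delta}}\|_1\le 2R\sqrt{s}$ enters only as a safeguard against the degenerate regime where the cone inclusion fails, exactly as in \cite{loh2013regularized}. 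Therefore $\|\bm{\widetilde{\Delta}}\|_2\lesssim\lambda\sqrt{s}/\kappa_0$ and $\|\bm{\widetilde{\Delta}}\|_1\lesssim\lambda s/\kappa_0$; substituting the prescribed $\lambda\asymp(\sigma+\bar\Delta)^2(\Delta+M^{1/(2l)})\sqrt{\delta\log d/(n\kappa_0)}$ produces the stated bounds with $\mathscr{L}=(\sigma+\bar\Delta)^2(\Delta+M^{1/(2l)})/\kappa_0^{3/2}$. The principal novelty beyond \cite{loh2013regularized} is establishing the lower RE for the shifted, dither-corrupted quadratic form in step (ii), where essentially all of the technical work concentrates.
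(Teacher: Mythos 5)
Your proposal is correct and follows the same Loh--Wainwright blueprint as the paper's proof of Theorem \ref{thm6} (first-order inequality, deviation bound $\|\bm{Q\theta^\star}-\bm{b}\|_\infty\lesssim\lambda$, lower restricted eigenvalue for the indefinite $\bm{Q}$, cone inclusion, absorption), and your Step~(i) and closing combination mirror the paper's Steps~1 and~3. The genuine divergence is in Step~(ii). You decompose $\bm{v}^\top\bm{Q}\bm{v}$ into the clean quadratic in $\bm{x}_k$, a cross term in $(\bm{x}_k,\bm{\xi}_k)$, and the exactly centered quadratic in $\bm{\xi}_k$, handling each piece by its own concentration argument. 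That route works in principle but multiplies the effort: each of the three pieces is a quadratic or bilinear form in $\bm{v}$ that must be controlled uniformly in the Loh--Wainwright form with $\|\bm{v}\|_1^2$ tolerance over \emph{all} of $\mathbb{R}^d$, not just over sparse vectors, so a pointwise Bernstein bound followed by a sparse-vector covering is not yet enough --- each piece needs its own peeling or $\ell_1$-ball Gaussian-width argument. The paper sidesteps this entirely by treating $\bm{\dot{x}}_k = \bm{x}_k+\bm{\xi}_k$ as a \emph{single} sub-Gaussian vector with $\|\bm{\dot{x}}_k\|_{\psi_2}\lesssim\sigma+\bar\Delta$ and covariance $\bm{\Sigma^\star}+\tfrac{\bar\Delta^2}{4}\bm{I}_d$, applies the matrix deviation inequality (Lemma~\ref{deviation}) once to $\bm{\dot X}$ over $\{\|\bm{v}\|_1=1\}$, and absorbs the $-\tfrac{\bar\Delta^2}{4}\|\bm{v}\|_2^2$ shift via $(a-b)^2\ge\hat c\,a^2-\tfrac{\hat c}{1-\hat c}b^2$ with $\hat c$ chosen so that $\hat c(\kappa_0+\tfrac{\bar\Delta^2}{4})-\tfrac{\bar\Delta^2}{4}=\tfrac{\kappa_0}{2}$; one concentration event, no cross term. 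One further small point: the $\ell_1$ constraint $\|\bm{\widetilde{\Delta}}\|_1\le 2R\sqrt s$ is not merely a safeguard against a degenerate non-cone regime --- in (\ref{4.21}) it is substituted into one factor of the $\|\bm{\widetilde{\Delta}}\|_1^2$ tolerance so that the scaling $n\gtrsim\delta s\log d$ (with constant depending on $R$, $\bar\Delta$, etc.) absorbs the resulting linear term into $\lambda\|\bm{\widetilde{\Delta}}\|_1$, and that absorption is precisely what produces the cone inclusion.
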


\vspace{1mm}


Similarly, the next result extends Theorem \ref{thm5} to a setting involving covariate quantization.

\begin{theorem}\label{thm7}
{\rm  (Quantized Heavy-Tailed Covariate)\textbf{.}} Given $\Delta\geq 0$, $\bar{\Delta}\geq 0$, $\delta>0$, we consider (\ref{csmodel}) with the same assumptions on $(\bm{x}_k,y_k,\bm{\theta^\star})$ as  Theorem  \ref{thm5}. 
The quantization of $(\bm{x}_k,y_k)$ is  described above, and we set $\zeta_x,\zeta_y\asymp \big(\frac{nM}{\delta\log d}\big)^{1/4}$.
For recovery, we let $\bm{Q}=\frac{1}{n}\sum_{k=1}^n \bm{\dot{x}}_k\bm{\dot{x}}_k^\top- \frac{\bar{\Delta}^2}{4}\bm{I}_d$, $\bm{b}= \frac{1}{n}\sum_{k=1}^n\dot{y}_k\bm{\dot{x}}_k$, $\mathcal{S}=\{\bm{\theta}:\|\bm{\theta}\|_1\leq R\}$ 
and set  
$\lambda =C_1 (R\sqrt{M}+\Delta^2+R\bar{\Delta}^2)\sqrt{\frac{\delta \log d}{n}}$ with sufficiently large $C_1$. If $n\gtrsim \delta s\log d$ for some hidden constant only depending on $(\kappa_0,M)$, then with probability at least $1- 8d^{1-\delta}$,  all $\bm{\widetilde{\theta}}\in \mathcal{S}$ satisfying (\ref{4.17}) have   estimation error $\bm{\widetilde{\Upsilon}}:=\bm{\widetilde{\theta}}-\bm{\theta^\star}$ bounded by \begin{equation}\nonumber
    \|\bm{\widetilde{\Upsilon}}\|_2\leq C_3\mathscr{L}\sqrt{\frac{\delta s\log d}{n}} ~~\mathrm{and}~~\|\bm{\widetilde{\Upsilon}}\|_1\leq C_4\mathscr{L}s\sqrt{\frac{\delta \log d}{n}}.
\end{equation}  
where $\mathscr{L}:=\frac{R\sqrt{M}+\Delta^2+R\bar{\Delta}^2}{\kappa_0}$.
\end{theorem}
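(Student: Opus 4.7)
The plan is to follow the non-convex regularized $M$-estimator framework of \cite{loh2011high,loh2013regularized}, reducing the guarantee for every $\bm{\widetilde{\theta}}\in\mathcal{S}$ satisfying (\ref{4.17}) to two deterministic ingredients: (i) a deviation bound $\|\bm{Q}\bm{\theta^\star}-\bm{b}\|_\infty\leq \lambda/2$, and (ii) a restricted-strong-convexity condition for the non-PSD $\bm{Q}$. Once both are in place, substituting $\bm{\theta}=\bm{\theta^\star}$ into (\ref{4.17}) and running the standard cone argument closes the proof.

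For (i), I decompose $\bm{Q}\bm{\theta^\star}-\bm{b}=(\bm{Q}\bm{\theta^\star}-\mathbbm{E}\bm{Q}\bm{\theta^\star})+(\mathbbm{E}\bm{b}-\bm{b})+(\mathbbm{E}\bm{Q}\bm{\theta^\star}-\mathbbm{E}\bm{b})$. The deterministic third term is pure truncation bias: by Lemma \ref{lem1}, the triangular dither gives $\mathbbm{E}(\bm{\dot{x}}_k\bm{\dot{x}}_k^\top)=\mathbbm{E}(\bm{\widetilde{x}}_k\bm{\widetilde{x}}_k^\top)+\tfrac{\bar{\Delta}^2}{4}\bm{I}_d$ so that the correction in $\bm{Q}$ exactly cancels the quantization-noise covariance, while the zero-mean uniform dither on $y_k$ gives $\mathbbm{E}(\dot{y}_k\bm{\dot{x}}_k)=\mathbbm{E}(\widetilde{y}_k\bm{\widetilde{x}}_k)$. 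The bias thus reduces to $\mathbbm{E}\bigl(\bm{\widetilde{x}}_k\bm{\widetilde{x}}_k^\top\bm{\theta^\star}-\widetilde{y}_k\bm{\widetilde{x}}_k\bigr)$, which I bound element-wise using $\|\bm{\theta^\star}\|_1\leq R$, $\mathbbm{E}|x_{ki}|^4,\mathbbm{E}|y_k|^4\leq M$, and a Markov-type estimate on the tail mass $|x_{ki}|\mathbbm{1}(|x_{ki}|>\zeta_x)$. The two stochastic fluctuations are handled coordinate-wise by Bernstein's inequality applied to bounded summands (bounded by $O(\zeta_x^2+\bar{\Delta}\zeta_x)$ and $O(\zeta_x(\zeta_y+\Delta))$ respectively, with variances controlled by $M$ and $R$), followed by a union bound over $[d]$ and the prescribed $\zeta_x,\zeta_y$.

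The main obstacle is (ii), since $\bm{Q}$ has at least $d-n$ eigenvalues equal to $-\bar{\Delta}^2/4$, ruling out any global eigenvalue lower bound. I will instead establish the Loh--Wainwright-type relaxed form
\[ \bm{v}^\top \bm{Q}\bm{v}\geq \tfrac{\kappa_0}{2}\|\bm{v}\|_2^2 - \tau\,\tfrac{\log d}{n}\|\bm{v}\|_1^2\qquad\text{for all }\bm{v}\in\mathbb{R}^d, \]
with high probability, where the $\|\bm{v}\|_1^2$ slack will be harmless because both $\bm{\widetilde{\theta}}$ and $\bm{\theta^\star}$ sit in the $\ell_1$-ball of radius $R$. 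At the population level, the triangular-dither identity yields $\mathbbm{E}(\bm{\dot{x}}_k^\top\bm{v})^2-\tfrac{\bar{\Delta}^2}{4}\|\bm{v}\|_2^2=\mathbbm{E}(\bm{\widetilde{x}}_k^\top\bm{v})^2\geq \bm{v}^\top\bm{\Sigma^\star}\bm{v}-C\sqrt{M}\,\zeta_x^{-2}\|\bm{v}\|_1^2\geq \kappa_0\|\bm{v}\|_2^2-C\sqrt{M}\,\zeta_x^{-2}\|\bm{v}\|_1^2$; the bias term arises from element-wise truncation under the $\ell_4$-moment hypothesis and produces exactly the slack we want. To lift this to the empirical process, I will discretize the intersection of the unit $\ell_2$-sphere with $\{\|\bm{v}\|_1\leq t\}$ and apply Bernstein to the bounded summands $(\bm{\dot{x}}_k^\top\bm{v})^2$; rescaling general $\bm{v}$ by $\|\bm{v}\|_1/t$ is where the $\|\bm{v}\|_1^2$ factor crystallizes, mirroring the analysis underlying Theorem \ref{thm5}.

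With (i) and (ii), the argument concludes quickly. Substituting $\bm{\theta}=\bm{\theta^\star}$ into (\ref{4.17}) and applying H\"older's inequality gives $|\langle\bm{Q}\bm{\theta^\star}-\bm{b},\bm{\widetilde{\Delta}}\rangle|\leq \tfrac{\lambda}{2}\|\bm{\widetilde{\Delta}}\|_1$, while decomposability of $\|\cdot\|_1$ bounds the subgradient term by $\lambda(\|\bm{\widetilde{\Delta}}_S\|_1-\|\bm{\widetilde{\Delta}}_{S^c}\|_1)$, where $S=\mathrm{supp}(\bm{\theta^\star})$. A standard manipulation yields the cone inclusion $\|\bm{\widetilde{\Delta}}_{S^c}\|_1\leq 3\|\bm{\widetilde{\Delta}}_S\|_1$, hence $\|\bm{\widetilde{\Delta}}\|_1\leq 4\sqrt{s}\|\bm{\widetilde{\Delta}}\|_2$. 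Plugging this into (ii) converts the slack into an $O(s\log d/n)\|\bm{\widetilde{\Delta}}\|_2^2$ term, absorbed by $\tfrac{\kappa_0}{2}\|\bm{\widetilde{\Delta}}\|_2^2$ under $n\gtrsim s\log d$, and solving the remaining linear inequality in $\|\bm{\widetilde{\Delta}}\|_2$ produces the advertised rates with prefactor $\mathscr{L}=(R\sqrt{M}+\Delta^2+R\bar{\Delta}^2)/\kappa_0$.
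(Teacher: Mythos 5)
Your Step~1 (the deviation bound $\|\bm{Q\theta^\star}-\bm{b}\|_\infty \lesssim \lambda$) matches the paper's decomposition into a $\bm{b}$-fluctuation, a truncation bias, and a $\bm{Q}$-fluctuation, and is fine. The real issue is Step~2.

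You announce the Loh--Wainwright RSC form $\bm{v}^\top\bm{Q}\bm{v}\geq\tfrac{\kappa_0}{2}\|\bm{v}\|_2^2-\tau\tfrac{\log d}{n}\|\bm{v}\|_1^2$, but with only element-wise fourth moments on $\bm{x}_k$ this cannot be established, and this is precisely the obstruction Remark~\ref{rem4} warns about. Your own population-level computation already contradicts it: with $\zeta_x\asymp(nM/\delta\log d)^{1/4}$ you get $\zeta_x^{-2}\asymp\sqrt{\delta\log d/(nM)}$, so the truncation slack is of order $\sqrt{M}\,\zeta_x^{-2}\|\bm{v}\|_1^2\asymp\sqrt{\delta\log d/n}\,\|\bm{v}\|_1^2$, a full square-root worse than what you wrote. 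The empirical fluctuation is no better: the natural control is the element-wise max-norm bound $\|\bm{Q}-\bm{\Sigma^\star}\|_\infty\lesssim(\sqrt{M}+\bar{\Delta}^2)\sqrt{\delta\log d/n}$ from Theorem~\ref{thm1} (Bernstein on bounded entries gives a $n^{-1/2}$ rate, not $n^{-1}$; there is no directional-fourth-moment assumption to do better). So the only RSC available is the paper's (\ref{4.23}),
\begin{equation}\nonumber
\bm{\widetilde{\Delta}}^\top\bm{Q}\bm{\widetilde{\Delta}}\ \geq\ \kappa_0\|\bm{\widetilde{\Delta}}\|_2^2 - C\,(\sqrt{M}+\bar{\Delta}^2)\sqrt{\tfrac{\delta\log d}{n}}\,\|\bm{\widetilde{\Delta}}\|_1^2,
\end{equation}
with the slack at scale $\sqrt{\log d/n}$.

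Given that, your closing step fails. Plugging $\|\bm{\widetilde{\Delta}}\|_1\leq 4\sqrt{s}\|\bm{\widetilde{\Delta}}\|_2$ into this RSC gives a slack of order $s\sqrt{\log d/n}\,\|\bm{\widetilde{\Delta}}\|_2^2$, which is only dominated by $\tfrac{\kappa_0}{2}\|\bm{\widetilde{\Delta}}\|_2^2$ when $n\gtrsim s^2\log d$ --- you would recover the sample-size requirement of Theorem~\ref{thm5}, not the claimed $n\gtrsim \delta s\log d$. The missing idea, which is the whole point of the $\ell_1$-ball constraint and of Remark~\ref{impro}, is to handle the quadratic slack \emph{before} invoking the cone: since $\bm{\widetilde{\theta}},\bm{\theta^\star}\in\mathcal{S}$ give $\|\bm{\widetilde{\Delta}}\|_1\leq 2R$, bound \emph{one} of the two $\|\bm{\widetilde{\Delta}}\|_1$ factors in the slack by $2R$, turning it into the linear term $2CR(\sqrt{M}+\bar{\Delta}^2)\sqrt{\delta\log d/n}\,\|\bm{\widetilde{\Delta}}\|_1$. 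Because $\lambda\gtrsim R\sqrt{M}\sqrt{\delta\log d/n}$ by construction, this linear term is absorbed together with $\|\bm{Q\theta^\star}-\bm{b}\|_\infty\|\bm{\widetilde{\Delta}}\|_1$ into $\tfrac{\lambda}{2}\|\bm{\widetilde{\Delta}}\|_1$; the cone inclusion and the final linear inequality in $\|\bm{\widetilde{\Delta}}\|_2$ then go through with only $n\gtrsim\delta s\log d$, and this linear absorption is exactly where the $R$ in $\mathscr{L}$ comes from. Without it, your argument is circular: you invoke an RSC you cannot prove to obtain a sample complexity that the provable RSC does not support.
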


\begin{rem}\label{rem4}
{\rm  (Comparing Our Analyses with \cite{loh2013regularized}){\bf \sffamily.}} The above results are motivated by a line of works on nonconvex M-estimator \cite{loh2011high,loh2013regularized,loh2017statistical}, and our guarantee for the whole set of stationary points (\ref{4.17}) resembles \cite{loh2013regularized} most. While the main strategy for proving Theorem \ref{thm6} is adjusted from \cite{loh2013regularized},   the proof of Theorem \ref{thm7} does involve an essentially different RSC condition, see our (\ref{4.23}). In particular, compared with  \cite[equation (4)]{loh2013regularized}, the leading factor of $\|\bm{\widetilde{\Upsilon}}\|_1^2$ in (\ref{4.23}) degrades from $O\big(\frac{\log d}{n}\big)$ to $O\big(\sqrt{\frac{\log d}{n}}\big)$. To retain near optimal rate we need to impose a stronger scaling $\|\bm{\theta^\star}\|_1\leq R$  with proper changes in the proof. Although Theorem \ref{thm7} is presented for a   concrete setting, it sheds light on   extension of \cite{loh2013regularized} to a weaker RSC condition that could accommodate covariate with heavier tail. Such extension is formally presented as a deterministic framework in Proposition \ref{framework}.  
\end{rem}
\begin{pro}
\label{framework}
Suppose that the $s$-sparse $\bm{\theta^\star}\in \mathbb{R}^d$  satisfies $\|\bm{\theta^\star}\|_1\leq R$, and the  positive definite matrix  $\bm{\Sigma^\star}\in \mathbb{R}^{d\times d}$ satisfies $\lambda_{\min}(\bm{\Sigma^\star})\geq \kappa_0$. If for some $\bm{Q}\in \mathbb{R}^{d\times d},\bm{b}\in \mathbb{R}^d$ we have \begin{equation}\label{4.26}
    \lambda \geq C_1\max\big\{\|\bm{Q\theta^\star} - \bm{b}\|_\infty,~ R\cdot\|\bm{Q}-\bm{\Sigma^\star}\|_\infty\big\}
\end{equation}
holds for sufficiently large $C_1$, then all $\bm{\widetilde{\theta}}\in \mathcal{S}$ satisfying (\ref{4.17}) with $\mathcal{S} = \{\bm{\theta}\in \mathbb{R}^d:\|\bm{\theta}\|_1\leq R\}$ have estimation error $\bm{\widetilde{\Upsilon}}:=\bm{\widetilde{\theta}}-\bm{\theta^\star}$ bounded by 
\begin{equation}
    \begin{aligned} \nonumber
      \|\bm{\widetilde{\Upsilon}}\|_2\leq C_2 \frac{\sqrt{s}\lambda}{\kappa_0} ~~\mathrm{and}~~\|\bm{\widetilde{\Upsilon}}\|_1\leq C_3 \frac{s\lambda}{\kappa_0}~.
    \end{aligned}
\end{equation} 
\end{pro}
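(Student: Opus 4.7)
\textbf{Proof plan for Proposition \ref{framework}.} The plan is to run the usual basic-inequality argument for regularized M-estimation, but with two twists to handle the fact that $\bm{Q}$ may not be positive semi-definite: substitute $\bm{\Sigma}^\star$ for $\bm{Q}$ in the quadratic form at the cost of an $\|\cdot\|_\infty$ perturbation, and then use the constraint $\|\bm{\widetilde{\theta}}\|_1\leq R$ to linearize the perturbation against the $\ell_1$ error.

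Concretely, I would first plug $\bm{\theta}=\bm{\theta^\star}\in\mathcal{S}$ into the first-order condition (\ref{4.17}) and invoke the subgradient inequality $\langle\nabla\|\bm{\widetilde{\theta}}\|_1,\bm{\theta^\star}-\bm{\widetilde{\theta}}\rangle\leq\|\bm{\theta^\star}\|_1-\|\bm{\widetilde{\theta}}\|_1$ to obtain the basic inequality
\begin{equation*}
    \bm{\widetilde{\Delta}}^\top\bm{Q}\bm{\widetilde{\Delta}}\;\leq\;\langle \bm{b}-\bm{Q}\bm{\theta^\star},\bm{\widetilde{\Delta}}\rangle+\lambda\bigl(\|\bm{\theta^\star}\|_1-\|\bm{\widetilde{\theta}}\|_1\bigr).
\end{equation*}
Writing $\bm{Q}=\bm{\Sigma}^\star+(\bm{Q}-\bm{\Sigma}^\star)$ on the LHS and applying $|\bm{v}^\top\bm{M}\bm{v}|\leq\|\bm{M}\|_\infty\|\bm{v}\|_1^2$ gives
\begin{equation*}
    \kappa_0\|\bm{\widetilde{\Delta}}\|_2^2\;\leq\;\|\bm{Q}-\bm{\Sigma}^\star\|_\infty\|\bm{\widetilde{\Delta}}\|_1^2+\|\bm{Q}\bm{\theta^\star}-\bm{b}\|_\infty\|\bm{\widetilde{\Delta}}\|_1+\lambda\bigl(\|\bm{\theta^\star}\|_1-\|\bm{\widetilde{\theta}}\|_1\bigr).
\end{equation*}

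The key step is to linearize the quadratic $\|\bm{\widetilde{\Delta}}\|_1^2$ term. Since both $\bm{\widetilde{\theta}}$ and $\bm{\theta^\star}$ lie in the $\ell_1$-ball of radius $R$, the triangle inequality yields $\|\bm{\widetilde{\Delta}}\|_1\leq 2R$, so $\|\bm{Q}-\bm{\Sigma}^\star\|_\infty\|\bm{\widetilde{\Delta}}\|_1^2\leq 2R\|\bm{Q}-\bm{\Sigma}^\star\|_\infty\|\bm{\widetilde{\Delta}}\|_1$. Combining with the hypothesis (\ref{4.26}) (both of whose terms are now bounded by $\lambda/C_1$ up to the factor $R$), all perturbation terms can be packed into $\tfrac{3\lambda}{C_1}\|\bm{\widetilde{\Delta}}\|_1$. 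Letting $S=\mathrm{supp}(\bm{\theta^\star})$ with $|S|\leq s$, the standard decomposition $\|\bm{\theta^\star}\|_1-\|\bm{\widetilde{\theta}}\|_1\leq\|\bm{\widetilde{\Delta}}_S\|_1-\|\bm{\widetilde{\Delta}}_{S^c}\|_1$ then leaves
\begin{equation*}
    \kappa_0\|\bm{\widetilde{\Delta}}\|_2^2\;\leq\;\bigl(1+\tfrac{3}{C_1}\bigr)\lambda\|\bm{\widetilde{\Delta}}_S\|_1-\bigl(1-\tfrac{3}{C_1}\bigr)\lambda\|\bm{\widetilde{\Delta}}_{S^c}\|_1.
\end{equation*}

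Taking $C_1$ sufficiently large (e.g.\ $C_1\geq 6$) and discarding the non-positive term yields the cone condition $\|\bm{\widetilde{\Delta}}_{S^c}\|_1\lesssim\|\bm{\widetilde{\Delta}}_S\|_1$, hence $\|\bm{\widetilde{\Delta}}\|_1\lesssim\sqrt{s}\|\bm{\widetilde{\Delta}}_S\|_2\leq\sqrt{s}\|\bm{\widetilde{\Delta}}\|_2$. Plugging this back into the displayed inequality and using Cauchy–Schwarz on the RHS gives $\kappa_0\|\bm{\widetilde{\Delta}}\|_2^2\lesssim\lambda\sqrt{s}\|\bm{\widetilde{\Delta}}\|_2$, from which the $\ell_2$ bound $\|\bm{\widetilde{\Delta}}\|_2\lesssim\sqrt{s}\lambda/\kappa_0$ and then $\|\bm{\widetilde{\Delta}}\|_1\lesssim s\lambda/\kappa_0$ follow.

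I expect the main obstacle to be the linearization of $\|\bm{Q}-\bm{\Sigma}^\star\|_\infty\|\bm{\widetilde{\Delta}}\|_1^2$: this is exactly the step that breaks the classical Lasso analysis when $\bm{Q}$ is indefinite, and it is the reason why Proposition \ref{framework} \emph{requires} both the $\ell_1$-ball constraint $\|\bm{\theta}\|_1\leq R$ and the strengthened second term $R\cdot\|\bm{Q}-\bm{\Sigma^\star}\|_\infty$ inside (\ref{4.26}). Once this linearization is in place the rest of the argument is the standard cone-plus-restricted-strong-convexity reduction, so no further technical machinery is needed.
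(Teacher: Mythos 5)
Your proof is correct and follows essentially the same route as the paper's: plug $\bm{\theta}=\bm{\theta^\star}$ into the first-order condition, split $\bm{Q}=\bm{\Sigma^\star}+(\bm{Q}-\bm{\Sigma^\star})$, use the constraint to linearize $\|\bm{Q}-\bm{\Sigma^\star}\|_\infty\|\bm{\widetilde{\Delta}}\|_1^2$ against $R\|\bm{\widetilde{\Delta}}\|_1$, then derive the cone condition and plug back. The paper organizes the same algebra slightly differently (it packages the cone step as $2(\|\bm{\widetilde{\theta}}\|_1-\|\bm{\theta^\star}\|_1)\leq\|\bm{\widetilde{\Delta}}\|_1$ and references the support decomposition from an earlier proof), but the ideas, the role of each hypothesis, and the resulting bounds are identical.
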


By extracting the ingredients that guarantee (\ref{4.17}) to be accurate, interestingly, Proposition \ref{framework} is now independent of the model assumption (\ref{csmodel}).  Particularly, we could set $\bm{\Sigma^\star}=\mathbbm{E}[\bm{x}_k\bm{x}_k^\top]$ when we apply Proposition \ref{framework} to (\ref{csmodel}). Compared with the framework \cite[Thm. 1]{loh2013regularized}, the key strength of   Proposition \ref{framework} is that it does not explicitly assume the  RSC condition on the loss function that is hard to verify without assuming sub-Gaussian covariate.  
Instead, the role of the RSC assumption in \cite{loh2013regularized} is now played by $\lambda\gtrsim R\|\bm{Q}-\bm{\Sigma^\star}\|_\infty$,  which immediately yields a kind of  RSC condition by simple argument as (\ref{4.24}). Although this RSC condition is often essentially weaker than the conventional one  in terms of the leading factor of $\|\bm{\widetilde{\Upsilon}}\|_1^2$ (see   Remark \ref{rem4}), along this line one can still   derive non-trivial (or even near optimal) error rate. The gain of replacing RSC assumption with $\lambda\gtrsim R\|\bm{Q}-\bm{\Sigma^\star}\|_\infty$ is that the latter amounts  to 
constructing element-wise estimator for $\bm{\Sigma^\star}$, which is often much easier for heavy-tailed covariate (e.g., due to many existing robust covariance estimator).

We conclude this part with a side interesting observation.
\begin{rem}
\label{rem5} \label{impro}By setting $\bar{\Delta}=0$, Theorem \ref{thm7} produces a result (with convex program) for the setting of Theorem \ref{thm5}.  Interestingly, with the additional $\ell_1$-constraint, a notable improvement is that the sub-optimal $n\gtrsim s^2\log d$ in Theorem \ref{thm5} is sharpened to the near optimal one in Theorem \ref{thm7}.  
More concretely, this is because (ii) in (\ref{4.8}) can be  tightened  to $(ii)$ of     (\ref{4.24}). Going back to the full-data unquantized regime,   Theorem \ref{thm7} with $\Delta=\bar{\Delta}=0$ recovers \cite[Theorem 2(b)]{fan2021shrinkage} with improved sample complexity requirement. 
\end{rem}
   
\subsubsection{1-bit QCS with Quantized Covariate} \label{sec4.3}
Our consideration of covariate quantization in QCS seems fairly new to the literature. To the best of our knowledge, the only related results are  \cite[Thms. 7-8]{chen2022high} for QCS with 1-bit quantized covariate and response. The assumption there, however, is quite restrictive. Specifically, it is assumed that $\bm{\Sigma^\star}=\mathbbm{E}(\bm{x}_k\bm{x}_k^\top)$ has sparse columns (see \cite[Assumption 3]{chen2022high}), which is non-standard in both compressed sensing and sparse linear regression.  
Departing momentarily from our focus of dithered uniform quantization, we consider QCS under dithered 1-bit quantization  and   will apply Proposition \ref{framework} to derive results comparable to \cite[Thms. 7-8]{chen2022high} without resorting to the sparsity of $\bm{\Sigma^\star}$.  

We first review the 1-bit quantization scheme developed in \cite{chen2022high}: 
\begin{itemize}
[leftmargin=5ex,topsep=0.15ex]
 \setlength\itemsep{-0.1em}
    \item \textbf{Response Quantization.} We truncate $y_k$ to $\widetilde{y}_k = \mathscr{T}_{\zeta_y}(y_k)$ with some threshold $\zeta_y$, and then quantize $\widetilde{y}_k$ to $\dot{y}_k = \sign(\widetilde{y}_k+\phi_k)$ with uniform dither $\phi_k\sim \mathscr{U}([-\gamma_y,\gamma_y])$.

    \item \textbf{Covariate Quantization.} We truncate $\bm{x}_k$ to $\bm{\widetilde{x}}_k=\mathscr{T}_{\zeta_x}(\bm{x}_k)$ with some threshold $\zeta_x$, and then quantize $\bm{\widetilde{x}}_k$ to $\bm{\dot{x}}_{k1}= \sign(\bm{\widetilde{x}}_k+\bm{\tau}_{k1})$ and $\bm{\dot{x}}_{k2}= \sign(\bm{\widetilde{x}}_k+\bm{\tau}_{k2})$, where $\bm{\tau}_{k1},\bm{\tau}_{k2}\sim \mathscr{U}([-\gamma_x,\gamma_x]^d)$ are independent uniform dithers. (Note that we collect 2 bits per  entry).

   
\end{itemize}

The following two results refine \cite[Thms. 7-8]{chen2022high} by deriving comparable error rates without using sparsity of $\bm{\Sigma^\star}$.

\begin{theorem}\label{sg1bitqccs}
{\rm  (1-bit Quantized Sub-Gaussian Covariate)\textbf{.}} Given $\delta>0$, we consider (\ref{csmodel}) where the $s$-sparse $\bm{\theta^\star}$ satisfies $\|\bm{\theta^\star}\|_1\leq R$, $\bm{x}_k$s are i.i.d. zero-mean sub-Gaussian with $\|\bm{x}_k\|_{\psi_2}\leq \sigma$, and $\bm{\Sigma^\star}=\mathbbm{E}(\bm{x}_k\bm{x}_k^\top)$ satisfies $\lambda_{\min}\big(\bm{\Sigma^\star}\big)\geq \kappa_0$   for some $\kappa_0>0$, the noise ${\epsilon}_k$s are independent of $\bm{x}_k$s and  i.i.d. sub-Gaussian, while for simplicity we assume $\|y_k\|_{\psi_2}\leq \sigma$. In the quantization of $(\bm{x}_k,y_k)$ described above, we set $\zeta_x=\zeta_y=\infty$ and $\gamma_x,\gamma_y\asymp \sigma \sqrt{\log\big(\frac{n}{2\delta \log d}\big)}$.  For   recovery we let $\bm{Q}:=\frac{\gamma_x^2}{2n}\sum_{k=1}^n\big(\bm{\dot{x}}_{k1}\bm{\dot{x}}_{k2}^\top+\bm{\dot{x}}_{k2}\bm{\dot{x}}_{k1}^\top\big)$, $\bm{b}:=\frac{\gamma_x\gamma_y}{n}\sum_{k=1}^n \dot{y}_k \bm{\dot{x}}_{k1}$, $\mathcal{S}=\{\bm{\theta}:\|\bm{\theta}\|_1\leq R\}$ and set   $\lambda = C_1\sigma^2R \sqrt{\frac{\delta \log d(\log n)^2}{n}}$   with sufficiently large $C_1$. If $n\gtrsim \delta s\log d(\log n)^2$, then with probability at least $1-4d^{2-\delta}$, all  $\bm{\widetilde{\theta}}\in\mathcal{S}$ satisfying (\ref{4.17}) have   estimation error $\bm{\widetilde{\Upsilon}}:=\bm{\widetilde{\theta}}-\bm{\theta^\star}$ bounded by \begin{equation}\nonumber
    \|\bm{\widetilde{\Upsilon}}\|_2\leq C_2 \frac{\sigma^2}{\kappa_0}\cdot R\sqrt{\frac{\delta s\log d (\log n)^2}{n}} ~~\mathrm{and}~~\|\bm{\widetilde{\Upsilon}}\|_1\leq  C_3\frac{\sigma^2}{\kappa_0}\cdot Rs\sqrt{\frac{\delta \log d (\log n)^2}{n}}.
\end{equation}
\end{theorem}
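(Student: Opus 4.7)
The plan is to reduce Theorem \ref{sg1bitqccs} to Proposition \ref{framework} with $\mathcal{S}=\{\bm{\theta}:\|\bm{\theta}\|_1\leq R\}$. Since $\bm{\Sigma^\star}\bm{\theta^\star}=\mathbb{E}(y_k\bm{x}_k)$ (using that $\epsilon_k$ is zero-mean and independent of $\bm{x}_k$), the decomposition
\begin{equation}\nonumber
\bm{Q}\bm{\theta^\star}-\bm{b}=(\bm{Q}-\bm{\Sigma^\star})\bm{\theta^\star}+\bigl(\bm{\Sigma^\star}\bm{\theta^\star}-\bm{b}\bigr)
\end{equation}
together with $\|\bm{\theta^\star}\|_1\leq R$ shows that it suffices to verify, with the prescribed probability, that both $R\|\bm{Q}-\bm{\Sigma^\star}\|_\infty$ and $\|\bm{b}-\bm{\Sigma^\star}\bm{\theta^\star}\|_\infty$ are dominated by $\lambda\asymp \sigma^2 R\sqrt{\delta\log d\,(\log n)^2/n}$. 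Then the $\ell_2$ and $\ell_1$ error bounds follow directly from Proposition \ref{framework} with the scaling $\sqrt{s}\lambda/\kappa_0$ and $s\lambda/\kappa_0$.

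The key ingredient is the exact-recovery identity for the dithered 1-bit quantizer: if $\tau\sim\mathscr{U}([-\gamma,\gamma])$ is independent of $a$, then $\mathbb{E}[\gamma\,\sign(a+\tau)\mid a]=a$ whenever $|a|\leq\gamma$. Applied coordinatewise with the two independent covariate dithers $\bm{\tau}_{k1},\bm{\tau}_{k2}$ and the response dither $\phi_k$, conditioning gives, on the event $\mathcal{E}_k:=\{\|\bm{x}_k\|_\infty\leq\gamma_x,\,|y_k|\leq\gamma_y\}$,
\begin{equation}\nonumber
\mathbb{E}\bigl[\gamma_x^2\,\dot{x}_{k1,i}\dot{x}_{k2,j}\bigm|\bm{x}_k\bigr]=x_{ki}x_{kj},\qquad \mathbb{E}\bigl[\gamma_x\gamma_y\,\dot{y}_k\dot{x}_{k1,i}\bigm|\bm{x}_k,y_k\bigr]=y_kx_{ki},
\end{equation}
where the two independent dithers $\bm{\tau}_{k1},\bm{\tau}_{k2}$ are exactly what is needed to decouple the two factors of the quadratic form. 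The symmetrization in $\bm{Q}$ is just cosmetic (it restores symmetry without altering the mean). Off $\mathcal{E}_k$ the quantizer saturates; the saturation bias is $|\mathbb{E}[(\gamma_x^2\dot{x}_{k1,i}\dot{x}_{k2,j}-x_{ki}x_{kj})\mathbbm{1}(\mathcal{E}_k^c)]|\lesssim \gamma_x^2\,\mathbb{P}(\mathcal{E}_k^c)+\mathbb{E}[|x_{ki}x_{kj}|\mathbbm{1}(\mathcal{E}_k^c)]$, and similarly for the $\bm{b}$ side, and both are controlled by sub-Gaussian tail bounds. The calibration $\gamma_x,\gamma_y\asymp\sigma\sqrt{\log(n/(\delta\log d))}$ makes $\mathbb{P}(\mathcal{E}_k^c)$ polynomially small in $n$, so the bias contribution is absorbed into the Hoeffding fluctuation below.

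For the stochastic part, every summand defining an entry of $\bm{Q}$ is bounded in absolute value by $\gamma_x^2$ and every summand of $\bm{b}$ by $\gamma_x\gamma_y$. Hoeffding's inequality followed by a union bound over the $d^2$ entries of $\bm{Q}$ and the $d$ entries of $\bm{b}$ therefore yields, with probability at least $1-4d^{2-\delta}$,
\begin{equation}\nonumber
\|\bm{Q}-\mathbb{E}\bm{Q}\|_\infty\lesssim \gamma_x^2\sqrt{\frac{\delta\log d}{n}},\qquad \|\bm{b}-\mathbb{E}\bm{b}\|_\infty\lesssim \gamma_x\gamma_y\sqrt{\frac{\delta\log d}{n}}.
\end{equation}
Substituting $\gamma_x^2,\gamma_x\gamma_y\asymp\sigma^2\log n$ gives the bound $\sigma^2\sqrt{\delta\log d\,(\log n)^2/n}$, so $R\|\bm{Q}-\bm{\Sigma^\star}\|_\infty\lesssim \lambda$ and $\|\bm{b}-\bm{\Sigma^\star}\bm{\theta^\star}\|_\infty\lesssim \sigma^2\sqrt{\delta\log d\,(\log n)^2/n}\leq \lambda$ (absorbing $R\gtrsim 1$ into the constant), thereby verifying the hypothesis of Proposition \ref{framework}.

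The hard part is the bookkeeping of the saturation bias: one needs to confirm that the tail integrals $\mathbb{E}[|x_{ki}|^p\mathbbm{1}(|x_{ki}|>\gamma_x)]$ (and the corresponding $y_k$-version) are of smaller order than $\gamma_x^2\sqrt{\log d/n}$. This is exactly what forces the logarithmic scaling $\gamma_x,\gamma_y\asymp\sigma\sqrt{\log(n/(\delta\log d))}$, which in turn is the source of the $(\log n)^2$ factor in the final rate and in the sample complexity $n\gtrsim \delta s\log d\,(\log n)^2$. Once that calibration is done cleanly, the remainder is a mechanical invocation of Proposition \ref{framework}.
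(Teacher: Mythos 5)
Your proposal is correct and follows essentially the same route as the paper's proof: both reduce the theorem to Proposition \ref{framework}, both verify condition (\ref{4.26}) by decomposing $\bm{Q\theta^\star}-\bm{b}$ into $(\bm{Q}-\bm{\Sigma^\star})\bm{\theta^\star}$ plus $\bm{\Sigma^\star\theta^\star}-\bm{b}$ and bounding $\|\bm{Q}-\bm{\Sigma^\star}\|_\infty$ and the second term separately. The only difference is one of sourcing: the paper outsources the two concentration estimates to \cite[Theorems 1 and 7]{chen2022high}, whereas you re-derive them from scratch via the dithered-sign identity $\mathbbm{E}[\gamma\,\sign(a+\tau)\mid a]=\sign(a)\min\{|a|,\gamma\}$, the independence of $\bm{\tau}_{k1},\bm{\tau}_{k2}$ that decouples the two factors of $\bm{Q}$, a Hoeffding fluctuation bound (each summand is bounded by $\gamma_x^2$, resp.\ $\gamma_x\gamma_y$), and a sub-Gaussian tail estimate for the clipping bias that the calibrated $\gamma_x,\gamma_y\asymp\sigma\sqrt{\log(n/(\delta\log d))}$ renders of smaller order. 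This is precisely the content of the cited results, so your version is a self-contained rendering of the same argument rather than a distinct one; the bookkeeping you flag as the "hard part" is indeed where the $(\log n)^2$ factor and the calibration requirement originate, and your account of it is accurate.
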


\begin{theorem}\label{ht1bitqccs}
{\rm  (1-bit Quantized   Heavy-Tailed Covariate)\textbf{.}} Given $\delta>0$, we consider (\ref{csmodel}) where the $s$-sparse $\bm{\theta^\star}$ satisfies $\|\bm{\theta^\star}\|_1\leq R$, $\bm{x}_k$s are i.i.d. zero-mean heavy-tailed satisfying the joint fourth moment constraint $\sup_{\bm{v}\in \mathbb{S}^{d-1}}\mathbbm{E}|\bm{v}^\top \bm{x}_k|^4\leq M$, and $\bm{\Sigma^\star}=\mathbbm{E}(\bm{x}_k\bm{x}_k^\top)$ satisfies  $\lambda_{\min}\big(\bm{\Sigma^\star}\big)\geq \kappa_0$   for some $\kappa_0>0$, the noise $\epsilon_k$s are independent of $\bm{x}_k$s and i.i.d. heavy-tailed with bounded fourth moment, while for simplicity we assume $\mathbbm{E}|y_k|^4\leq M$.    In the   quantization of $(\bm{x}_k,y_k)$ described above, we set $\zeta_x,\zeta_y,\gamma_x,\gamma_y \asymp \big(\frac{nM^2}{\delta \log d}\big)^{1/8}$ and enforce  $\zeta_x <\gamma_x$, $\zeta_y<\gamma_y$. For   recovery we let $\bm{Q}:=\frac{\gamma_x^2}{2n}\sum_{k=1}^n\big(\bm{\dot{x}}_{k1}\bm{\dot{x}}_{k2}^\top+\bm{\dot{x}}_{k2}\bm{\dot{x}}_{k1}^\top\big)$, $\bm{b}:=\frac{\gamma_x\gamma_y}{n}\sum_{k=1}^n \dot{y}_k \bm{\dot{x}}_{k1}$, $\mathcal{S}=\{\bm{\theta}:\|\bm{\theta}\|_1\leq R\}$ and set $\lambda = C_1\sqrt{M}R\big(\frac{\delta \log d}{n}\big)^{1/4}$ with sufficiently large $C_1$.  If $n\gtrsim \delta s^2\log d$, then with probability at least $1-4d^{2-\delta}$,
all   $\bm{\widetilde{\theta}}\in \mathcal{S}$ satisfying (\ref{4.17}) have estimation error $\bm{\widetilde{\Upsilon}}:=\bm{\widetilde{\theta}}-\bm{\theta^\star}$ bounded by \begin{equation}\nonumber
    \|\bm{\widetilde{\Upsilon}}\|_2\leq C_2\frac{\sqrt{M}}{\kappa_0}\cdot R \Big(\frac{\delta s^2\log d}{n}\Big)^{1/4} ~~\mathrm{and}~~\|\bm{\widetilde{\Upsilon}}\|_1\leq C_3\frac{\sqrt{M}}{\kappa_0}\cdot Rs\Big(\frac{\delta \log d}{n}\Big)^{1/4}.
\end{equation}
\end{theorem}
\subsection{Uniform Recovery Guarantee}

Uniformity is a highly desired property for a compressed sensing guarantee because it allows one to use a fixed (possibly randomly drawn) measurement ensemble  for all sparse signals. Unfortunately, as many other results for nonlinear compressed sensing in the literature, our earlier recovery guarantees are non-uniform and only ensure the accurate recovery of a sparse signal fixed before drawing the random measurement ensemble.

 We provide another additional development to   QCS in this part. Specifically, we establish a uniform recovery guarantee which, despite the heavy-tailed noise and nonlinear quantization scheme, notably retains a near minimax error rate. This is done by upgrading Theorem \ref{thm4} to be uniform over all sparse $\bm{\theta^\star}$ by more in-depth technical tools and a careful covering argument. Part of the techniques is inspired by prior works \cite{genzel2022unified,xu2020quantized}, but certain technical innovations are required:


 1) Like the recent work \cite{genzel2022unified}, one crucial technical tool in our proof is a powerful concentration inequality for product process due to Mendelson \cite{mendelson2016upper}, as adapted in the present Lemma \ref{productpro}. However, \cite{genzel2022unified} only studied sub-Gaussian distribution, and the results produced by their unified approach typically exhibit a decaying rate of $O(n^{-1/4})$ \cite[Sect. 4]{genzel2022unified}. By contrast, our problem involves heavy-tailed noise only having bounded $(2+\nu)$-th moment ($\nu>0$), and we aim to establish a near minimax uniform error bound --- cautiousness and new treatment are thus needed in the application of Lemma \ref{productpro}. More specifically,
  in the proof we need to bound 
$$I_1= \sup_{\bm{\theta} \in \Sigma_{s,R_0}}\sup_{\bm{v}\in \mathscr{V}}\sum_{k=1}^n \big(\widetilde{y}_k\bm{x}_k^\top \bm{v}-\mathbbm{E}[\widetilde{y}_k\bm{x}_k^\top \bm{v}]\big),$$
where $\mathscr{V}=\{\bm{v}:\|\bm{v}\|_2=1,\|\bm{v}\|_1\leq 2\sqrt{s}\}$, and $\Sigma_{s,R_0}=\Sigma_s\cap \{\bm{\theta}\in \mathbb{R}^d:\|\bm{\theta}\|_2\leq R_0\}$ is the signal space of interest, and recall that $\widetilde{y}_k=\mathscr{T}_{\zeta_y}(\bm{x}_k^\top\bm{\theta}+\epsilon_k)$ with sub-Gaussian $\bm{x}_k$. It is natural to invoke Lemma \ref{productpro} to bound $I_1$ straightforwardly, but the issue is on lack of good bound for $\|\widetilde{y}_k\|_{\psi_2}$ due to the heavy-tailedness of $\epsilon_k$; indeed,  one only has   the trivial estimate as
$\|\widetilde{y}_k\|_{\psi_2}=O(\zeta_y)$,  which is much worse than an $O(1)$ bound since $\zeta_y\asymp \sqrt{\frac{n}{\delta \log d}}$, and   using Lemma \ref{productpro} with this estimate leads to a loose bound for $I_1$ and finally a sub-optimal error rate. To address the issue, our main idea is to introduce the truncated heavy-tailed noise $\mathscr{T}_{\zeta_y}(\epsilon_k)$ and define $\widetilde{z}_k=\widetilde{y}_k-\mathscr{T}_{\zeta_y}(\epsilon_k)$, which enables us to decompose $I_1$ as  
\begin{equation}
    \nonumber
    I_1\leq \underbrace{\sup_{\bm{\theta}\in \Sigma_{s,R_0}}\sup_{\bm{v}\in\mathscr{V}}\sum_{k=1}^n \big(\widetilde{z}_k\bm{x}_k^\top\bm{v}-\mathbbm{E}[\widetilde{z}_k\bm{x}_k^\top\bm{v}]\big)}_{:=I_{11}}+\underbrace{\sup_{\bm{v}\in \mathscr{V}} \sum_{k=1}^n \big(\mathscr{T}_{\zeta_y}(\epsilon_k)\bm{x}_k^\top\bm{v}-\mathbbm{E}[\mathscr{T}_{\zeta_y}(\epsilon_k)\bm{x}_k^\top\bm{v}]\big)}_{:=I_{12}}. 
\end{equation}
Now, the benefits of working with $I_{11},I_{12}$ are that: i) We can directly invoke Lemma \ref{productpro} to bound $I_{11}$  since we have a good sub-Gaussian norm estimate   $\|\widetilde{z}_k\|_{\psi_2}\leq \|\bm{x}_k^\top\bm{\theta}\|_{\psi_2}\lesssim\|\bm{x}_k\|_{\psi_2}R_0$, see Step 2.1.1 in the proof; ii) $I_{12}$ becomes the supremum of a process that is independent of $\bm{\theta}$ and only indexed by $\bm{v}$, hence Bernstein's inequality suffices for bounding $I_{12}$ (Step 2.1.2 in the proof), analogously to the proof of the non-uniform guarantee (Theorem \ref{thm4}).

2) Like  \cite[Prop. 6.1]{xu2020quantized}, we invoke a covering argument with similar techniques to bound $I_0=\sup_{\bm{\theta}\in \Sigma_{s,R_0}}\sup_{\bm{v}\in \mathscr{V}}\sum_{k=1}^n\xi_k\bm{x}_k^\top\bm{v}$, where $\xi_k=\mathcal{Q}_\Delta(\widetilde{y}_k+\tau_k)-\widetilde{y}_k$ is the quantization noise. Nevertheless, our Lasso estimator is different from their projected back projection estimator, and it turns out that we need to directly handle "$\sup_{\bm{v}\in \mathscr{V}}$"  by Lemma \ref{talagrand}, unlike    \cite[Prop. 6.2]{xu2020quantized} that again used a covering argument for this purpose. See more discussions in Step 2.4 of the proof.

We are in a position to present our uniform recovery guarantee. We follow most assumptions in Theorem \ref{thm4} but specify the signal space as $\bm{\theta^\star}\in \Sigma_{s,R_0}$ and impose the $(2l)$-th moment constraint on $\epsilon_k$.   Following prior works on QCS (e.g., \cite{genzel2022unified,thrampoulidis2020generalized}),  we consider constrained Lasso that utilizes an $\ell_1$-constraint $\|\bm{\theta}\|_1\leq \|\bm{\theta}^\star\|_1$ (rather than (\ref{4.3})) to pursue uniform recovery. 

\begin{theorem}
    \label{uniformtheorem}
    {\rm  (Uniform Version of Theorem \ref{thm4})\textbf{.}} Given some $\delta>0,\Delta>0$, in (\ref{csmodel}) we 
      suppose  that $\bm{x}_k$s are i.i.d.,  zero-mean sub-Gaussian with $\|\bm{x}_k\|_{\psi_2}\leq \sigma$,      $\kappa_0\leq \lambda_{\min}(\bm{\Sigma^\star})\leq \lambda_{\max}(\bm{\Sigma^\star})\leq \kappa_1$  for some $\kappa_1\geq \kappa_0>0$ where $\bm{\Sigma^\star}=\mathbbm{E}(\bm{x}_k\bm{x}_k^\top)$, $\bm{\theta^\star}\in \Sigma_{s,R_0}:= \Sigma_s \cap \{\bm{\theta}:\|\bm{\theta}\|_2\leq R_0\}$ for some absolute constant $R_0$, $\epsilon_k$s are i.i.d. noise that are independent of $\bm{x}_k$s and satisfy $\mathbbm{E}|\epsilon_k|^{2l}\leq M$ for some fixed $l>1$.     In quantization, we truncate 
$y_k$ to $\widetilde{y}_k=\mathscr{T}_{\zeta_y}(y_k)$ with threshold $\zeta_y\asymp \big(\frac{n(M^{1/l}+\sigma^2)}{\delta \log d}\big)^{1/2}$, then quantize $\widetilde{y}_k$ to $\dot{y}_k = \mathcal{Q}_\Delta(\widetilde{y}_k+\tau_k)$ with uniform dither $\tau_k\sim \mathscr{U}([-\frac{\Delta}{2},\frac{\Delta}{2}])$. For recovery, we define the estimator $\bm{\widehat{\theta}}$ as the solution to constrained Lasso \begin{equation}\nonumber
        \bm{\widehat{\theta}}= \mathop{\arg\min}\limits_{\|\bm{\theta}\|_{1}\leq \|\bm{\theta^\star}\|_1}~\frac{1}{2n}\sum_{k=1}^n(\dot{y}_k-\bm{x}_k^\top\bm{\theta})^2 
    \end{equation}
    If $n \gtrsim \delta s\log \mathscr{W}$ for $\mathscr{W}= \frac{\kappa_1d^2n^3}{\Delta^2s^5\delta^3}$ and some hidden constant depending on $(\kappa_0,\sigma)$,   then  with probability at least $1-Cd^{1-\delta}$ on a single random draw of $(\bm{x}_k,\epsilon_k, \tau_k)_{k=1}^n$, it holds uniformly for all $\bm{\theta^\star}\in\Sigma_{s,R_0}$ that the estimation error  $\bm{\widehat{\Upsilon}}:=\bm{\widehat{\theta}}-\bm{\theta^\star}$ satisfy 
    \begin{gather}
        \|\bm{\widehat{\Upsilon}}\|_2 \leq \frac{C_3\sigma(\sigma+M^{\frac{1}{2l}})}{\kappa_0}\sqrt{\frac{\delta s\log d}{n}}+\frac{C_3\sigma\Delta}{\kappa_0}\sqrt{\frac{\delta s \log \mathscr{W}}{n}},\nonumber\\
        \|\bm{\widehat{\Upsilon}}\|_1\leq \frac{C_4\sigma(\sigma+M^{\frac{1}{2l}})}{\kappa_0}s\sqrt{\frac{\delta \log d}{n}}+\frac{C_4\sigma\Delta}{\kappa_0}s\sqrt{\frac{\delta\log \mathscr{W}}{n}}.\nonumber
    \end{gather}
\end{theorem}

 Notably, our uniform guarantee is still minimax optimal up to some additional logarithmic factors (i.e., $\sqrt{\log\mathscr{W}}$) arising from the covering argument (Step 2.4 of the proof), whose main aim is to show that one uniform dither $\bm{\tau}=[\tau_k]$ suffices for all signals. Thus naturally, $\sqrt{\log \mathscr{W}}$ is associated with a leading factor of the quantization level $\Delta$, meaning that the logarithmic gap between uniform recovery and non-uniform recovery closes when $\Delta\to 0$. In particular, Theorem \ref{uniformtheorem} implies a uniform error rate matching the non-uniform one  in Theorem \ref{thm4} (up to some multiplicative factors) when $\Delta$ is small  enough or in  an unquantized case.  


To the best of our knowledge, the only existing uniform guarantee for heavy-tailed QCS is \cite[Thm. 1.11]{dirksen2021non}, but the following  distinctions make it impossible to closely compare their result with our Theorem \ref{uniformtheorem}: 1) \cite[Thm. 1.11]{dirksen2021non} is for dithered 1-bit quantization, but ours is for dithered uniform quantizer; 2) We handle heavy-tailedness by truncation, while 
\cite[Thm. 1.11]{dirksen2021non} does not involve this kind of special treatment; 3) \cite[Thm. 1.11]{dirksen2021non} considers a highly intractable program with hamming distance as objective and $\bm{\theta}\in \Sigma_{s}$ as constraint (when specialized to sparse signal), while our Theorem \ref{uniformtheorem} is for the convex program Lasso; 4) Their analysis is based on an in-depth result on random hyperplane tessellations (see also \cite{dirksen2022sharp,plan2014dimension}), while our proof follows the more standard strategy (i.e., to upgrade each piece in a non-uniform proof to be uniform) and requires certain technical innovations (e.g., the treatment to deal with the truncation step). It is possible to use such a standard strategy to upgrade Theorem \ref{thm5} to a uniform result whose error rate may exhibit worse dependence on $s$ due to covering argument.




\section{Numerical Simulations}\label{sec6}
In this section we provide two sets of experimental results to support and demonstrate our theoretical developments. The first set of our simulations is devoted to validate our major standpoint that near minimax rates are achievable in  quantized heavy-tailed settings. Then, the second set of results are presented to illustrate the crucial role played by  the appropriate dither (i.e., triangular dither for covariate, uniform dither for response) before uniform quantization. For the importance of data truncation we refer to   in \cite[Sect. 5]{fan2021shrinkage}, which includes three estimation problems in this work and contrasts the estimations with or without the data truncation. 

\subsection{(Near) Minimax Error Rates}
Each data point in our results is set to be the mean value of $50$ or $100$ independent trials. 



\subsubsection{Quantized Covariance Matrix Estimation}
We start from covariance matrix estimation, specifically we verify the element-wise rate $\mathscr{B}_1:=O\big(\mathscr{L}\sqrt{\frac{\log d}{n}}\big)$ and operator norm rate $\mathscr{B}_2:=O\big(\mathscr{L}\sqrt{\frac{d\log d}{n}}\big)$ in Theorems \ref{thm1}-\ref{thm2}.

For estimator in Theorem \ref{thm1}, we draw $\bm{x}_k = (x_{ki})$ such that the first two coordinates are independently drawn from $\mathsf{t}(4.5)$, $(x_{ki})_{i=3,4}$ are from $\mathsf{t}(6)$ with covariance $\mathbbm{E}(x_{k3}x_{k4})=1.2$, and the remaining $d-4$ coordinates are i.i.d. following  $\mathsf{t}(6)$. We test different choices of $(d,\Delta)$  under $n=80:20:220$, and the log-log plots are shown in Figure \ref{fig1}(a). Clearly, for each $(d,\Delta)$ the experimental points roughly exhibit a straight line that is well aligned with the dashed line representing the $n^{-1/2}$ rate. As predicted by the factor $\mathscr{L}=\sqrt{M}+\Delta^2$, the curves with larger $\Delta$ are higher, but note that the error decreasing rates remain unchanged. In addition, the curves of $(d,\Delta)=(100,1),(120,1)$ are extremely close, which is consistent with the logarithmic dependence of $\mathscr{B}_1$  on  $d$.

For the error bound $\mathscr{B}_2$, the coordinates of $\bm{x}_k$ are independently drawn from a scaled version of $\mathsf{t}(4.5)$ such that $\bm{\Sigma^\star}=\mathrm{diag}(2,2,1,...,1)$, and we test different settings of $(d,\Delta)$ under $n= 200:100:1000$. As shown in Figure \ref{fig1}(b), the operator norm error decreases with $n$ in the optimal rate $n^{-1/2}$, and using a coarser dithered quantizer (i.e., larger $\Delta$) only slightly lifts the curves. Indeed, the effect seems consistent with $\mathscr{L}$'s quadratic dependence on $\Delta$. To validate the relative scaling of $n$ and $d$, in addition to the setting $(d,\Delta)=(100,1)$ under $n=200:100:1000$,
we try $(d,\Delta)= (150,1)$ under $1.5$ times  the original sample size $n=1.5\times(200:100:1000)$ (but in Figure \ref{fig1}(b) we still plot the curve according to the sample size of $200:100:1000$ without the multiplicative factor of $1.5$), and surprisingly the obtained curve coincides with the one for $(d,\Delta)=(100,1)$. Thus, ignoring the logarithmic factor $\log d$, the operator norm error can be characterized by $\mathscr{B}_2$ fairly well.

Additionally, we want to compare $\mathscr{B}_1$ and $\mathscr{B}_2$ regarding the dependence on $d$ more clearly. Specifically, we generate the samples $\bm{x}_k$s as in Figure \ref{fig1}(a) and test the fixed sample size  $n=180$ and varying dimension $d=80:20:260$. The max norm estimation errors of $\bm{\widehat{\Sigma}}$ in Theorem \ref{thm1} and the operator norm errors (under $d=80:20:180$ to ensure $n\geq d$) of the estimator in Theorem \ref{thm2} are reported in Figure \ref{fig1}(c). It is clear that the max norm error increases with $d$ rather slowly, while the operator norm error increases much more significantly under larger $d$. This is consistent with the logarithmic dependence of $\mathscr{B}_1$ on $d$ and the more essential dependence of $\mathscr{B}_2$ on $d$.    

\begin{figure}[ht!]
    \centering
    \includegraphics[scale = 0.6]{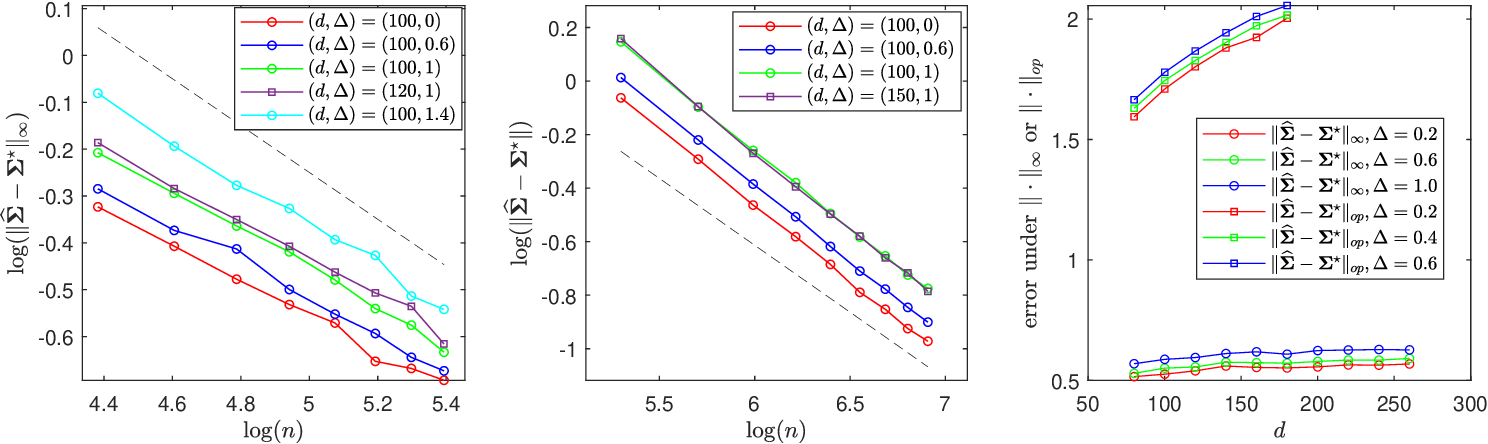}
    
 ~~~   (a) \hspace{4.3cm} (b)\hspace{4.3cm}(c) 
    \caption{(a): Element-wise error (Theorem \ref{thm1}); (b): operator norm error (Theorem \ref{thm2}); (c): the dependence on $d$ of both error metrics.}
    \label{fig1}
\end{figure}

\subsubsection{Quantized Compressed Sensing}

We now switch to QCS with unquantized covariate and aim to verify the $\ell_2$-norm error rate $\mathscr{B}_3=O\big(\mathscr{L}\sqrt{\frac{s\log d}{n}}\big)$ obtained in Theorems \ref{thm4}-\ref{thm5}. We let the support of the $s$-sparse   $\bm{\theta^\star}\in \mathbb{R}^d$ be $[s]$, and then draw the non-zero entries from a uniform distribution over $\mathbb{S}^{s-1}$ (hence $\|\bm{\theta^\star}\|_2=1$). For the setting of Theorem \ref{thm4} we adopt $\bm{x}_k\sim \mathcal{N}(0,\bm{I}_d)$ and $\epsilon_k\sim \frac{1}{\sqrt{6}}\mathsf{t}(3)$, while $\bm{x}_{ki}\stackrel{iid}{\sim}\frac{\sqrt{5}}{3}\mathsf{t}(4.5)$ and $\epsilon_k\sim \frac{1}{\sqrt{3}}\mathsf{t}(4.5)$ for Theorem \ref{thm5}. We simulate different choices of $(d,s,\Delta)$ under $n=100:100:1000$, and the proposed convex program (\ref{4.3}) is solved with the framework of ADMM (we refer to the review \cite{boyd2011distributed}). Experimental results are shown as   log-log plots in Figure \ref{fig2}. Consistent with the theoretical bound $\mathscr{B}_3$, the errors in both cases decrease in a rate of $n^{-1/2}$, whereas the effect of uniform quantization is merely on the multiplicative factor $\mathscr{L}$. Interestingly, it seems that the gaps between $\Delta=0,0.5$ and $\Delta = 0.5,1$ are in agreement with the explicit form of $\mathscr{L}$, i.e., $\mathscr{L}\asymp M^{1/(2l)}+\Delta$ for Theorem \ref{thm4}, and $\mathscr{L}\asymp \sqrt{M}+\Delta^2$ for Theorem \ref{thm5}. In addition, note that the curves of $(d,s)=(150,5),(180,5)$ are close, whereas increasing $s=8$ suffers from significantly larger error. This is consistent with the scaling law of  $(n,d,s)$ in $\mathscr{B}_3$.

\begin{figure}[ht!]
    \centering
    \includegraphics[scale = 0.62]{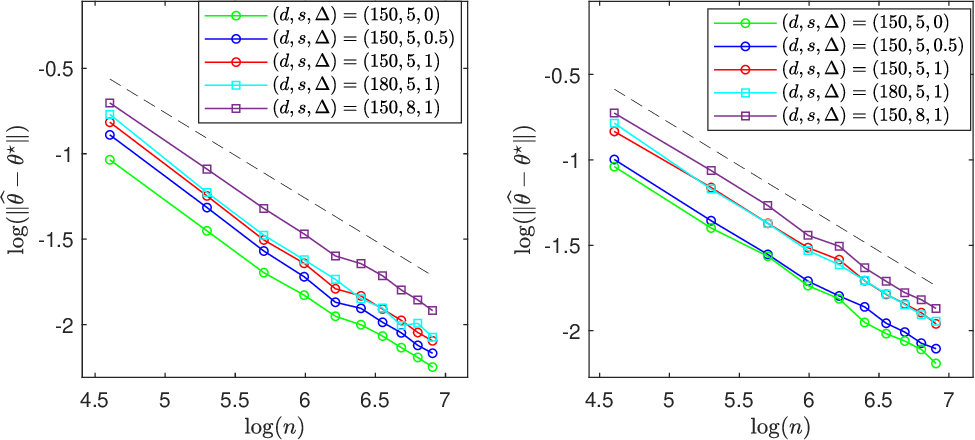}
    
 ~~~   (a) \hspace{4.4cm} (b) 
    \caption{(a): QCS in Theorem \ref{thm4}; (b): QCS in Theorem \ref{thm5}.}
    \label{fig2}
\end{figure}

Then, we simulate the complete quantization setting where both covariate and response are quantized (Theorems \ref{thm6}-\ref{thm7}). The simulation details are the same as before except that $\bm{x}_k$ is also quantized with quantization level same as $y_k$. We provide  the best $\ell_1$-norm constraint for recovery, i.e., $\mathcal{S}:=\{\bm{\theta}:\|\bm{\theta}\|_1\leq\|\bm{\theta^\star}\|_1 \}$. Then,   composite gradient descent \cite{loh2011high,loh2013regularized} is invoked to handle the non-convex estimation program. We show the log-log plots in Figure \ref{fig3}. Note that these results have implications similar to Figure \ref{fig2}, in terms of the $n^{-1/2}$ rate, the effect of quantization, and the relative scaling of $(n,d,s)$.

\begin{figure}[ht!]
    \centering
    \includegraphics[scale = 0.58]{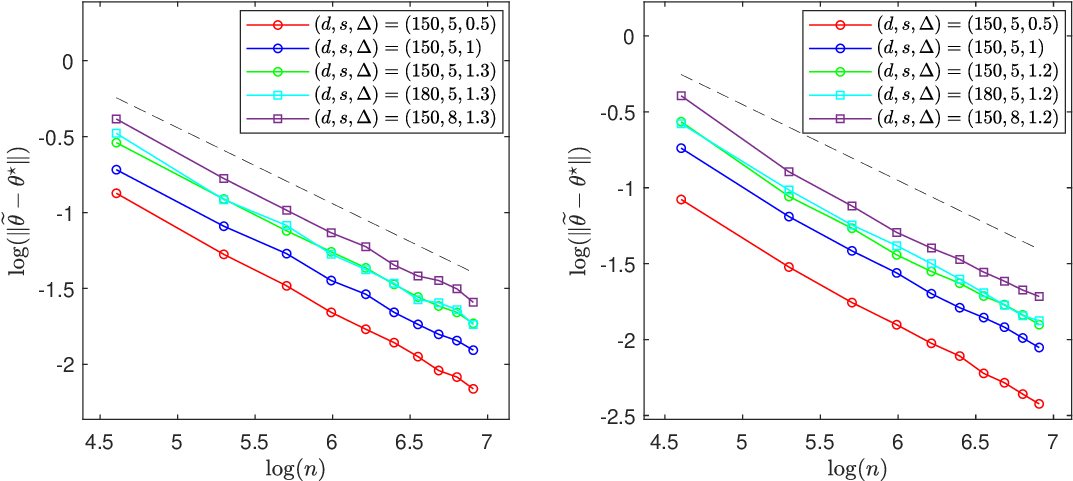}
    
 ~~~   (a) \hspace{4.4cm} (b) 
    \caption{(a):  QCS in Theorem \ref{thm6}; (b):  QCS in Theorem \ref{thm7}.}
    \label{fig3}
\end{figure}



\subsubsection{Quantized Matrix Completion}
Finally, we simulate QMC and demonstrate the   error bound $\mathscr{B}_4=O\big(\mathscr{L}\sqrt{\frac{rd\log d}{n}}\big)$ for $\|\bm{\widehat{\Upsilon}} \|_F/d$  in Theorems \ref{thm8}-\ref{thm9}. We generate the rank-$r$ $\bm{\Theta^\star}\in \mathbb{R}^{d\times d}$ as follows: we first generate $\bm{\Theta}_0\in \mathbb{R}^{d\times r}$ with i.i.d. standard Gaussian entries to obtain the rank-$r$ $\bm{\Theta}_1:=\bm{\Theta}_0\bm{\Theta}_0^\top$, then we rescale it to $\bm{\Theta^\star}:=k_1\bm{\Theta}_1$ such that $\|\bm{\Theta^\star}\|_F=d$. 
We use $\epsilon_k\sim \mathcal{N}(0,\frac{1}{4})$ to simulate the sub-exponential noise in Theorem \ref{thm8}, while $\epsilon_k\sim \frac{1}{\sqrt{6}}\mathsf{t}(3)$ for Theorem \ref{thm9}. The convex program (\ref{5.2}) is fed with  $\alpha=\|\bm{\Theta^\star}\|_\infty$ and optimized by the ADMM algorithm. We test different choices of $(d,r,\Delta)$ under $n=2000:1000:8000$, with the log-log error plots   displayed in Figure \ref{fig4}. Firstly, the experimental curves are well aligned with the dashed line that represents the optimal $n^{-1/2}$ rate. Then, comparing the results for $\Delta = 0,0.5,1$, we conclude that quantization only affects the multiplicative factor $\mathscr{L}$ in the estimation error. It should also be noted that, increasing either $d$ or $r$ leads to significantly larger error, which is consistent with the $\mathscr{B}_4$'s essential  dependence on $d$ and $r$.

\begin{figure}[ht!]
    \centering
    \includegraphics[scale = 0.72]{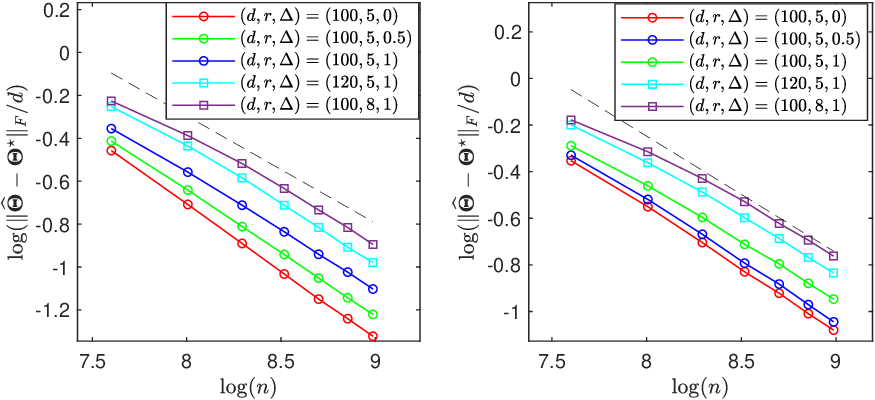}
    
 ~~~   (a) \hspace{4.4cm} (b) 
    \caption{(a): QMC in Theorem \ref{thm8}; (b): QMC in Theorem \ref{thm9}.}
    \label{fig4}
\end{figure}

\subsection{Importance of Appropriate Dithering}
To demonstrate the crucial role played by the suitable dither, we   provide the second set of simulations. In order to observe more significant phenomena and then conclude evidently, we may test huge sample size but a rather simple estimation problem under coarse quantization (i.e., large $\Delta$).

Specifically, for covariance matrix estimation we set $d=1$ and i.i.d. draw $X_1,...,X_n$ from $\mathcal{N}(0,1)$. Thus, the problem boils down to estimating $\mathbbm{E}|X_k|^2$, for which the estimators in Theorems \ref{thm1}-\ref{thm2} coincide. Since $X_k$ is sub-Gaussian, we do not perform data truncation before dithered quantization. Besides our estimator $\bm{\widehat{\Sigma}}=\frac{1}{n}\sum_{k=1}^n\dot{X}_k^2-\frac{\Delta^2}{4}$ where $\dot{X}_k=\mathcal{Q}_\Delta(X_k+\tau_k)$ and $\tau_k$ is triangular dither, we invite the following competitors: 
\begin{itemize}
[leftmargin=5ex,topsep=0.15ex]
 \setlength\itemsep{-0.1em}
    \item $\bm{\widehat{\Sigma}}_{no}=\frac{1}{n}\sum_{k=1}^n(\dot{X}'_k)^2$, where $\dot{X}'_k=\mathcal{Q}_\Delta(X_k)$ is the direct quantization without dithering;
    \item $\bm{\widehat{\Sigma}}_u-\frac{\Delta^2}{6}$ and $\bm{\widehat{\Sigma}}_u$, where $\bm{\widehat{\Sigma}}_u=\frac{1}{n}\sum_{k=1}^n(\dot{X}''_k)^2$, and $\dot{X}''_k=\mathcal{Q}_\Delta(X_k+\tau''_k)$ is quantized under uniform dither $\tau''_k\sim \mathscr{U}([-\frac{\Delta}{2},\frac{\Delta}{2}])$.
\end{itemize}   To illustrate the choice of $\bm{\widehat{\Sigma}}_u-\frac{\Delta^2}{6}$ and $\bm{\widehat{\Sigma}}_u$, we write $\dot{X}''_k=X_k+\tau''_k+w_k=X_k+\xi_k$ with quantization error $w_k\sim \mathscr{U}([-\frac{\Delta}{2},\frac{\Delta}{2}])$ (due to Theorem \ref{lem1}(a)) and quantization noise $\xi_k=\tau''_k+w_k$, then (\ref{3.1})  gives $\mathbbm{E}(\dot{X}''_k)^2=\mathbbm{E}|X_k|^2 +\mathbbm{E}|\xi_k|^2$, while $\mathbbm{E}|\xi_k|^2$ remains unknown. Thus, we consider $\bm{\widehat{\Sigma}}_u-\frac{\Delta^2}{6}$    because of an unjustified  guess $\mathbbm{E}|\xi_k|^2\approx \mathbbm{E}|\tau''_k|^2+\mathbbm{E}|w_k|^2=\frac{\Delta^2}{6}$, while   $\bm{\widehat{\Sigma}}_u$ simply gives up the correction of $\mathbbm{E}|\xi_k|^2$. We test $\Delta =3$ under $n=(2:2:20)\cdot10^3$. From the results shown in Figure \ref{fig5}(a), the proposed estimator based on quantized data under triangular dither embraces the lowest estimation errors and the optimal rate of $n^{-1/2}$, whereas other competitors are not consistent, i.e., they all reach some error floors under a large sample size.

For the two remaining signal recovery problems, we simply focus on the quantization of the response $y_k$. In particular, we simulate QCS in the setting of Theorem \ref{thm4}, with $(d,s,\Delta)=(50,3,2)$ under $n:=(2:2:20)\cdot 10^3$. Other experimental details are as previously stated. We compare our estimator $\bm{\widehat{\theta}}$ with its counterpart  $\bm{\widehat{\theta}}'$ defined by (\ref{4.3}) with the same $\bm{Q},\mathcal{S}$ but $\bm{b}'=\frac{1}{n}\sum_{k=1}^n\dot{y}'_k\bm{x}_k$, where $\dot{y}_k' = \mathcal{Q}_\Delta(\widetilde{y}_k)$ is a direct uniform quantization with no dither. Evidently, the simulation results   in Figure \ref{fig5}(b) confirm that the application of a uniform dither significantly lessens the recovery errors. Without dithering, although  our results under Gaussian covariate still exhibit $n^{-1/2}$ decreasing rate,  identifiability issue unavoidably arises under Bernoulli covariate. In that case, the simulation without dithering will evidently deviate from the $n^{-1/2}$ rate, see  \cite[Figure 1]{sun2022quantized} for instance.

In analogy, we simulate QMC   (Theorem \ref{thm8}) with data generated as previous experiments, and specifically we try $(d,r,\Delta)= (30,5,1.5)$ under $n=(5:5:25)\cdot 10^3$. While our estimator $\bm{\widehat{\Theta}}$ is defined in (\ref{5.2}) involving $\dot{y}_k$ from a dithered quantizer, we simulate the performance of its counterpart without dithering, i.e., $\bm{\widehat{\Theta}}'$ defined in (\ref{5.2}) with $\dot{y}_k$ substituted by $\dot{y}_k'=\mathcal{Q}_\Delta(y_k)$. From the experimental results displayed in Figure \ref{fig5}(c), one shall clearly see that $\bm{\widehat{\Theta}}$ performs much better in terms of the   decreasing rate of $n^{-1/2}$ and the estimation error; while the curve without dithering even does not decrease.

\begin{figure}[ht!]
    \centering
    \includegraphics[scale = 0.72]{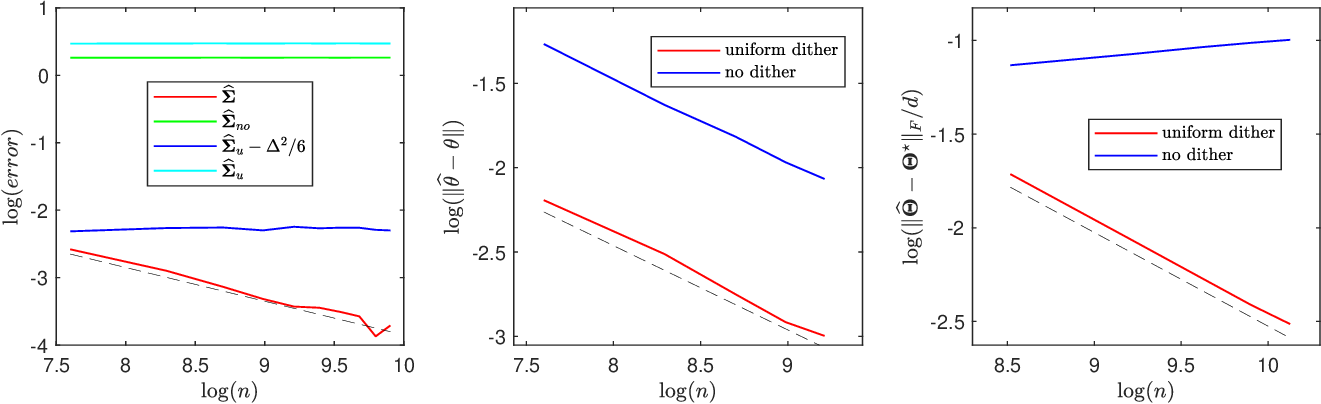}
    
 ~~~   (a) \hspace{4.6cm} (b) \hspace{4.6cm} (c) 
    \caption{(a): covariance matrix estimation; (b): QCS in Theorem \ref{thm4}; (c): QMC in Theorem \ref{thm8}.}
    \label{fig5}
\end{figure}
  
\section{Concluding Remarks}\label{sec7}
In digital signal processing and many distributed   machine learning systems, data quantization is an indispensable process. On the other hand,  many modern datasets exhibit   heavy-tailedness, and the past decade has witnessed an increasing interest in statistical estimation methods robust to heavy-tailed data. 
In this work we try to bridge these two developments by studying quantization of heavy-tailed data.
We propose to truncate the heavy-tailed data prior to  a   uniform quantizer with random dither well suited to the problem at hand. Applying our quantization scheme to covariance  matrix estimation, compressed sensing, and matrix completion, we have proposed (near) optimal estimators based on quantized data, and they are computationally feasible. These results suggest a unified conclusion that the dithered quantization does not affect the key scaling law in the error rate but only slightly worsens the multiplicative factor, which was complemented by numerical simulations. 
Further, from two respects, we presented additional developments for quantized compressed sensing. Firstly, we study a novel setting   that involves covariate quantization.   Because our quantized covariance matrix estimator is not positive semi-definite, the proposed recovery program is non-convex, but we proved that all local minimizers enjoy near minimax   rate. At a higher level, this development extends a line of works on non-convex M-estimator \cite{loh2011high,loh2013regularized,loh2017statistical} to accommodate heavy-tailed covariate, see the deterministic framework Proposition \ref{framework}. As application, we   derive results for (dithered) 1-bit compressed sensing as byproducts. Secondly,  we established a near minimax uniform recovery guarantee for QCS under heavy-tailed noise, which states that all sparse signals within an $\ell_2$-ball can be uniformly recovered up to near optimal $\ell_2$-norm error, using a single realization of  the measurement ensemble. We believe the developments presented in this work will prove useful in many other estimation problems, for instance, the triangular dither and the quantization scheme apply to multi-task learning, as shown by subsequent works \cite{chen2023quantized,li2023two}.



\bibliographystyle{plain}
\bibliography{libr}

\begin{thebibliography}{10}

\bibitem{ai2014one}
Albert Ai, Alex Lapanowski, Yaniv Plan, and Roman Vershynin.
\newblock One-bit compressed sensing with non-gaussian measurements.
\newblock {\em Linear Algebra and its Applications}, 441:222--239, 2014.

\bibitem{bennett2007netflix}
James Bennett, Stan Lanning, et~al.
\newblock The netflix prize.
\newblock In {\em Proceedings of KDD cup and workshop}, volume 2007, page~35.
  New York, 2007.

\bibitem{bhaskar2016probabilistic}
Sonia~A Bhaskar.
\newblock Probabilistic low-rank matrix completion from quantized measurements.
\newblock {\em The Journal of Machine Learning Research}, 17(1):2131--2164,
  2016.

\bibitem{bickel2008covariance}
Peter~J Bickel and Elizaveta Levina.
\newblock Covariance regularization by thresholding.
\newblock {\em The Annals of statistics}, 36(6):2577--2604, 2008.

\bibitem{biswas2007statistical}
Atanu Biswas, Sujay Datta, Jason~P Fine, and Mark~R Segal.
\newblock {\em Statistical advances in the biomedical sciences: clinical
  trials, epidemiology, survival analysis, and bioinformatics}.
\newblock John Wiley \& Sons, 2007.

\bibitem{boucheron2013concentration}
St{\'e}phane Boucheron, G{\'a}bor Lugosi, and Pascal Massart.
\newblock {\em Concentration inequalities: A nonasymptotic theory of
  independence}.
\newblock Oxford university press, 2013.

\bibitem{boufounos2013sparse}
Petros~T Boufounos.
\newblock Sparse signal reconstruction from phase-only measurements.
\newblock In {\em Proc. Int. Conf. Sampling Theory and Applications
  (SampTA)],(July 1-5 2013)}. Citeseer, 2013.

\bibitem{boufounos20081}
Petros~T Boufounos and Richard~G Baraniuk.
\newblock 1-bit compressive sensing.
\newblock In {\em 2008 42nd Annual Conference on Information Sciences and
  Systems}, pages 16--21. IEEE, 2008.

\bibitem{boyd2011distributed}
Stephen Boyd, Neal Parikh, Eric Chu, Borja Peleato, Jonathan Eckstein, et~al.
\newblock Distributed optimization and statistical learning via the alternating
  direction method of multipliers.
\newblock {\em Foundations and Trends{\textregistered} in Machine learning},
  3(1):1--122, 2011.

\bibitem{brownlees2015empirical}
Christian Brownlees, Emilien Joly, and G{\'a}bor Lugosi.
\newblock Empirical risk minimization for heavy-tailed losses.
\newblock {\em The Annals of Statistics}, 43(6):2507--2536, 2015.

\bibitem{cai2010optimal}
T~Tony Cai, Cun-Hui Zhang, and Harrison~H Zhou.
\newblock Optimal rates of convergence for covariance matrix estimation.
\newblock {\em The Annals of Statistics}, 38(4):2118--2144, 2010.

\bibitem{cai2012optimal}
T~Tony Cai and Harrison~H Zhou.
\newblock Optimal rates of convergence for sparse covariance matrix estimation.
\newblock {\em The Annals of Statistics}, pages 2389--2420, 2012.

\bibitem{cai2011adaptive}
Tony Cai and Weidong Liu.
\newblock Adaptive thresholding for sparse covariance matrix estimation.
\newblock {\em Journal of the American Statistical Association},
  106(494):672--684, 2011.

\bibitem{cai2013max}
Tony Cai and Wen-Xin Zhou.
\newblock A max-norm constrained minimization approach to 1-bit matrix
  completion.
\newblock {\em J. Mach. Learn. Res.}, 14(1):3619--3647, 2013.

\bibitem{candes2012exact}
Emmanuel Candes and Benjamin Recht.
\newblock Exact matrix completion via convex optimization.
\newblock {\em Communications of the ACM}, 55(6):111--119, 2012.

\bibitem{candes2015phase}
Emmanuel~J Candes, Xiaodong Li, and Mahdi Soltanolkotabi.
\newblock Phase retrieval via wirtinger flow: Theory and algorithms.
\newblock {\em IEEE Transactions on Information Theory}, 61(4):1985--2007,
  2015.

\bibitem{cao2015categorical}
Yang Cao and Yao Xie.
\newblock Categorical matrix completion.
\newblock In {\em 2015 IEEE 6th International Workshop on Computational
  Advances in Multi-Sensor Adaptive Processing (CAMSAP)}, pages 369--372. IEEE,
  2015.

\bibitem{catoni2012challenging}
Olivier Catoni.
\newblock Challenging the empirical mean and empirical variance: a deviation
  study.
\newblock In {\em Annales de l'IHP Probabilit{\'e}s et statistiques},
  volume~48, pages 1148--1185, 2012.

\bibitem{chen2022color}
Junren Chen and Michael~K Ng.
\newblock Color image inpainting via robust pure quaternion matrix completion:
  Error bound and weighted loss.
\newblock {\em SIAM Journal on Imaging Sciences}, 15(3):1469--1498, 2022.

\bibitem{chen2022uniform}
Junren Chen and Michael~K Ng.
\newblock Uniform exact reconstruction of sparse signals and low-rank matrices
  from phase-only measurements.
\newblock {\em IEEE Transactions on Information Theory}, 2023.

\bibitem{chen2022high}
Junren Chen, Cheng-Long Wang, Michael~K Ng, and Di~Wang.
\newblock High dimensional statistical estimation under one-bit quantization.
\newblock {\em IEEE Transactions on Information Theory}, 2023.

\bibitem{chen2023quantized}
Junren Chen, Yueqi Wang, and Michael~K Ng.
\newblock Quantized low-rank multivariate regression with random dithering.
\newblock {\em arXiv preprint arXiv:2302.11197}, 2023.

\bibitem{chen2015solving}
Yuxin Chen and Emmanuel Candes.
\newblock Solving random quadratic systems of equations is nearly as easy as
  solving linear systems.
\newblock {\em Advances in Neural Information Processing Systems}, 28, 2015.

\bibitem{dabeer2006signal}
Onkar Dabeer and Aditya Karnik.
\newblock Signal parameter estimation using 1-bit dithered quantization.
\newblock {\em IEEE Transactions on Information Theory}, 52(12):5389--5405,
  2006.

\bibitem{danaee2022distributed}
Alireza Danaee, Rodrigo~C de~Lamare, and Vitor~Heloiz Nascimento.
\newblock Distributed quantization-aware rls learning with bias compensation
  and coarsely quantized signals.
\newblock {\em IEEE Transactions on Signal Processing}, 70:3441--3455, 2022.

\bibitem{davenport20141}
Mark~A Davenport, Yaniv Plan, Ewout Van Den~Berg, and Mary Wootters.
\newblock 1-bit matrix completion.
\newblock {\em Information and Inference: A Journal of the IMA}, 3(3):189--223,
  2014.

\bibitem{davenport2016overview}
Mark~A Davenport and Justin Romberg.
\newblock An overview of low-rank matrix recovery from incomplete observations.
\newblock {\em IEEE Journal of Selected Topics in Signal Processing},
  10(4):608--622, 2016.

\bibitem{devroye2016sub}
Luc Devroye, Matthieu Lerasle, Gabor Lugosi, and Roberto~I Oliveira.
\newblock Sub-gaussian mean estimators.
\newblock {\em The Annals of Statistics}, 44(6):2695--2725, 2016.

\bibitem{dirksen2019quantized}
Sjoerd Dirksen.
\newblock Quantized compressed sensing: a survey.
\newblock In {\em Compressed Sensing and Its Applications}, pages 67--95.
  Springer, 2019.

\bibitem{dirksen2020one}
Sjoerd Dirksen, Hans~Christian Jung, and Holger Rauhut.
\newblock One-bit compressed sensing with partial gaussian circulant matrices.
\newblock {\em Information and Inference: A Journal of the IMA}, 9(3):601--626,
  2020.

\bibitem{dirksen2021covariance}
Sjoerd Dirksen, Johannes Maly, and Holger Rauhut.
\newblock Covariance estimation under one-bit quantization.
\newblock {\em The Annals of Statistics}, 50(6):3538--3562, 2022.

\bibitem{dirksen2021non}
Sjoerd Dirksen and Shahar Mendelson.
\newblock Non-gaussian hyperplane tessellations and robust one-bit compressed
  sensing.
\newblock {\em Journal of the European Mathematical Society}, 23(9):2913--2947,
  2021.

\bibitem{dirksen2022sharp}
Sjoerd Dirksen, Shahar Mendelson, and Alexander Stollenwerk.
\newblock Sharp estimates on random hyperplane tessellations.
\newblock {\em SIAM Journal on Mathematics of Data Science}, 4(4):1396--1419,
  2022.

\bibitem{fan2017estimation}
Jianqing Fan, Quefeng Li, and Yuyan Wang.
\newblock Estimation of high dimensional mean regression in the absence of
  symmetry and light tail assumptions.
\newblock {\em Journal of the Royal Statistical Society: Series B (Statistical
  Methodology)}, 79(1):247--265, 2017.

\bibitem{fan2021shrinkage}
Jianqing Fan, Weichen Wang, and Ziwei Zhu.
\newblock A shrinkage principle for heavy-tailed data: High-dimensional robust
  low-rank matrix recovery.
\newblock {\em Annals of statistics}, 49(3):1239, 2021.

\bibitem{feuillen2020ell}
Thomas Feuillen, Mike~E Davies, Luc Vandendorpe, and Laurent Jacques.
\newblock ($\ell_1$,$\ell_2$)-rip and projected back-projection reconstruction
  for phase-only measurements.
\newblock {\em IEEE Signal Processing Letters}, 27:396--400, 2020.

\bibitem{foucart2020weighted}
Simon Foucart, Deanna Needell, Reese Pathak, Yaniv Plan, and Mary Wootters.
\newblock Weighted matrix completion from non-random, non-uniform sampling
  patterns.
\newblock {\em IEEE Transactions on Information Theory}, 67(2):1264--1290,
  2020.

\bibitem{Foucart2013}
Simon Foucart and Holger Rauhut.
\newblock {\em A Mathematical Introduction to Compressive Sensing}.
\newblock Springer New York, New York, NY, 2013.

\bibitem{genzel2022generic}
Martin Genzel and Christian Kipp.
\newblock Generic error bounds for the generalized lasso with sub-exponential
  data.
\newblock {\em Sampling Theory, Signal Processing, and Data Analysis},
  20(2):15, 2022.

\bibitem{genzel2022unified}
Martin Genzel and Alexander Stollenwerk.
\newblock A unified approach to uniform signal recovery from nonlinear
  observations.
\newblock {\em Foundations of Computational Mathematics}, pages 1--74, 2022.

\bibitem{gray1993dithered}
Robert~M Gray and Thomas~G Stockham.
\newblock Dithered quantizers.
\newblock {\em IEEE Transactions on Information Theory}, 39(3):805--812, 1993.

\bibitem{gross2010quantum}
David Gross, Yi-Kai Liu, Steven~T Flammia, Stephen Becker, and Jens Eisert.
\newblock Quantum state tomography via compressed sensing.
\newblock {\em Physical review letters}, 105(15):150401, 2010.

\bibitem{hanna2021quantization}
Osama~A Hanna, Yahya~H Ezzeldin, Christina Fragouli, and Suhas Diggavi.
\newblock Quantization of distributed data for learning.
\newblock {\em IEEE Journal on Selected Areas in Information Theory},
  2(3):987--1001, 2021.

\bibitem{hsu2016loss}
Daniel Hsu and Sivan Sabato.
\newblock Loss minimization and parameter estimation with heavy tails.
\newblock {\em The Journal of Machine Learning Research}, 17(1):543--582, 2016.

\bibitem{ibragimov2015heavy}
Marat Ibragimov, Rustam Ibragimov, and Johan Walden.
\newblock {\em Heavy-tailed distributions and robustness in economics and
  finance}, volume 214.
\newblock Springer, 2015.

\bibitem{jacques2017time}
Laurent Jacques and Valerio Cambareri.
\newblock Time for dithering: fast and quantized random embeddings via the
  restricted isometry property.
\newblock {\em Information and Inference: A Journal of the IMA}, 6(4):441--476,
  2017.

\bibitem{jacques2021importance}
Laurent Jacques and Thomas Feuillen.
\newblock The importance of phase in complex compressive sensing.
\newblock {\em IEEE Transactions on Information Theory}, 67(6):4150--4161,
  2021.

\bibitem{jacques2013robust}
Laurent Jacques, Jason~N Laska, Petros~T Boufounos, and Richard~G Baraniuk.
\newblock Robust 1-bit compressive sensing via binary stable embeddings of
  sparse vectors.
\newblock {\em IEEE transactions on information theory}, 59(4):2082--2102,
  2013.

\bibitem{jeong2022sub}
Halyun Jeong, Xiaowei Li, Yaniv Plan, and Ozgur Yilmaz.
\newblock Sub-gaussian matrices on sets: Optimal tail dependence and
  applications.
\newblock {\em Communications on Pure and Applied Mathematics},
  75(8):1713--1754, 2022.

\bibitem{jung2021quantized}
Hans~Christian Jung, Johannes Maly, Lars Palzer, and Alexander Stollenwerk.
\newblock Quantized compressed sensing by rectified linear units.
\newblock {\em IEEE transactions on information theory}, 67(6):4125--4149,
  2021.

\bibitem{klopp2014noisy}
Olga Klopp.
\newblock Noisy low-rank matrix completion with general sampling distribution.
\newblock {\em Bernoulli}, 20(1):282--303, 2014.

\bibitem{klopp2015adaptive}
Olga Klopp, Jean Lafond, {\'E}ric Moulines, and Joseph Salmon.
\newblock Adaptive multinomial matrix completion.
\newblock {\em Electronic Journal of Statistics}, 9(2):2950--2975, 2015.

\bibitem{klopp2017robust}
Olga Klopp, Karim Lounici, and Alexandre~B Tsybakov.
\newblock Robust matrix completion.
\newblock {\em Probability Theory and Related Fields}, 169(1):523--564, 2017.

\bibitem{knudson2016one}
Karin Knudson, Rayan Saab, and Rachel Ward.
\newblock One-bit compressive sensing with norm estimation.
\newblock {\em IEEE Transactions on Information Theory}, 62(5):2748--2758,
  2016.

\bibitem{koltchinskii2011nuclear}
Vladimir Koltchinskii, Karim Lounici, and Alexandre~B Tsybakov.
\newblock Nuclear-norm penalization and optimal rates for noisy low-rank matrix
  completion.
\newblock {\em The Annals of Statistics}, 39(5):2302--2329, 2011.

\bibitem{konevcny2016federated}
Jakub Kone{\v{c}}n{\`y}, H~Brendan McMahan, Felix~X Yu, Peter Richt{\'a}rik,
  Ananda~Theertha Suresh, and Dave Bacon.
\newblock Federated learning: Strategies for improving communication
  efficiency.
\newblock {\em arXiv preprint arXiv:1610.05492}, 2016.

\bibitem{kruczek2020detect}
Piotr Kruczek, Rados{\l}aw Zimroz, and Agnieszka Wy{\l}oma{\'n}ska.
\newblock How to detect the cyclostationarity in heavy-tailed distributed
  signals.
\newblock {\em Signal Processing}, 172:107514, 2020.

\bibitem{kuchibhotla2022moving}
Arun~Kumar Kuchibhotla and Abhishek Chakrabortty.
\newblock Moving beyond sub-gaussianity in high-dimensional statistics:
  Applications in covariance estimation and linear regression.
\newblock {\em Information and Inference: A Journal of the IMA},
  11(4):1389--1456, 2022.

\bibitem{lafond2014probabilistic}
Jean Lafond, Olga Klopp, Eric Moulines, and Joseph Salmon.
\newblock Probabilistic low-rank matrix completion on finite alphabets.
\newblock {\em Advances in Neural Information Processing Systems}, 27, 2014.

\bibitem{li2023two}
Kangqiang Li and Yuxuan Wang.
\newblock Two results on low-rank heavy-tailed multiresponse regressions.
\newblock {\em arXiv preprint arXiv:2305.13897}, 2023.

\bibitem{liaw2017simple}
Christopher Liaw, Abbas Mehrabian, Yaniv Plan, and Roman Vershynin.
\newblock A simple tool for bounding the deviation of random matrices on
  geometric sets.
\newblock In {\em Geometric aspects of functional analysis}, pages 277--299.
  Springer, 2017.

\bibitem{loh2017statistical}
Po-Ling Loh.
\newblock Statistical consistency and asymptotic normality for high-dimensional
  robust $ m $-estimators.
\newblock {\em The Annals of Statistics}, 45(2):866--896, 2017.

\bibitem{loh2011high}
Po-Ling Loh and Martin~J. Wainwright.
\newblock High-dimensional regression with noisy and missing data: Provable
  guarantees with nonconvexity.
\newblock {\em The Annals of statistics}, 40(3):1637--1664, 2012.

\bibitem{loh2013regularized}
Po-Ling Loh and Martin~J. Wainwright.
\newblock Regularized m-estimators with nonconvexity: Statistical and
  algorithmic theory for local optima.
\newblock {\em Journal of Machine Learning Research}, 16(19):559--616, 2015.

\bibitem{lugosi2019mean}
G{\'a}bor Lugosi and Shahar Mendelson.
\newblock Mean estimation and regression under heavy-tailed distributions: A
  survey.
\newblock {\em Foundations of Computational Mathematics}, 19(5):1145--1190,
  2019.

\bibitem{lugosi2021robust}
Gabor Lugosi and Shahar Mendelson.
\newblock Robust multivariate mean estimation: the optimality of trimmed mean.
\newblock {\em The Annals of Statistics}, 49(1):393--410, 2021.

\bibitem{mendelson2016upper}
Shahar Mendelson.
\newblock Upper bounds on product and multiplier empirical processes.
\newblock {\em Stochastic Processes and their Applications},
  126(12):3652--3680, 2016.

\bibitem{minsker2015geometric}
Stanislav Minsker.
\newblock Geometric median and robust estimation in banach spaces.
\newblock {\em Bernoulli}, 21(4):2308--2335, 2015.

\bibitem{mo2018limited}
Jianhua Mo and Robert~W Heath.
\newblock Limited feedback in single and multi-user mimo systems with
  finite-bit adcs.
\newblock {\em IEEE Transactions on Wireless Communications}, 17(5):3284--3297,
  2018.

\bibitem{negahban2011estimation}
Sahand Negahban and Martin~J Wainwright.
\newblock Estimation of (near) low-rank matrices with noise and
  high-dimensional scaling.
\newblock {\em The Annals of Statistics}, 39(2):1069--1097, 2011.

\bibitem{negahban2012restricted}
Sahand Negahban and Martin~J Wainwright.
\newblock Restricted strong convexity and weighted matrix completion: Optimal
  bounds with noise.
\newblock {\em The Journal of Machine Learning Research}, 13(1):1665--1697,
  2012.

\bibitem{negahban2012unified}
Sahand~N Negahban, Pradeep Ravikumar, Martin~J Wainwright, and Bin Yu.
\newblock A unified framework for high-dimensional analysis of $ m $-estimators
  with decomposable regularizers.
\newblock {\em Statistical science}, 27(4):538--557, 2012.

\bibitem{nemirovskij1983problem}
Arkadij~Semenovi{\v{c}} Nemirovskij and David~Borisovich Yudin.
\newblock Problem complexity and method efficiency in optimization.
\newblock 1983.

\bibitem{nguyen2019low}
Luong~Trung Nguyen, Junhan Kim, and Byonghyo Shim.
\newblock Low-rank matrix completion: A contemporary survey.
\newblock {\em IEEE Access}, 7:94215--94237, 2019.

\bibitem{plan2012robust}
Yaniv Plan and Roman Vershynin.
\newblock Robust 1-bit compressed sensing and sparse logistic regression: A
  convex programming approach.
\newblock {\em IEEE Transactions on Information Theory}, 59(1):482--494, 2012.

\bibitem{plan2013one}
Yaniv Plan and Roman Vershynin.
\newblock One-bit compressed sensing by linear programming.
\newblock {\em Communications on Pure and Applied Mathematics},
  66(8):1275--1297, 2013.

\bibitem{plan2014dimension}
Yaniv Plan and Roman Vershynin.
\newblock Dimension reduction by random hyperplane tessellations.
\newblock {\em Discrete \& Computational Geometry}, 51(2):438--461, 2014.

\bibitem{plan2016generalized}
Yaniv Plan and Roman Vershynin.
\newblock The generalized lasso with non-linear observations.
\newblock {\em IEEE Transactions on information theory}, 62(3):1528--1537,
  2016.

\bibitem{plan2017high}
Yaniv Plan, Roman Vershynin, and Elena Yudovina.
\newblock High-dimensional estimation with geometric constraints.
\newblock {\em Information and Inference: A Journal of the IMA}, 6(1):1--40,
  2017.

\bibitem{qiu2020robust}
Shuang Qiu, Xiaohan Wei, and Zhuoran Yang.
\newblock Robust one-bit recovery via relu generative networks: Near-optimal
  statistical rate and global landscape analysis.
\newblock In {\em International Conference on Machine Learning}, pages
  7857--7866. PMLR, 2020.

\bibitem{raskutti2011minimax}
Garvesh Raskutti, Martin~J Wainwright, and Bin Yu.
\newblock Minimax rates of estimation for high-dimensional linear regression
  over $\ell_q $-balls.
\newblock {\em IEEE transactions on information theory}, 57(10):6976--6994,
  2011.

\bibitem{recht2011simpler}
Benjamin Recht.
\newblock A simpler approach to matrix completion.
\newblock {\em Journal of Machine Learning Research}, 12(12), 2011.

\bibitem{roberts1962picture}
Lawrence Roberts.
\newblock Picture coding using pseudo-random noise.
\newblock {\em IRE Transactions on Information Theory}, 8(2):145--154, 1962.

\bibitem{sahu2019noising}
Sima Sahu, Harsh~Vikram Singh, Basant Kumar, and Amit~Kumar Singh.
\newblock De-noising of ultrasound image using bayesian approached heavy-tailed
  cauchy distribution.
\newblock {\em Multimedia Tools and Applications}, 78(4):4089--4106, 2019.

\bibitem{sivakumar2015beyond}
Vidyashankar Sivakumar, Arindam Banerjee, and Pradeep~K Ravikumar.
\newblock Beyond sub-gaussian measurements: High-dimensional structured
  estimation with sub-exponential designs.
\newblock {\em Advances in neural information processing systems}, 28, 2015.

\bibitem{stollenwerk2019one}
Alexander Stollenwerk.
\newblock {\em One-bit compressed sensing and fast binary embeddings}.
\newblock PhD thesis, Dissertation, RWTH Aachen University, 2019, 2019.

\bibitem{sun2022quantized}
Zhongxing Sun, Wei Cui, and Yulong Liu.
\newblock Quantized corrupted sensing with random dithering.
\newblock {\em IEEE Transactions on Signal Processing}, 70:600--615, 2022.

\bibitem{swami2002some}
Ananthram Swami and Brian~M Sadler.
\newblock On some detection and estimation problems in heavy-tailed noise.
\newblock {\em Signal Processing}, 82(12):1829--1846, 2002.

\bibitem{thrampoulidis2020generalized}
Christos Thrampoulidis and Ankit~Singh Rawat.
\newblock The generalized lasso for sub-gaussian measurements with dithered
  quantization.
\newblock {\em IEEE Transactions on Information Theory}, 66(4):2487--2500,
  2020.

\bibitem{tibshirani1996regression}
Robert Tibshirani.
\newblock Regression shrinkage and selection via the lasso.
\newblock {\em Journal of the Royal Statistical Society: Series B
  (Methodological)}, 58(1):267--288, 1996.

\bibitem{tropp2015introduction}
Joel~A Tropp.
\newblock An introduction to matrix concentration inequalities.
\newblock {\em arXiv preprint arXiv:1501.01571}, 2015.

\bibitem{vershynin2018high}
Roman Vershynin.
\newblock {\em High-dimensional probability: An introduction with applications
  in data science}, volume~47.
\newblock Cambridge university press, 2018.

\bibitem{woolson2011statistical}
Robert~F Woolson and William~R Clarke.
\newblock {\em Statistical methods for the analysis of biomedical data}.
\newblock John Wiley \& Sons, 2011.

\bibitem{xu2020quantized}
Chunlei Xu and Laurent Jacques.
\newblock Quantized compressive sensing with rip matrices: The benefit of
  dithering.
\newblock {\em Information and Inference: A Journal of the IMA}, 9(3):543--586,
  2020.

\bibitem{yang2023plug}
Tianyu Yang, Johannes Maly, Sjoerd Dirksen, and Giuseppe Caire.
\newblock Plug-in channel estimation with dithered quantized signals in
  spatially non-stationary massive mimo systems.
\newblock {\em arXiv preprint arXiv:2301.04641}, 2023.

\bibitem{zhang2017zipml}
Hantian Zhang, Jerry Li, Kaan Kara, Dan Alistarh, Ji~Liu, and Ce~Zhang.
\newblock Zipml: Training linear models with end-to-end low precision, and a
  little bit of deep learning.
\newblock In {\em International Conference on Machine Learning}, pages
  4035--4043. PMLR, 2017.

\bibitem{zhu2021taming}
Ziwei Zhu and Wenjing Zhou.
\newblock Taming heavy-tailed features by shrinkage.
\newblock In {\em International Conference on Artificial Intelligence and
  Statistics}, pages 3268--3276. PMLR, 2021.

\end{thebibliography}
\begin{appendix}
 
\section{Proofs in Section \ref{sec3}}
\subsection{Quantized Covariance Matrix Estimation}
We first provide Bernstein's inequality that is recurring in our proofs.  In application we will choose the more convenient one from (\ref{bernform1}) and (\ref{bernform2}).
\begin{lem}\label{bernstein}
    {\rm (Bernstein's inequality, \cite[Thm. 2.10, Coro. 2.11]{boucheron2013concentration})\textbf{.}} Let $X_1,...,X_n$ be independent random variables, and assume that there exist positive numbers $v$ and $c$ such that $\sum_{i=1}^n \mathbbm{E}[X_i^2]\leq v$ and $$\sum_{i=1}^n \mathbbm{E}|X_i|^q\leq \frac{q!}{2}vc^{q-2}\text{ for all integers }q\geq 3 ,$$
    then for any $t>0$ we have  \begin{gather}
        \label{bernform1}\mathbbm{P}\left(\Big|\sum_{i=1}^n(X_i-\mathbbm{E}X_i)\Big|\geq \sqrt{2vt}+ct\right)\leq 2\exp\big(-t\big)
        \\
        \label{bernform2}
        \mathbbm{P}\left(\Big|\sum_{i=1}^n(X_i-\mathbbm{E}X_i)\Big|\geq t\right)\leq  2\exp \left(-\frac{t^2}{2(v+ct)}\right) 
    \end{gather}
\end{lem}
We will also use the Matrix Bernstein's inequality. 
\begin{lem}
    \label{matrixbern}
    {\rm (Matrix Bernstein, \cite[Thm. 6.1.1]{tropp2015introduction})\textbf{.}} Let $\bm{S}_1,...,\bm{S}_n$ be independent zero-mean random variables with common dimension $d_1\times d_2$. We assume that $\|\bm{S}_k\|_{op}\leq L$ for $k\in [n]$ and introduce the matrix variance statistic $$\nu = \max\left\{\Big\|\sum_{k=1}^n \mathbbm{E}(\bm{S}_k\bm{S}_k^\top)\Big\|_{op},\Big\|\sum_{k=1}^n \mathbbm{E}(\bm{S}_k^\top \bm{S}_k)\Big\|_{op}\right\}.$$
    Then for any $t\geq 0$, we have $$\mathbbm{P}\left(\Big\|\sum_{k=1}^n\bm{S}_k\Big\|_{op}\geq t\right)\leq (d_1+d_2)\exp\left(\frac{-\frac{1}{2}t^2}{ \nu  + \frac{ Lt}{3}}\right).$$
\end{lem}
\subsubsection{Proof of Theorem \ref{thm1}}
\begin{proof} Recall that $\bm{\xi}_k=\bm{\dot{x}}_k-\bm{\widetilde{x}}_k$ is the quantization noise, and $\mathbbm{E}(\bm{\xi}_k\bm{\xi}_k^\top) = \frac{\Delta^2}{4}\bm{I}_d$,  which implies $\mathbbm{E}\bm{\widehat{\Sigma}}=\mathbbm{E}(\bm{\widetilde{x}}_k\bm{\widetilde{x}}_k^\top)$. Thus, by using triangle inequality we obtain $$\|\bm{\widehat{\Sigma}} - \bm{\Sigma^\star}\|_{\infty}\leq \|\bm{\widehat{\Sigma}} - \mathbbm{E}\bm{\widehat{\Sigma}}\|_{\infty} + \|\mathbbm{E}(\bm{\widetilde{x}}_k\bm{\widetilde{x}}_k^\top - \bm{x}_k\bm{x}_k^\top)\|_{\infty}:=I_1+I_2.$$ 

\noindent{\bf\sffamily Step 1. Bounding $I_1$.}

\vspace{1mm}

Note that $\|\bm{\widehat{\Sigma}} - \mathbbm{E}\bm{\widehat{\Sigma}}\|_\infty = \|\frac{1}{n}\sum_{k=1}^n\bm{\dot{x}}_k\bm{\dot{x}}_k^\top-\mathbbm{E}(\bm{\dot{x}}_k\bm{\dot{x}}_k^\top)\|_\infty$, so for any $(i,j)\in [d]\times [d]$ we aim to bound the $(i,j)$-th entry error $$ |\widehat{\sigma}_{ij}- \mathbbm{E}\widehat{\sigma}_{ij}| =\left| \sum_{k=1}^n\frac{1}{n} \dot{x}_{ki}\dot{x}_{kj}-\mathbbm{E}(\dot{x}_{ki}\dot{x}_{kj})\right|$$

Observe that the quantization noise is bounded as follows  $$\|\bm{\xi}_k\|_\infty\leq \|\mathcal{Q}_\Delta(\bm{\widetilde{x}}_k+\bm{\tau}_k)-(\bm{\widetilde{x}}_k+\bm{\tau}_k)\|_\infty + \|\bm{\tau}_k\|_\infty \leq \frac{3\Delta}{2},$$ which implies  $\mathbbm{E}|\xi_{ki}|^4\leq (\frac{3\Delta}{2})^4$ and $\|\bm{\dot{x}}_k\|_\infty \leq \|\bm{\widetilde{x}}_k\|_\infty + \|\bm{\xi}_k\|_\infty \leq \zeta + \frac{3\Delta}{2}$. By the moment constraint on $x_{ki}$ we have  $\mathbbm{E}|\widetilde{x}_{ki}|^4\leq \mathbbm{E}|x_{ki}|^4\leq M$. Thus, for any positive integer $p\geq 2$ we have the following bound \begin{equation}
    \begin{aligned}\label{prepabernstein}
      \sum_{k=1}^n\mathbbm{E} \Big|\frac{\dot{x}_{ki}\dot{x}_{kj}}{n}\Big|^q &\leq \frac{(\zeta+\frac{3}{2}\Delta)^{2(q-2)}}{n^q}\sum_{k=1}^n  \mathbbm{E}(\dot{x}_{ki}\dot{x}_{kj})^2\\&\leq\frac{(\zeta+\frac{3}{2}\Delta)^{2(q-2)}}{2n^q} \sum_{k=1}^n\big(\mathbbm{E}|\dot{x}_{ki}|^4+\mathbbm{E}|\dot{x}_{kj}|^4\big)\\
      &\leq \frac{4(\zeta+\frac{3}{2}\Delta)^{2(q-2)}}{n^q}\sum_{k=1}^n \big(\mathbbm{E}|\widetilde{x}_{ki}|^4+\mathbbm{E}|\widetilde{x}_{kj}|^4+ \mathbbm{E}|\xi_{ki}|^4+\mathbbm{E}|\xi_{kj}|^4\big)\\&\leq \frac{q!}{2} v_0c_0^{q-2},
    \end{aligned}
\end{equation}
for some $v_0 = O\big(\frac{{M}+\Delta^4}{n}\big)$, $c_0 =O\big(\frac{(\zeta+\Delta)^2}{n}\big)$. With these preparations, we can invoke Bernstein's inequality  (Lemma \ref{bernstein}) to obtain that, for any $t\geq 0$, with probability at least $1-2\exp(-t)$,
\begin{equation}\nonumber
   |\widehat{\sigma}_{ij} - \mathbbm{E}\widehat{\sigma}_{ij}| \leq C_1 \Big(\sqrt{\frac{(M+\Delta^4)t}{n}} + \frac{(\zeta^2 + \Delta^2)t}{n}\Big).
\end{equation}
Taking $t = \delta \log d$ and using the choice $\zeta\asymp \big(\frac{nM}{\delta \log d}\big)^{1/4}$, then   applying a union bound over $(i,j)\in [d]\times [d]$, under the scaling $n\gtrsim \delta \log d$,  we obtain that $I_1 \lesssim (\sqrt{M}+\Delta^2) \sqrt{\frac{\delta \log d}{n}}$ holds with probability at least $1-2d^{2-\delta}$.

\vspace{1mm}

\noindent{\bf \sffamily Step 2. Bounding $I_2$.}

We aim to  bound  $|\mathbbm{E}(\widetilde{x}_{ki}\widetilde{x}_{kj} - x_{ki}x_{kj})|$ for any $(i,j)\in [d]\times [d]$. First by the definition of truncation we have $$\big|\mathbbm{E}(\widetilde{x}_{ki}\widetilde{x}_{kj} - x_{ki}x_{kj})\big|\leq \mathbbm{E}\big[|x_{ki}x_{kj}|(\mathbbm{1}(|x_{ki}|\geq \zeta)+\mathbbm{1}(|x_{kj}|\geq \zeta))\big];$$  then applying  Cauchy-Schwarz to $\mathbbm{E}\big[|x_{ki}x_{kj}|\mathbbm{1}(|x_{ki}\geq \zeta|)\big]$, we obtain $$\mathbbm{E}\big[|x_{ki}x_{kj}|\mathbbm{1}(|x_{ki}|\geq \zeta) \big] \leq \big[\mathbbm{E}|x_{ki}x_{kj}|^2\big]^{1/2}\big[\mathbbm{P}(|x_{ki}|\geq \zeta)\big]^{1/2}\leq \sqrt{M}\sqrt{\frac{M}{\zeta^4}} = \frac{M}{\zeta^2},$$
where the second inequality is due to Markov's inequality.
Note that this bound remains valid for $\mathbbm{E}\big[|x_{ki}x_{kj}|\mathbbm{1}(|x_{kj}|\geq \zeta)\big]$. Since this holds for any $(i,j)\in [d]\times [d]$, combining with $\zeta\asymp \big(\frac{nM}{\delta \log d}\big)^{1/4}$, we obtain $\|\mathbbm{E}(\bm{\widetilde{x}}_k\bm{\widetilde{x}}_k^\top - \bm{x}_k\bm{x}_k^\top)\|_\infty\leq \frac{2M}{\zeta^2}\lesssim \sqrt{M}\sqrt{\frac{\delta \log d}{n}}$.

By  putting pieces together,  we have $\|\bm{\widehat{\Sigma}} - \bm{\Sigma^\star}\|_{\infty}\lesssim (\sqrt{M}+\Delta^2)\sqrt{\frac{\delta \log d}{n}}$ with probability at least $1-2d^{2-\delta}$, as claimed. \end{proof}

\subsubsection{Proof of Theorem \ref{thm2}}
\begin{proof} Note that the calculations in (\ref{3.1}) and (\ref{Delta24}) remain valid (but the truncated samples are denoted by $\bm{\check{x}}_k$ rather than $\bm{\widetilde{x}}_k$), so we have $\mathbbm{E}\bm{\widehat{\Sigma}}=\mathbbm{E}(\bm{\check{x}}_k\bm{\check{x}}_k^\top)$. Using triangle inequality we first decompose the error as $$\|\bm{\widehat{\Sigma}} - \bm{\Sigma^\star}\|_{op}\leq \|\bm{\widehat{\Sigma}} -\mathbbm{E}\bm{\widehat{\Sigma}}\|_{op}+\|\mathbbm{E}(\bm{\check{x}}_k\bm{\check{x}}_k^\top-\bm{x}_k\bm{x}_k^\top)\| _{op}:=I_1+I_2.$$

\noindent{\bf \sffamily Step 1. Bounding $I_1$.}

We first write that $$\bm{\widehat{\Sigma}}-\mathbbm{E}\bm{\widehat{\Sigma}} = \frac{1}{n}\sum_{k=1}^n\bm{S}_k \text{ where }\bm{S}_k = \bm{\dot{x}}_k\bm{\dot{x}}_k^\top-\mathbbm{E}(\bm{\dot{x}}_k\bm{\dot{x}}_k^\top).$$ Recall  that we define quantization error as $\bm{w}_k=\bm{\dot{x}}_k- \bm{\check{x}}_k-\bm{\tau}_k$ and quantization noise as  $\bm{\xi}_k=\bm{\dot{x}}_k- \bm{\check{x}}_k$, and observe that the quantization noise is bounded $\|\bm{\xi}_k\|_\infty= \|\bm{\dot{x}}_k - \bm{\check{x}}_k\|_{\infty}=\|\bm{\tau}_k+\bm{w}_k\|_\infty\leq \frac{3}{2}\Delta$. Thus, by $\|\bm{a}\|^2 _2 \leq \sqrt{d}\|\bm{a}\|_4^2$ that holds for any $\bm{a}\in \mathbb{R}^d$, we obtain \begin{equation}
   \begin{aligned}\nonumber
     \|\bm{\dot{x}}_k\bm{\dot{x}}_k^\top\| _{op}&= \|\bm{\dot{x}}_k\|_2^2=\|\bm{\check{x}}_k + \bm{\xi}_k\|^2 _2\leq 2\|\bm{\check{x}}_k\|^2_2+2\|\bm{\xi}_k\|^2_2 \\
    &\leq 2 \sqrt{d}\cdot\|\bm{\check{x}}_k\|_4^2+2d\cdot\Big(\frac{3\Delta}{2}\Big)^2\leq 2\sqrt{d}\zeta^2+\frac{9}{2}d\Delta^2,
   \end{aligned}
\end{equation}
which implies $\|\bm{S}_k\|_{op} \leq \|\bm{\dot{x}}_k\bm{\dot{x}}_k^\top\|_{op} + \mathbbm{E}\|\bm{\dot{x}}_k\bm{\dot{x}}_k^\top\|_{op}\leq 4\sqrt{d}\zeta^2 +9d\Delta^2$. Moreover, we estimate the matrix variance statistic. Since $\bm{S}_k$ is symmetric,   we simply deal with $\|\mathbbm{E}\bm{S}_k^2\|_{op}$ and  some algebra gives $\mathbbm{E}\bm{S}_k^2=\mathbbm{E}\big[\|\bm{\dot{x}}_k\|_2^2\bm{\dot{x}}_k\bm{\dot{x}}_k^\top\big]-\big(\mathbbm{E}\big[\bm{\dot{x}}_k\bm{\dot{x}}_k^\top\big]\big)^2$. First let us note that \begin{equation}\nonumber
    \begin{aligned}\Big\|\big(\mathbbm{E}\big[\bm{\dot{x}}_k\bm{\dot{x}}_k^\top\big]\big)^2\Big\|_{op}&=\Big\|\mathbbm{E}\big[\bm{\dot{x}}_k\bm{\dot{x}}_k^\top\big]\Big\|^2_{op}=\Big\|\mathbbm{E}\big[\bm{\check{x}}_k\bm{\check{x}}_k^\top\big] +\frac{\Delta^2}{4}\bm{I}_d\Big\|^2_{op}\\&\leq \Big(\Big\|\mathbbm{E}\big[\bm{\check{x}}_k\bm{\check{x}}_k^\top\big]\Big\|_{op} + \frac{\Delta^2}{4}\Big)^2\leq 2\Big\|\mathbbm{E}\big[\bm{\check{x}}_k\bm{\check{x}}_k^\top\big]\Big\|_{op}^2 + \frac{\Delta^4}{8}.\end{aligned}
\end{equation} Combining with the observation that  $$\Big\|\mathbbm{E}\big[\bm{\check{x}}_k\bm{\check{x}}_k^\top\big]\Big\|_{op}=\sup_{\bm{v}\in \mathbb{S}^{d-1}}\mathbbm{E}(\bm{v}^\top \bm{\check{x}}_k)^2\leq \sup_{\bm{v}\in \mathbb{S}^{d-1}}\sqrt{\mathbbm{E}(\bm{v}^\top \bm{x}_k)^4}\leq \sqrt{M},$$ we obtain $\big\|(\mathbbm{E}[\bm{\dot{x}}_k\bm{\dot{x}}_k^\top])^2\big\|_{op} = O(M+\Delta^4)$. Then we turn to the operator norm of $\mathbbm{E}[\|\bm{\dot{x}}_k\|_2^2\bm{\dot{x}}_k\bm{\dot{x}}_k^\top]$. We apply Cauchy-Schwarz to estimate \begin{equation}
    \begin{aligned}\label{3.7}
     \big\|\mathbbm{E}\big(\|\bm{\dot{x}}_k\|_2^2\bm{\dot{x}}_k\bm{\dot{x}}_k^\top\big)\big\|_{op} =\sup_{\bm{v}\in \mathbb{S}^{d-1}} \mathbbm{E}\big( \|\bm{\dot{x}}_k\|_2^2(\bm{v}^\top \bm{\dot{x}}_k)^2  \big){\leq} \sqrt{\mathbbm{E}\|\bm{\dot{x}}_k\|_2^4}\sup_{\bm{v}\in \mathbb{S}^{d-1}} \sqrt{\mathbbm{E}(\bm{v}^\top \bm{\dot{x}}_k)^4}.
    \end{aligned}
\end{equation}
By   $\|\bm{a}\|^2_2\leq \sqrt{d}\|\bm{a}\|_4^2$ that holds for any $\bm{a}\in \mathbb{R}^d$,   $\mathbbm{E}|\check{x}_{ki}|^4\leq \mathbbm{E}|x_{ki}|^4\leq M$, $\bm{\dot{x}}_k=\bm{\check{x}}_k+\bm{\xi}_k$ and $\|\bm{\xi}_k\|_\infty\leq \frac{3\Delta}{2}$, we obtain \begin{equation}
    \begin{aligned}
        \label{627add1}\mathbbm{E}\|\bm{\dot{x}}_k\|^4_2&\leq \mathbbm{E}(\|\bm{\check{x}}_k\|_2+\|\bm{\xi}_k\|_2)^4\lesssim \mathbbm{E}( \|\bm{\check{x}}_k\|_2^4+  \|\bm{\xi}_k\|^4_2)\\&\leq d\mathbbm{E}(\|\bm{\check{x}}_k\|^4_4 + \|\bm{\xi}_k\|^4_4)\lesssim d^2( M+\Delta^4).
    \end{aligned}
\end{equation} For any $\bm{v}\in \textcolor{black}{\mathbb{S}^{d-1}}$, we write 
$\bm{\dot{x}}_k= \bm{\check{x}}_k+\bm{\tau}_k+\bm{w}_k$ and then have the bound \begin{equation}
    \begin{aligned}\label{627add2}
      &\mathbbm{E}(\bm{v}^\top \bm{\dot{x}}_k)^4  \lesssim \mathbbm{E}(\bm{v}^\top \bm{\check{x}}_k)^4+ \mathbbm{E}(\bm{v}^\top \bm{\tau}_k)^4+ \mathbbm{E}(\bm{v}^\top \bm{w}_k)^4\stackrel{(i)}{\lesssim} M + \Delta^4, 
    \end{aligned}
\end{equation}
where $(i)$ is because $\mathbbm{E}(\bm{v}^\top \bm{\check{x}}_k)^4\leq\mathbbm{E}(\bm{v}^\top \bm{x}_k)^4\leq M$,    $\bm{\tau}_k\sim\mathscr{U}([-\frac{\Delta}{2},\frac{\Delta}{2}]^d)+\mathscr{U}([-\frac{\Delta}{2},\frac{\Delta}{2}]^d) $, and the quantization error $\bm{w}_k$ follows $ \mathscr{U}([-\frac{\Delta}{2},\frac{\Delta}{2}]^d)$; in more detail, $\|\bm{v}^\top \bm{\tau}_k\|_{\psi_2},\|\bm{v}^\top \bm{w}_k\|_{\psi_2}=O(\Delta)$ and  then the moment constraint of sub-Gaussian random variable implies     $\mathbbm{E}(\bm{v}^\top\bm{\tau}_k)^4=O(\Delta^4)$ and $\mathbbm{E}(\bm{v}^\top\bm{w}_k)=O(\Delta^4)$. From (\ref{3.7}), (\ref{627add1}) and (\ref{627add2}), we obtain $\big\|\mathbbm{E}\big(\|\bm{\dot{x}}_k\|_2^2\bm{\dot{x}}_k\bm{\dot{x}}_k^\top\big)\big\|_{op}=O\big(d(\Delta^4+M)\big)$. Further combining with $\mathbbm{E}\bm{S}_k^2=\mathbbm{E}\big[\|\bm{\dot{x}}_k\|_2^2\bm{\dot{x}}_k\bm{\dot{x}}_k^\top\big]-\big(\mathbbm{E}\big[\bm{\dot{x}}_k\bm{\dot{x}}_k^\top\big]\big)^2$ and $\big\|(\mathbbm{E}[\bm{\dot{x}}_k\bm{\dot{x}}_k^\top])^2\big\|_{op} = O(M+\Delta^4)$, we arrive at $\|\mathbbm{E}\bm{S}_k^2\|_{op}\lesssim d(\Delta^4+M)$ and hence $\big\|\sum_{k=1}^n \mathbbm{E}\bm{S}_k^2\big\|_{op}\lesssim nd(\Delta^4+M)$.  With these preparations,   Matrix Bernstein's inequality (Lemma \ref{matrixbern}) yields the following inequality that holds for any $t\geq 0$\begin{equation}\nonumber
     \mathbbm{P}\Big(\|\bm{\widehat{\Sigma}} - \mathbbm{E}\bm{\widehat{\Sigma}}\|_{op}\geq t\Big) \leq 2d\exp\left(-\frac{C_1nt^2}{(M+\Delta^4)d + (\sqrt{d}\zeta^2+d\Delta^2)t}\right).
\end{equation}    
  Setting $t=C_2 (\sqrt{M}+\Delta^2)\sqrt{\frac{\delta d\log d}{n}}$ with sufficiently large $C_2$, under the scaling of $n\gtrsim \delta d\log d$ and the threshold $\zeta \asymp (M^{1/4}+\Delta)\big(\frac{n}{\delta \log d}\big)^{1/4}$, we obtain that  $I_1=\|\bm{\widehat{\Sigma}}-\mathbbm{E}\bm{\widehat{\Sigma}}\|_{op} \leq C_2(\sqrt{M}+\Delta^2)\sqrt{\frac{\delta d\log d}{n}}$ holds with probability at least $1-2d^{1-\delta}$.

\noindent{\bf \sffamily Step 2. Bounding $I_2$.}

Having bounded the concentration term $I_1$, we now switch to  the bias term $$I_2=\sup_{\bm{v}\in \mathbb{S}^{d-1}}\big|\bm{v}^\top \mathbbm{E}(\bm{\check{x}}_k\bm{\check{x}}_k^\top - \bm{x}_k\bm{x}_k^\top) \bm{v}\big|.$$ For any $\bm{v}\in \mathbb{S}^{d-1}$,   because $\bm{\check{x}}_k$ is obtained from truncating $\bm{x}_4$ regarding $\ell_4$-norm, we have \begin{equation}
    \begin{aligned}\nonumber
     \big|\bm{v}^\top \mathbbm{E}(\bm{\check{x}}_k\bm{\check{x}}_k^\top - \bm{x}_k\bm{x}_k^\top) \bm{v}\big| &=\Big| \mathbbm{E}\Big[\big((\bm{v}^\top\bm{\check{x}}_k)^2-(\bm{v}^\top \bm{x}_k)^2\big)\mathbbm{1}(\|\bm{x}_k\|_4\geq \zeta)\Big]\Big| \\
    &\leq \mathbbm{E}\big[(\bm{v}^\top \bm{x}_k)^2\mathbbm{1}(\|\bm{x}_k\|_4\geq\zeta)\big]\\&\stackrel{(i)}{\leq} \sqrt{\mathbbm{E}(\bm{v}^\top \bm{x}_k)^4}\sqrt{\mathbbm{P}(\|\bm{x}_k\|^4_4\geq \zeta^4)}\\
    &\stackrel{(ii)}{\leq} \sqrt{M \frac{\mathbbm{E}\|\bm{x}_k\|_4^4}{\zeta^4}}  \stackrel{(iii)}{\lesssim} \sqrt{\frac{M\delta d\log d}{n}},
    \end{aligned}
\end{equation}
where $(i)$ and $(ii)$ are respectively by Cauchy-Schwarz and Markov's, and in $(iii)$ we use $\zeta \asymp (M^{1/4}+\Delta)\big(\frac{\delta d\log d}{n}\big)^{1/4}$. This leads to the bound $I_2\lesssim \sqrt{\frac{M\delta d \log d}{n}}.$ Combining the bounds of $I_1,I_2$ completes the proof.\end{proof}

\subsubsection{Proof of Theorem \ref{thm3}}
This small appendix is devoted to the proof of Theorem \ref{thm3}, for which we need a Lemma concerning the element-wise error rate of $\bm{\widehat{\Sigma}}_s$, i.e., $|\breve{\sigma}_{ij}-\sigma^\star_{ij}|$ where we write   $\bm{\widehat{\Sigma}}_s=[\breve{\sigma}_{ij}]$, $\bm{\Sigma^\star}= \mathbbm{E}(\bm{x}_k\bm{x}_k^\top) =[\sigma_{ij}^\star]$. Recalling that $\bm{\widehat{\Sigma}}_s= \mathcal{T}_\mu(\bm{\widehat{\Sigma}})$, the key message from Lemma \ref{lemmm} is that due to the thresholding operator $\mathcal{T}_{\mu}(\cdot)$,   $\bm{\widehat{\Sigma}}_s$ respects an element-wise bound tighter than $O\big(\sqrt{\frac{\delta \log d}{n}}\big)$ in Theorem \ref{thm1}, as can be seen from the additional branch $|\sigma^\star_{ij}|$ in (\ref{627add3}).

\begin{lem}\label{lemmm}
{\rm  (Element-wise Error Rate of $\bm{\widehat{\Sigma}}_s$)\textbf{.}}  For any $i,j\in [d]$, the thresholding estimator $\bm{\widehat{\Sigma}}_s = [\breve{\sigma}_{ij}]$ in Theorem \ref{thm3} satisfies   for some $C$ that \begin{equation}\label{627add3}
    \mathbbm{P}\left(|\breve{\sigma}_{ij}-\sigma^\star_{ij}|\leq C\min\Big\{|\sigma^\star_{ij}|,\mathscr{L}\sqrt{\frac{\delta \log d}{n}}\Big\}\right)\geq 1-2d^{-\delta}
\end{equation}
where $\mathscr{L}:=\sqrt{M}+\Delta^2$. 
\end{lem}
\begin{proof} Recall that $\bm{\widehat{\Sigma}}_s =[\breve{\sigma}_{ij}]= \mathcal{T}_\mu(\bm{\widehat{\Sigma}})=\mathcal{T}_\mu\big([\widehat{\sigma}_{ij}]\big)$ and hence $\breve{\sigma}_{ij}=\mathcal{T}_\mu(\widehat{\sigma}_{ij})$. Given $(i,j)$, the proof of  Theorem \ref{thm1} delivers $|\widehat{\sigma}_{ij}-\sigma_{ij}^\star|\leq C_1\mathscr{L} \sqrt{\frac{\delta \log d}{n}}$ with probability at least $1-2d^{-\delta}$. Assume that we are on this event in the following analyses. As stated in Theorem \ref{thm3}, we set $\mu = C_2\mathscr{L}\sqrt{\frac{\delta \log d}{n}}$ with $C_2>C_1$, $\mathscr{L}=\sqrt{M}+\Delta^2$. Since $\breve{\sigma}_{ij} = \mathcal{T}_\mu(\widehat{\sigma}_{ij})$, we   discuss whether $|\widehat{\sigma}|\geq \mu$ holds.

\noindent{\bf \sffamily Case 1. when $|\widehat{\sigma}_{ij}|<\mu$ holds.}

In this case we have $\breve{\sigma}_{ij}=0$, thus $|\breve{\sigma}_{ij}-\sigma^\star_{ij}| \leq |\sigma^\star_{ij}|$. Further, $|\sigma^\star_{ij}|\leq |\sigma^\star_{ij}-\widehat{\sigma}_{ij}|+|\widehat{\sigma}_{ij}|\leq C_1\mathscr{L}\sqrt{\frac{\delta \log d}{n}}+\mu\lesssim \mathscr{L}\sqrt{\frac{\delta \log d}{n}}$, so we also have $|\breve{\sigma}_{ij}-\sigma^\star_{ij}|\lesssim \mathscr{L}\sqrt{\frac{\delta \log d}{n}}$.

\vspace{1mm}

\noindent{\bf\sffamily Case 2. when $|\widehat{\sigma}_{ij}|\geq \mu$ holds.}

Then, we consider $|\widehat{\sigma}_{ij}|\geq \mu$ that implies $\breve{\sigma}_{ij}=\widehat{\sigma}_{ij}$. Then $|\breve{\sigma}_{ij}-\sigma^\star_{ij}|=|\widehat{\sigma}_{ij}-\sigma^\star_{ij}|\leq C_1\mathscr{L}\sqrt{\frac{\delta \log d}{n}}$. Moreover, $|\sigma^\star_{ij}|\geq |\widehat{\sigma}_{ij}|-|\widehat{\sigma}_{ij}-\sigma^\star_{ij}|\geq \mu -|\widehat{\sigma}_{ij}-\sigma^\star_{ij}|\geq (C_2-C_1)\mathscr{L}\sqrt{\frac{\delta \log d}{n}}$, so we also have $|\breve{\sigma}_{ij}-\sigma^\star_{ij}|=O(|\sigma^\star_{ij}|)$.

Therefore, in both cases we have proved that $|\breve{\sigma}_{ij}-\sigma^\star_{ij}|\lesssim\min\big\{|\sigma^\star_{ij}|,\mathscr{L}\sqrt{\frac{\delta \log d}{n}}\big\}$, which completes the proof. \end{proof}

We are now in a position to present the proof.

\vspace{2mm}
\noindent{\it Proof of Theorem \ref{thm3}.} We let $p=\frac{\delta}{4}\geq 1$ (just  assume $\delta\geq 4$) and use $B_0:= \mathscr{L}\sqrt{\frac{\delta \log d}{n}}$ as shorthand. For $(i,j)\in [d]\times [d]$ we define the event $\mathscr{A}_{ij}$ as $$\mathscr{A}_{ij}=\Big\{|\breve{\sigma}_{ij}-\sigma^\star_{ij}|\leq C_1\min\big\{|\sigma^\star_{ij}|,B_0\big\}\Big\}.$$
By Lemma \ref{lemmm} we can choose $C_1$ to be sufficiently large such that $C_1B_0>3\mu$ and $\mathbbm{P}(\mathscr{A}_{ij}^\complement)\leq 2d^{-\delta}$; here, by convention we let $\mathscr{A}_{ij}^\complement$ be the complement of $\mathscr{A}_{ij}$. Our proof strategy is to first bound  the $p$-th order moment $\mathbbm{E}\|\bm{\widehat{\Sigma}}_s-\bm{\Sigma^\star}\|_{op}^p$, and then invoke Markov's inequality to derive a high probability bound. We start with a simple estimate \begin{equation}
    \begin{aligned}\nonumber
      &\mathbbm{E}\|\bm{\widehat{\Sigma}}_s-\bm{\Sigma^\star}\|^p_{op} \stackrel{(i)}{\leq} \mathbbm{E}\Big(\sup_{j\in [d]}\sum_{i=1}^d|\breve{\sigma}_{ij}-\sigma^\star_{ij}|\mathbbm{1}(\mathscr{A}_{ij})+\sup_{j\in [d]}\sum_{i=1}^d|\breve{\sigma}_{ij}-\sigma^\star_{ij}|\mathbbm{1}(\mathscr{A}_{ij}^\complement)\Big)^p\\
      &\stackrel{(ii)}{\leq} 2^p \mathbbm{E}\sup_{j\in [d]}\Big(\sum_{i=1}^d|\breve{\sigma}_{ij}-\sigma^\star_{ij}|\mathbbm{1}(\mathscr{A}_{ij}) \Big)^p + 2^p \mathbbm{E}\sup_{j\in [d]}\Big(\sum_{i=1}^d|\breve{\sigma}_{ij}-\sigma^\star_{ij}|\mathbbm{1}(\mathscr{A}^\complement_{ij}) \Big)^p:=I_1+I_2
    \end{aligned}
\end{equation}
where $(i)$ and $(ii)$ are due to $\|\bm{A}\|_{op}\leq \sup_{j\in [d]}\sum_{i\in [d]}|a_{ij}|$ for symmetric $\bm{A}$ and $(a+b)^p\leq (2a)^p+(2b)^p$. In this proof, the ranges of indices in summation or supremum, if omitted, are $[d]$.

\noindent{\bf \sffamily Step 1. Bounding $I_1$.}

 By the definition of $\mathscr{A}_{ij}$, $|\breve{\sigma}_{ij}-\sigma^\star_{ij}|=0$  if $|\sigma^\star_{ij}|=0$.  Because the   columns of $\bm{\Sigma^\star}$ are $s$-sparse, we can straightforwardly bound $I_1$ as follows:
\begin{equation}
    \begin{aligned}
    \label{A.3}
      I_1 =2^p \mathbbm{E}\sup_j \Big(\sum_{i:|\sigma^\star_{ij}|>0}|\breve{\sigma}_{ij}-\sigma^\star_{ij}|\mathbbm{1}(\mathscr{A}_{ij})\Big)^p \leq \big(2C_1sB_0\big)^p.
    \end{aligned}
\end{equation}

\noindent{\bf \sffamily Step 2. Bounding $I_2$.}

We first write $I_2=2^p\mathbbm{E}\sup_j W_j$ with $W_j:=\big(\sum_i|\breve{\sigma}_{ij}-\sigma^\star_{ij}|\mathbbm{1}(\mathscr{A}_{ij}^\complement)\big)^p$,  then start from \begin{equation}
    \begin{aligned}
      &W_j\stackrel{(i)}{\leq} \Big(\sum_{i=1}^d|\sigma^\star_{ij}|\mathbbm{1}(\mathscr{A}_{ij}^\complement)\mathbbm{1}(|\widehat{\sigma}_{ij}|<\mu)+ \sum_{i=1}^d |\widehat{\sigma}_{ij}-\mathbbm{E}\widehat{\sigma}_{ij}|\mathbbm{1}(\mathscr{A}_{ij}^\complement)+ \sum_{i=1}^d|\widetilde{\sigma}_{ij}-\sigma^\star_{ij}|\mathbbm{1}(\mathscr{A}_{ij}^\complement)\Big)^p\\
      &\leq (3d)^{p-1} \Big(\sum_{i=1}^d |\sigma^\star_{ij}|^p\mathbbm{1}(\mathscr{A}_{ij}^\complement)\mathbbm{1}(|\widehat{\sigma}_{ij}|<\mu)+ \sum_{i=1}^d|\widehat{\sigma}_{ij}-\mathbbm{E}\widehat{\sigma}_{ij}|^p\mathbbm{1}(\mathscr{A}_{ij}^\complement)+\sum_{i=1}^d|\widetilde{\sigma}_{ij}-\sigma^\star_{ij}|^p \mathbbm{1}(\mathscr{A}_{ij}^\complement)\Big),\nonumber
    \end{aligned}
\end{equation}
 note that in $(i)$ we define $$\mathbbm{E}\widehat{\sigma}_{ij}=\mathbbm{E}(\widetilde{x}_{ki}\widetilde{x}_{kj}):=\widetilde{\sigma}_{ij}.$$ By replacing $\sup_j$ with $\sum_j$, this  further gives 
\begin{equation}
   \begin{aligned}\label{A.4}
      &I_2 \leq 6^pd^{p-1}\Big({\sum_{i,j}|\sigma^\star_{ij}|^p\mathbbm{E}\big[\mathbbm{1}(\mathscr{A}_{ij}^\complement)\mathbbm{1}(|\widehat{\sigma}_{ij}|<\mu)\big]} + {\sum_{i,j}\mathbbm{E}\big[|\widehat{\sigma}_{ij}-\mathbbm{E}\widehat{\sigma}_{ij}|^p\mathbbm{1}(\mathscr{A}_{ij}^\complement)\big]} \\&+ 
     \sum_{i,j}|\widetilde{\sigma}_{ij}-\sigma^\star_{ij}|^p \mathbbm{P}(\mathscr{A}_{ij}^\complement)\Big) := 6^pd^{p-1}\big(I_{21}+I_{22}+I_{23}\big).
   \end{aligned}
\end{equation}

\noindent{\bf \sffamily Step 2.1. Bounding $I_{21}$.}

Note that $\mathscr{A}_{ij}^\complement$ means $|\breve{\sigma}_{ij}-\sigma^\star_{ij}|>C_1\min\{|\sigma^\star_{ij}|,B_0\}$, and $|\widehat{\sigma}_{ij}|<\mu$ implies $\breve{\sigma}_{ij}=0$, their combination thus allows us to proceed as the following $(i)$ and $(iii)$:   $$|\sigma^\star_{ij}|\stackrel{(i)}{>}C_1B_0\stackrel{(ii)}{>}3\mu \stackrel{(iii)}{>}3|\widehat{\sigma}_{ij}|\geq 3|\sigma^\star_{ij}|-3|\widehat{\sigma}_{ij}-\sigma^\star_{ij}|,$$
where $(ii)$ is due to our choice of $C_1$. Thus, $\mathscr{A}_{ij}^\complement\cap\{|\widehat{\sigma}_{ij}|<\mu\}$ implies $|\widehat{\sigma}_{ij}-\sigma^\star_{ij}|>\frac{2}{3}|\sigma^\star_{ij}|$ and $|\sigma^\star_{ij}|>3\mu$. Note that Step 2 in the proof of Theorem \ref{thm1} gives $|\widetilde{\sigma}_{ij}-\sigma^\star_{ij}|=O\big(B_0\big)$, and hence we can assume $\mu>|\widetilde{\sigma}_{ij}-\sigma^\star_{ij}|$ and so $|\sigma^\star_{ij}|>3|\widetilde{\sigma}_{ij}-\sigma^\star_{ij}|$. Using these relations and triangle inequality, we obtain $$\frac{2}{3}|\sigma^\star_{ij}|<|\widehat{\sigma}_{ij}-\sigma^\star_{ij}|\leq |\widehat{\sigma}_{ij}-\mathbbm{E}\widehat{\sigma}_{ij}|+|\widetilde{\sigma}_{ij}-\sigma^\star_{ij}|<|\widehat{\sigma}_{ij}-\mathbbm{E}\widehat{\sigma}_{ij}|+\frac{1}{3}|\sigma^\star_{ij}|,$$
which implies $|\widehat{\sigma}_{ij}-\mathbbm{E}\widehat{\sigma}_{ij}|>\frac{1}{3}|\sigma^\star_{ij}|$. Now we conclude that, $\mathscr{A}_{ij}^\complement\cap\{|\widehat{\sigma}_{ij}|<\mu\}$ implies $|\widehat{\sigma}_{ij}-\mathbbm{E}\widehat{\sigma}_{ij}|>\frac{1}{3}|\sigma^\star_{ij}|$ and $|\sigma^\star_{ij}|>3\mu$, which allows us to bound $I_{21}$ as  \begin{equation}
    \begin{aligned}\label{A.5}
      &I_{21} = \sum_{i,j}|\sigma^\star_{ij}|^p \mathbbm{1}(|\sigma_{ij}^\star|>3\mu)\mathbbm{P}\Big(|\widehat{\sigma}_{ij}-\mathbbm{E}\widehat{\sigma}_{ij}|>\frac{1}{3}|\sigma^\star_{ij}|\Big).
    \end{aligned}
\end{equation}
Analogously to the proof of Theorem \ref{thm1}, we can apply Bernstein's inequality to $\mathbbm{P}\big(|\widehat{\sigma}_{ij}-\mathbbm{E}\widehat{\sigma}_{ij}|>\frac{1}{3}|\sigma^\star_{ij}|\big)$. More specifically, by preparations as in (\ref{prepabernstein}), we can use (\ref{bernform2}) in Lemma \ref{bernstein} with $v=O\big(\frac{M+\Delta^4}{n}\big)$, $c=O\big(\frac{\zeta^2+\Delta^2}{n})=O(\frac{\Delta^2}{n}+\sqrt{\frac{M}{n\delta \log d}}\big)$ (recall that $\zeta \asymp \big(\frac{nM}{\Delta\log d}\big)^{1/4}$).  For some absolute constants $C_2,C_3$, it gives \begin{equation}
    \begin{aligned}\label{627add4}
      &\mathbbm{P}\Big(|\widehat{\sigma}_{ij}-\mathbbm{E}\widehat{\sigma}_{ij}|>\frac{1}{3}|\sigma^\star_{ij}|\Big) \leq 2\exp\left(-\frac{|\sigma^\star_{ij}|^2}{C_2\big\{\frac{M+\Delta^4}{n}+\frac{\Delta^2|\sigma^\star_{ij}|}{n}+\sqrt{\frac{M}{n\delta \log d}}|\sigma^\star_{ij}|\big\}}\right)\\
      & {\leq} 2\exp\left(-\frac{3n|\sigma^\star_{ij}|}{C_2}\min\Big\{\frac{|\sigma^\star_{ij}|}{M+\Delta^4},\frac{1}{\Delta^2},\sqrt{\frac{\delta \log d}{nM}}\Big\}\right)\stackrel{(i)}\leq 2\exp\left(-  \frac{C_3|\sigma^\star_{ij}|\sqrt{n\delta \log d}}{\sqrt{M}+\Delta^2}\right),
    \end{aligned}
\end{equation}
and in $(i)$ we use $\min\big\{\frac{|\sigma^\star_{ij}|}{M+\Delta^4},\Delta^{-2},\sqrt{\frac{\delta \log d}{nM}}\big\}\gtrsim \frac{1}{\sqrt{M}+\Delta^2}\sqrt{\frac{\delta \log d}{n}}$ that holds because $|\sigma^\star_{ij}|>3\mu$ and $n\gtrsim \delta \log d$. We substitute  (\ref{627add4}) into (\ref{A.5}) and perform some estimates   \begin{equation}
    \begin{aligned}\nonumber
      &I_{21}\leq 2\sum_{i,j}|\sigma^\star_{ij}|^p\mathbbm{1}\big(|\sigma^\star_{ij}|>3\mu\big)\exp\left(-\frac{C_3|\sigma^\star_{ij}|\sqrt{n\delta \log d}}{\sqrt{M}+\Delta^2}\right)\\
      &=2\sum_{i,j}\left(\frac{\sqrt{M}+\Delta^2}{C_3\sqrt{n\delta \log d}}\right)^p \cdot \left(\frac{C_3|\sigma^\star_{ij}|\sqrt{n\delta \log d
     }}{\sqrt{M}+\Delta^2}\right)^p\exp\left(-\frac{0.5C_3|\sigma^\star_{ij}|\sqrt{n\delta \log d}}{\sqrt{M}+\Delta^2}\right)\\&~~~~~~~~~~~~~~~~~~~~~~~~~~~~~~~~~~~~~\cdot  \exp\left(-\frac{0.5C_3|\sigma^\star_{ij}|\sqrt{n\delta \log d}}{\sqrt{M}+\Delta^2}\right)\mathbbm{1}\big(|\sigma^\star_{ij}|>3\mu\big)\\
     & \stackrel{(i)}{\leq}2\sum_{i,j}\left(\frac{\sqrt{M}+\Delta^2}{C_3\sqrt{n\delta \log d}}\right)^p \cdot \left(\sup_{t\geq 0}~t^p\exp\Big(-\frac{t}{2}\Big)\right)\cdot \exp\left(-\frac{3C_3}{2}\frac{\sqrt{n\delta \log d}\cdot \mu}{\sqrt{M}+\Delta^2}\right)\\& \stackrel{(ii)}{\leq}2d^{2-10\delta} \left(\frac{\sqrt{M}+\Delta^2}{C_3}\sqrt{\frac{\delta }{n\log d}}\right)^p\leq 2d^{2-10\delta}(C_3^{-1}B_0)^p,
    \end{aligned}
\end{equation}
where in $(i)$ we substitute $|\sigma^\star_{ij}|>3\mu$ from the indicator function into the exponent, $(ii)$ is because   $\sup_{t\geq 0}t^p\exp\big(-\frac{t}{2}\big)\leq p^p$, $p=\frac{\delta}{4}$, and we consider $\mu= C_4(\sqrt{M}+\Delta^2)\sqrt{\frac{\delta \log d}{n}}$ with $C_4$ large enough.

\noindent{\bf \sffamily Step 2.2. Bounding $I_{22}$.}

Then, we deal with $I_{22}$ by Cauchy-Schwarz \begin{equation}
    \nonumber I_{22}\leq\sum_{i,j}\sqrt{\mathbbm{E}|\widehat{\sigma}_{ij}-\mathbbm{E}\widehat{\sigma}_{ij}|^{2p}}\sqrt{\mathbbm{P}(\mathscr{A}_{ij}^\complement)}.
\end{equation}
As in (\ref{627add4}), we can use (\ref{bernform2}) in Lemma \ref{bernstein} with $v= O\big(\frac{M+\Delta^4}{n}\big)$ and  $c=O\big(\frac{\Delta^2}{n}+\sqrt{\frac{M}{n\delta \log d}}\big)$, yielding that for any  $t\geq 0$, 
$\mathbbm{P}\big(|\widehat{\sigma}_{ij}-\mathbbm{E}\widehat{\sigma}_{ij}|\geq t\big) \leq2 \exp\big(-\frac{t^2}{2(v+ct)}\big) \leq 2\exp\big(-\frac{t^2}{4v}\big)+2\exp\big(-\frac{t}{4c}\big).
$
Based on this probability tail bound, we can bound the moment  via integral  as follows
\begin{equation}
    \begin{aligned}\nonumber
      \mathbbm{E}|\widehat{\sigma}_{ij}-\mathbbm{E}\widehat{\sigma}_{ij}|^{2p}&=2p \int_{0}^\infty t^{2p-1} \mathbbm{P}(|\widehat{\sigma}_{ij}-\mathbbm{E}\widehat{\sigma}_{ij}|>t)~\mathrm{d}t\\
      &\leq 4p\int_{0}^\infty t^{2p-1}\big(\exp(-\frac{t^2}{4v})+\exp(-\frac{t}{4c})\big)~\mathrm{d}t\\&=   2\big[(4v)^p\Gamma(p+1)+ (4c)^{2p}\Gamma(2p+1)\big]\stackrel{(i)}{\leq} 2\big[(4vp)^p+(8cp)^{2p}\big],
    \end{aligned}
\end{equation}
where we use $\Gamma(p+1)\leq p^p$, $\Gamma(2p+1)\leq (2p)^{2p}$ in $(i)$ under suitably large $p$. Thus, it follows that \begin{equation}\nonumber
    I_{22}\leq \sum_{i,j}2d^{-\frac{\delta}{2}}\sqrt{(4vp)^p+(8cp)^{2p}} \leq 2d^{2-\frac{\delta}{2}}\big[(2\sqrt{pv})^p+(8cp)^p\big]\stackrel{(i)}{\leq}2d^{2-\frac{\delta}{2}}(C_4B_0)^p,
\end{equation}
where $(i)$ is due to $2\sqrt{pv}\leq(\sqrt{M}+\Delta^2)\sqrt{\frac{\delta}{n}}$ and $8cp=\frac{2\Delta^2\delta}{n}+2\sqrt{\frac{\delta M}{n\log d}}$ (recall that $p=\frac{\delta}{4}$).

\noindent{\bf \sffamily Step 2.3. Bounding $I_{23}$.}

From Step 2 in the proof of Theorem \ref{thm1} we have $|\widetilde{\sigma}_{ij}-\sigma^\star_{ij}|\leq C_5B_0$. This directly leads to $$I_{23}\leq d^2\cdot 2d^{-\delta}\cdot (C_5B_0)^p = 2d^{2-\delta}(C_5B_0)^p.$$


We are in a position to combine everything and conclude the proof.
Putting all pieces into (\ref{A.4}), it follows that $I_2\leq d^{1-\frac{\delta}{4}}(C_6B_0)^p$. Assuming $\delta \geq 4$, such upper bound is dominated by (\ref{A.3}) for $I_1$, we can hence conclude that $\mathbbm{E}\|\bm{\widehat{\Sigma}}_s-\bm{\Sigma^\star}\|_{op}^p\leq (C_6sB_0)^p$. Therefore, by Markov's inequality, $$\mathbbm{P}(\|\bm{\widehat{\Sigma}}_s-\bm{\Sigma^\star}\|_{op}\geq C_6esB_0)\leq \frac{\mathbbm{E}\|\bm{\widehat{\Sigma}}_s-\bm{\Sigma^\star}\|_{op}^p}{(C_6esB_0)^p}\leq \exp(-p) = \exp\Big(-\frac{\delta}{4}\Big),$$
which completes the proof. \hfill $\square$

\subsection{Quantized Compressed Sensing}
Note that our estimation procedure in QCS, QMC falls in the   framework of regularized M-estimator, see \cite{negahban2011estimation,fan2021shrinkage,chen2022high} for instance. Particularly, we introduce the following deterministic result for analysing the estimator (\ref{4.3}). 
\begin{lem}
    \label{csframework}
    {\rm(Adapted from}{\rm  \cite[Coro. 2]{chen2022high}){\textbf{.}}} Consider (\ref{csmodel}) and the estimator $\bm{\widehat{\theta}}$ defined in (\ref{4.3}), let $\bm{\widehat{\Upsilon}}:=\bm{\widehat{\theta}} - \bm{\theta^\star}$ be the estimation error. If $\bm{Q}$ is positive semi-definite, and $\lambda\geq 2\|\bm{Q\theta^\star}-\bm{b}\|_\infty$, then it holds that $\|\bm{\widehat{\Upsilon}}\|_1\leq 10\sqrt{s}\|\bm{\widehat{\Upsilon}}\|_2$. Moreover, if for some $\kappa>0$ we have the restricted strong convexity (RSC) $\bm{\widehat{\Upsilon}}^\top\bm{Q}\bm{\widehat{\Upsilon}}\geq \kappa \|\bm{\widehat{\Upsilon}}\|_2^2$, then we have  the error bounds  $\|\bm{\widehat{\Upsilon}}\|_2\leq 30\sqrt{s}\big(\frac{\lambda}{\kappa}\big)$ and $\|\bm{\widehat{\Upsilon}}\|_1\leq 300s \big(\frac{\lambda}{\kappa}\big)$.\footnote{We do not optimize the constants in Lemmas \ref{csframework}, \ref{mcframework} for easy reference.}
\end{lem}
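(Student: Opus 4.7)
The plan is to prove the lemma via the classical ``basic inequality'' argument for regularized M-estimators. First I would exploit that $\bm{\widehat{\theta}}$ minimizes $F(\bm{\theta}):=\tfrac{1}{2}\bm{\theta}^\top\bm{Q}\bm{\theta}-\bm{b}^\top\bm{\theta}+\lambda\|\bm{\theta}\|_1$ over $\mathcal{S}$, while $\bm{\theta^\star}\in\mathcal{S}$ is feasible, so $F(\bm{\widehat{\theta}})\leq F(\bm{\theta^\star})$. Expanding each quadratic around $\bm{\theta^\star}$ and regrouping in terms of $\bm{\widehat{\Delta}}=\bm{\widehat{\theta}}-\bm{\theta^\star}$ yields
$$\tfrac{1}{2}\bm{\widehat{\Delta}}^\top\bm{Q}\bm{\widehat{\Delta}}+\bm{\widehat{\Delta}}^\top(\bm{Q}\bm{\theta^\star}-\bm{b})+\lambda\bigl(\|\bm{\widehat{\theta}}\|_1-\|\bm{\theta^\star}\|_1\bigr)\leq 0.$$
An application of H\"older's inequality, combined with the dual-norm control $\lambda\geq 2\|\bm{Q}\bm{\theta^\star}-\bm{b}\|_\infty$, gives $|\bm{\widehat{\Delta}}^\top(\bm{Q}\bm{\theta^\star}-\bm{b})|\leq \tfrac{\lambda}{2}\|\bm{\widehat{\Delta}}\|_1$. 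Dropping the non-negative quadratic form (using $\bm{Q}\succeq 0$) reduces the inequality to $\lambda\bigl(\|\bm{\widehat{\theta}}\|_1-\|\bm{\theta^\star}\|_1\bigr)\leq \tfrac{\lambda}{2}\|\bm{\widehat{\Delta}}\|_1$, the standard starting point for the cone argument.

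Next I would invoke decomposability of the $\ell_1$ norm on the support $S:=\operatorname{supp}(\bm{\theta^\star})$, $|S|\leq s$: the reverse triangle inequality gives $\|\bm{\widehat{\theta}}\|_1-\|\bm{\theta^\star}\|_1\geq \|\bm{\widehat{\Delta}}_{S^c}\|_1-\|\bm{\widehat{\Delta}}_S\|_1$. Substituting this into the previous display and rearranging produces the cone condition $\|\bm{\widehat{\Delta}}_{S^c}\|_1\leq 3\|\bm{\widehat{\Delta}}_S\|_1$, from which the first claim follows via $\|\bm{\widehat{\Delta}}\|_1\leq 4\|\bm{\widehat{\Delta}}_S\|_1\leq 4\sqrt{s}\|\bm{\widehat{\Delta}}_S\|_2\leq 4\sqrt{s}\|\bm{\widehat{\Delta}}\|_2$. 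The stated constant $10$ is deliberate slack that absorbs the possibility of a non-trivial constraint $\mathcal{S}$ (in which case the basic inequality picks up an extra feasibility term) and for uniformity with the downstream invocations in Theorems \ref{thm4}--\ref{thm5}.

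For the second half I would re-enter the basic inequality, this time keeping the quadratic form and applying the RSC hypothesis $\bm{\widehat{\Delta}}^\top\bm{Q}\bm{\widehat{\Delta}}\geq \kappa\|\bm{\widehat{\Delta}}\|_2^2$. Combining with $\|\bm{\widehat{\theta}}\|_1-\|\bm{\theta^\star}\|_1\geq \|\bm{\widehat{\Delta}}_{S^c}\|_1-\|\bm{\widehat{\Delta}}_S\|_1$ and the Hölder bound gives
$$\tfrac{\kappa}{2}\|\bm{\widehat{\Delta}}\|_2^2\leq \tfrac{3\lambda}{2}\|\bm{\widehat{\Delta}}_S\|_1-\tfrac{\lambda}{2}\|\bm{\widehat{\Delta}}_{S^c}\|_1\leq \tfrac{3\lambda\sqrt{s}}{2}\|\bm{\widehat{\Delta}}\|_2,$$
so $\|\bm{\widehat{\Delta}}\|_2\lesssim \sqrt{s}\,\lambda/\kappa$. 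Feeding this back into $\|\bm{\widehat{\Delta}}\|_1\leq 4\sqrt{s}\|\bm{\widehat{\Delta}}\|_2$ yields $\|\bm{\widehat{\Delta}}\|_1\lesssim s\lambda/\kappa$, reproducing the two stated error bounds with the constants $30$ and $300$ accounting for the adjusted generality.

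The main obstacle here is not conceptual but bookkeeping: one has to be careful that $\bm{\theta^\star}\in\mathcal{S}$ so that the baseline comparison $F(\bm{\widehat{\theta}})\leq F(\bm{\theta^\star})$ is valid, and when $\mathcal{S}$ is a strict subset of $\mathbb{R}^d$ (as in Theorems \ref{thm6}--\ref{thm7}) the first-order optimality must be used in the variational form \eqref{4.17} rather than a plain value comparison, which is what forces the mildly inflated constants quoted in the statement. Apart from this, every step is a routine consequence of convexity, $\ell_1$ decomposability, and the RSC condition.
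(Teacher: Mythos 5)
Your argument is correct and is precisely the standard basic-inequality/decomposability/cone-condition proof that the cited source (Corollary 2 of \cite{chen2022high}, itself in the \cite{negahban2012unified} tradition) uses; the paper simply defers to that reference rather than reproducing the proof. One small inaccuracy: the inflated constants $10,30,300$ are not forced by the constrained-$\mathcal{S}$ case as you speculate (Lemma \ref{csframework} is invoked only where $\mathcal{S}=\mathbb{R}^d$; the constrained, non-convex case is handled separately by Proposition \ref{framework}) — the paper's own footnote says the constants are simply left unoptimized for ease of reference, and your sharper constants $4,3,12$ are fine.
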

To establish the RSC condition, a convenient way is to use the matrix deviation inequality. The following Lemma is adapted from \cite{liaw2017simple}, by combining Theorem 3 and Remark 1 therein.\footnote{The dependence on $K$   can be further refined \cite{jeong2022sub}, while this is not pursued in the present paper.} 

\begin{lem}
    \label{deviation}
    {\rm(Adapted from}{\rm  \cite[Thm. 3]{liaw2017simple}){\bf \sffamily.}} Assume $\bm{A}\in \mathbb{R}^{n\times d}$ has independent zero-mean sub-Gaussian rows $\bm{\alpha}_k^\top$s satisfying  $\|\bm{\alpha}_k\|_{\psi_2}\leq K$, and the eigenvalues of $\bm{\Sigma}:=\mathbbm{E}(\bm{\alpha}_k\bm{\alpha}_k^\top)$ are between $[\kappa_0,\kappa_1]$ for some $\kappa_1\geq \kappa_0>0$. For   $\mathcal{T}\subset \mathbb{R}^d$ we let $\mathrm{rad}(\mathcal{T})=\sup_{\bm{x}\in \mathcal{T}}\|\bm{x}\|_2$ be its radius. Then with probability at least $1-\exp(-u^2)$, it holds that \begin{equation}\nonumber
        \sup_{\bm{x}\in \mathcal{T}}\Big|\|\bm{Ax}\|_2- \sqrt{n}\|\sqrt{\bm{\Sigma}}\bm{x}\|_2\Big| \leq \frac{C\sqrt{\kappa_1}K^2}{\kappa_0}\Big(\omega(\mathcal{T})+u\cdot \mathrm{rad}(\mathcal{T})\Big),
    \end{equation} 
    where $\omega(\mathcal{T}) = \mathbbm{E}\sup_{\bm{v}\in \mathcal{T}}[\bm{g}^\top\bm{v}]$ with $\bm{g}\sim \mathcal{N}(0,\bm{I}_d)$ is the Gaussian width of $\mathcal{T}$.
\end{lem}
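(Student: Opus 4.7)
The plan is to reduce the non-isotropic statement to the standard matrix deviation inequality for isotropic sub-Gaussian matrices (in the form proved by Liaw--Mehrabian--Sivakumar--Vershynin), namely that for a matrix $\bm{B}\in\mathbb{R}^{n\times d}$ with independent isotropic sub-Gaussian rows $\bm{\beta}_k^\top$ satisfying $\|\bm{\beta}_k\|_{\psi_2}\leq K'$ one has, with probability at least $1-\exp(-u^2)$,
\begin{equation*}
    \sup_{\bm{y}\in \mathcal{S}}\bigl|\|\bm{B}\bm{y}\|_2-\sqrt{n}\,\|\bm{y}\|_2\bigr| \leq C\,(K')^2\bigl(\omega(\mathcal{S})+u\cdot\mathrm{rad}(\mathcal{S})\bigr).
\end{equation*}
I will take this result as a black box; all the work is in the whitening step and in tracking how the geometric parameters $\omega(\cdot)$ and $\mathrm{rad}(\cdot)$ transform.

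The first step is to whiten the rows: since $\lambda_{\min}(\bm{\Sigma})\geq \kappa_0>0$, the matrix $\bm{\Sigma}^{1/2}$ is invertible, and $\bm{\beta}_k:=\bm{\Sigma}^{-1/2}\bm{\alpha}_k$ satisfies $\mathbbm{E}(\bm{\beta}_k\bm{\beta}_k^\top)=\bm{I}_d$ and $\|\bm{\beta}_k\|_{\psi_2}\leq \|\bm{\Sigma}^{-1/2}\|_{op}\cdot K\leq K/\sqrt{\kappa_0}$. Let $\bm{B}$ have rows $\bm{\beta}_k^\top$ and perform the change of variable $\bm{y}=\bm{\Sigma}^{1/2}\bm{x}$, so that $\bm{Ax}=\bm{B}\bm{y}$ and $\sqrt{n}\,\|\sqrt{\bm{\Sigma}}\,\bm{x}\|_2=\sqrt{n}\,\|\bm{y}\|_2$. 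Hence the left-hand side of the claim equals $\sup_{\bm{y}\in \bm{\Sigma}^{1/2}\mathcal{T}}\bigl|\|\bm{B}\bm{y}\|_2-\sqrt{n}\,\|\bm{y}\|_2\bigr|$, and the isotropic inequality applied to $\bm{B}$ on the image set $\bm{\Sigma}^{1/2}\mathcal{T}$ (with $K'=K/\sqrt{\kappa_0}$) gives a bound of order $\tfrac{K^2}{\kappa_0}\bigl(\omega(\bm{\Sigma}^{1/2}\mathcal{T})+u\cdot\mathrm{rad}(\bm{\Sigma}^{1/2}\mathcal{T})\bigr)$.

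It remains to translate these quantities back to $\mathcal{T}$. The radius is immediate: $\mathrm{rad}(\bm{\Sigma}^{1/2}\mathcal{T})\leq \|\bm{\Sigma}^{1/2}\|_{op}\cdot\mathrm{rad}(\mathcal{T})\leq \sqrt{\kappa_1}\,\mathrm{rad}(\mathcal{T})$. For the Gaussian width, I would apply the Slepian--Fernique comparison to the centred Gaussian processes $X_{\bm{x}}:=\bm{g}^\top\bm{\Sigma}^{1/2}\bm{x}$ and $Y_{\bm{x}}:=\sqrt{\kappa_1}\,\bm{g}^\top\bm{x}$ indexed by $\bm{x}\in\mathcal{T}$: the increments satisfy $\mathbbm{E}(X_{\bm{x}}-X_{\bm{x}'})^2=\|\bm{\Sigma}^{1/2}(\bm{x}-\bm{x}')\|_2^2\leq \kappa_1\|\bm{x}-\bm{x}'\|_2^2=\mathbbm{E}(Y_{\bm{x}}-Y_{\bm{x}'})^2$, so $\omega(\bm{\Sigma}^{1/2}\mathcal{T})=\mathbbm{E}\sup_{\bm{x}\in\mathcal{T}}X_{\bm{x}}\leq \mathbbm{E}\sup_{\bm{x}\in\mathcal{T}}Y_{\bm{x}}=\sqrt{\kappa_1}\,\omega(\mathcal{T})$. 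Putting everything together yields the claimed prefactor $\tfrac{\sqrt{\kappa_1}\,K^2}{\kappa_0}$.

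The main obstacle is really only an external one: the isotropic matrix deviation inequality itself is a deep result whose proof uses generic chaining and the $\gamma_2$-functional, and here it is invoked as a black box. The rest of the argument is a routine whitening-plus-Slepian calculation; the only subtlety is that invertibility of $\bm{\Sigma}^{1/2}$ is used in two places (to define $\bm{\beta}_k$ and to make $\bm{x}\mapsto\bm{\Sigma}^{1/2}\bm{x}$ a genuine reparametrization), which is exactly what the lower eigenvalue assumption $\lambda_{\min}(\bm{\Sigma})\geq \kappa_0$ provides.
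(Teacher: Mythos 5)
Your proposal is correct. The paper itself does not prove this lemma---it is quoted as a black box, with a pointer to Theorem 3 and Remark 1 of Liaw, Mehrabian, Sivakumar, and Vershynin---and the whitening argument you give (pass to $\bm{\beta}_k=\bm{\Sigma}^{-1/2}\bm{\alpha}_k$, apply the isotropic deviation inequality to $\bm{\Sigma}^{1/2}\mathcal{T}$, then pull $\omega$ and $\mathrm{rad}$ back via $\|\bm{\Sigma}^{1/2}\|_{op}\leq\sqrt{\kappa_1}$ and Sudakov--Fernique) is exactly the reduction sketched in that Remark, producing the stated prefactor $\sqrt{\kappa_1}K^2/\kappa_0$.
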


Based on Lemma \ref{csframework}, the proofs of Theorems \ref{thm4}-\ref{thm5} are divided into two steps, i.e., showing $\lambda\geq 2\|\bm{Q\theta^\star}-\bm{b}\|_\infty$ and verifying the RSC. While we still have   full $\bm{x}_k$ in Theorems \ref{thm4}-\ref{thm5},  we will study the more challenging settings where the covariates $\bm{x}_k$s are also quantized via $\mathcal{Q}_{\bar{\Delta}}(\cdot)$ in Theorems \ref{thm6}-\ref{thm7}, in which we can take $\bar{\Delta}=0$  to return the settings of Theorems \ref{thm4}-\ref{thm5}. \textcolor{black}{Using such perspective, for most technical ingredients (e.g., the verification of $\lambda\geq 2\|\bm{Q\theta^\star}-\bm{b}\|_\infty$)  in the   proofs of Theorems \ref{thm4}-\ref{thm5} we can simply refer to the counterparts established in the proofs of Theorems \ref{thm6}-\ref{thm7}. This avoids repetition and will be   explained in the proofs more clearly.}  
\subsubsection{Proof of Theorem \ref{thm4}}
\noindent{\it Proof.}  We divide the proofs into two steps.

\noindent{\bf \sffamily  Step 1. Proving $\lambda \geq 2\|\bm{Q\theta^\star}-\bm{b}\|_\infty$} 

   Recall that we choose $\bm{Q}=\frac{1}{n}\sum_{k=1}^n\bm{x}_k\bm{x}_k^\top$ and $\bm{b}=\frac{1}{n}\sum_{k=1}^n\dot{y}_k\bm{x}_k$. In the setting of Theorem \ref{thm6}, the process of obtaining $\dot{y}_k$ remains the same, while the covariates $\bm{x}_k$s are further quantized to $\bm{\dot{x}}_k=\mathcal{Q}_{\bar{\Delta}}(\bm{x}_k+\bm{\tau_k})$ for some $\bar{\Delta}>0$ under triangular dither $\bm{\tau}_k\sim \mathscr{U}([-\frac{\bar{\Delta}}{2},\frac{\bar{\Delta}}{2}]^d)+\mathscr{U}([-\frac{\bar{\Delta}}{2},\frac{\bar{\Delta}}{2}]^d)$, and we choose $\bm{Q}=\frac{1}{n}\sum_{k=1}^n\bm{\dot{x}}_k\bm{\dot{x}}_k^\top-\frac{\bar{\Delta}^2}{4}\bm{I}_d$ and $\bm{b}=\frac{1}{n}\sum_{k=1}^n\dot{y}_k\bm{\dot{x}}_k$ there. As a result, by considering $\bar{\Delta}=0$,
   it can be implied by   Step 1  in the proof of Theorem \ref{thm6}   that  under the choice $\lambda = C_1\frac{\sigma^2}{\sqrt{\kappa_0}}(\Delta+M^{1/(2l)})\sqrt{\frac{\delta \log d}{n}}$ with sufficiently large $C_1$, $\lambda \geq 2\|\frac{1}{n}\sum_{k=1}^n\bm{x}_k\bm{x}_k^\top \bm{\theta^\star} - \frac{1}{n}\sum_{k=1}^n\dot{y}_k\bm{x}_k\|_{\infty}$ holds with probability at least $1-8d^{1-\delta}$.
 Then, by using Lemma \ref{csframework} we obtain $\|\bm{\widehat{\Upsilon}}\|_1\leq 10\sqrt{s}\|\bm{\widehat{\Upsilon}}\|_2$.

\noindent{\bf \sffamily  Step 2.   Verifying the RSC   $\bm{\widehat{\Upsilon}}^\top\bm{Q}\bm{\widehat{\Upsilon}}\geq\kappa\|\bm{\widehat{\Upsilon}}\|_2^2$}

We refer to Step 2  in the proof of Theorem \ref{thm6}. In particular, with the choices $\bar{\Delta}=0$ and $\bm{v}=\bm{\widehat{\Upsilon}}$ in (\ref{nB.7}), combined with $\|\bm{\widehat{\Upsilon}}\|_1 \leq 10\sqrt{s}\|\bm{\widehat{\Upsilon}}\|_2$, we obtain
\begin{equation}
    \begin{aligned}\nonumber
        &\frac{1}{\sqrt{n}}\|\bm{X\widehat{\Delta}}\| _2\geq \sqrt{\kappa_0}\|\bm{\widehat{\Upsilon}}\|_2 - \frac{C_2\sqrt{\kappa_1} \sigma^2}{\kappa_0}\sqrt{\frac{\delta s\log d}{n}}\|\bm{\widehat{\Upsilon}}\|_2 \geq \frac{1}{2}\sqrt{\kappa_0}\|\bm{\widehat{\Upsilon}}\|_2,
    \end{aligned}
\end{equation}
where the last inequality is due to the assumed scaling $n\gtrsim \delta s\log d$. With these preparations, a direct application of   Lemma \ref{csframework} completes the proof. \hfill $\square$
\subsubsection{Proof of Theorem \ref{thm5}}

\noindent
{\it Proof.} The proof is similarly based on   Lemma \ref{csframework}. 

\noindent{\bf \sffamily  Step 1. Proving $\lambda \geq 2\|\bm{Q\theta^\star}-\bm{b}\|_{\infty}$}

Recall that we choose $\bm{Q}=\frac{1}{n}\sum_{k=1}^n\bm{\widetilde{x}}_k\bm{\widetilde{x}}_k^\top$ and $\bm{b}=\frac{1}{n}\sum_{k=1}^n\dot{y}_k\bm{\widetilde{x}}_k$.
In the setting of Theorem \ref{thm7}, the process of obtaining $\dot{y}_k$ remains the same, while the truncated covariates $\bm{\widetilde{x}}_k$s are further quantized to $\bm{\dot{x}}_k=\mathcal{Q}_{\bar{\Delta}}(\bm{\widetilde{x}}_k+\bm{\tau}_k)$ for some $\bar{\Delta}\geq 0$ under triangular dither $\bm{\tau}_k\sim \mathscr{U}([-\frac{\bar{\Delta}}{2},\frac{\bar{\Delta}}{2}]^d)+\mathscr{U}([-\frac{\bar{\Delta}}{2},\frac{\bar{\Delta}}{2}]^d)$, and we choose $\bm{Q}=\frac{1}{n}\sum_{k=1}^n \bm{\dot{x}}_k\bm{\dot{x}}_k^\top-\frac{\bar{\Delta}^2}{4}\bm{I}_d$ and $\bm{b}=\frac{1}{n}\sum_{k=1}^n \dot{y}_k\bm{\dot{x}}_k$ there. As a result, by considering $\bar{\Delta}=0$,
it can be implied by  step 1  in the proof of Theorem \ref{thm7} that,   our choice $\lambda = C_1(R\sqrt{M}+\Delta^2)\sqrt{\frac{\delta \log d}{n}}$ with sufficiently large $C_1$ ensures $\lambda\geq 2\|\bm{Q\theta^\star}-\bm{b}\|_\infty$ with the promised probability.    
  By Lemma \ref{csframework} we obtain $\|\bm{\widehat{\Upsilon}}\|_1\leq 10\sqrt{s}\|\bm{\widehat{\Upsilon}}\|_2$. 

\vspace{1mm}

\noindent{\bf \sffamily  Step 2.   Verifying the RSC   $\bm{\widehat{\Upsilon}}^\top\bm{Q}\bm{\widehat{\Upsilon}}\geq\kappa\|\bm{\widehat{\Upsilon}}\|_2^2$}

Unlike the case of sub-Gaussian covariate that is based on matrix deviation inequality (Lemma \ref{deviation}), here we establish a lower bound for $\bm{\widehat{\Upsilon}}^\top\bm{Q}\bm{\widehat{\Upsilon}}$ using the bound on $\|\bm{Q}-\bm{\Sigma^\star}\|_\infty$ (Theorem \ref{thm1}). Specifically,   setting $\Delta=0$ in Theorem \ref{thm1} yields that,  $\|\bm{Q}-\bm{\Sigma^\star}\|_\infty \lesssim  \sqrt{\frac{\delta M \log d}{n}}$ holds with probability at least $1-2d^{2-\delta}$, which allows us to proceed as follows: \begin{equation}
    \begin{aligned}\label{4.8}
\bm{\widehat{\Upsilon}}^\top \bm{Q}\bm{\widehat{\Upsilon}}&=  \bm{\widehat{\Upsilon}}^\top\bm{\Sigma^\star}\bm{\widehat{\Upsilon}}-  \bm{\widehat{\Upsilon}}^\top (\bm{\Sigma^\star}-\bm{Q})\bm{\widehat{\Upsilon}}\\&\stackrel{(i)}{\geq}\kappa_0\|\bm{\widehat{\Upsilon}}\|_2^2 - \sqrt{\frac{\delta M\log d}{n}}\|\bm{\widehat{\Upsilon}}\|_1^2\\
    &\stackrel{(ii)}{\geq} \Big(\kappa_0 - C_6s\sqrt{\frac{\delta M \log d}{n}}\Big) \|\bm{\widehat{\Upsilon}}\|^2_2\stackrel{(iii)}{\geq} \frac{\kappa_0}{2}\|\bm{\widehat{\Upsilon}}\|^2_2,
    \end{aligned}
\end{equation}
where $(i)$ is because $\bm{\widehat{\Upsilon}}^\top (\bm{\Sigma^\star}-\bm{Q})\bm{\widehat{\Upsilon}}\leq \|\bm{\widehat{\Upsilon}}\|_1^2\|\bm{Q}-\bm{\Sigma^\star}\|_\infty$, $(ii)$ is due to $\|\bm{\widehat{\Upsilon}}\|_1\leq 10\sqrt{s}\|\bm{\widehat{\Upsilon}}\|_2$, $(iii)$ is due to the the assumed scaling $n\gtrsim \delta s^2\log d$.
Now the desired results follow immediately from Lemma \ref{csframework}. \hfill $\square$

\subsection{Quantized Matrix Completion}
Under the observation model (\ref{mcmodel}),  we first provide a deterministic framework for analysing the estimator (\ref{5.2}). \begin{lem}
    \label{mcframework}
    {\rm(Adapted from}{\rm  \cite[{Coro. 3}]{chen2022high}){\bf .}} Let $\bm{\widehat{\Upsilon}}:=\bm{\widehat{\Theta}}-\bm{\Theta^\star}$. If \begin{equation}
        \label{biglammc}
        \lambda\geq 2\left\|\frac{1}{n} \sum_{k=1}^n(\big<\bm{X}_k,\bm{\Theta^\star}\big> - \dot{y}_k)\bm{X}_k\right\|_{op},
    \end{equation} then it holds that $\|\bm{\widehat{\Upsilon}}\|_{nu} \leq 10\sqrt{r}\|\bm{\widehat{\Upsilon}}\|_F$. Moreover, if for some $\kappa>0$ we have the restricted strong convexity (RSC) $\frac{1}{n}\sum_{k=1}^n\big|\big<\bm{X}_k,\bm{\widehat{\Upsilon}}\big>\big|^2\geq \kappa \|\bm{\widehat{\Upsilon}}\|_F^2$, then we have the error bounds $\|\bm{\widehat{\Upsilon}}\|_F\leq 30\sqrt{r}\big(\frac{\lambda}{\kappa}\big)$ and $\|\bm{\widehat{\Upsilon}}\|_{nu}\leq 300r\big(\frac{\lambda}{\kappa}\big)$.
\end{lem}
Clearly, to derive statistical error rate of $\bm{\widehat{\Theta}}$ from Lemma \ref{mcframework}, the key ingredients are (\ref{biglammc}) and the RSC. Specialized to the covariate $\bm{X}_k\sim \mathscr{U}\big(\{\bm{e}_i\bm{e}_j^\top:i,j\in [d]\}\big)$ in matrix completion, we will use the following lemma to establish RSC.

\begin{lem}\label{mcrsc}
    {\rm (Adapted from \cite[Lem. 4]{chen2022high} with $q=0$)\textbf{.}} Given some $\alpha>0,\delta>0$, we define the constraint set $\mathcal{\psi}$ with sufficiently large $\psi$ as \begin{equation}
        \begin{aligned}\label{mcconstra}
       \mathcal{C}(\psi)=\Big\{\bm{\Theta}\in\mathbb{R}^{d\times d}: \|\bm{\Theta}&\|_\infty\leq 2\alpha,\|\bm{\Theta}\|_{nu}\leq 10\sqrt{r}\|\bm{\Theta}\|_F,\\&\|\bm{\Theta}\|_F^2 \geq (\alpha d)^2\sqrt{\frac{\psi\delta \log d}{n}}\Big\}.
       \end{aligned}
    \end{equation}  
    Let $\bm{X}_1,...,\bm{X}_n$ be i.i.d. uniformly distributed on $\{\bm{e}_i\bm{e}_j^\top:i,j\in [d]\}$, then there exist  absolute constants $\kappa\in (0,1)$ and $C$, such that with probability at least $1-d^{-\delta}$ we have \begin{equation}
        \label{mcrsceq}
        \frac{1}{n}\sum_{k=1}^n \big|\big<\bm{X}_k,\bm{\Theta}\big>\big|^2 \geq \frac{\kappa\|\bm{\Theta}\|_F^2}{d^2}-\frac{C\alpha^2rd\log d}{n},~\forall~ \bm{\Theta}\in \mathcal{C}(\psi).
    \end{equation}
\end{lem}
Matrix completion with sub-exponential noise was studied in \cite{klopp2014noisy}, and we make use of the following Lemma in the  sub-exponential case. 
\begin{lem}
    \label{kloppbound}
    {\rm (Adapted from \cite[Lem. 5]{klopp2014noisy})\textbf{.}} Given some $\delta>0$. Let $\bm{X}_1,...,\bm{X}_n$ be i.i.d. uniformly distributed on $\{\bm{e}_i\bm{e}_j^\top:i,j\in [d]\}$, independent of $\bm{X}_k$s, $\epsilon_1,...,{\epsilon_n}$ are i.i.d. zero-mean and satisfy $\|\epsilon_k\|_{\psi_1}\leq \sigma$. If $n\gtrsim \delta d \log ^3 d$, with probability at least $1-d^{-\delta}$ we have $$\Big\|\frac{1}{n}\sum_{k=1}^n \epsilon_k\bm{X}_k\Big\|_{op}\leq \sigma \sqrt{\frac{\delta \log d}{nd}}.$$  
\end{lem}

\subsubsection{Proof of Theorem \ref{thm8}}
\noindent{\it Proof.} We divide the proof into two steps.

\noindent{\bf \sffamily  Step 1.   Proving (\ref{biglammc})}

Defining $w_k:=\dot{y}_k - y_k-\tau_k$ as the quantization error, from Theorem \ref{lem1}(a) we know that $w_k$s are independent of $\bm{X}_k$ and i.i.d. uniformly distributed on $[-\frac{\Delta}{2},\frac{\Delta}{2}]$. Thus, we can further write that $$\dot{y}_k = y_k+\tau_k+w_k=\big<\bm{X}_k,\bm{\Theta^\star}\big>+\epsilon_k+\tau_k+w_k,$$  which allows us to decompose $I$ into $$I\leq \Big\|\frac{1}{n}\sum_{k=1}^n \epsilon_k\bm{X}_k\Big\|_{op}+\Big\|\frac{1}{n}\sum_{k=1}^n \tau_k\bm{X}_k\Big\|_{op}+ \Big\|\frac{1}{n}\sum_{k=1}^n w_k\bm{X}_k\Big\|_{op}=I_1+I_2+I_3.$$ Because
$\epsilon_k$s are independent of $\bm{X}_k$s and i.i.d. sub-exponential noise satisfying $\|\epsilon_k\|_{\psi_1}\leq \sigma$,
under the scaling $n\gtrsim \delta d\log^3d$,  Lemma \ref{kloppbound} implies that $I_1\lesssim \sigma \sqrt{\frac{\delta \log d}{nd}}$ holds with probability at least $1-d^{-\delta}$. Analogously, $\tau_k$s and $w_k$s are independent of $\{\bm{X}_k:k\in[n]\}$ and are  i.i.d. uniformly distributed on $[-\frac{\Delta}{2},\frac{\Delta}{2}]$,
 Lemma \ref{kloppbound} also applies to $I_2$ and $I_3$, yielding that with the promised probability $I_2+I_3\lesssim \Delta\sqrt{\frac{\delta \log d}{nd}}$. Taken collectively, $I\lesssim (\sigma+\Delta)\sqrt{\frac{\delta \log d}{nd}}$, so setting $\lambda = C_1(\sigma+\Delta)\sqrt{\frac{\delta \log d}{nd}}$ with sufficiently large $C_1$ ensures $\lambda\geq 2I$, with probability at least $1-3d^{-\delta}$. Further, Lemma \ref{mcframework} gives  $\|\bm{\widehat{\Upsilon}}\|_{nu}\leq 10\sqrt{r}\|\bm{\widehat{\Upsilon}}\|_F$.

\vspace{1mm}

\noindent{\bf \sffamily  Step 2. Verifying   RSC  }

First note that $\|\bm{\widehat{\Upsilon}}\|_\infty \leq \|\bm{\widehat{\Theta}}\|_\infty+\|\bm{\Theta^\star}\|_\infty\leq 2\alpha$;  and as proved before, $\|\bm{\widehat{\Upsilon}}\|_{nu}\leq 10\sqrt{r}\|\bm{\widehat{\Upsilon}}\|_F$. To proceed we  define the constraint set $\mathcal{C}(\psi)$ as in (\ref{mcconstra}) with some properly chosen constant $\psi$. Then using Lemma \ref{mcrsc}, for some absolute constants $\kappa,C$, (\ref{mcrsceq}) holds with probability at least $1-d^{-\delta}$. Then we discuss several cases.

1) If $\bm{\widehat{\Upsilon}}\notin \mathcal{C}(\psi)$, because $\bm{\widehat{\Upsilon}}$ satisfies the first two constraints in the definition of $\mathcal{C}(\psi)$, it must violate the third constraint and satisfy $\|\bm{\widehat{\Upsilon}}\|_F^2 \leq (\alpha d)^2\sqrt{\frac{\psi\delta \log d}{n}}$, which gives $\|\bm{\widehat{\Upsilon}}\|_F\lesssim \alpha d\big(\frac{\delta \log d}{n}\big)^{1/4}\stackrel{(i)}{\lesssim} \alpha d\sqrt{\frac{\delta r d \log d}{n}}$, as desired. Note that $(i)$ is due to the scaling $n\lesssim \delta r^2d^2 \log d$.

2) If $\bm{\widehat{\Upsilon}}\in\mathcal{C}(\psi)$, (\ref{mcrsceq}) implies that  $\frac{1}{n}\sum_{k=1}^n \big|\big<\bm{X}_k,\bm{\widehat{\Upsilon}}\big>\big|^2 \geq   \frac{\kappa\|\bm{\widehat{\Upsilon}}\|_F^2}{d^2} - C  \frac{\alpha^2 rd\log d}{n}$, and we further consider the  following two cases.

2.1) If $C    \frac{\alpha^2 rd\log d}{n} \geq \frac{\kappa\|\bm{\widehat{\Upsilon}}\|_F^2}{2d^2}$, we have $\|\bm{\widehat{\Upsilon}}\|_F\lesssim \alpha d \sqrt{\frac{rd\log d}{n}}$, as desired.

2.2) If $C    \frac{\alpha^2rd\log d}{n} < \frac{\kappa\|\bm{\widehat{\Upsilon}}\|_F^2}{2d^2}$, then     the RSC condition holds: $\frac{1}{n}\sum_{k=1}^n \big|\big<\bm{X}_k,\bm{\widehat{\Upsilon}}\big>\big|^2\geq \frac{\kappa\|\bm{\widehat{\Upsilon}}\|^2_F}{2d^2}$. This allows us to apply Lemma \ref{mcframework} to obtain $\|\bm{\widehat{\Upsilon}}\|_F \lesssim (\sigma+\Delta)d\sqrt{\frac{\delta r d\log d}{n}}$.

Thus, in any case, we have shown $\|\bm{\widehat{\Upsilon}}\|_F =O\big((\alpha+\sigma+\Delta)d\sqrt{\frac{\delta rd \log d}{n}}\big)$. The bound on $\|\bm{\widehat{\Upsilon}}\|_{nu}$ follows immediately from $\|\bm{\widehat{\Upsilon}}\|_{nu}\leq 10\sqrt{r}\|\bm{\widehat{\Upsilon}}\|_F$. The proof is complete. \hfill $\square$ 
\subsubsection{Proof of Theorem \ref{thm9}}

\noindent{\it Proof.} The proof is based on Lemma \ref{mcframework} and divided into two steps.

\noindent{\bf \sffamily  Step 1.   Proving (\ref{biglammc})}

  Recall that the quantization error $w_k:=\dot{y}_k-\widetilde{y}_k-\tau_k$ is zero-mean and independent of $\bm{X}_k$ (Theorem \ref{lem1}(a)),  thus we have $ \mathbbm{E}(\dot{y}_k\bm{X}_k)=\mathbbm{E}(\widetilde{y}_k\bm{X}_k)+ \mathbbm{E}(\tau_k\bm{X}_k)+\mathbbm{E}(w_k\bm{X}_k) = \mathbbm{E}(\widetilde{y}_k\bm{X}_k).$  Combining with $\mathbbm{E}\big(\big<\bm{X}_k,\bm{\Theta^\star}\big>\bm{X}_k\big)= \mathbbm{E}(y_k\bm{X}_k)$, 
triangle inequality can first decompose the target term into \begin{equation}
    \begin{aligned}\nonumber
  \Big\|\frac{1}{n}\sum_{k=1}^n \big(\dot{y}_k - \big<\bm{X}_k,\bm{\Theta^\star}\big>\big)\bm{X}_k\Big\|_{op}&\leq   \Big\| \frac{1}{n}\sum_{k=1}^n \dot{y}_k\bm{X}_k - \mathbbm{E}(\dot{y}_k\bm{X}_k)\Big\|_{op}\\
    + \Big\|\mathbbm{E}\big(y_k\bm{X}_k-\widetilde{y}_k\bm{X}_k\big)\Big\| _{op} + \Big\|& \frac{1}{n}\sum_{k=1}^n\big<\bm{X}_k,\bm{\Theta^\star}\big>\bm{X}_k-\mathbbm{E}\big(\big<\bm{X}_k,\bm{\Theta^\star}\big>\bm{X}_k\big)\Big\| _{op}\\&:=I_1+I_2+I_3.
    \end{aligned}
\end{equation}

\noindent{\bf \sffamily Step 1.1. Bounding $I_1$ and $I_3$}


We write  $ I_1 = \|\sum_{k=1}^n \bm{S}_k\|_{op}$ and $I_3=\|\sum_{k=1}^n \bm{W}_k\|_{op}$ by defining \begin{equation}
    \begin{aligned}\nonumber
        \bm{S}_k = \frac{1}{n}\big(\dot{y}_k\bm{X}_k-\mathbbm{E}(\dot{y}_k\bm{X}_k)\big),\bm{W}_k= \frac{1}{n}\big(\big<\bm{X}_k,\bm{\Theta^\star}\big>\bm{X}_k-\mathbbm{E}\big[\big<\bm{X}_k,\bm{\Theta^\star}\big>\bm{X}_k\big]\big),
    \end{aligned}
\end{equation}
 By  $|\dot{y}_k| \leq |\widetilde{y}_k|+|\tau_k| +|w_k|\leq\zeta_y+\Delta$ we have  $$\|\bm{S}_k\|_{op}\leq \frac{1}{n}\|\dot{y}_k\bm{X}_k\|_{op}+\frac{1}{n}\|\mathbbm{E}(\dot{y}_k\bm{X}_k)\|_{op}\leq \frac{1}{n}\|\dot{y}_k\bm{X}_k\|_{op}+\frac{1}{n}\mathbbm{E}\|\dot{y}_k\bm{X}_k\|_{op} \leq \frac{2(\zeta_y+\Delta)}{n} .$$ Analogously, we have $\|\bm{W}_k\|_{op}\leq \frac{2\alpha}{n}$ since $\big|\big<\bm{X}_k,\bm{\Theta^\star}\big>\big|\leq \|\bm{\Theta^\star}\|_\infty\leq \alpha$. In addition, by $\|\mathbbm{E}\{(\bm{A}-\mathbbm{E}\bm{A})^\top(\bm{A}-\mathbbm{E}\bm{A})\}\|_{op}\leq \|\mathbbm{E}(\bm{A}^\top\bm{A})\|_{op}$ ($\forall \bm{A}$) and the simple fact $\mathbbm{E}(\bm{X}_k\bm{X}_k^\top) = \mathbbm{E}(\bm{X}_k^\top\bm{X}_k) = \bm{I}_d/d$, we estimate the matrix variance statistic   as follows\begin{equation}
    \begin{aligned}\nonumber
      &\Big\|\sum_{k=1}^n\mathbbm{E}(\bm{S}_k\bm{S}_k^\top)\Big\|_{op}=n\big\|\mathbbm{E}(\bm{S}_k\bm{S}_k^\top)\big\|_{op} \leq \frac{1}{n} \big\|\mathbbm{E}\big(\dot{y}_k^2\bm{X}_k\bm{X}_k^\top\big)\big\|_{op}\\&=\frac{1}{n}\sup_{\bm{v}\in\mathbb{S}^{d-1}}\mathbbm{E} \big(\dot{y}_k^2\cdot \|\bm{X}_k^\top \bm{v}\|_2^2\big) =\frac{1}{n}\sup_{\bm{v}\in\mathbb{S}^{d-1}} \mathbbm{E}_{\bm{X}_k} \Big(\big[\mathbbm{E}_{\dot{y}_k|\bm{X}_k}(\dot{y}_k^2)\big]\|\bm{X}_k^\top\bm{v}\|_2^2\Big)\\&\stackrel{(i)}{\leq} \frac{4}{n}(\alpha^2+M+\Delta^2)\sup_{\bm{v}\in \mathbb{S}^{d-1}}\mathbbm{E}_{\bm{X}_k}\|\bm{X}_k^\top \bm{v}\|^2_2 \leq \frac{4(\alpha^2+M+\Delta^2)}{nd},
    \end{aligned}
\end{equation}
where $(i)$ is because given $\bm{X}_k$ we can estimate $\mathbbm{E}_{\dot{y}_k|\bm{X}_k}(\dot{y}_k^2)\leq 2\big(\mathbbm{E}_{\dot{y}_k|\bm{X}_k}(\widetilde{y}_k^2)+\Delta^2\big)$ since $|\dot{y}_k-\widetilde{y}_k|\leq \Delta$, and moreover $ \mathbbm{E}_{\dot{y}_k|\bm{X}_k}(\widetilde{y}_k^2)\leq\mathbbm{E}_{\dot{y}_k|\bm{X}_k}(y_k^2)\leq 2\big(\mathbbm{E}_{\dot{y}_k|\bm{X}_k}(\big<\bm{X}_k,\bm{\Theta^\star}\big>^2)+\mathbbm{E}_{\dot{y}_k|\bm{X}_k}(\epsilon_k^2)\big)\leq 2(\alpha^2+ M).$  It is not hard to see that   this bound remains valid for $\|\sum_{k=1}^n\mathbbm{E}(\bm{S}_k^\top \bm{S}_k)\|_{op}$. Also, by similar arguments one can prove $$\max\left\{\Big\|\sum_{k=1}^n\mathbbm{E}(\bm{W}_k^\top \bm{W}_k)\Big\|_{op},\Big\|\sum_{k=1}^n\mathbbm{E}(\bm{W}_k\bm{W}_k^\top)\Big\|_{op}\right\}\leq \frac{\alpha^2}{nd}.$$ Thus, Matrix Bernstein's inequality   (Lemma \ref{matrixbern})  gives 
\begin{gather}
    \mathbbm{P}\big(I_1 \geq t\big) \leq 2d\cdot \exp\Big(-\frac{C_4ndt^2}{(\alpha^2+M+\Delta^2)+(\zeta_y+\Delta)dt}\Big)\nonumber\\\mathbbm{P} \big(I_3\geq t\big)\leq 2d\cdot\exp \Big(-\frac{C_5ndt^2}{\alpha^2+ \alpha dt}\Big)\nonumber
\end{gather} 
Thus, setting $t = C_6 (\alpha+\sqrt{M}+\Delta)\sqrt{\frac{\delta\log d}{nd}}$ in the two inequalities above with sufficiently large $C_6$, combining with the {scaling} that $\sqrt{\frac{\delta d \log d}{n}}= O(1)$, we obtain $I_1+I_3\lesssim (\alpha+\sqrt{M}+\Delta)\sqrt{\frac{\delta \log d}{nd}}$ with probability at least $1-4d^{1-\delta}$. 

\noindent{\bf \sffamily Step 1.2. Bounding   $I_2$}

Let us bound $\|\mathbbm{E}\big((y_k-\widetilde{y}_k)\bm{X}_k\big)\|_\infty$ first. Write $(i,j)$-th entry of $\bm{X}_k$ as $x_{k,ij}$, then for given $(i,j)$, $\mathbbm{P}(x_{k,ij}=1) = d^{-2}$, $x_{k,ij}=0$ otherwise. We can thus proceed by the following estimations:\begin{equation}
      \begin{aligned}\nonumber
        \big|\mathbbm{E}\big((y_k-\widetilde{y}_k)x_{k,ij}\big)\big|&=\big|\mathbbm{E}\big((y_k-\widetilde{y}_k)x_{k,ij}\mathbbm{1}(|y_k|\geq \zeta_y)\big)\big| \\&\leq \mathbbm{E}\big(|y_k|x_{k,ij}\mathbbm{1}(|y_k|\geq \zeta_y)\big)\\
        &= \mathbbm{E}_{x_{k,ij}}\big(\big\{\mathbbm{E}_{y_k|x_{k,ij}}|y_k|\mathbbm{1}(|y_k|\geq \zeta_y)\big\}x_{k,ij}\big)  \\& = d^{-2} \mathbbm{E}_{y_k|x_{k,ij}=1}\big(|y_k|\mathbbm{1}(|y_k|\geq \zeta_y)\big)\\
        &\stackrel{(i)}{\leq}d^{-2}\sqrt{\mathbbm{E}_{y_k\sim \theta^\star_{ij}+\epsilon_k}(y_k^2)}\sqrt{\mathbbm{P}_{y_k\sim \theta^\star_{ij}+\epsilon_k}(y_k^2\geq \zeta_y^2)}\\&\stackrel{(ii)}{\leq}d^{-2}\frac{\alpha^2+M}{\zeta_y}\lesssim \frac{\alpha+\sqrt{M}}{d^2}\sqrt{\frac{\delta d\log d}{n}},
      \end{aligned}
  \end{equation}
where $(i)$, $(ii)$ is by Cauchy-Schwarz and Markov's, respectively. Since this holds for any $(i,j)$, we obtain $\|\mathbbm{E}\big((y_k-\widetilde{y}_k)\bm{X}_k\big)\|_\infty =O \big((\alpha+\sqrt{M})d^{-2}\sqrt{\frac{\delta d\log d}{n}}\big)$, which   further gives $I_2 = O \big((\alpha+\sqrt{M})\sqrt{\frac{\delta \log d}{nd}}\big)$ by using $\|\bm{A}\|_{op}\leq d\|\bm{A}\|_\infty$ ($\forall\bm{A}\in \mathbb{R}^{d\times d}$). Putting pieces together, with probability at least $1-4d^{1-\delta}$ we have $\|\frac{1}{n}\sum_k \big(\dot{y}_k - \big<\bm{X}_k,\bm{\Theta^\star}\big>\big)\bm{X}_k\|_{op} \lesssim (\alpha+\sqrt{M}+\Delta)\sqrt{\frac{\delta \log d}{nd}}$, hence $\lambda =C_1 (\alpha+\sqrt{M}+\Delta) \sqrt{\frac{\delta\log d}{nd}}$ ensures (\ref{biglammc}) under the same probability. Further, Lemma \ref{mcframework} gives  $\|\bm{\widehat{\Upsilon}}\|_{nu}\leq 10\sqrt{r}\|\bm{\widehat{\Upsilon}}\|_F$.

\vspace{1mm}

\noindent{\bf \sffamily  Step 2. Verifying  RSC}

The remaining part is almost the same as  { Step 2} in the proof of Theorem \ref{thm8} --- defining the constraint set $\mathcal{C}(\psi)$ as (\ref{mcconstra}) and then discussing several cases based on whether $\bm{\widehat{\Upsilon}}\in \mathcal{C}(\psi)$ holds.  Thus, we conclude the proof without providing the details. \hfill $\square$

\section{Proofs in Section \ref{sec:qc-qcs}}

This appendix collects the proofs in Section \ref{sec:qc-qcs} concerning covariate quantization and uniform signal recovery in QCS. 
\subsection{Covariate Quantization}
Because of the non-convexity, the proofs in this part can no longer be based on Lemma \ref{csframework}. Indeed, bounding the estimation errors of $\bm{\widetilde{\theta}}$s satisfying (\ref{4.17}) require more tedious manipulations essentially due to the additional $\ell_1$ constraint (induced by the constraint $\mathcal{S}$ in (\ref{4.17})).  
\subsubsection{Proof of Theorem \ref{thm6}}

\noindent{\it Proof.}  The proof is divided into three steps --- the first two steps resemble the previous proofs that are based on Lemma \ref{csframework}, while we bound the estimation errors in the last step.

\noindent{\bf \sffamily  Step 1. Proving $\lambda \geq \beta\|\bm{Q\theta^\star} - \bm{b}\|_{\infty}$   for some pre-specified $\beta>2$}

Recall that $(\bm{Q},\bm{b})$ are constructed from the quantized data as $\bm{Q}=\frac{1}{n}\sum_{k=1}^n \bm{\dot{x}}_k\bm{\dot{x}}_k^\top - \frac{\bar{{\Delta}}^2}{4}\bm{I}_d$ and $\bm{b}=\frac{1}{n}\sum_{k=1}^n \dot{y}_k\bm{\dot{x}}_k$. We will show that, $\lambda = C_1 \frac{(\sigma+\bar{\Delta})^2}{\sqrt{\kappa_0}}(\Delta +M^{1/(2l)})\sqrt{\frac{\log d}{n}}$ guarantees $\lambda \geq  \beta\|\frac{1}{n}\sum_{k=1}^n (\bm{\dot{x}}_k\bm{\dot{x}}_k^\top - \frac{\bar{\Delta}^2}{4}\bm{I}_d)\bm{\theta^\star} - \frac{1}{n}\sum_{k=1}^n \dot{y}_k\bm{\dot{x}}_k\|_\infty$ holds with the promised probability, where $\beta>2$ is any pre-specified constant.  Recall the notation we  introduced: $\dot{y}_k = \widetilde{y}_k + \phi_k + \vartheta_k$ with the   quantization error $\vartheta_k\sim \mathscr{U}([-\frac{\Delta}{2},\frac{\Delta}{2}])$ being independent of $\widetilde{y}_k$, $\bm{\dot{x}}_k = \bm{x}_k + \bm{\tau}_k+\bm{w}_k$ with  the   quantization error $\bm{w}_k \sim \mathscr{U}([-\frac{\bar{\Delta}}{2},\frac{\bar{\Delta}}{2}]^d)$ being independent of $\bm{x}_k$. Combining with the assumptions that the dithers are independent of $(\bm{x}_k,y_k)$ and  that $\phi_k$s and $\bm{\tau}_k$s are independent, we have \begin{equation}
\begin{aligned}\label{628add1}
    \mathbbm{E}(\dot{y}_k&\bm{\dot{x}}_k)=\mathbbm{E}\big((\widetilde{y}_k+\phi_k+\vartheta_k)(\bm{x}_k+\bm{\tau}_k+\bm{w}_k)\big)= \mathbbm{E}(\widetilde{y}_k\bm{x}_k),\\
    &\mathbbm{E}\Big(\Big[\bm{\dot{x}}_k\bm{\dot{x}}_k^\top - \frac{\bar{\Delta}^2}{4}\bm{I}_d\Big]\bm{\theta^\star}\Big)=\mathbbm{E}(\bm{x}_k\bm{x}_k^\top \bm{\theta^\star}) = \mathbbm{E}(y_k\bm{x}_k),
\end{aligned}
\end{equation}  which allows us to decompose the target term as two concentration terms ($I_1,I_3$) and a bias term $(I_2)$\begin{equation}
    \begin{aligned}\nonumber
  &  \Big\|\frac{1}{n}\sum_{k=1}^n \Big[\bm{\dot{x}}_k\bm{\dot{x}}_k^\top - \frac{\bar{\Delta}^2}{4}\bm{I}_d\Big]\bm{\theta^\star} - \frac{1}{n}\sum_{k=1}^n \dot{y}_k\bm{\dot{x}}_k\Big\|_\infty\leq  {\Big\|\frac{1}{n}\sum_{k=1}^n\dot{y}_k\bm{\dot{x}}_k-\mathbbm{E}(\dot{y}_k\bm{\dot{x}}_k)\Big\|_{\infty}}  \\&+ {\Big\|\mathbbm{E}\big(y_k\bm{x}_k- \widetilde{y}_k \bm{x}_k\big)\Big\|_\infty}   +  {\Big\|\frac{1}{n}\sum_{k=1}^n\bm{\dot{x}}_k\bm{\dot{x}}_k^\top \bm{\theta^\star}-\mathbbm{E}(\bm{\dot{x}}_k\bm{\dot{x}}_k^\top \bm{\theta^\star})\Big\|_\infty}:=I_1+I_2+I_3.
    \end{aligned}
\end{equation}

\noindent{\bf \sffamily Step 1.1. Bounding $I_1$}

Denote the $i$-th entry of $\bm{x}_k,\bm{\dot{x}}_k,\bm{\tau}_k,\bm{w}_k$ by $x_{ki},\dot{x}_{ki},\tau_{ki},w_{ki}$, respectively.
For $I_1$, the $i$-th entry reads $\frac{1}{n}\sum_{k=1}^n\dot{y}_k\dot{x}_{ki}-\mathbbm{E}(\dot{y}_k\dot{x}_{ki})$.  By using the relations $
        |\dot{y}_k|\leq |\widetilde{y}_k|+|\phi_k|+|\vartheta_k|\leq \zeta_y+\Delta$, $\|\bm{\dot{x}}_{k}\|_{\psi_2}\leq \|\bm{x}_{k}\|_{\psi_2}+\|\bm{\tau}_{k}\|_{\psi_2}+\|\bm{w}_{k}\|_{\psi_2}\lesssim \sigma+\bar{\Delta}$ and $\mathbbm{E}|\dot{y}_k|^{2l}\lesssim \mathbbm{E}|\widetilde{y}_k|^{2l} + \mathbbm{E}|\phi_k+\vartheta_k|^{2l}\lesssim M+\Delta^{2l}$,  for any integer $q\geq 2$ we can bound that \begin{equation}
    \begin{aligned}\label{nB.2}
    &\sum_{k=1}^n \mathbbm{E}\Big|\frac{\dot{y}_k\dot{x}_{ki}}{n}\Big|^q \leq \frac{(\zeta_y+\Delta)^{q-2} }{n^q}\sum_{k=1}^n \mathbbm{E}|\dot{y}_k^2\dot{x}_{ki}^q|\\
    & \stackrel{(i)}{\leq}\frac{(\zeta_y+\Delta)^{q-2} }{n^q}\sum_{k=1}^n \big\{\mathbbm{E}|\dot{y}_k|^{2l}\big\}^{\frac{1}{l}} \big\{\mathbbm{E}|\dot{x}_{ki}|^{\frac{lq}{l-1}}\big\}^{1-\frac{1}{l}}\\&\stackrel{(ii)}{\lesssim} \Big(\frac{(\sigma+\bar{\Delta})(\zeta_y+\Delta)}{n}\Big)^{q-2} \Big(\frac{(\sigma+\bar{\Delta})^2(M^{\frac{1}{l}}+\Delta^2)}{n}\Big)\Big(\sqrt{\frac{lq}{l-1}}\Big)^q;
    \end{aligned}
\end{equation}
combining with Stirling's approximation and treating $l$ as absolute constant, this provides$$  \sum_{k=1}^n \mathbbm{E}\Big|\frac{\dot{y}_k\dot{x}_{ki}}{n}\Big|^q\leq  \frac{q!}{2}v_0c_0^{q-2} \text{ where }v_0=O\Big(\frac{(\sigma+\bar{\Delta})^2(M^{{1}/{l}}+\Delta^2)}{n}\Big),~ c_0=O\Big(\frac{(\sigma+\bar{\Delta})(\zeta_y+\Delta)}{n}\Big).$$In (\ref{nB.2}), 
 $(i)$ is due to Holder's, and in $(ii)$ we use the moment constraint of sub-Gaussian variable (\ref{2.2b}). With these preparations, we invoke Bernstein's inequality  (Lemma \ref{bernstein})   and then a union bound over $i\in [d]$ to obtain
$$\mathbbm{P}\Big(I_1\lesssim (\sigma+\bar{\Delta}) (M^{\frac{1}{2l}}+\Delta)\sqrt{\frac{t}{n}}+ \frac{(\sigma+\bar{\Delta})(\zeta_y+\Delta)t}{n}\Big)\geq 1-2d\cdot \exp(-t),$$
  Thus, taking $t =\delta \log d$ and plug in $\zeta_y\asymp\sqrt{\frac{nM^{1/l}}{\delta \log d}}$, we obtain 
  $$\mathbbm{P}\Big(I_1\lesssim (\sigma+\bar{\Delta}) (M^{1/(2l)}+\Delta)\sqrt{\frac{\delta \log d}{n}}\Big)\geq 1-2d^{1-\delta}.$$

\noindent{\bf \sffamily Step 1.2. Bounding $I_2$}

Moreover,  we   estimate the $i$-th entry of $I_2$ by \begin{equation}
    \begin{aligned}\label{nB.3}
      &|\mathbbm{E}\big((y_k-\widetilde{y}_k)x_{ki}\big)|\leq \mathbbm{E}|y_kx_{ki}\mathbbm{1}(|y_k|\geq \zeta_y)|\\&\stackrel{(i)}{\leq} \big(\mathbbm{E}|y_k|^{\frac{2l}{2l-1}}|x_{ki}|^{\frac{2l}{2l-1}}\big)^{1-\frac{1}{2l}}\big(\mathbbm{P}(|y_k|\geq \zeta_y)\big)^{\frac{1}{2l}} \\
      &\stackrel{(ii)}{\leq}\Big(\big[\mathbbm{E}|y_k|^{2l}\big]^{\frac{1}{2l-1}}\big[\mathbbm{E}|x_{ki}|^{\frac{l}{l-1}}\big]^{\frac{2l-2}{2l-1}}\Big)^{1-\frac{1}{2l}}\Big(\mathbbm{P}(|y_k|^{2l}\geq \zeta_y^{2l})\Big)^{\frac{1}{2l}}\\&\stackrel{(iii)}{\leq} \frac{\sigma M^{1/l}}{\zeta_y}\lesssim \sigma M^{\frac{1}{2l}}\sqrt{\frac{\delta \log d}{n}},
    \end{aligned}
\end{equation}
where $(i)$, $(ii)$ are due to Holder's, $(iii)$ is due to Markov's. Since this holds for $i\in [d]$, it gives $I_2 \lesssim \sigma M^{1/(2l)}\sqrt{\frac{\delta \log d}{n}}$.

\vspace{1mm}
\noindent{\bf \sffamily Step 1.3. Bounding $I_3$}
\vspace{1mm}

We first derive a bound for $\|\bm{\theta^\star}\|_2$ that is implicitly implied by other conditions:  $$M^{1/l}\geq \mathbbm{E}|y_k|^2 \geq \mathbbm{E}(\bm{x}_k^\top \bm{\theta^\star})^2 = (\bm{\theta^\star})^\top\bm{\Sigma^\star}\bm{\theta^\star}\geq  \kappa_0\|\bm{\theta^\star}\|_2^2 \Longrightarrow\|\bm{\theta^\star}\|_2 =O\Big(\frac{M^{1/(2l)}}{\sqrt{\kappa_0}}\Big).$$
Hence, we can estimate $\|(\bm{\dot{x}}_k^\top\bm{\theta^\star})\dot{x}_{ki}\|_{\psi_1}\leq \|\bm{\dot{x}}_k^\top\bm{\theta^\star}\|_{\psi_2}\|\dot{x}_{ki}\|_{\psi_2}\leq \|\bm{\dot{x}}_k\|_{\psi_2}^2 \|\bm{\theta^\star}\|_2\lesssim {(\sigma+\bar{\Delta})^2} \frac{M^{1/(2l)}}{\sqrt{\kappa_0}}$. Then, we invoke Bernstein's inequality regarding the independent sum of sub-exponential random variables (e.g.,  \cite[Thm. 2.8.1]{vershynin2018high})\footnote{The application of Bernstein's inequality leads to the $\sigma^2$ ($\sigma$ is the upper bound on $\|\bm{x}_k\|_{\psi_2}$) dependence in the multiplicative factor $\mathscr{L}$. It is possible to refine this quadratic dependence by using a new Bernstein's inequality developed in \cite[Thm. 1.3]{jeong2022sub}, but we do not pursue this in the present paper.} to obtain  that for any $t\geq 0$ we have \begin{equation}
    \begin{aligned}\nonumber
      &\mathbbm{P}\left(\Big|\frac{1}{n}\sum_{k=1}^n (\bm{\dot{x}}_k^\top \bm{\theta^\star})\dot{x}_{ki} -\mathbbm{E}\{(\bm{\dot{x}}_k^\top \bm{\theta^\star})\dot{x}_{ki}\}\Big|\geq t\right)\\&~~~~~~~~~~~~~~~~~~\leq 2\exp\left(-C_5n\min\left\{\frac{\sqrt{\kappa_0} t}{(\sigma+\bar{\Delta})^2 M^{\frac{1}{2l}}},\Big(\frac{\sqrt{\kappa_0} t}{(\sigma+\bar{\Delta})^2 M^{\frac{1}{2l}}}\Big)^2\right\}\right) 
    \end{aligned}
\end{equation}
Hence, we can set $t = C_6\frac{(\sigma+\bar{\Delta})^2}{\sqrt{\kappa_0}} M^{1/(2l)}\sqrt{\frac{\delta\log d}{n}}$ with sufficiently large $C_6$, and further apply union bound over $i\in [d]$, under the scaling that $\frac{\delta \log d}{n}$ is small enough, we obtain $I_3\lesssim \frac{(\sigma+\bar{\Delta})^2}{\sqrt{\kappa_0}} M^{1/(2l)}\sqrt{\frac{\delta \log d}{n}}$ holds with probability at least $1-2d^{1-\delta}$.

Putting pieces together, since $\kappa_0 \lesssim \sigma^2$, it is immediate that $I\lesssim\frac{(\sigma+\bar{\Delta})^2}{\sqrt{\kappa_0}} \big(\Delta + M^{1/(2l)}\big)\sqrt{\frac{\delta \log d}{n}}$ holds with probability at least $1-8d^{1-\delta}$. Since $\lambda = C_1 \frac{(\sigma+\bar{\Delta})^2}{\sqrt{\kappa_0}}(\Delta +M^{1/(2l)})\sqrt{\frac{\delta\log d}{n}}$ with sufficiently large $C_1$, $\lambda \geq \beta \cdot\|\bm{Q\theta^\star}- \bm{b}\|_\infty$ holds w.h.p.

\vspace{1mm}

\noindent{\bf \sffamily Step 2. Verifying   RSC}

We provide a lower bound for $\bm{v}^\top \bm{Q}\bm{v}= \frac{1}{n}\|\bm{\dot{X}v}\|_2^2- \frac{\bar{\Delta}^2}{4}\|\bm{v}\|_2^2$ by using the matrix deviation inequality (Lemma \ref{deviation}). First note that the rows of  $\bm{\dot{X}}$ are sub-Gaussian $\|\bm{\dot{x}}_k\|_{\psi_2} \lesssim \sigma + \bar{\Delta}$. Since $\mathbbm{E}(\bm{\dot{x}}_k\bm{\dot{x}}_k^\top)=\bm{\Sigma^\star}+\frac{\bar{\Delta}^2}{4}\bm{I}_d$, all eigenvalues of $\bm{\dot{\Sigma}}:=\mathbbm{E}(\bm{\dot{x}}_k\bm{\dot{x}}_k^\top)$ are between $[\kappa_0+\frac{1}{4}\bar{\Delta}^2,\kappa_1+\frac{1}{4}\bar{\Delta}^2]$. Thus, we invoke Lemma \ref{deviation} for $\mathcal{T}:=\{\bm{v}\in \mathbb{R}^d:\|\bm{v}\|_1= 1\}$ with $u=\sqrt{\delta \log d}$; due to the well-known Gaussian width estimate  $\omega(\mathcal{T})\lesssim \sqrt{\log d}$ \cite[Example 7.5.9]{vershynin2018high}, with probability at least $1-d^{-\delta}$ the following event holds
\begin{equation}\nonumber
\begin{aligned}
\sup_{\|\bm{v}\|_1=1}\Big|\|\bm{\dot{X}v}\|_2-\sqrt{n}\|\bm{\dot{\Sigma}}^{1/2}\bm{v}\|_2\Big|&\leq \frac{c_1\sqrt{\kappa_1+\frac{1}{4}\bar{\Delta}^2}(\sigma+ \bar{\Delta})^2}{\kappa_0+\frac{1}{4}\bar{\Delta}^2}\sqrt{\delta \log d}\\&:= c_1 \mathscr{L}_1 \sqrt{\delta \log d}.
\end{aligned}
\end{equation}
Under the same probability, a simple  rescaling then provides 
\begin{equation}
    \label{nB.6}
 \Big|\frac{1}{\sqrt{n}}\|\bm{\dot{X}v}\|_2-\|\bm{\dot{\Sigma}}^{1/2}\bm{v}\|_2\Big| \leq c_1\mathscr{L}_1\sqrt{\frac{\delta \log d}{n}}\|\bm{v}\|_1,~\forall~\bm{v}\in \mathbb{R}^d,
\end{equation}
which implies  \begin{equation}
    \begin{aligned}\label{nB.7}
        &\frac{1}{\sqrt{n}}\|\bm{\dot{X}v}\|_2 \geq \|\bm{\dot{\Sigma}}^{1/2}\bm{v}\|_2 - c_1 \mathscr{L}_1\Big({\frac{\delta \log d}{n}}\Big)^{1/2}\|\bm{v}\|_1\\&\geq \Big({\kappa_0+\frac{1}{4}\bar{\Delta}^2}\Big)^{1/2} \|\bm{v}\|_2-  c_1 \mathscr{L}_1\Big({\frac{\delta \log d}{n}}\Big)^{1/2}\|\bm{v}\|_1,~\forall~\bm{v}\in \mathbb{R}^d.
    \end{aligned}
\end{equation}
 Based on (\ref{nB.7}), we let $\hat{c}:=\frac{2\kappa_0+\bar{\Delta}^2}{4\kappa_0+\bar{\Delta}^2}$ and use  the inequality $(a-b)^2\geq \hat{c}a^2-\frac{\hat{c}}{1-\hat{c}}b^2$ to obtain  
 \begin{equation}
     \begin{aligned}\nonumber
         \bm{v}^\top \bm{Q}\bm{v} &= \frac{1}{n}\|\bm{\dot{X}v}\|_2^2 - \frac{\bar{\Delta}^2}{4}\|\bm{v}\|_2^2\\&\geq \hat{c}\big(\kappa_0+\frac{1}{4}\bar{\Delta}^2\big)\|\bm{v}\|_2^2- c_1^2\mathscr{L}_1^2\frac{\hat{c}}{1-\hat{c}}\frac{\delta \log d}{n}\|\bm{v}\|_1^2-\frac{\bar{\Delta}^2}{4}\|\bm{v}\|_2^2\\
         &\geq \frac{\kappa_0}{2}\|\bm{v}\|_2^2- c_1^2\mathscr{L}_1^2 \big(1+\frac{\bar{\Delta}^2}{2\kappa_0}\big)\frac{\delta \log d}{n}\|\bm{v}\|_1^2\\&:=\frac{\kappa_0}{2}\|\bm{v}\|_2^2-c_2(\kappa_0,\sigma,\bar{\Delta})\cdot \frac{\delta \log d}{n}\|\bm{v}\|_1^2,
     \end{aligned}
 \end{equation}
 which holds for all $\bm{v}\in \mathbb{R}^d$ and   $\hat{c}_2:=c_2(\kappa_0, \sigma,\bar{\Delta})$ is a constant depending on 
$\kappa_0, \sigma,\bar{\Delta}$ (we remove the dependence on $\kappa_1$ by $\kappa_1\lesssim \sigma^2$). 
\vspace{1mm}

\noindent{\bf \sffamily  Step 3.  Bounding the Estimation Error}

\vspace{1mm}
We are in a position to bound the estimation error of any $\bm{\widetilde{\theta}}$ satisfying (\ref{4.17}).
Note that definition of $\partial \|\bm{\widetilde{\theta}}\|$ gives $\lambda\|\bm{\theta^\star}\|_1-\lambda\|\bm{\widetilde{\theta}}\|_1\geq \big<\lambda\cdot\partial \|\bm{\widetilde{\theta}}\|_1, -\bm{\widetilde{\Upsilon}}\big>$. Thus, we set $\bm{\theta} = \bm{\theta^\star}$ in (\ref{4.17}) and proceed as follows 
\begin{equation}
    \begin{aligned}\label{4.21}
      0&\geq \big<\bm{Q\widetilde{\theta}} - \bm{b}+ \lambda\cdot \partial \|\bm{\widetilde{\theta}}\|_1, \bm{\widetilde{\Upsilon}}\big> \\&= \bm{\widetilde{\Upsilon}}^\top \bm{Q}\bm{\widetilde{\Upsilon}}+ \big<\bm{Q\theta^\star}-\bm{b},\bm{\widetilde{\Upsilon}}\big> + \big<\lambda \cdot \partial\|\bm{\widetilde{\theta}}\|_1, \bm{\widetilde{\Upsilon}}\big> \\
      &\stackrel{(i)}{\geq}   \frac{\kappa_0}{2}\|\bm{\widetilde{\Upsilon}}\|_2^2 - \frac{2\hat{c}_2R\sqrt{s}\cdot \delta \log d }{n} \|\bm{\widetilde{\Upsilon}}\|_1  - \frac{\lambda}{\beta}\|\bm{\widetilde{\Upsilon}}\|_1 + \lambda\big(\|\bm{\widetilde{\theta}}\|_1-\|\bm{\theta^\star}\|_1\big)\\&\stackrel{(ii)}{\geq} \frac{\kappa_0}{2}\|\bm{\widetilde{\Upsilon}}\|_2^2 - \frac{\lambda}{2}\|\bm{\widetilde{\Upsilon}}\|_1  + \lambda \big(\|\bm{\widetilde{\theta}}\|_1-\|\bm{\theta^\star}\|_1\big),
     \end{aligned}
\end{equation}
where we use $\lambda\geq \beta \|\bm{Q\theta^\star}-\bm{b}\|_\infty~(\beta>2)$  and $\|\bm{\widetilde{\Upsilon}}\|_1\leq \|\bm{\widetilde{\theta}}\|_1+\|\bm{\theta^\star}\|_1\leq 2R\sqrt{s}$ in $(i)$, and  $(ii)$ is due to the scaling $2\hat{c}_2R\delta \sqrt{s}\frac{\log d}{n} \leq (\frac{1}{2}-\frac{1}{\beta})\lambda$ that holds under the assumed $n\gtrsim \delta \log d$ for some hidden constant depending on $(\kappa_0,\sigma,\bar{\Delta},\Delta,M,R)$. Thus, we arrive at $\frac{\lambda}{2}\|\bm{\widetilde{\Upsilon}}\|_1 \geq \lambda\big(\|\bm{\widetilde{\theta}}\|_1-\|\bm{\theta^\star}\|_1\big)$.

For $\bm{a}\in \mathbb{R}^d$, $\mathcal{J}\subset[d]$ we obtain $\bm{a}_{\mathcal{J}}\in \mathbb{R}^d$ by keeping entries of $\bm{a}$  in $\mathcal{J}$ while setting others to zero. Let $\mathcal{A}$ be the support of $\bm{\theta^\star}$, $\mathcal{A}^c = [d]\setminus \mathcal{A}$, then we have \begin{equation}
    \begin{aligned}\label{decom}
        &\frac{1}{2}\|\bm{\widetilde{\Upsilon}}\|_1\geq \|\bm{\theta^\star}+\bm{\widetilde{\Upsilon}}\|_1-\|\bm{\theta^\star}\|_1 = \|\bm{\theta^\star} +\bm{\widetilde{\Upsilon}}_{\mathcal{A}}+\bm{\widetilde{\Upsilon}}_{\mathcal{A}^c}\|_1-\|\bm{\theta^\star}\|_1\\&\geq \|\bm{\theta^\star}\|_1   + \|\bm{\widetilde{\Upsilon}}_{\mathcal{A}^c}\|_1-\|\bm{\widetilde{\Upsilon}}_{\mathcal{A}}\|_1 -\|\bm{\theta^\star}\|_1 = \|\bm{\widetilde{\Upsilon}}_{\mathcal{A}^c}\|_1-\|\bm{\widetilde{\Upsilon}}_{\mathcal{A}}\|_1.
    \end{aligned}
\end{equation} Further use $\frac{1}{2}\|\bm{\widetilde{\Upsilon}}\|_1 \leq \frac{1}{2}\|\bm{\widetilde{\Upsilon}}_{\mathcal{A}}\|_1+\frac{1}{2}\|\bm{\widetilde{\Upsilon}}_{\mathcal{A}^c}\|_1$, we obtain $\|\bm{\widetilde{\Upsilon}}_{\mathcal{A}^c}\|_1\leq 3\|\bm{\widetilde{\Upsilon}}_{\mathcal{A}}\|_1$. Hence, we have $\|\bm{\widetilde{\Upsilon}}\|_1 \leq \|\bm{\widetilde{\Upsilon}}_{\mathcal{A}}\|_1+\|\bm{\widetilde{\Upsilon}}_{\mathcal{A}^c}\|_1\leq 4\|\bm{\widetilde{\Upsilon}}_{\mathcal{A}}\|_1\leq 4\sqrt{s}\|\bm{\widetilde{\Upsilon}}\|_2$. Now, we further substitute this into (\ref{4.21}) and obtain $$\frac{1}{2}\kappa_0\|\bm{\widetilde{\Upsilon}}\|^2_2\leq \frac{\lambda}{2}\|\bm{\widetilde{\Upsilon}}\|_1+\lambda \big(\|\bm{\theta^\star}\|_1- \|\bm{\widetilde{\theta}}\|_1\big) \leq \frac{3\lambda}{2}\|\bm{\widetilde{\Upsilon}}\|_1 \leq 6\lambda \sqrt{s}\|\bm{\widetilde{\Upsilon}}\|_2.$$
 Thus, we arrive at the desired error bound for $\ell_2$-norm\begin{equation}\nonumber
 \|\bm{\widetilde{\Upsilon}}\|_2\lesssim \mathscr{L}\sqrt{\frac{\delta s\log d}{n}},     ~\mathrm{with}~\mathscr{L}:= \frac{(\sigma+\bar{\Delta})^2(\Delta+M^{1/(2l)})}{\kappa_0^{3/2}}.
 \end{equation}
  We   simply use $\|\bm{\widetilde{\Upsilon}}\|_1\leq 4\sqrt{s}\|\bm{\widetilde{\Upsilon}}\|_2$ again  to establish the bound for $\|\bm{\widetilde{\Upsilon}}\|_1$. The proof is complete. \hfill $\square$ 
\subsubsection{Proof of Theorem \ref{thm7}}

\noindent{\it Proof.} The proof is divided into two  steps. Compared to the last proof, due to the heavy-tailedness of $\bm{x}_k$, the step of "verifying RSC" reduces to the simpler argument in (\ref{4.23}).

\noindent{\bf \sffamily  Step 1. Proving $\lambda \geq \beta\|\bm{Q\theta^\star} - \bm{b}\|_{\infty}$   for some pre-specified $\beta>2$}

Recall that $(\bm{Q},\bm{b})$ are constructed from the quantized data as $\bm{Q}=\frac{1}{n}\sum_{k=1}^n \bm{\dot{x}}_k\bm{\dot{x}}_k^\top-\frac{\bar{\Delta}}{4}\bm{I}_d$ and $\bm{b}=\frac{1}{n}\sum_{k=1}^n\dot{y}_k\bm{\dot{x}}_k$. Thus, our main aim in this step is to prove that $\lambda=C_1(R\sqrt{M}+\Delta^2+R\bar{\Delta}^2)\sqrt{\frac{\delta \log d}{n}}$ suffices to ensure $\lambda\geq \beta \|\frac{1}{n}\sum_{k=1}^n(\bm{\dot{x}}_k\bm{\dot{x}}_k^\top-\frac{\bar{\Delta}^2}{4}\bm{I}_d)\bm{\theta^\star}-\frac{1}{n}\sum_{k=1}^n\dot{y}_k\bm{\dot{x}}_k\|_\infty$ with the promised probability and any pre-specified $\beta>2$. We let $\bm{\dot{x}}_k=\bm{\widetilde{x}}_k +\bm{\tau}_k+ \bm{w}_k$, $\dot{y}_k= \widetilde{y}_k+\phi_k+\vartheta_k$ with quantization errors $\bm{w}_k$ and $\vartheta_k$. Analogously to (\ref{628add1}), we have $\mathbbm{E}[\dot{y}_k\bm{\dot{x}}_k]=\mathbbm{E}(\widetilde{y}_k\bm{\widetilde{x}}_k)$ and $\mathbbm{E}(\bm{\dot{x}}_k\bm{\dot{x}}_k^\top\bm{\theta^\star})= \frac{\bar{\Delta}^2}{4}\bm{\theta^\star}+ \mathbbm{E}(\bm{\widetilde{x}}_k\bm{\widetilde{x}}_k^\top\bm{\theta^\star}) .$  Thus, the term we want to bound can be first decomposed into two concentration terms ($I_1,I_3$) and one bias term ($I_2$):\begin{equation}
    \begin{aligned}\label{4.12}
    & \Big\|\Big(\frac{1}{n}\sum_{k=1}^n\bm{\dot{x}}_k\bm{\dot{x}}_k^\top-\frac{\bar{\Delta}^2}{4}\bm{I}_d\Big) \bm{\theta^\star} - \frac{1}{n}\sum_{k=1}^n\dot{y}_k\bm{\dot{x}}_k\Big\|_{\infty} \leq \Big\|\frac{1}{n}\sum_{k=1}^n\dot{y}_k\bm{\dot{x}}_k - \mathbbm{E}(\dot{y}_k\bm{\dot{x}}_k)\Big\|_\infty \\
    & + \Big\|\mathbbm{E}\big(\bm{\widetilde{x}}_k\bm{\widetilde{x}}_k^\top \bm{\theta^\star}\big)-\mathbbm{E}\big(\widetilde{y}_k\bm{\widetilde{x}}_k\big)\Big\|_\infty + \Big\|\Big(\frac{1}{n}\sum_{k=1}^n\bm{\dot{x}}_k\bm{\dot{x}}_k^\top- \mathbbm{E}(\bm{\dot{x}}_k\bm{\dot{x}}_k^\top)\Big)\bm{\theta^\star}\Big\|_\infty:= I_1+I_2+I_3,
    \end{aligned}
\end{equation}

\noindent{\bf \sffamily Step 1.1. Bounding $I_1$}

 Denote the $i$-th entry of $\bm{x}_k,\bm{\widetilde{x}}_k,\bm{\dot{x}}_k,\bm{\tau}_k,\bm{w}_k$ by $x_{ki},\widetilde{x}_{ki},\dot{x}_{ki},\tau_{ki},w_{ki}$, respectively.
  Since $|\dot{y}_k|\leq |\widetilde{y}_k|+|\phi_k|+|\vartheta_k|\leq |\widetilde{y}_k|+\Delta$, $\dot{x}_{ki}\leq |\widetilde{x}_{ki}|+|\tau_{ki}|+|w_{ki}|\leq|\widetilde{x}_{ki}|+\frac{3}{2}\bar{\Delta}$, for any integer $q\geq 2$ we can bound the moments as \begin{equation}
    \begin{aligned}
\nonumber\sum_{k=1}^n\mathbbm{E}\Big|\frac{\dot{y}_k\dot{x}_{ki}}{n}\Big|^q &\leq \frac{(\zeta_x+\frac{3}{2}\bar{\Delta})^{q-2}(\zeta_y+\Delta)^{q-2}}{n^q}\sum_{k=1}^n\mathbbm{E}|\dot{y}_k\dot{x}_{ki}|^2\\
    &\leq \frac{[(\zeta_x+\frac{3}{2}\bar{\Delta}) (\zeta_y+\Delta)]^{q-2}}{n^q}\sum_{k=1}^n\sqrt{\mathbbm{E}|\dot{y}_k|^4\mathbbm{E}|\dot{x}_{ki}|^4}\\&\lesssim \Big(\frac{(\zeta_x+\frac{3}{2}\bar{\Delta})(\zeta_y+\Delta)}{n}\Big)^{q-2}\Big(\frac{M+ \Delta^4+ \bar{\Delta}^4}{n}\Big),
    \end{aligned}    
\end{equation}
and in the last inequality we use $\mathbbm{E}|\widetilde{y}_k|\leq \mathbbm{E}|y_k|^4\leq M$ and $\mathbbm{E}|\widetilde{x}_{ki}|^4 \leq \mathbbm{E}|x_{ki}|^4\leq M$.
With these preparations, we apply Bernstein's inequality (Lemma \ref{bernstein}) and a union bound, yielding that \begin{equation}\nonumber
    \mathbbm{P}\left(I_1\geq C_5\left\{\sqrt{\frac{(M+ \Delta^4+ \bar{\Delta}^4)t}{n}}+\frac{(\zeta_x+\frac{3}{2}\bar{\Delta})(\zeta_y+\Delta)t}{n}\right\}\right)\leq 2d\cdot\exp(-t).
\end{equation}
Set $t=\delta \log d$, $I_1 \lesssim (\sqrt{M}+\Delta^2+\bar{\Delta}^2)\sqrt{\frac{\delta \log d}{n}}$ holds with probability at least $1-2d^{1-\delta}$.

\noindent{\bf \sffamily Step 1.2. Bounding $I_2$}

Noting that $\mathbbm{E}(y_k\bm{x}_k)=\mathbbm{E}(\bm{x}_k\bm{x}_k^\top\bm{\theta^\star})+\mathbbm{E}(\epsilon_k\bm{x}_k)=\mathbbm{E}(\bm{x}_k\bm{x}_k^\top \bm{\theta^\star})$, we could further decompose $I_2$ as 
$I_2\leq \|\mathbbm{E}(\bm{\widetilde{x}}_k\bm{\widetilde{x}}_k^\top\bm{\theta^\star})-\mathbbm{E}(\bm{x}_k\bm{x}_k^\top\bm{\theta^\star})\|_\infty + \|\mathbbm{E}(y_k\bm{x}_k)-\mathbbm{E}(\widetilde{y}_k\bm{\widetilde{x}}_k)\|_\infty:= I_{21}+I_{22}$. To bound $I_{21}$, we note that the assumption and truncation procedure for $\bm{x}_k$ are   the same as in Theorem \ref{thm1}; thus, Step 2 in the proof of Theorem \ref{thm1} can yield that $\|\mathbbm{E}(\bm{\widetilde{x}}_k\bm{\widetilde{x}}_k^\top-\bm{x}_k\bm{x}_k^\top)\|_\infty\leq \sqrt{\frac{\delta M\log d}{n}}$. Thus, we have $I_{21}\leq \|\mathbbm{E}(\bm{\widetilde{x}}_k\bm{\widetilde{x}}_k^\top-\bm{x}_k\bm{x}_k^\top)\|_\infty\|\bm{\theta^\star}\|_1\leq R\sqrt{M}\sqrt{\frac{\delta \log d}{n}}$. To bound $I_{22}$,
 we estimate the $i$-th entry 
\begin{equation}
    \begin{aligned}\nonumber
    \big|\mathbbm{E}(y_kx_{ki}-\widetilde{y}_k\widetilde{x}_{ki})\big|&= \big|\mathbbm{E}(y_kx_{ki}-\widetilde{y}_k\widetilde{x}_{ki})\big(\mathbbm{1}(|y_k|>\zeta_y)+\mathbbm{1}(|x_{ki}|\geq \zeta_x)\big)\big|\\
    &\leq  \mathbbm{E}\big(|y_kx_{ki}|\mathbbm{1}(|y_k|>\zeta_y)\big) + \mathbbm{E}\big(|y_kx_{ki}|\mathbbm{1}(|x_{ki}|\geq \zeta_x)\big)\\&\stackrel{(i)}{\leq} M\Big(\frac{1}{\zeta_x^2}+\frac{1}{\zeta_y^2}\Big)\lesssim \sqrt{\frac{\delta M \log d}{n}},
    \end{aligned}
\end{equation}
where $(i)$ is because $\mathbbm{E}\big(|y_kx_{ki}|\mathbbm{1}(|y_k|>\zeta_y)\big) \leq \big[\mathbbm{E}|y_k^2x_{ki}^2|\big]^{1/2}\sqrt{\mathbbm{P}(|y_k|>\zeta_y)} \leq (\mathbbm{E}y_k^4)^{1/4}(\mathbbm{E}x_{ki}^4)^{1/4}\sqrt{\frac{\mathbbm{E}y_k^4}{\zeta_y^4}}\leq \frac{M}{\zeta_y^2}$, 
and applying similar treatment   to $\mathbbm{E}\big(|y_kx_{ki}|\mathbbm{1}(|x_{ki}|>\zeta_x)\big)$. Overall, we have $I_2\lesssim R\sqrt{M}\sqrt{\frac{\delta \log d}{n}}$.

\vspace{1mm}

\noindent{\bf \sffamily Step 1.3. Bounding $I_3$}

We first note that $$I_3 = \|(\bm{Q}- \bm{\Sigma^\star})\bm{\theta^\star}\|_\infty\leq \|\bm{Q}- \bm{\Sigma^\star}\|_\infty\cdot\|\bm{\theta^\star}\|_1\leq R\|\bm{Q}- \bm{\Sigma^\star}\|_\infty.$$ By Theorem \ref{thm1}, $\|\bm{Q}-\bm{\Sigma^\star}\|_\infty\lesssim (\sqrt{M}+\bar{\Delta}^2)\sqrt{\frac{\delta\log d}{n}}$ holds with probability at least $1-2d^{2-\delta}$, which leads to $I_3 \leq \|\bm{Q}-\bm{\Sigma^\star}\|_{\infty}\|\bm{\theta^\star}\|_1\lesssim  R(\sqrt{M}+\bar{\Delta}^2)\sqrt{\frac{\delta   \log d}{n}}$. Thus, by combining everything, we obtain that $\|\bm{Q\theta^\star}-\bm{b}\|_\infty \lesssim (R\sqrt{M}+\Delta^2+R\bar{\Delta}^2)\sqrt{\frac{\delta \log d}{n}}$ holds with probability at least $1-4d^{2-\delta}$.
 Compared to our choice of $\lambda$, the claim of this step follows.

\vspace{1mm}

\vspace{1mm}

\noindent{\bf \sffamily  Step 2.   Bounding the Estimation Error}

We are now ready to bound the estimation error of any $\bm{\widetilde{\theta}}$ satisfying (\ref{4.17}). Set $\bm{\theta}=\bm{\theta^\star}$ in (\ref{4.12}), it gives $\big<\bm{Q\widetilde{\theta}}-\bm{b} +\lambda\cdot \partial \|\bm{\widetilde{\theta}}\|_1, \bm{\widetilde{\Upsilon}}\big>\leq 0$. Recall that we can assume $\|\bm{Q}-\bm{\Sigma^\star}\|_\infty \leq C_6(\sqrt{M}+\bar{\Delta}^2) \sqrt{\frac{\delta \log d}{n}}$ with probability at least $1-2d^{2-\delta}$, which leads to \begin{equation}\label{4.23}
   \begin{aligned}
        &\bm{\widetilde{\Upsilon}}^\top \bm{Q} \bm{\widetilde{\Upsilon}} = \bm{\widetilde{\Upsilon}}^\top \bm{\Sigma^\star} \bm{\widetilde{\Upsilon}} - \bm{\widetilde{\Upsilon}}^\top (\bm{\Sigma^\star}-\bm{Q}) \bm{\widetilde{\Upsilon}}\\& \geq \kappa_0\|\bm{\widetilde{\Upsilon}}\|_2^2 -C_6(\sqrt{M}+\bar{\Delta}^2)\sqrt{\frac{\delta \log d}{n}}\|\bm{\widetilde{\Upsilon}}\|_1^2.
   \end{aligned}
\end{equation} Thus, it follows that \begin{equation}
    \begin{aligned}\label{4.24}
    0&\geq \big<\bm{Q\widetilde{\theta}}-\bm{b}+\lambda \cdot \partial\|\bm{\widetilde{\theta}}\|_1,\bm{\widetilde{\Upsilon}}\big>\\&= \big<\bm{Q\theta^\star}-\bm{b}, \bm{\widetilde{\Upsilon}}\big> + \bm{\widetilde{\Upsilon}}^\top\bm{Q}\bm{\widetilde{\Upsilon}} + \lambda \big<\partial\|\bm{\widetilde{\theta}}\|_1,\bm{\widetilde{\Upsilon}}\big>\\
    &\stackrel{(i)}{\geq} C_0\sqrt{M}\|\bm{\widetilde{\Upsilon}}\|_2^2 -C_6(\sqrt{M}+\bar{\Delta}^2)\sqrt{\frac{\delta \log d}{n}}\|\bm{\widetilde{\Upsilon}}\|_1^2 \\&~~~~~~~~~~~~~~~~~~~~~~~ - \|\bm{Q\theta^\star}-\bm{b}\|_\infty\|\bm{\widetilde{\Upsilon}}\|_1 + \lambda\big(\|\bm{\widetilde{\theta}}\|_1-\|\bm{\theta^\star}\|_1\big)\\
    &\stackrel{(ii)}{\geq} C_0\sqrt{M}\|\bm{\widetilde{\Upsilon}}\|_2^2 -\Big( 2C_6R(\sqrt{M}+\bar{\Delta}^2)\sqrt{\frac{\delta \log d}{n}}+\|\bm{Q\theta^\star}-\bm{b}\|_\infty\Big) \|\bm{\widetilde{\Upsilon}}\|_1 \\&~~~~~~~~~~~~~~~~~~~~~~~+ \lambda\big(\|\bm{\widetilde{\theta}}\|_1-\|\bm{\theta^\star}\|_1\big) \\
    &\stackrel{(iii)}{\geq} C_0\sqrt{M}\|\bm{\widetilde{\Upsilon}}\|_2^2 - \frac{\lambda}{2}\|\bm{\widetilde{\Upsilon}}\|_1 +\lambda\big(\|\bm{\widetilde{\theta}}\|_1-\|\bm{\theta^\star}\|_1\big).
    \end{aligned}
\end{equation}
Note that $(i)$ is due to (\ref{4.23}) and $\|\bm{\theta^\star}\|_1-\|\bm{\widetilde{\theta}}\|_1\geq \big<\partial\|\bm{\widetilde{\theta}}\|_1, -\bm{\widetilde{\Upsilon}}\big>$, in $(ii)$ we use $\|\bm{\widetilde{\Upsilon}}\|_1\leq \|\bm{\widetilde{\theta}}\|_1+\|\bm{\theta^\star}\|_1\leq 2R$, and   from { Step 1} $(iii)$   holds when $\lambda =C_2 (R\sqrt{M}+\Delta^2+R\bar{\Delta}^2)\sqrt{\frac{\delta \log d}{n}}$ with sufficiently large $C_2$. Therefore, we arrive at $\frac{1}{2}\|\bm{\widetilde{\Upsilon}}\|_1 \geq \|\bm{\widetilde{\theta}}\|_1-\|\bm{\theta^\star}\|_1$. Similar to {Step 3} in the proof of Theorem \ref{thm6}, we can show $\|\bm{\widetilde{\Upsilon}}\|_1 \leq 4\sqrt{s}\|\bm{\widetilde{\Upsilon}}\|_2$. Applying (\ref{4.24}) again, it yields $$\kappa_0\|\bm{\widetilde{\Upsilon}}\|_2^2 \leq \frac{\lambda}{2}\|\bm{\widetilde{\Upsilon}}\|_1+ \lambda \|\bm{\widetilde{\Upsilon}}\|_1 \leq \frac{3\lambda}{2}\|\bm{\widetilde{\Upsilon}}\|_1\leq 6\sqrt{s}\lambda \|\bm{\widetilde{\Upsilon}}\|_2.$$ Thus, we obtain $\|\bm{\widetilde{\Upsilon}}\|_2\lesssim \mathscr{L}\sqrt{\frac{\delta s\log d}{n}}$ with $\mathscr{L}:=\frac{ R\sqrt{M}+\Delta^2+R\bar{\Delta}^2}{\kappa_0}$. The proof can be concluded by further applying $\|\bm{\widetilde{\Upsilon}}\|_1\leq 4\sqrt{s}\|\bm{\widetilde{\Upsilon}}\|_2$ to derive the bound for $\ell_1$-norm. \hfill $\square$
\subsubsection{Proof of Proposition \ref{framework}}
\begin{proof}
     We let $\bm{\theta}=\bm{\theta^\star}$ in (\ref{4.17}), then proceeds as the proof of Theorem \ref{thm7}: \begin{equation}
    \begin{aligned}\label{A.1}
      0&\geq \big<\bm{Q\widetilde{\theta}} -\bm{b}+\lambda \cdot \partial \|\bm{\widetilde{\theta}}\|_1,\bm{\widetilde{\Upsilon}}\big> \\&= \bm{\widetilde{\Upsilon}}^\top \bm{\Sigma^\star}\bm{\widetilde{\Upsilon}} + \big<\bm{Q\theta^\star}-\bm{b}, \bm{\widetilde{\Upsilon}}\big> + \bm{\widetilde{\Upsilon}}^\top(\bm{Q}-\bm{\Sigma^\star})\bm{\widetilde{\Upsilon}} + \lambda \big<\partial\|\bm{\widetilde{\theta}}\|_1,\bm{\widetilde{\Upsilon}}\big> \\
      &\stackrel{(i)}{\geq} \kappa_0 \|\bm{\widetilde{\Upsilon}}\|_2^2 - \|\bm{Q\theta^\star}-\bm{b}\|_\infty \|\bm{\widetilde{\Upsilon}}\|_1 -  \|\bm{Q}-\bm{\Sigma^\star}\|_\infty\|\bm{\widetilde{\Upsilon}}\|_1^2 +\lambda\big(\|\bm{\widetilde{\theta}}\|_1-\|\bm{\theta^\star}\|_1\big)\\
      &\stackrel{(ii)}{\geq} \kappa_0  \|\bm{\widetilde{\Upsilon}}\|_2^2  - \big(\|\bm{Q\theta^\star}-\bm{b}\|_\infty+2R\cdot\|\bm{Q}-\bm{\Sigma^\star}\|_\infty \big)\|\bm{\widetilde{\Upsilon}}\|_1+ \lambda\big(\|\bm{\widetilde{\theta}}\|_1-\|\bm{\theta^\star}\|_1\big) \\
      &\stackrel{(iii)}{\geq}  \kappa_0  \|\bm{\widetilde{\Upsilon}}\|_2^2  - \frac{\lambda}{2}\|\bm{\widetilde{\Upsilon}}\|_1+ \lambda\big(\|\bm{\widetilde{\theta}}\|_1-\|\bm{\theta^\star}\|_1\big),
    \end{aligned}
\end{equation}
where in $(i)$ we use $\lambda_{\min}(\bm{\Sigma^\star}) \geq \kappa_0$ and $\|\bm{\theta^\star}\|_1-\|\bm{\widetilde{\theta}}\|_1\geq \big<\partial\|\bm{\widetilde{\theta}}\|_1, -\bm{\widetilde{\Upsilon}}\big>$, $(ii)$ is by $\|\bm{\widetilde{\Upsilon}}\|_1\leq\|\bm{\widetilde{\theta}}\|_1+\|\bm{\theta^\star}\|_1\leq 2R$, in $(iii)$ we use the assumption (\ref{4.26}). Thus, by $\kappa_0\|\bm{\widetilde{\Upsilon}}\|^2_2\geq 0$ we obtain $2\big(\|\bm{\widetilde{\theta}}\|_1-\|\bm{\theta^\star}\|_1\big)\leq \|\bm{\widetilde{\Upsilon}}\|_1$. Similarly to  {Step 3} in the proof of Theorem \ref{thm6}, we can show $\|\bm{\widetilde{\Upsilon}}\|_1 \leq 4\sqrt{s}\|\bm{\widetilde{\Upsilon}}\|_2$. Again we use (\ref{A.1}), it gives \begin{equation}
    \begin{aligned}\nonumber
      \kappa_0\|\bm{\widetilde{\Upsilon}}\|^2_2\leq \frac{\lambda}{2}\|\bm{\widetilde{\Upsilon}}\|_1+ \lambda \big(\|\bm{\theta^\star}\|_1 - \|\bm{\widetilde{\theta}}\|_1\big) \leq \frac{3}{2}\lambda \|\bm{\widetilde{\Upsilon}}\|_1 \leq 6\lambda\sqrt{s}\|\bm{\widetilde{\Upsilon}}\|_2,
    \end{aligned}
\end{equation}  
thus displaying $\|\bm{\widetilde{\Upsilon}}\|_2 \leq \frac{6\lambda\sqrt{s}}{\kappa_0}$. The proof can be concluded by using $\|\bm{\widetilde{\Upsilon}}\|_1 \leq 4\sqrt{s}\|\bm{\widetilde{\Upsilon}}\|_2$.
\end{proof}
\subsubsection{Proof of Theorem \ref{sg1bitqccs}}
 
\begin{proof}
To invoke Proposition \ref{framework}, it is enough to verify (\ref{4.26}). Recalling $\bm{\Sigma^\star} = \mathbbm{E}(\bm{x}_k\bm{x}_k^\top)$, we first invoke \cite[Thm. 1]{chen2022high} and obtain $\|\bm{Q}-\bm{\Sigma^\star}\|_\infty =O\big(\sigma^2\sqrt{\frac{\delta \log d(\log n)^2}{n}}\big)$ holds with probability at least $1-2d^{2-\delta}$. This confirms $\lambda\gtrsim R\cdot\|\bm{Q}-\bm{\Sigma^\star}\|_\infty$. Then, it remains to upper bound $\|\bm{Q\theta^\star}-\bm{b}\|_\infty$:\begin{equation}
    \begin{aligned}\nonumber
      &\|\bm{Q\theta^\star}-\bm{b}\|_\infty \leq \|(\bm{Q}-\bm{\Sigma^\star})\bm{\theta^\star}\|_\infty +\|\bm{\Sigma^\star\theta^\star}-\bm{b}\|_\infty \\
      &\leq \|\bm{Q}-\bm{\Sigma^\star}\|_\infty\|\bm{\theta^\star}\|_1 + \Big\|\frac{\gamma_x\gamma_y}{n}\sum_{k=1}^n\dot{y}_k\bm{\dot{x}}_{k1} -\mathbbm{E}(y_k\bm{x}_k)\Big\|_\infty\\&\stackrel{(i)}{\lesssim} \sigma^2(R+1)\sqrt{\frac{\delta \log d(\log n)^2}{n}},
    \end{aligned}
\end{equation}
where in $(i)$ we use a known estimate from   \cite[Eq. (A.31)]{chen2022high}:
$$\mathbbm{P}\left(\Big\|\frac{\gamma_x\gamma_y}{n}\sum_{k=1}^n\dot{y}_k\bm{\dot{x}}_{k1}- \mathbbm{E}(y_k\bm{x}_k)\Big\|_\infty\lesssim \sigma^2\sqrt{\frac{\delta \log d(\log n)^2}{n}}\right)\geq 1-2d^{1-\delta}.$$  Thus, by setting $\lambda = C_1\sigma^2R\sqrt{\frac{\delta \log d(\log n)^2}{n}}$, (\ref{4.26}) can be satisfied with probability at least $1-4d^{2-\delta}$, hence using Proposition \ref{framework} concludes the proof.\end{proof}

\subsubsection{Proof of Theorem \ref{ht1bitqccs}}
\begin{proof}The proof is again based on Proposition \ref{framework} and some ingredients from \cite{chen2022high}. From  \cite[Thm. 4]{chen2022high}, $\|\bm{Q}-\bm{\Sigma^\star}\|_\infty \lesssim\big(\frac{M^2\delta \log d}{n}\big)^{1/4}$ holds with probability at least $1-2d^{2-\delta}$, thus confirming $\lambda \gtrsim R\cdot\|\bm{Q}-\bm{\Sigma^\star}\|_\infty$ with the same probability. Moreover, \begin{equation}
    \begin{aligned}\nonumber
      &\|\bm{Q\theta^\star}-\bm{b}\|_\infty \leq \|(\bm{Q}-\bm{\Sigma^\star})\bm{\theta^\star}\|_\infty +\|\bm{\Sigma^\star\theta^\star}-\bm{b}\|_\infty \\
      &\leq \|\bm{Q}-\bm{\Sigma^\star}\|_\infty\|\bm{\theta^\star}\|_1 + \Big\|\frac{\gamma_x\gamma_y}{n}\sum_{k=1}^n\dot{y}_k\bm{\dot{x}}_{k1} -\mathbbm{E}(y_k\bm{x}_k)\Big\|_\infty\\&\stackrel{(i)}{\lesssim} \sqrt{M}(R+1)\Big(\frac{\delta \log d}{n}\Big)^{1/4},
    \end{aligned}
\end{equation}
where $(i)$ is due to a known estimate from   \cite[Eq. (A.34)]{chen2022high}: $$\mathbbm{P}\left(\Big\|\frac{\gamma_x\gamma_y}{n} \sum_{k=1}^n\dot{y}_k\bm{\dot{x}}_{k1}-\mathbbm{E}(y_k\bm{x}_k)\Big\|_\infty \lesssim\sqrt{M}\Big(\frac{\delta \log d}{n}\Big)^{1/4}\right)\geq 1-2d^{1-\delta}$$
Thus, with probability at least $1-4d^{2-\delta}$, (\ref{4.26}) holds if   $\lambda = C_1\sqrt{M}R\big(\frac{\delta \log d}{n}\big)^{1/4}$ with sufficiently large $C_1$. The proof can be concluded by invoking Proposition \ref{framework}. \end{proof}
 \subsection{Uniform Recovery Guarantee}
 We need some auxiliary results to support the proof. The first one is a concentration inequality for product process due to Mendelson \cite{mendelson2016upper}; the following statement can be directly adapted from  \cite[Thm. 8]{genzel2022unified} by specifying the pseudo-metrics as $\ell_2$-distance.
 \begin{lem}
    \label{productpro}
{\rm  (Concentration of   Product Process){\bf \sffamily.}} Let $\{g_{\bm{a}}\}_{\bm{a}\in \mathcal{A}}$ and $\{h_{\bm{b}}\}_{\bm{b}\in\mathcal{B}}$ be stochastic processes indexed by two sets $\mathcal{A}\subset \mathbb{R}^{p}$, $\mathcal{B}\subset \mathbb{R}^q$, both defined on a common probability space $(\Omega,\textsc{A},\mathbbm{P})$. We assume that there exist $K_{\mathcal{A}},K_{\mathcal{B}},r_{\mathcal{A}},r_{\mathcal{B}}\geq 0$ such that 
\begin{equation}
    \begin{aligned}\nonumber
       & \|g_{\bm{a}}-g_{\bm{a}'}\|_{\psi_2}\leq K_{\mathcal{A}}\|\bm{a}-\bm{a}'\|_2,~\|g_{\bm{a}}\|_{\psi_2} \leq r_{\mathcal{A}},~\forall~\bm{a},\bm{a}'\in \mathcal{A};\\
       & \|h_{\bm{b}}-h_{\bm{b}'}\|_{\psi_2} \leq {K}_{\mathcal{B}}\|\bm{b}-\bm{b}'\|_2,~\|h_{\bm{b}}\|_{\psi_2}\leq r_{\mathcal{B}},~\forall~\bm{b},\bm{b}'\in \mathcal{B}.
    \end{aligned}
\end{equation}
Finally, let $X_1,...,X_m$ be independent copies of a random variable $X\sim \mathbbm{P}$, then for every $u\geq 1$ the following holds with probability at least $1-2\exp(-cu^2)$ \begin{equation}
   \begin{aligned}\nonumber
        &\sup_{\substack{\bm{a}\in \mathcal{A}\\\bm{b}\in \mathcal{B}}} ~ \frac{1}{n}\left|\sum_{i=1}^n g_{\bm{a}}(X_i)h_{\bm{b}}(X_i)-\mathbbm{E}\big[g_{\bm{a}}(X_i)h_{\bm{b}}(X_i)\big]\right|\\
        &~~~\leq C\Big(\frac{(K_{\mathcal{A}}\cdot\omega(\mathcal{A})+u\cdot r_{\mathcal{A}})\cdot (K_{\mathcal{B}}\cdot \omega(\mathcal{B})+u\cdot r_{\mathcal{B}})}{n}\\&~~~~~~+\frac{r_{\mathcal{A}}\cdot K_{\mathcal{B}}\cdot \omega(\mathcal{B})+r_{\mathcal{B}}\cdot K_{\mathcal{A}}\cdot \omega(\mathcal{A})+u\cdot r_{\mathcal{A}}r_{\mathcal{B}}}{\sqrt{n}}\Big),
   \end{aligned}
\end{equation}
where $\omega(\mathcal{A})= \mathbbm{E}\sup_{\bm{a}\in\mathcal{A}}(\bm{g}^\top\bm{a})$ with $\bm{g}\sim \mathcal{N}(0,\bm{I}_p)$ is the Gaussian width of $\mathcal{A}\subset \mathbb{R}^p$, and similarly, $\omega(\mathcal{B})$ is the Gaussian width of $\mathcal{B}$.
 \end{lem}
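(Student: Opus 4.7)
The plan is to reduce the product-process supremum to three simpler stochastic processes via a ``quadratic-plus-linear'' decomposition around fixed reference points. Choose any $\bm{a}_0\in\mathcal{A}$, $\bm{b}_0\in\mathcal{B}$ and write
\begin{equation*}
g_{\bm{a}}h_{\bm{b}} = (g_{\bm{a}}-g_{\bm{a}_0})(h_{\bm{b}}-h_{\bm{b}_0}) + g_{\bm{a}_0}(h_{\bm{b}}-h_{\bm{b}_0}) + (g_{\bm{a}}-g_{\bm{a}_0})h_{\bm{b}_0} + g_{\bm{a}_0}h_{\bm{b}_0}.
\end{equation*}
Centering the empirical average of each summand, the last one is a scalar Bernstein problem that delivers the $u\cdot r_{\mathcal{A}}r_{\mathcal{B}}/\sqrt{n}$ contribution. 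The two ``linear'' pieces will produce the $r_{\mathcal{A}}\omega(\mathcal{B})/\sqrt{n}$ and $r_{\mathcal{B}}\omega(\mathcal{A})/\sqrt{n}$ terms, while the pure quadratic piece must generate the entire fast-rate bound of order $1/n$.

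For the linear pieces, I would use a symmetrization step and then condition on $\{X_k\}$: the resulting Rademacher process $\frac{1}{n}\sum_k\varepsilon_k g_{\bm{a}_0}(X_k)(h_{\bm{b}}-h_{\bm{b}_0})(X_k)$ is sub-Gaussian in $\bm{b}$ with increment metric dominated by $r_{\mathcal{A}}K_{\mathcal{B}}\|\bm{b}-\bm{b}'\|_2$ (after a uniform bound on $\|g_{\bm{a}_0}\|_{\psi_2}$), so Talagrand's generic chaining / Dudley's inequality bounds its expected supremum by $r_{\mathcal{A}}K_{\mathcal{B}}\omega(\mathcal{B})/\sqrt{n}$; the tail in $u$ is then obtained from Adamczak's version of Talagrand's inequality for unbounded empirical processes (valid because the summands are sub-exponential). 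The symmetric piece is handled identically. The constant term is a Bernstein bound on a single sub-exponential variable.

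The main obstacle is the quadratic piece $\frac{1}{n}\sum_k(g_{\bm{a}}-g_{\bm{a}_0})(h_{\bm{b}}-h_{\bm{b}_0})(X_k)$, which must attain the $K_{\mathcal{A}}K_{\mathcal{B}}\omega(\mathcal{A})\omega(\mathcal{B})/n$ rate. This is precisely where Mendelson's product-process machinery \cite{mendelson2016upper} is needed: fix admissible sequences $\{\mathcal{A}_s\}_{s\ge 0}$ and $\{\mathcal{B}_s\}_{s\ge 0}$ realizing the $\gamma_2$ functionals (equivalent to $\omega(\mathcal{A})$, $\omega(\mathcal{B})$ up to constants by Talagrand's majorizing measure theorem), and write each factor as a telescoping sum $g_{\bm{a}}-g_{\bm{a}_0}=\sum_s(g_{\pi_{s+1}\bm{a}}-g_{\pi_s\bm{a}})$. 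A two-sided multi-scale chaining then reduces the quadratic process to a double sum of increments, each of which satisfies a Bernstein-Orlicz inequality with variance proxy $K_{\mathcal{A}}K_{\mathcal{B}}\cdot 2^{-s/2}\cdot 2^{-s'/2}\cdot(\text{admissible diameters})$ because $\|(g_{\bm{a}}-g_{\bm{a}'})(h_{\bm{b}}-h_{\bm{b}'})\|_{\psi_1}\le K_{\mathcal{A}}K_{\mathcal{B}}\|\bm{a}-\bm{a}'\|_2\|\bm{b}-\bm{b}'\|_2$. Summing over $(s,s')$ yields the $K_{\mathcal{A}}K_{\mathcal{B}}\omega(\mathcal{A})\omega(\mathcal{B})/n$ term; the mixed $K_{\mathcal{A}}\omega(\mathcal{A})u r_{\mathcal{B}}/n$ and $u r_{\mathcal{A}}K_{\mathcal{B}}\omega(\mathcal{B})/n$ cross-terms appear when the tail parameter $u$ enters only one of the two chaining integrals (via Bernstein), and the joint tail term $u^2 r_{\mathcal{A}}r_{\mathcal{B}}/n$ is absorbed into the $1/\sqrt{n}$ bound since $u/\sqrt{n}\le 1$ in the regime of interest.

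Finally, collecting all four pieces and applying a union bound over the $O(1)$ separate failure events yields the stated probability estimate after replacing $u$ with a suitable constant multiple. The delicate part is ensuring the $\psi_1$ Bernstein ingredient at each chaining scale has exactly the right variance-vs-tail trade-off so that the final bound has the precise additive form in $(K_{\mathcal{A}}\omega(\mathcal{A})+u r_{\mathcal{A}})$ and $(K_{\mathcal{B}}\omega(\mathcal{B})+u r_{\mathcal{B}})$; this is where the bulk of the bookkeeping lies and where the adaptation of \cite[Theorem 8]{genzel2022unified} contributes over raw Mendelson.
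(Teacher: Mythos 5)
This lemma is stated in the paper \emph{without proof}: the paper cites it as an adaptation of \cite[Theorem 8]{genzel2022unified}, which is in turn Genzel and Stollenwerk's reformulation of Mendelson's product-process inequality \cite{mendelson2016upper}. There is therefore no internal argument in the paper to compare against. That said, your four-term decomposition around base points $\bm{a}_0,\bm{b}_0$ is the standard route taken in the cited source: the two multiplier pieces and the constant/Bernstein piece supply the $1/\sqrt{n}$-scale terms, and the centered product piece supplies the $1/n$-scale product term. So your outline is, at the level of structure, a plausible reconstruction of the known argument.

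Two caveats. First, rather than re-running a two-scale chaining from scratch for the centered product piece, it is simpler (and closer to what the cited source actually does) to directly invoke Mendelson's theorem on the centered pair $\{g_{\bm a}-g_{\bm a_0}\}$, $\{h_{\bm b}-h_{\bm b_0}\}$: these have the same increment metrics and radii at most $2r_{\mathcal A}$, $2r_{\mathcal B}$, so all four sub-terms of the expanded product $(K_{\mathcal A}\omega(\mathcal A)+u r_{\mathcal A})(K_{\mathcal B}\omega(\mathcal B)+u r_{\mathcal B})/n$ fall out at once, with no need for a hand-rolled double chaining. Second, your claim that the joint tail term $u^2 r_{\mathcal A}r_{\mathcal B}/n$ ``is absorbed into the $1/\sqrt{n}$ bound since $u/\sqrt{n}\le 1$'' is a slip: the lemma is asserted for \emph{all} $u\geq 1$, so this absorption is invalid when $u>\sqrt{n}$; moreover it is unnecessary, since the $u^2 r_{\mathcal A}r_{\mathcal B}/n$ term is already present explicitly inside the expanded product in the stated bound and should simply be carried along, not traded against the slow-rate term.
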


We will use the following result that can be found in \cite[Thm. 8]{liaw2017simple}. 

\begin{lem}\label{talagrand}
   Let $(X_{\bm{u}})_{\bm{u}\in\mathcal{T}}$ be a random process indexed by points in a bounded set $\mathcal{T}\subset \mathbb{R}^n$. Assume that the process has sub-Gaussian increments, i.e., there exists $M>0$ such that $
        \|X_{\bm{u}}-X_{\bm{v}}\|_{\psi_2}\leq M\|\bm{u}-\bm{v}\|_2$ holds for any $\bm{u},\bm{v}\in \mathcal{T}$.
    Then for every $t>0$, the event \begin{equation}\nonumber
        \sup_{\bm{u},\bm{v}\in\mathcal{T}}|X_{\bm{u}}-X_{\bm{v}}|\leq CM\cdot \big(\omega(\mathcal{T})+t\cdot \mathrm{diam}(\mathcal{T})\big)
    \end{equation}
    holds with probability at least $1-\exp(-t^2)$, where $\mathrm{diam}(\mathcal{T}):=\sup_{\bm{x},\bm{y}\in\mathcal{T}}\|\bm{x}-\bm{y}\|_2$ denotes the diameter of $\mathcal{T}$.
\end{lem}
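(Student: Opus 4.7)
The plan is to combine Talagrand's generic chaining method with the Fernique--Talagrand majorizing measure theorem. After rescaling one may take $M=1$. Fix any reference point $\bm{u}_0\in\mathcal{T}$; then $\sup_{\bm{u},\bm{v}}|X_{\bm{u}}-X_{\bm{v}}|\le 2\sup_{\bm{u}\in\mathcal{T}}|X_{\bm{u}}-X_{\bm{u}_0}|$, so it suffices to control this one-sided supremum on an event of probability at least $1-\exp(-t^2)$. The two workhorses will be a high-probability chaining bound expressed in terms of Talagrand's $\gamma_2$-functional, followed by the identification $\gamma_2(\mathcal{T},\|\cdot\|_2)\asymp\omega(\mathcal{T})$.

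For the chaining step, I will choose an admissible sequence of partitions $(\mathcal{A}_n)_{n\ge 0}$ of $\mathcal{T}$ with $|\mathcal{A}_0|=1$ and $|\mathcal{A}_n|\le 2^{2^n}$, and pick representatives $\pi_n(\bm{u})\in A_n(\bm{u})\ni\bm{u}$, so that $X_{\bm{u}}-X_{\bm{u}_0}=\sum_{n\ge 1}\bigl(X_{\pi_n(\bm{u})}-X_{\pi_{n-1}(\bm{u})}\bigr)$. The sub-Gaussian increment hypothesis gives, for every $s\ge 0$,
\[
\mathbbm{P}\bigl(|X_{\pi_n(\bm{u})}-X_{\pi_{n-1}(\bm{u})}|>s\,\|\pi_n(\bm{u})-\pi_{n-1}(\bm{u})\|_2\bigr)\le 2e^{-cs^2}.
\]
Setting $s=C(2^{n/2}+t)$ and union-bounding over the at most $2^{2^{n+1}}$ pairs $(\pi_n(\bm{u}),\pi_{n-1}(\bm{u}))$ arising at level $n$, then summing the resulting geometric series in $n$, produces the deterministic bound
\[
\sup_{\bm{u}\in\mathcal{T}}|X_{\bm{u}}-X_{\bm{u}_0}|\ \lesssim\ \gamma_2(\mathcal{T},\|\cdot\|_2)+t\cdot\mathrm{diam}(\mathcal{T})
\]
on an event of probability at least $1-\exp(-t^2)$. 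Here $\gamma_2(\mathcal{T},\|\cdot\|_2):=\inf_{(\mathcal{A}_n)}\sup_{\bm{u}}\sum_{n\ge 0}2^{n/2}\mathrm{diam}(A_n(\bm{u}))$ is Talagrand's $\gamma_2$-functional, and the $t\cdot\mathrm{diam}(\mathcal{T})$ summand is extracted from the $n$-independent part of $s$ together with the telescoping estimate $\sum_{n\ge 1}\|\pi_n(\bm{u})-\pi_{n-1}(\bm{u})\|_2\lesssim\mathrm{diam}(\mathcal{T})$.

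To convert $\gamma_2$ into the Gaussian width I will invoke the Fernique--Talagrand majorizing measure theorem, which asserts $\gamma_2(\mathcal{T},\|\cdot\|_2)\asymp\omega(\mathcal{T})$ for every bounded $\mathcal{T}\subset\mathbbm{R}^n$. Substituting this equivalence into the chaining bound above and restoring the constant $M$ will yield the desired inequality. The hard part of the argument is entirely located in this last step: the easy inequality $\omega(\mathcal{T})\lesssim\gamma_2(\mathcal{T},\|\cdot\|_2)$ follows from Dudley's entropy bound applied to the canonical Gaussian process on $\mathcal{T}$, whereas the reverse inequality $\gamma_2(\mathcal{T},\|\cdot\|_2)\lesssim\omega(\mathcal{T})$ is a deep result of Talagrand whose proof builds an admissible partition from an extremal functional attached to the Gaussian process and relies on Sudakov-type minorations at every scale. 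Reproving this machinery would take us well outside the scope of the present work, so I will treat the majorizing measure theorem as a black box and cite the standard references in the theory of stochastic processes.
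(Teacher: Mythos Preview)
The paper does not prove this lemma at all; it is simply quoted as a known result with a reference to \cite[Theorem~8]{liaw2017simple}. Your high-level plan---generic chaining for the tail bound, then the Fernique--Talagrand majorizing measure theorem to replace $\gamma_2$ by $\omega$---is exactly the standard route to this statement, so in spirit you are on the right track and there is nothing to compare against in the paper.

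That said, your chaining step contains a genuine gap. The ``telescoping estimate'' $\sum_{n\ge 1}\|\pi_n(\bm{u})-\pi_{n-1}(\bm{u})\|_2\lesssim\mathrm{diam}(\mathcal{T})$ is \emph{false} for an admissible sequence that (nearly) realises $\gamma_2$. A concrete counterexample is $\mathcal{T}=\{e_1,\dots,e_N\}\subset\mathbb{R}^N$: all pairwise distances equal $\sqrt{2}$, so $\mathrm{diam}(\mathcal{T})=\sqrt{2}$, yet any admissible sequence needs about $\log_2\log_2 N$ levels before every cell is a singleton, and for some $\bm{u}$ the representative $\pi_n(\bm{u})$ must change at each of those levels, forcing $\sum_n\|\pi_n(\bm{u})-\pi_{n-1}(\bm{u})\|_2\asymp\sqrt{2}\,\log\log N$, which is unbounded as $N\to\infty$. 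The best general bound available from a near-optimal admissible sequence is $\sum_n\|\pi_n(\bm{u})-\pi_{n-1}(\bm{u})\|_2\lesssim\gamma_2(\mathcal{T})$, obtained from $\mathrm{diam}(A_n(\bm{u}))\le 2^{-n/2}\cdot 2\gamma_2$. Plugging this into your argument yields only $\sup|X_{\bm u}-X_{\bm v}|\lesssim (1+t)\,\gamma_2(\mathcal{T})$ with probability $1-e^{-t^2}$, which is strictly weaker than the stated $\gamma_2+t\,\mathrm{diam}(\mathcal{T})$ because $\gamma_2\ge\mathrm{diam}$ always, with the gap unbounded in general.

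Obtaining the sharper form with $t\cdot\mathrm{diam}(\mathcal{T})$ in the fluctuation term requires a more delicate argument---for instance Talagrand's proof in \emph{Upper and Lower Bounds for Stochastic Processes} (Theorem~2.2.27), or the treatment in Dirksen's tail-bound paper---where the contribution of the coarsest chaining link is isolated and controlled separately. Since the paper itself treats the lemma as a citation, the most honest fix is to do the same rather than attempt to patch the chaining here.
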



 \subsubsection{Proof of Theorem \ref{uniformtheorem}}

 \begin{proof}
 We start from the optimality $\sum_{k=1}^n (\dot{y}_k-\bm{x}_k^\top\bm{\widehat{\theta}})^2\leq\sum_{k=1}^n (\dot{y}_k-\bm{x}_k^\top\bm{{\theta^\star}})^2.$ By substituting $\bm{\widehat{\theta}}=\bm{\theta^\star}+\bm{\widehat{\Upsilon}}$ and performing some algebra, we obtain $$\sum_{k=1}^n (\bm{x}_k^\top \bm{\widehat{\Upsilon}})^2 \leq 2 \sum_{k=1}^n \big(\dot{y}_k - \bm{x}_k^\top \bm{\theta^\star}\big)\bm{x}_k^\top\bm{\widehat{\Upsilon}}.$$ Due to the constraint we have $\|\bm{\theta^\star}+\bm{\widehat{\Upsilon}}\|_{1}\leq \|\bm{\theta^\star}\|_1$, then similar to (\ref{decom}) we can show $\|\bm{\widehat{\Upsilon}}\|_1\leq 2\sqrt{s}\|\bm{\widehat{\Upsilon}}\|_2$ holds. Thus, we let $\mathscr{V}= \{\bm{v}:\|\bm{v}\|_2=1,\|\bm{v}\|_1\leq 2\sqrt{s}\}$, then the following holds uniformly for all $\bm{\theta^\star}\in \Sigma_{s,R_0}$   
\begin{equation}\label{frame}
    \|\bm{\widehat{\Upsilon}}\|_2^2 \cdot \inf_{\bm{v}\in\mathscr{V}}\sum_{k=1}^n(\bm{x}_k^\top\bm{v})^2 \leq 2\|\bm{\widehat{\Upsilon}}\|_2 \cdot \sup_{\bm{v}\in\mathscr{V}}\sum_{k=1}^n(\dot{y}_k - \bm{x}_k^\top\bm{\theta^\star})\bm{x}_k^\top\bm{v}
\end{equation}
Similarly to previous developments, our strategy is to lower bound the left-hand side while upper bound the right hand side, but with the difference  that the bounds must be valid uniformly for all $\bm{\theta^\star}\in \Sigma_{s,R_0}$.

\vspace{1mm}

\noindent{\bf \sffamily  Step 1. Bounding the Left-Hand Side From Below}

Letting $\bar{\Delta}=0$ and restricting $\bm{v}$ to $\mathscr{V}$, we use (\ref{nB.7}) in the proof of Theorem \ref{thm6}, then for some constant $c(\kappa_0,\sigma)$ depending on $\kappa_0,\sigma$, with probability at least $1-d^{-\delta}$ 
\begin{equation}\nonumber
   \inf_{\bm{v}\in \mathscr{V}} \frac{1}{\sqrt{n}}\left[\sum_{k=1}^n(\bm{x}_k^\top\bm{v})^2\right]^{1/2} \geq \sqrt{\kappa_0}  - c(\kappa_0,\sigma)\cdot \sqrt{\frac{s\log d}{n}}.
\end{equation}
Thus, if $n\geq \frac{4c^2(\kappa_0,\sigma)}{\kappa_0}s\log d$, then it holds that $\inf_{\bm{v}\in \mathscr{V}} \sum_{k=1}^n (\bm{x}_k^\top\bm{v})^2\geq \frac{1}{4}\kappa_0 n$. 

\vspace{1mm}

\noindent{\bf \sffamily  Step 2.      Bounding the Right-Hand Side Uniformly}

To pursue the uniformity over $\bm{\theta^\star}\in \Sigma_{s,R_0}$, we  take a supremum by replacing specific $\bm{\theta^\star}$ with $\sup_{\bm{\theta}\in \Sigma_{s,R_0}}$, then we consider the upper bound on 
\begin{equation}
    \label{defineI}
    I:=\sup_{\bm{\theta}\in \Sigma_{s,R_0}}\sup_{\bm{v}\in \mathscr{V}}~\sum_{k=1}^n (\dot{y}_k - \bm{x}_k^\top\bm{\theta})\bm{x}_k^\top\bm{v},
\end{equation}
where $\dot{y}_k = \mathcal{Q}_\Delta(\widetilde{y}_k + \tau_k),~\widetilde{y}_k = \mathscr{T}_{\zeta_y}(y_k):= \sign(y_k)\min\{|y_k|,\zeta_y\},~y_k=\bm{x}_k^\top\bm{\theta}+\epsilon_k$; note that $\dot{y}_k,\widetilde{y}_k,y_k$ depend on $\bm{\theta}$, and we will use notation  $\dot{y}_{\bm{\theta},k},\widetilde{y}_{\bm{\theta},k},y_{\bm{\theta},k}$ to indicate such dependence when necessary. In this proof, the ranges of $\bm{\theta}$ and $\bm{v}$ (e.g., in  supremum), if omitted, are respectively $\bm{\theta}\in \Sigma_{s,R_0}$ and $\bm{v}\in \mathscr{V}$.  Now let the quantization noise be $\xi_k= \dot{y}_k- \widetilde{y}_k$, observing that $\mathbbm{E}(y_k\bm{x}_k^\top\bm{v})=\mathbbm{E}(\bm{\theta}^\top\bm{x}_k\bm{x}_k^\top\bm{v})+\mathbbm{E}(\epsilon_k\bm{x}_k^\top\bm{v})=\mathbbm{E}(\bm{\theta}^\top\bm{x}_k\bm{x}_k^\top\bm{v})$, then 
we can first decompose $I$ as  
\begin{equation}
    \begin{aligned}\label{B.29}
   I&\leq \sup_{\bm{\theta},\bm{v}}  ~\sum_{k=1}^n \xi_k \bm{x}_k^\top\bm{v}+\sup_{\bm{\theta},\bm{v}}  ~\sum_{k=1}^n \big(\widetilde{y}_k\bm{x}_k^\top\bm{v}-\mathbbm{E}[\widetilde{y}_k\bm{x}_k^\top\bm{v}]\big) \\&+ \sup_{\bm{\theta},\bm{v}}~\sum_{k=1}^n\mathbbm{E}\big((\widetilde{y}_k-y_k)\bm{x}_k^\top\bm{v}\big)+\sup_{\bm{\theta},\bm{v}}~\sum_{k=1}^n \big(\bm{\theta}^\top\bm{x}_k\bm{x}_k^\top\bm{v}-\mathbbm{E}[\bm{\theta}^\top\bm{x}_k\bm{x}_k^\top\bm{v}]\big)\\&:= {I}_0 + {I}_1 + {I}_2+{I}_3,
    \end{aligned}
\end{equation}
where $I_0$ is the term arising from quantization, $I_1$ is the concentration term involving truncation of heavy-tailed data for which we develop some new machinery to bound it, $I_2$ is the bias term, $I_3$ is a more regular concentration term that can be bounded via Lemma \ref{productpro}. In the remainder of the proof, we will bound $I_1,I_2,I_3$ separately and finally deal with $I_0$.

\noindent{\bf \sffamily Step 2.1. Bounding ${I}_1$}

Using $\widetilde{y}_k=\mathscr{T}_{\zeta_y}(\bm{x}_k^\top\bm{\theta}+\epsilon_k)$, $I_1$ is concerned with the concentration of the product process $\big\{\sum_{k=1}^n\mathscr{T}_{\zeta_y}(\bm{\theta}^\top\bm{x}_k+\epsilon_k)\bm{x}_k^\top\bm{v}\big\}_{\bm{\theta},\bm{v}}$ about its mean. It is natural to apply Lemma \ref{productpro} towards this end, but we lack  good bound on $\|\mathscr{T}_{\zeta_y}(\bm{\theta}^\top\bm{x}_k+\epsilon_k)\|_{\psi_2}$ because of the heavy-tailedness of $\epsilon_k$ (on the other hand, the bound $O(\zeta_y)$ is just too crude to yield a sharp rate). Our strategy is already introduced in the mainbody --- we introduce $\widetilde{z}_k: = \widetilde{y}_k - \mathscr{T}_{\zeta_y}(\epsilon_k)$ and decompose ${I}_1$ as 
\begin{equation}
    \nonumber
    I_1\leq \underbrace{\sup_{\bm{v},\bm{\theta}}\sum_{k=1}^n \big(\widetilde{z}_k\bm{x}_k^\top\bm{v}-\mathbbm{E}[\widetilde{z}_k\bm{x}_k^\top\bm{v}]\big)}_{:=I_{11}}+\underbrace{\sup_{\bm{v}} \sum_{k=1}^n \big(\mathscr{T}_{\zeta_y}(\epsilon_k)\bm{x}_k^\top\bm{v}-\mathbbm{E}[\mathscr{T}_{\zeta_y}(\epsilon_k)\bm{x}_k^\top\bm{v}]\big)}_{:=I_{12}}. 
\end{equation}
Thus, it suffices to bound $I_{11}$ and $I_{12}$.

\noindent{\bf \sffamily Step 2.1.1. Bounding ${I}_{11}$}

We use Lemma \ref{productpro} to bound $I_{11}$. For any $\bm{v}_1,\bm{v}_2\in \mathscr{V}$, it is evident that we have $\|\bm{x}_k^\top\bm{v}\|_{\psi_2}\leq \|\bm{x}_k\|_{\psi_2}\leq \sigma$ and $\|\bm{x}_k^\top\bm{v}_1-\bm{x}_k^\top\bm{v}_2\|_{\psi_2} \leq \sigma \|\bm{v}_1-\bm{v}_2\|_2$. Regarding $\widetilde{z}_k=\widetilde{z}_{\bm{\theta},k}:=\mathscr{T}_{\zeta_y}(\bm{x}_k^\top\bm{\theta}+\epsilon_k)-\mathscr{T}_{\zeta_y}(\epsilon_k)$ indexed by $\bm{\theta}\in \Sigma_{s,R_0}$, the $1$-Lipschitzness of $\mathscr{T}_{\zeta_y}(\cdot)$ gives $|\widetilde{z}_k|\leq  |\bm{x}_k^\top\bm{\theta}|$, and then the definition of sub-Gaussian norm yields  $\|\widetilde{z}_k\|_{\psi_2}\leq \|\bm{x}_k^\top\bm{\theta}\|_{\psi_2}\leq \|\bm{x}_k\|_{\psi_2}\|\bm{\theta}\|_2\leq R_0\sigma$ (this addresses the aforementioned issue). Further, for any $\bm{\theta}_1,\bm{\theta}_2\in \Sigma_{s,R_0}$ we verify the sub-Gaussian increments  \begin{equation}
    \begin{aligned}\nonumber
         |\widetilde{z}_{\bm{\theta}_1,k}-\widetilde{z}_{\bm{\theta}_2,k}| =&\big|\mathscr{T}_{\zeta_y}(\bm{x}_k^\top\bm{\theta}_1+\epsilon_k)-\mathscr{T}_{\zeta_y}(\epsilon_k)-\mathscr{T}_{\zeta_y}(\bm{x}_k^\top\bm{\theta}_2+\epsilon_k)+\mathscr{T}_{\zeta_y}(\epsilon_k)\big|\\=&\big|\mathscr{T}_{\zeta_y}(\bm{x}_k^\top\bm{\theta}_1+\epsilon_k)-\mathscr{T}_{\zeta_y}(\bm{x}_k^\top\bm{\theta}_2+\epsilon_k)\big|\leq \big|\bm{x}_k^\top(\bm{\theta}_1-\bm{\theta}_2)\big|,
    \end{aligned}
\end{equation}
which leads to $\|\widetilde{z}_{\bm{\theta}_1,k}-\widetilde{z}_{\bm{\theta}_2,k}\|_{\psi_2}\leq \|\bm{x}_k^\top(\bm{\theta}_1-\bm{\theta}_2)\|_{\psi_2}\leq \|\bm{x}_k\|_{\psi_2}\|\bm{\theta}_1-\bm{\theta}_2\|_2\leq \sigma\|\bm{\theta}_1-\bm{\theta}_2\|_2$. With these preparations, we can invoke Lemma \ref{productpro} use the well-known estimates   $\omega(\Sigma_{s,R_0}),\omega(\mathscr{V})=O(\sqrt{s\log d})$\footnote{In fact, we have the tighter estimate $\omega(\Sigma_{s,R_0}),\omega(\mathscr{V})=O\Big(\sqrt{s\log\big(\frac{ed}{s}\big)}\Big)$ (e.g., \cite{plan2013one})  but we simply put   $\sqrt{s\log d}$ to be consistent with earlier results concerning unconstrained Lasso.} to obtain that,  with probability at least $1-2\exp(-cu^2)$ we have
\begin{equation}
    \begin{aligned}\label{B.26}
        {I}_{11} &\lesssim   \sigma^2\Big[\sqrt{n}\big(\omega(\Sigma_{s,R_0})+\omega(\mathscr{V})+u\big)+\big(\omega(\Sigma_{s,R_0})+u\big)\cdot\big(\omega(\mathscr{V})+u\big)\Big] \\
        & \lesssim \sigma^2\Big[\sqrt{n}\big(\sqrt{s\log d}+u\big)+\big(\sqrt{s\log d}+u\big)\cdot\big(\sqrt{s\log d}+u\big)\Big].
    \end{aligned}
\end{equation}
   Therefore, we can set  $u= \sqrt{\delta s\log d}$ in (\ref{B.26}), under the scaling of $n\gtrsim s\delta \log d$ it provides \begin{equation}\label{b31}
    \mathbbm{P}\big(I_{11}\lesssim \sigma^2 \sqrt{n \delta s\log d}\big) \geq 1-2d^{-\delta \Omega(s)}.
\end{equation}

\noindent{\bf \sffamily Step 2.1.2. Bounding $I_{12}$}

By $\|\bm{v}\|_1\leq 2\sqrt{s}$ we have $ I_{12} \leq 2\sqrt{s}\|\sum_{k=1}^n (\mathscr{T}_{\zeta_y}(\epsilon_k)\bm{x}_k -\mathbbm{E}[\mathscr{T}_{\zeta_y}(\epsilon_k)\bm{x}_k])\|_{\infty}.$   Then to apply Bernstein's inequality, for   integer $q\geq 2$ and $i\in [d]$, analogously to (\ref{nB.2}) in the proof of Theorem \ref{thm6}, we can bound that
\begin{equation}
    \begin{aligned}\nonumber
        \sum_{k=1}^n \mathbbm{E}\left|\frac{\mathscr{T}_{\zeta_y}(\epsilon_k)x_{ki}}{n}\right|^q &\leq \Big(\frac{\zeta_y}{n}\Big)^{q-2} \frac{1}{n^2}\sum_{k=1}^n \mathbbm{E}\big|\mathscr{T}^2_{\zeta_y}(\epsilon_k) x_{ki}^q\big|\\&\leq \Big(\frac{\sigma \zeta_y}{n}\Big)^{q-2}  \Big(\frac{\sigma^2M^{\frac{1}{l}}}{n}\Big)(Cq)^{\frac{q}{2}}\leq \frac{q!}{2}v_0c_0^{q-2},
    \end{aligned}
\end{equation}
for some $v_0=O\big(\frac{\sigma^2M^{1/l}}{n}\big)$, $c_0 = O\big(\frac{\sigma \zeta_y}{n}\big)$. Then we use Lemma \ref{bernstein} to obtain that, with probability at least  $1-2\exp(-t)$ we have\begin{equation}\nonumber
    \left|\frac{1}{n}\sum_{k=1}^n(\mathscr{T}_{\zeta_y}(\epsilon_k) x_{ki}-\mathbbm{E}[\mathscr{T}_{\zeta_y}(\epsilon_k) x_{ki}])\right|\leq C\sigma \Big(M^{\frac{1}{2l}}\sqrt{\frac{t}{n}} + \frac{\zeta_y t}{n} \Big) 
\end{equation} 
Then we use $\zeta_y\asymp \big(\sigma+M^{\frac{1}{2l}}\big)\sqrt{\frac{n}{\delta \log d}}$, set $t \asymp \delta \log d$, and take a union bound over $i\in [d]$ to obtain that,  $\|\sum_{k=1}^n\frac{1}{n} (\mathscr{T}_{\zeta_y}(\epsilon_k)\bm{x}_k -\mathbbm{E}[\mathscr{T}_{\zeta_y}(\epsilon_k)\bm{x}_k])\|_\infty \lesssim \sigma (M^{1/(2l)}+\sigma)\sqrt{\frac{\delta \log d}{n}}$ holds with probability at least $1-2d^{1-\delta}$, which implies the following under the same probability \begin{equation}\label{b34}
  I_{12}\lesssim \sigma\big(M^{\frac{1}{2l}}+\sigma\big)\sqrt{ns\delta \log d}.
\end{equation}
Therefore, combining (\ref{b31}) and (\ref{b34}), we obtain that $I_1\lesssim \sigma\big(M^{\frac{1}{2l}}+\sigma\big)\sqrt{ns\delta \log d}$ with the promised probability.

\noindent{\bf \sffamily Step 2.2. Bounding ${I}_2$}

For this bias term  the supremum does not make things harder. We begin with $I_2 = n \cdot \sup_{\bm{\theta},\bm{v}}\mathbbm{E}\big((\widetilde{y}_k - y_k)\bm{x}_k^\top \bm{v}\big)\leq 2n\sqrt{s}\cdot \sup_{\bm{\theta}}\|\mathbbm{E}(\widetilde{y}_k-y_k)\bm{x}_k\|_\infty.$ Fix any $\bm{\theta}\in \Sigma_{s,R_0}$, we have $\mathbbm{E}|y_k|^{2l}\lesssim \mathbbm{E}|\bm{x}_k^\top\bm{\theta}|^{2l}+ \mathbbm{E}|\epsilon_k|^{2l}\lesssim M +  \sigma^{2l}$. Then following arguments similarly to (\ref{nB.3}) we obtain $$\|\mathbbm{E}(\widetilde{y}_k-y_k)\bm{x}_k\|_\infty\lesssim \frac{\sigma M^{1/l}}{\zeta_y} \lesssim \sigma(M^{1/(2l)}+\sigma)\sqrt{\frac{\delta \log d}{n}},$$which implies $I_2\lesssim \sigma M^{\frac{1}{2l}}\sqrt{n\delta s\log d}$.

\noindent{\bf \sffamily Step 2.3. Bounding $I_3$}

It is evident that we can apply Lemma \ref{productpro} with $(g_{\bm{\theta}}(\bm{x}_k),h_{\bm{v}}(\bm{x}_k))=(\bm{\theta}^\top\bm{x}_k,\bm{v}^\top\bm{x}_k)$, $(\mathcal{A},\mathcal{B})=(\Sigma_{s,R_0},\mathscr{V})$, and hence with $K_{\mathcal{A}},r_{\mathcal{A}},K_{\mathcal{B}},r_{\mathcal{B}}=O(\sigma)$. Along with $\omega(\Sigma_{s,R_0}),\omega(\mathscr{V})\lesssim\sqrt{s\log d}$, we obtain that the following holds with probability at least $1-2\exp(-cu^2)$: \begin{equation}\nonumber
    I_3\leq \sigma^2 \Big[\big(\sqrt{s\log d}+u\big)^2+\sqrt{n}\big(\sqrt{s\log d}+u\big)\Big].
\end{equation}
By taking $u\asymp \sqrt{s\delta \log d}$, under the scaling $n\gtrsim \delta s\log d$, it follows that $I_3\lesssim \sigma^2\sqrt{n\delta s\log d}$ with probability at least $1-2d^{-\delta\Omega(s)}$.

\noindent{\bf \sffamily Step 2.4.  Bounding $I_0$}

It remains to bound $
   I_0= \sup_{\bm{\theta},\bm{v}}\sum_{k=1}^n \xi_k \bm{x}_k^\top \bm{v}$. Bounding $I_0$ is similar to establishing the "limited projection distortion (LPD)" property in \cite{xu2020quantized}, but the   key distinction is  that   $\bm{\theta}$ and $\bm{v}$ in $I_0$ take value in different spaces.

The main difficulty associated with "$\sup_{\bm{\theta}}$"  lies in the discontinuity of the quantization noise $\xi_k:= \mathcal{Q}_{\Delta}(\widetilde{y}_k+\tau_k)-\widetilde{y}_k$, which we overcome by a covering argument and some machinery developed in  \cite[Prop. 6.1]{xu2020quantized}. However, the essential difference from \cite{xu2020quantized} is that we use  Lemma \ref{talagrand} to handle "$\sup_{\bm{v}}$", while \cite{xu2020quantized} again used covering argument for $\bm{v}$ to strengthen  their Proposition 6.1 to their Proposition 6.2, which is unfortunately insufficient in our setting because the covering number of $\mathscr{V}$ significantly increases under smaller covering radius (on the other hand, using covering argument for $\bm{v}$ suffices for the analyses in \cite{xu2020quantized} regarding a different estimator named {\it projected back projection}).

Let us first construct a $\rho$-net of $\Sigma_{s,R_0}$   denoted by $\mathcal{G}=\{\bm{\theta}_1,...,\bm{\theta}_N\}$, so that for any $\bm{\theta}\in \Sigma_{s,R_0}$ we can pick $\bm{\theta}'\in \mathcal{G}$ satisfying $\|\bm{\theta}'-\bm{\theta}\|_2\leq \rho$; here, the covering radius $\rho$ is to be chosen later, and we   assume that $N\leq \big(\frac{9d}{\rho s}\big)^s$ \cite[Lemma 3.3]{plan2013one}. As is standard in a covering argument, we first control $I_0$ over the net $\mathcal{G}$ (by replacing "$\sup_{\bm{\theta}\in \Sigma_{s,R_0}}$" with "$\sup_{\bm{\theta}\in \mathcal{G}}$"), and then bound the approximation error induced by such replacement.

\noindent{\bf \sffamily Step 2.4.1. Bounding $I_0$ over $\mathcal{G}$}

In this step, we want to bound $I_{0,\mathcal{G}}:=\sup_{\bm{\theta}\in \mathcal{G}}\sup_{\bm{v}\in \mathscr{V}}\sum_{k=1}^n\xi_k\bm{x}_k^\top\bm{v}$.
First let us consider a fixed $\bm{\theta}\in \Sigma_{s,R_0}$. Then since $|\xi_k|\leq \Delta$, we have $\|\xi_k\bm{x}_k\|_{\psi_2} \lesssim \Delta\sigma$. Because $\{\xi_k\bm{x}_k:k\in [n]\}$ are independent zero mean, by \cite[Prop. 2.6.1]{vershynin2018high} we have   $\big\|\sum_{k=1}^n \xi_k\bm{x}_k\big\|_{\psi_2}\lesssim \sqrt{n} \Delta\sigma$. Define $\mathscr{V}'=\mathscr{V}\cup \{0\}$, then for any $\bm{v}_1,\bm{v}_2\in \mathscr{V}'$ we have $$\left\|\Big(\sum_{k=1}^n\xi_k\bm{x}_k\Big)^\top\bm{v}_1-\Big(\sum_{k=1}^n\xi_k\bm{x}_k\Big)^\top\bm{v}_2\right\|_{\psi_2}\leq C\sqrt{n}\Delta \sigma \|\bm{v}_1-\bm{v}_2\|_2.$$
Thus, by Lemma \ref{talagrand}, it holds with probability at least $1-\exp(-u^2)$ that \begin{equation}
    \begin{aligned}\nonumber
        \sup_{\bm{v}\in \mathscr{V}}\Big(\sum_{k=1}^n \xi_k\bm{x}_k\Big)^\top \bm{v}&\leq \sup_{\bm{v},\bm{v}'\in \mathscr{V}'}\left|\Big(\sum_{k=1}^n \xi_k\bm{x}_k\Big)^\top \bm{v}-\Big(\sum_{k=1}^n \xi_k\bm{x}_k\Big)^\top \bm{v}'\right| \\&\leq C\sqrt{n}  \Delta  \sigma \big(\omega(\mathscr{V}')+u\big) \leq C_1\sqrt{n} \Delta  \sigma \Big(\sqrt{s\log \frac{9d}{s}}+u\Big).
    \end{aligned}
\end{equation}
Moreover, by a union bound over $\mathcal{G}$, we obtain that $I_{0,\mathcal{G}}\lesssim \sigma \Delta \sqrt{n}\big(\sqrt{s\log \frac{9d}{s}}+u\big)$ holds with probability at least $1-\exp\big(s\log \frac{9d}{\rho s}  -u^2\big)$.
We set $u\asymp \sqrt{s\delta\log \big(\frac{9d}{\rho s}\big)}$ and arrive at \begin{equation}
    \mathbbm{P}\left(I_{0,\mathcal{G}}\lesssim \sigma \Delta\sqrt{ns\delta \log\frac{9d}{\rho s}}\right) \geq 1-\Big(\frac{9d}{\rho s}\Big)^{-\Omega(\delta s)}.\label{B.38}
\end{equation}

\vspace{1mm}
\noindent{\bf \sffamily Step 2.4.2.     Bounding the Approximation Error}

From now on we indicate the dependence of $\xi_k$ on $\bm{\theta}$ by using the notation $ \xi_{\bm{\theta},k}:= \mathcal{Q}_\Delta(\widetilde{y}_{\bm{\theta},k}+\tau_k)-\widetilde{y}_{\bm{\theta},k}$ where $\widetilde{y}_{\bm{\theta},k}=\mathscr{T}_{\zeta_y}(\bm{x}_k^\top\bm{\theta}+\epsilon_k)$. 
For any $\bm{\theta}\in \Sigma_{s,R_0}$ we   pick one $\bm{\theta}'\in \mathcal{G}$   such that $\|\bm{\theta}- \bm{\theta}'\|_2 \leq \rho$; we fix such correspondence and remember that from now on every $\bm{\theta}\in\Sigma_{s,R_0}$ is associated with some $\bm{\theta'}\in \mathcal{G}$, (which of course depends on $\bm{\theta}$ but our notation omits such dependence). Thus, we can bound $I_0=\sup_{\bm{\theta},\bm{v}}\sum_{k=1}^n\xi_{\bm{\theta},k}\bm{x}_k^\top\bm{v}$ as
\begin{equation}\label{B.39}\begin{aligned}
     I_0 &\leq \sup_{\bm{\theta},\bm{v}}\sum_{k=1}^n \xi_{\bm{\theta'},k}\bm{x}_k^\top\bm{v}   + \sup_{\bm{\theta},\bm{v}}  \sum_{k=1}^n(\xi_{\bm{\theta},k}-\xi_{\bm{\theta}',k})\bm{x}_k  ^\top\bm{v} \leq I_{0,\mathcal{G}}+I_{01}.
    \end{aligned}
\end{equation}
Note that the bound on $I_{0,\mathcal{G}}$ is available in  (\ref{B.38}), so it remains to bound $I_{01}:=\sup_{\bm{\theta},\bm{v}}\sum_{k=1}^n (\xi_{\bm{\theta},k}-\xi_{\bm{\theta'},k})\bm{x}_k^\top\bm{v}$, which can be understood as the approximation error of the net $\mathcal{G}$ regarding the empirical process of interest. To facilitate the presentation we switch to the more compact notations --- let $\bm{X}\in \mathbb{R}^{n\times d}$ with rows $\bm{x}_k^\top$ be the sensing matrix, $\bm{\xi}_{\bm{\theta}}=[\xi_{\bm{\theta},k}]\in \mathbb{R}^n$ be the quantization error indexed by $\bm{\theta}$, $\bm{\tau}=[\tau_k]\in\mathbb{R}^n$ be the random dither vector, $\bm{\epsilon}=[\epsilon_k]\in \mathbb{R}^n$ be the heavy-tailed noise vector, $\bm{{y}}_{\bm{\theta}}= [y_{\bm{\theta},k}]= \bm{X\theta}+\bm{\epsilon}\in \mathbb{R}^n$ and  $\bm{\widetilde{y}}_{\bm{\theta}}=[\widetilde{y}_{\bm{\theta},k}]= \mathscr{T}_{\zeta_y}(\bm{y}_{\bm{\theta}})$ be the measurement vector and truncated measurement vector, respectively.  With these conventions we can write $I_{01}=\sup_{\bm{\theta},\bm{v}}(\bm{\xi_{\theta}}-\bm{\xi_{\theta'}})^\top \bm{Xv}$. Recall that a specific $\bm{\theta'}$ has been specified for each $\bm{\theta}\in \Sigma_{s,R_0}$, so defining $\bm{\Psi_{\theta}}:=\bm{\xi_{\theta}}-\bm{\xi_{\theta'}}$ allows us to write 
$I_{01}= \sup_{\bm{\theta},\bm{v}}~\bm{\Psi}_{\bm{\theta}}^\top\bm{Xv}$. Further,     we   define $\bm{\widetilde{\Psi}_{\theta}}:=\bm{\widetilde{y}_{\theta}}-\bm{\widetilde{y}_{\theta'}},\bm{\widehat{\Psi}_{\theta}}:=\mathcal{Q}_\Delta(\bm{\widetilde{y}_{\theta}}+\bm{\tau})-\mathcal{Q}_\Delta(\bm{\widetilde{y}_{\theta'}}+\bm{\tau})$ and make the following observation
\begin{equation}
    \begin{aligned}\label{B.40}
        \bm{\Psi_\theta} = \bm{\xi_\theta}-\bm{\xi_{\theta'}}=\bm{\widehat{\Psi}_{\theta}}- \bm{\widetilde{\Psi}_\theta}.
    \end{aligned}
\end{equation}
We pause to establish a property of  $\bm{X}$ that holds w.h.p.. Specifically, we restrict (\ref{nB.6}) to $\bm{v}\in \mathscr{V}$  (recall that $\bar{\Delta}=0$ and so $\bm{\dot{X}}=\bm{X}$ and $\bm{\dot{\Sigma}}= \bm{\Sigma^\star}$), then under the promised probability it holds for some $c(\kappa_0 ,\sigma)$   that $$\sup_{\bm{v}\in\mathscr{V}}\left|\frac{\|\bm{Xv}\|_2}{\sqrt{n}}- \|\bm{\sqrt{\Sigma^\star} v}\|_2\right|\leq c(\kappa_0, \sigma)\sqrt{\frac{\delta s\log d}{n}}.$$
Thus, when $n\gtrsim \delta s \log d$ with for large enough hidden constant depending on $(\kappa_0,\sigma)$, it holds that \begin{equation}\label{upperevent}
    \sup_{\bm{v}\in \mathscr{V}}\frac{\|\bm{Xv}\|_2}{\sqrt{n}}  \leq  
\sup_{\bm{v}\in \mathscr{V}}\|\sqrt{\bm{\Sigma^\star}}\bm{v}\|_2+\sqrt{\kappa_1}\leq 2\sqrt{\kappa_1}.
\end{equation} We proceed by  assuming we are on this event,   
which allows us to   bound 
$I_{01}$ as 
\begin{equation}\label{start}
\begin{aligned}
    I_{01} =\sup_{\theta,\bm{v}}\bm{\Psi_{\theta}}^\top\bm{Xv} &\leq \sup_{\bm{\theta}}\|\bm{\Psi_{\theta}}\|_2 \sup_{\bm{v}\in\mathscr{V}}\|\bm{Xv}\|_2\\&\leq 2\sqrt{\kappa_1n}\cdot \sup_{\bm{\theta}}\|\bm{\Psi_\theta}\|_2.\end{aligned}
\end{equation}
To bound $\sup_{\bm{\theta}}\|\bm{\Psi_\theta}\|_2$, motivated by (\ref{B.40}), we will investigate $\bm{\widetilde{\Psi}_{\theta}}$ and $\bm{\widehat{\Psi}_{\theta}}$ more carefully. We pick a threshold $\eta\in (0,\frac{\Delta}{2})$ (that is to be chosen later), and by the $1$-Lipschitzness of $\mathscr{T}_{\zeta_y}(\cdot)$ we have \begin{equation}
    \begin{aligned}\label{erraftermap}\sup_{\bm{\theta}}\|\bm{\widetilde{\Psi}_{\theta}}\|_2 &=\sup_{\bm{\theta}}\|\bm{\widetilde{y}}_{\bm{\theta}}-\bm{\widetilde{y}}_{\bm{\theta}'}\|_2 \leq \sup_{\bm{\theta}}\|\bm{y}_{\bm{\theta}}-\bm{y}_{\bm{\theta}'}\|_2\\&=\sup_{\bm{\theta}}\|\bm{X}(\bm{\theta}-\bm{\theta}')\|_2 \leq 2 \sqrt{\kappa_1n}\rho,\end{aligned}
\end{equation}
where the last inequality is because $\bm{\theta}-\bm{\theta}'$ is $2s$-sparse,   hence (\ref{upperevent}) implies $\|\bm{X}(\bm{\theta}-\bm{\theta}')\|_2 \leq 2 \sqrt{\kappa_1n}\|\bm{\theta}-\bm{\theta}'\|_2\leq 2 \sqrt{\kappa_1n}\rho$.

To proceed, we will define for  specific $\bm{\theta}$ the index vectors  $\bm{J}_{\bm{\theta},1},\bm{J}_{\bm{\theta},2}\in \{0,1\}^n$ and use $|\bm{J}_{\bm{\theta},1}|$ to denote the number of $1$s in $\bm{J}_{\bm{\theta},1}$ (similar meaning for $|\bm{J}_{\bm{\theta},2}|$). Specifically, using the entry-wise notation $\bm{\widetilde{\Psi}_\theta}=[\widetilde{\Psi}_{\bm{\theta},k}]$ we  define
$\bm{J}_{\bm{\theta},1} = [\mathbbm{1}(|\widetilde{\Psi}_{\bm{\theta},k}|\geq \eta)]$.   Recall that   (\ref{erraftermap}) gives $\sup_{\bm{\theta}}\|\bm{\widetilde{\Psi}_{\theta}}\|_{2}^2\leq 4\kappa_1n\rho^2$; combined with the simple observation   $\sup_{\bm{\theta}}\|\bm{\widetilde{\Psi}_{\theta}}\|_2^2\geq \sup_{\bm{\theta}}\eta^2|\bm{J}_{\bm{\theta},1}|$, we obtain  a uniform bound on $|\bm{J}_{\bm{\theta},1}|$ as
\begin{equation}
    \sup_{\bm{\theta}\in \Sigma_{s,R_0}}|\bm{J}_{\bm{\theta},1}|\leq \frac{4\kappa_1n\rho^2}{\eta^2}.\label{boundJ1}
\end{equation} Next, we define the index vector $\bm{J}_{\bm{\theta},2}$ for $\bm{\theta}\in \mathcal{G}$: first let
$\mathscr{E}_{\bm{\theta},i}= \{\mathcal{Q}_\Delta(\widetilde{y}_{\bm{\theta},i}+\tau_i +t )\text{ is discontinuous }$ $\text{in }t\in [-\eta,\eta]\},$ and then we define $\bm{J}_{\bm{\theta},2} := [\mathbbm{1}(\mathscr{E}_{\bm{\theta},i} )]$. Then by Lemma \ref{uniformJ2} that we prove later, we have \begin{equation}\label{useJ2bound}
    \mathbbm{P}\Big(\sup_{\bm{\theta}\in \mathcal{G}}|\bm{J}_{\bm{\theta},2}| \leq \frac{Cn\eta}{\Delta}\Big)\geq 1-\exp\Big(-\frac{cn\eta}{\Delta}+s\log \frac{9d}{\rho s} \Big):=1-\mathscr{P}_1.
\end{equation}

Note that $\mathscr{E}_{\bm{\theta},i}$ does not happen (i.e., $\mathbbm{1}(\mathscr{E}_{\bm{\theta},i})=0$) means that $\mathcal{Q}_\Delta(\widetilde{y}_{\bm{\theta},i}+\tau_i +t)$ is continuous in $t\in [-\eta,\eta]$; combined with the definition of $\mathcal{Q}_{\Delta}(\cdot)$, this is also equivalent to the statement that "$\mathcal{Q}_\Delta(\widetilde{y}_{\bm{\theta},i}+\tau_i +t)$ remains constant in $t\in [-\eta,\eta]$." Thus, given a fixed $\bm{\theta}\in \Sigma_{s,R_0}$ and its associated $\bm{\theta'}$, suppose that the $i$-th entry of $\bm{J}_{\bm{\theta'},2}$ is zero (meaning that "$\mathcal{Q}_\Delta(\widetilde{y}_{\bm{\theta'},i}+\tau_i+t)$ remains constant in $t\in [-\eta,\eta]$"),   if additionally $i$-th entry of $\bm{J}_{\bm{\theta},1}$ is zero   (i.e., $|\widetilde{\Psi}_{\bm{\theta},i}|<\eta$), then the $i$-th entry of $\bm{\widehat{\Psi}_{\bm{\theta}}}=\mathcal{Q}_\Delta(\bm{\widetilde{y}_\theta}+\bm{\tau})-\mathcal{Q}_\Delta(\bm{\widetilde{y}_{\theta'}}+\bm{\tau})$ vanishes:
\begin{equation}
    \begin{aligned}
        \nonumber\widehat{\Psi}_{\bm{\theta},i}&=\mathcal{Q}_\Delta(\widetilde{y}_{\bm{\theta},i}+\tau_{i})-\mathcal{Q}_\Delta(\widetilde{y}_{\bm{\theta'},i}+\tau_i)\\
        &=\mathcal{Q}_\Delta(\widetilde{y}_{\bm{\theta'},i}+\widetilde{\Psi}_{\bm{\theta},i}+\tau_i)-\mathcal{Q}_\Delta(\widetilde{y}_{\bm{\theta'},i}+\tau_i)=0;
    \end{aligned}
\end{equation}
combining with (\ref{B.40}), this implies $\Psi_{\bm{\theta},i}=-\widetilde{\Psi}_{\bm{\theta},i}$.
  Recall from (\ref{start}) that we want to bound $\sup_{\bm{\theta}}\|\bm{\Psi_\theta}\|_2$.  Write $\bm{J}_{\bm{\theta},1}^c = \bm{1}- \bm{J}_{\bm{\theta},1}$ and $\bm{J}_{\bm{\theta'},2}^c = \bm{1}- \bm{J}_{\bm{\theta'},2}$, then denoting hadamard  product by $\odot$ and  using the decomposition $\bm{1}= \max\{\bm{J}_{\bm{\theta},1},\bm{J}_{\bm{\theta'},2}\}+ \min\{\bm{J}_{\bm{\theta},1}^c,\bm{J}_{\bm{\theta'},2}^c\}$ and (\ref{B.40}) we have \begin{equation}
    \begin{aligned}\label{B.47}
       &\sup_{\bm{\theta}} \big\|\bm{\Psi_\theta}\big\|_2 = \sup_{\bm{\theta}}\big\|\bm{\Psi_\theta}\odot \max\{\bm{J}_{\bm{\theta},1},\bm{J}_{\bm{\theta'},2}\}+\bm{\Psi_\theta}\odot \min\{\bm{J}_{\bm{\theta},1}^c,\bm{J}^c_{\bm{\theta'},2}\} \big\|_2 \\
       &\leq \sup_{\bm{\theta}} \big\|\bm{\Psi_\theta}\odot \max\{\bm{J}_{\bm{\theta},1},\bm{J}_{\bm{\theta'},2}\}\big\|_2+\sup_{\bm{\theta}}\big\|\bm{\Psi_\theta}\odot \min\{\bm{J}_{\bm{\theta},1}^c,\bm{J}^c_{\bm{\theta'},2}\}\big\|_2\\
       &\stackrel{(i)}{\leq} \sup_{\bm{\theta}}\big\|\bm{\Psi_\theta}\big\|_{\infty}\cdot \sqrt{|\bm{J}_{\bm{\theta},1}|+|\bm{J}_{\bm{\theta'},2}|} ~+\sup_{\bm{\theta}}\big\|\bm{\widetilde{\Psi}_{\bm{\theta}}}\odot\min\{\bm{J}_{\bm{\theta},1}^c,\bm{J}_{\bm{\theta'},2}^c\}\big\|_2 \\&\stackrel{(ii)}{\leq} C\Big\{\Delta\sqrt{n}\Big(\frac{\sqrt{\kappa_1}\rho}{\eta}+ \sqrt{\frac{\eta}{\Delta}}\Big)+  \rho\sqrt{\kappa_1n}\Big\},
    \end{aligned}
\end{equation}
where  $(i)$ is because entries of $\bm{\Psi}_{\bm{\theta}}$ equal to those of $-\bm{\widetilde{\Psi}}_{\bm{\theta}}$ if the index corresponds to $\min\{\bm{J}_{\bm{\theta},1}^c,\bm{J}_{\bm{\theta'},2}^c\}=1$, and in $(ii)$ we use the simple bound $\|\bm{\Psi_{\theta}}\|_\infty \leq \|\bm{\xi_\theta}\|_\infty+\|\bm{\xi_{\theta'}}\|_\infty\leq 2\Delta$ and the derived bounds on $\sup_{\bm{\theta}}|\bm{J}_{\bm{\theta},1}|,\sup_{\bm{\theta}\in \mathcal{G}}|\bm{J}_{\bm{\theta},2}|,\sup_{\bm{\theta}}\|\bm{\widetilde{\Psi}_\theta}\|_2$ in (\ref{boundJ1}), (\ref{useJ2bound}) and   (\ref{erraftermap}), respectively.

\vspace{1mm}

\noindent{\bf \sffamily Step 2.4.3. Concluding the Bound on $I_0$}

We are ready to put pieces together, specify  $\rho,\eta$, and conclude the bound on $I_0$. Overall, with probability at least $1-\mathscr{P}_1-\mathscr{P}_2$ for $\mathscr{P}_1$ defined in (\ref{useJ2bound}) and some $\mathscr{P}_2$ within the promised probability, combining (\ref{B.38}), (\ref{B.39}), (\ref{start}) and (\ref{B.47}) we obtain \begin{equation} \nonumber
    I_0\lesssim\sigma\Delta\sqrt{ns\delta \log \frac{9d}{\rho s}}+\frac{\kappa_1n\Delta \rho}{\eta}+n\sqrt{\kappa_1\eta\Delta}+\kappa_1\rho n.
\end{equation}
Thus, we take the (near-optimal) choice of $(\rho,\eta)$ as $\rho \asymp \frac{\Delta}{\sqrt{\kappa_1}}\big(\frac{s\delta }{n}\big)^{3/2}$ and $\eta\asymp \frac{\delta \Delta s}{n}\log\frac{9d}{\rho s} ,$
under which we obtain that, with the promised probability (as $\mathscr{P}_1$ is also sufficiently small), we obtain the bound on $I_0$ as \begin{equation}
    \label{I0bound}
    I_0\lesssim \sigma\Delta\sqrt{ns\delta \log\Big(\frac{\kappa_1d^2n^3}{\Delta^2 s ^5\delta ^3}\Big)}
\end{equation}



 We can conclude the proof with all the works above. Substituting $\inf_{\bm{v}\in\mathscr{V}} \sum_{k=1}^n(\bm{x}_k^\top\bm{v})^2\geq \frac{1}{4}\kappa_0n$ and the definition of $I$ in (\ref{defineI}) into (\ref{frame}), then we obtain $\frac{1}{4}\kappa_0n\|\bm{\widehat{\Upsilon}}\|_2^2\leq 2\|\bm{\widehat{\Upsilon}}\|_2I$ that holds uniformly for all $\bm{\theta}\in \Sigma_{s,R_0}$, which implies $\sup_{\bm{\theta}}\|\bm{\widehat{\Upsilon}}\|_2\leq \frac{8I}{\kappa_0n}$. Substituting the derived bounds on $I_1,I_2,I_3,I_0$ into (\ref{B.29}), with the promised probability we have \begin{equation}\nonumber
    I \lesssim \sigma\big(\sigma+M^{\frac{1}{2l}}\big)\sqrt{ns\delta\log d}+\sigma\Delta\sqrt{ns\delta \log \Big(\frac{\kappa_1d^2n^3}{\Delta^2s^5\delta^3}\Big)},
\end{equation}
so 
the uniform bound on $\|\bm{\widehat{\Upsilon}}\|_2$ follows immediately. Further using $\|\bm{\widehat{\Upsilon}}\|_1\leq 2\sqrt{s}\|\bm{\widehat{\Upsilon}}\|_2$ completes the proof.\end{proof}
\begin{lem}
    \label{uniformJ2}{\rm (Bounding $\sup_{\bm{\theta}\in\mathcal{G}}|\bm{J}_{\bm{\theta},2}|$)\textbf{.}}
    Along the proof of Theorem \ref{uniformtheorem}, it holds that \begin{equation}\label{J2bound}
        \mathbbm{P}\Big(\sup_{\bm{\theta}\in \mathcal{G}}|\bm{J}_{\bm{\theta},2}| \geq \frac{Cn\eta}{\Delta}\Big)\leq \exp\Big(-\frac{cn\eta}{\Delta}+s\log \frac{9d}{\rho s} \Big).
    \end{equation}
\end{lem}

\begin{proof}
Notation and details in the proof of Theorem \ref{uniformtheorem} will be used.
We first consider a fixed $\bm{\theta}\in \mathcal{G}$, and by a simple shifting $\mathscr{E}_{\bm{\theta},i}$ happens if and only if $\mathcal{Q}_\Delta(\cdot)$ is discontinuous in $[\widetilde{y}_{\bm{\theta} ,i}+\tau_i-\eta,\widetilde{y}_{\bm{\theta} ,i}+\tau_i+\eta]$, which is also equivalently to $[\widetilde{y}_{\bm{\theta} ,i}+\tau_i-\eta,\widetilde{y}_{\bm{\theta} ,i}+\tau_i+\eta]\cap (\Delta\cdot \mathbb{Z})=\varnothing$. Because $\tau _i \sim \mathscr{U}\big([-\frac{\Delta}{2},\frac{\Delta}{2}]\big)$ and $\eta < \frac{\Delta}{2}$, $\mathbbm{P}(\mathscr{E}_{\bm{\theta} ,i}) = \frac{2\eta}{\Delta}$ is valid independent of the location of $[\widetilde{y}_{\bm{\theta} ,i}-\eta,\widetilde{y}_{\bm{\theta} ,i}+\eta]$.
Thus, for fixed $\bm{\theta}$, by conditioning on $(\bm{X},\bm{\epsilon})$, $|\bm{J}_{\bm{\theta},2}|$ follows the binomial distribution with $n$ trials and probability of success $p:= \frac{2\eta}{\Delta}$. This allows us to write $|\bm{J}_{\bm{\theta},2}| = \sum_{k=1}^n J_k$ with $J_k$ i.i.d. following Bernoulli distribution with success probability $\mathbbm{E}J_k = p$. Then
for any integer $q\geq 2$ we have $ \sum_{k=1}^n\mathbbm{E}|J_k - \mathbbm{E}J_k|^q\leq \sum_{k=1}^n \mathbbm{E}|J_k-\mathbbm{E}J_k|^2 \leq np(1-p)\leq \frac{q!}{2}np.$  Now we invoke Bernstein's inequality (Lemma \ref{bernstein}) to obtain that for any $t>0$,
\begin{equation}\nonumber
    \mathbbm{P}\big(|\bm{J}_{\bm{\theta},2}| -np \geq \sqrt{2np t}+t\big)\leq \exp(-t).
\end{equation}
We let $t = cnp$ and take a union bound over $\bm{\theta} \in\mathcal{G}$; this yields the desired claim since $|\mathcal{G}|\leq \big(\frac{9d}{\rho s}\big)^s$. 
\end{proof}

\end{appendix} 
\end{document}